\definecolor{winered}{rgb}{0.7,0,0}
\definecolor{lessblue}{rgb}{0,0,0.7}
\def\@tocline#1#2#3#4#5#6#7{
\begingroup
  \par
    \parindent\z@ \leftskip#3 \relax \advance\leftskip\@tempdima\relax
                  \rightskip\@pnumwidth plus 4em \parfillskip-\@pnumwidth
    \ifcase #1 
       \vskip 0.6em \hskip 0em 
       \or
       \or \hskip 0em 
       \or \hskip 1em 
    \fi%
    %
    #6
    %
    \nobreak\relax{\leavevmode\leaders\hbox{\,.}\hfill}
    \hbox to\@pnumwidth {\@tocpagenum{#7}}
  \par
\endgroup
}
 \def\l@section{\@tocline{0}{0pt}{0pc}{}{}}
\renewcommand{\tocsection}[3]{%
  \indentlabel{\@ifnotempty{#2}{ 
    \ignorespaces\bfseries{#2. #3}}}
  \indentlabel{\@ifempty{#2}{\ignorespaces\bfseries{#3}}{}} 
    \vspace{1.5pt}}
\renewcommand{\tocsubsection}[3]{%
  \indentlabel{\@ifnotempty{#2}{
    \ignorespaces#2. #3}}
  \indentlabel{\@ifempty{#2}{\ignorespaces #3}{}}
    \vspace{1.5pt}}
\renewcommand{\tocsubsubsection}[3]{%
  \indentlabel{\@ifnotempty{#2}{
    \ignorespaces#2. #3}}
  \indentlabel{\@ifempty{#2}{\ignorespaces #3}{}}
    \vspace{1.5pt}}
\def\@nomenstarted{0}
\newlength{\@nomenoldtabcolsep}
\newcommand{\nomenstart}
  {%
    \def\@nomenstarted{1}%
    \setlength{\@nomenoldtabcolsep}{\tabcolsep}%
    \setlength{\tabcolsep}{3.5pt}%
    \begin{longtable}{p{0.12\textwidth} p{0.84\textwidth}}
  }
\newcommand{\nomenitem}[2]{%
    \ifcase\@nomenstarted%
      \or 
      \or \\ 
    \fi%
    #1\,{\leavevmode\leaders\hbox{\,.}\hfill} & #2%
    \def\@nomenstarted{2}%
  }%
\newcommand{\nomenend}
  {\\%
      \end{longtable}%
      \setlength{\tabcolsep}{\@nomenoldtabcolsep}%
      \def\@nomenstarted{0}%
  }
\numberwithin{equation}{section}
\newtheorem{thm}{Theorem}[section]
\newtheorem{prop}[thm]{Proposition}
\newtheorem{lemma}[thm]{Lemma}
\newtheorem{cor}[thm]{Corollary}
\newtheorem*{thm*}{Theorem}
\newtheorem*{prop*}{Proposition}
\newtheorem*{cor*}{Corollary}
\newtheorem*{conj*}{Conjecture}
\theoremstyle{definition}
\newtheorem{definition}[thm]{Definition}
\theoremstyle{remark}
\newtheorem{rmk}[thm]{Remark}
\newcommand{\mc}{\mathcal}
\newcommand{\cA}{\mc A}
\newcommand{\cB}{\mc B}
\newcommand{\cC}{\mc C}
\newcommand{\cD}{\mc D}
\newcommand{\cH}{\mc H}
\newcommand{\cI}{\mc I}
\newcommand{\cL}{\mc L}
\newcommand{\cM}{\mc M}
\newcommand{\cO}{\mc O}
\newcommand{\cP}{\mc P}
\newcommand{\cR}{\mc R}
\newcommand{\cU}{\mc U}
\newcommand{\cV}{\mc V}
\newcommand{\cX}{\mc X}
\newcommand{\cZ}{\mc Z}
\newcommand{\cCH}{\cC\cH}
\newcommand{\ms}{\mathscr}
\newcommand{\sB}{\ms B}
\newcommand{\sC}{\ms C}
\newcommand{\sD}{\ms D}
\newcommand{\sE}{\ms E}
\newcommand{\sI}{\ms I}
\newcommand{\sL}{\ms L}
\newcommand{\sR}{\ms R}
\newcommand{\C}{\mathbb{C}}
\newcommand{\N}{\mathbb{N}}
\newcommand{\R}{\mathbb{R}}
\newcommand{\Sph}{\mathbb{S}}
\newcommand{\vect}{\mathbb{V}}
\newcommand{\ulR}{\ul{\R}}
\newcommand{\bfzero}{\mathbf{0}}
\newcommand{\bfa}{\mathbf{a}}
\newcommand{\bfc}{\mathbf{c}}
\newcommand{\bfB}{\mathbf{B}}
\newcommand{\bfE}{\mathbf{E}}
\newcommand{\bfX}{\mathbf{X}}
\newcommand{\frakg}{\mathfrak{g}}
\newcommand{\sld}{\slashed{d}{}}
\newcommand{\slg}{\slashed{g}{}}
\newcommand{\sln}{\slashed{n}{}}
\newcommand{\slG}{\slashed{G}{}}
\newcommand{\slH}{\slashed{H}{}}
\newcommand{\slGamma}{\slashed{\Gamma}{}}
\newcommand{\sldelta}{\slashed{\delta}{}}
\newcommand{\slDelta}{\slashed{\Delta}{}}
\newcommand{\slnabla}{\slashed{\nabla}{}}
\newcommand{\slpi}{\slashed{\pi}{}}
\newcommand{\slRic}{\slashed{\Ric}{}}
\newcommand{\slstar}{\slashed{\star}}
\newcommand{\sltr}{\operatorname{\slashed{\tr}}}
\newcommand{\hg}{{\wh{g}}{}}
\newcommand{\hG}{{\wh{G}}{}}
\newcommand{\hL}{{\wh{L}}{}}
\newcommand{\hR}{\wh{R}{}}
\newcommand{\hX}{{\wh{X}}}
\newcommand{\hBox}{\wh{\Box}{}}
\newcommand{\hGamma}{\wh{\Gamma}{}}
\newcommand{\hdelta}{\wh{\delta}{}}
\newcommand{\tcL}{\wt{\cL}{}}
\newcommand{\hnabla}{\wh{\nabla}{}}
\newcommand{\hpi}{\wh{\pi}{}}
\newcommand{\hRic}{\wh{\mathrm{Ric}}{}}
\newcommand{\hstar}{\wh{\star}}
\newcommand{\htr}{\operatorname{\wh{\tr}}}
\newcommand{\hvol}{\wh{\mathrm{vol}}{}}
\newcommand{\asph}{{\mathrm{AS}}}
\newcommand{\TAS}{T_\asph}
\newcommand{\TS}{T_{\mathrm{S}}}
\newcommand{\ran}{\operatorname{ran}}
\newcommand{\End}{\operatorname{End}}
\renewcommand{\Re}{\operatorname{Re}}
\renewcommand{\Im}{\operatorname{Im}}
\newcommand{\Id}{\operatorname{Id}}
\newcommand{\mathspan}{\operatorname{span}}
\newcommand{\supp}{\operatorname{supp}}
\newcommand{\sgn}{\operatorname{sgn}}
\newcommand{\tr}{\operatorname{tr}}
\newcommand{\diag}{\operatorname{diag}}
\newcommand{\Res}{\operatorname{Res}}
\newcommand{\IVP}{{\mathrm{IVP}}}
\newcommand{\Ups}{\Upsilon}
\newcommand{\eps}{\epsilon}
\newcommand{\hra}{\hookrightarrow}
\newcommand{\la}{\langle}
\newcommand{\ol}{\overline}
\newcommand{\pa}{\partial}
\newcommand{\ra}{\rangle}
\newcommand{\tn}{\textnormal}
\newcommand{\ul}{\underline}
\newcommand{\upd}{\updelta}
\newcommand{\wh}{\widehat}
\newcommand{\wt}{\widetilde}
\newcommand{\xra}{\xrightarrow}
\newcommand{\gdot}{\dot{g}}
\newcommand{\hdot}{\dot{h}}
\newcommand{\kdot}{\dot{k}}
\newcommand{\qdot}{\dot{q}}
\newcommand{\Adot}{\dot{A}}
\newcommand{\Fdot}{\dot{F}}
\newcommand{\Qdot}{\dot{Q}}
\newcommand{\Xdot}{\dot{X}}
\newcommand{\Ydot}{\dot{Y}}
\newcommand{\Zdot}{\dot{Z}}
\newcommand{\mudot}{\dot{\mu}}
\newcommand{\phidot}{\dot{\phi}}
\newcommand{\bfBdot}{\dot{\bfB}}
\newcommand{\bfEdot}{\dot{\bfE}}
\newcommand{\bop}{{\mathrm{b}}}
\newcommand{\bl}{{\mathrm{b}}}
\newcommand{\cp}{{\mathrm{c}}}
\newcommand{\semi}{\hbar}
\newcommand{\Diff}{\mathrm{Diff}}
\newcommand{\Vf}{\mathcal V}
\newcommand{\Vb}{\Vf_\bop}
\newcommand{\Diffb}{\Diff_\bop}
\newcommand{\Diffbh}{\Diff_{\bop,\semi}}
\newcommand{\Omegab}{{}^{\bop}\Omega}
\newcommand{\TC}{{}^{\C}T}
\newcommand{\Tb}{{}^{\bop}T}
\newcommand{\rcTbdual}[1][]{\ensuremath{\overline{{}^{\bop}T^*\ifthenelse{\isempty{#1}}{}{_#1}}}}
\newcommand{\Sb}{{}^{\bop}S}
\newcommand{\Nb}{{}^{\bop}N}
\newcommand{\bdiff}{{}^{\bop}d}
\newcommand{\half}{\frac{1}{2}}
\newcommand{\sigmabh}{\sigma_{\bop,\semi}}
\newcommand{\ham}{H}
\newcommand{\sub}{{\mathrm{sub}}}
\newcommand{\bhm}[1][]{\ensuremath{M_{\bullet\ifthenelse{\isempty{#1}}{}{,#1}}}}
\newcommand{\CI}{\cC^\infty}
\newcommand{\CIdot}{\dot\cC^\infty}
\newcommand{\CIc}{\cC^\infty_\cp}
\newcommand{\Hb}{H_{\bop}}
\newcommand{\Hbext}{\bar H_{\bop}}
\newcommand{\Hbsupp}{\dot H_{\bop}}
\newcommand{\Hext}{\bar H}
\newcommand{\Hbh}{H_{\bop,h}}
\newcommand{\hd}{\wh{d}{}}
\newcommand{\hdel}{\wh{\delta}{}}
\newcommand{\td}{\wt{d}{}}
\newcommand{\tdel}{\wt{\delta}{}}
\newcommand{\Riem}{\mathrm{Riem}}
\newcommand{\Ric}{\mathrm{Ric}}
\newcommand{\Ein}{\mathrm{Ein}}
\newcommand{\openbigpmatrix}[1]
  {%
    \def\@bigpmatrixsize{#1}%
    \addtolength{\arraycolsep}{-#1}%
    \begin{pmatrix}%
  }
\newcommand{\closebigpmatrix}
  {%
    \end{pmatrix}%
    \addtolength{\arraycolsep}{\@bigpmatrixsize}%
  }
\newcommand{\setarraystretch}{\def\arraystretch{1.5}}
\newcommand{\itref}[1]{(\ref{#1})}
\newcommand{\KI}[2]{(#1$\cdot$#2)}
\newcommand{\inclfig}[1]{\includegraphics{#1}}
\begin{document}

\title[Non-linear stability of Kerr--Newman--de~Sitter]{Non-linear stability of the Kerr--Newman-- de~Sitter family of charged black holes}

\author{Peter Hintz}
\address{Department of Mathematics, University of California, Berkeley, CA 94720-3840, USA}
\email{phintz@berkeley.edu}

\date{December 13, 2016. Final revision: January 11, 2018.}

\subjclass[2010]{Primary 83C57, Secondary 83C22, 35B40, 83C35}

\begin{abstract}
  We prove the global non-linear stability, without symmetry assumptions, of slowly rotating charged black holes in de~Sitter spacetimes in the context of the initial value problem for the Einstein--Maxwell equations: if one perturbs the initial data of a slowly rotating Kerr--Newman--de~Sitter (KNdS) black hole, then in a neighborhood of the exterior region of the black hole, the metric and the electromagnetic field decay exponentially fast to their values for a possibly different member of the KNdS family. This is a continuation of recent work of the author with Vasy on the stability of the Kerr--de~Sitter family for the Einstein vacuum equations. Our non-linear iteration scheme automatically finds the final black hole parameters as well as the gauge in which the global solution exists; we work in a generalized wave coordinate/Lorenz gauge, with gauge source functions lying in a suitable finite-dimensional space.

  We include a self-contained proof of the linear mode stability of Reissner--Nordstr\"om--de~Sitter black holes, building on work by Kodama--Ishibashi. In the course of our non-linear stability argument, we also obtain the first proof of the linear (mode) stability of slowly rotating KNdS black holes using robust perturbative techniques.
\end{abstract}

\maketitle

\tableofcontents

\section{Introduction}
\label{SecIntro}

The Einstein--Maxwell system describes the interaction of gravity and electromagnetism in the context of Einstein's Theory of General Relativity. On a $4$-manifold $M^\circ$, this system consists of coupled equations for the Lorentzian metric $g$ with signature $(+,-,-,-)$ and the electromagnetic field $F$, a 2-form, on $M^\circ$:
\begin{equation}
\label{EqIntroEM}
  \Ric(g) + \Lambda g = 2 T(g,F), \quad d F=0,\quad \delta_g F = 0.
\end{equation}
Here, $\Lambda$ is the cosmological constant, which we take to be positive. (This is consistent with the currently accepted $\Lambda$CDM model of cosmology \cite{RiessEtAlLambda,PerlmutterEtAlLambda}.) Moreover, $T$ is the energy-momentum tensor associated with the electromagnetic field $F$, and $\delta_g$ is the codifferential. Thus, the first equation in \eqref{EqIntroEM}, Einstein's field equation, describes the dynamics of the spacetime metric $g$ in the presence of an electromagnetic field $F$, and the second equation, Maxwell's equation, describes the propagation of electromagnetic waves on this spacetime. In the absence of an electromagnetic field, the system \eqref{EqIntroEM} reduces to the Einstein vacuum equation $\Ric(g)+\Lambda g=0$.

Equation~\eqref{EqIntroEM} is very close in character to a coupled system of wave equations; it is however not quite such a system due to its gauge invariance: pulling a solution $(g,F)$ back by any diffeomorphism of $M^\circ$ produces another solution of \eqref{EqIntroEM}. The correct formulation of the non-characteristic initial value problem is therefore somewhat subtle; this was first accomplished by Choquet-Bruhat \cite{ChoquetBruhatLocalEinstein} for the vacuum Einstein equations, see also \cite{ChoquetBruhatGerochMGHD}. For the Einstein--Maxwell system, an initial data set
\[
  (\Sigma_0,h,k,\bfE,\bfB)
\]
consists of a smooth $3$-manifold $\Sigma_0$, a Riemannian metric $h$, a symmetric 2-tensor $k$, and 1-forms $\bfE$ and $\bfB$ on $\Sigma_0$, subject to the constraint equations; these are the Gauss--Codazzi equations for a metric subject to \eqref{EqIntroEM}, together with equations for $\bfE$ and $\bfB$ asserting that the electromagnetic field is source-free. Given such data, one can find a unique (up to gauge equivalence) solution $(M^\circ,g,F)$ of \eqref{EqIntroEM}, with an embedding of $\Sigma_0$ into $M^\circ$ as a spacelike hypersurface, attaining these data at $\Sigma_0$ in the sense that $h$ and $k$ are the metric and second fundamental form on $\Sigma_0$ induced by $g$, and $\bfE$ and $\bfB$ are the electric and magnetic field, encoded by the electromagnetic 2-form $F$, as measured by an observer crossing $\Sigma_0$ in the normal direction at unit speed. See \cite[\S6.10]{ChoquetBruhatGR} and \S\ref{SecBasic} below for a detailed discussion.

\subsection{Statement of the main result}
\label{SubsecIntroNL}

We analyze the global behavior of solutions of the initial value problem for \eqref{EqIntroEMA} with initial data which are close to those of a slowly rotating Kerr--Newman--de~Sitter (KNdS) black hole. The KNdS family of solutions of \eqref{EqIntroEMA} describes stationary, charged, and rotating black holes inside of a universe undergoing accelerated expansion (consistent with $\Lambda>0$); it was discovered by Carter \cite{CarterHamiltonJacobiEinstein}, following the discovery of the Kerr \cite{KerrKerr} and Kerr--Newman (KN) solutions \cite{NewmanCouchChinnaparedExtonPrakashTorrenceKN} describing neutral or charged rotating black holes in asymptotically flat spacetimes (for which $\Lambda=0$). Thus, fixing $\Lambda>0$, a KNdS solution $(g_b,F_b)$, defined on a manifold
\begin{equation}
\label{EqIntroNLManifold}
  M^\circ=\R_{t_*}\times(0,\infty)_r\times\Sph^2,
\end{equation}
is characterized by its parameters
\[
  b=(\bhm,\bfa,Q_e,Q_m) \in \R^6,
\]
with $\bhm>0$ denoting the mass of the black hole, $\bfa\in\R^3$ its angular momentum (direction and magnitude), and $Q_e,Q_m\in\R$ its electric and magnetic charge. KNdS solutions are stationary, i.e.\ translations in the time variable $t_*$ are isometries, and axisymmetric around the axis of rotation $\bfa/|\bfa|$ if $\bfa\neq 0$. (If $Q_e=Q_m=0$, then the electromagnetic field vanishes, and the metric $g_b$ reduces to a Kerr--de~Sitter (KdS) metric with parameters $(\bhm,\bfa)$; if furthermore $\bfa=\bfzero$, the metric $g_b$ is a spherically symmetric Schwarzschild--de~Sitter (SdS) metric.) A non-degenerate KNdS black hole has an event horizon $\cH^+$ at a radius $r=r_{b,-}>0$ and a cosmological horizon $\ol\cH{}^+$ at $r=r_{b,+}>r_{b,-}$. In the vicinity of a KNdS black hole, i.e.\ near its event horizon, the effect of the cosmological constant is small, and the black hole behaves very much like a KN black hole with the same mass, charge and angular momentum, while far away, near and beyond the cosmological horizon, the KNdS geometry is close to the geometry of (a static patch of) de Sitter space, with the same cosmological constant. See Figure~\ref{FigIntroNLPenrose}. The Reissner--Nordstr\"om--de~Sitter (RNdS) family is the subfamily of non-rotating (spherically symmetric) charged black holes, which thus have parameters $b_0=(\bhm,\bfzero,Q_e,Q_m)$.

\begin{figure}[!ht]
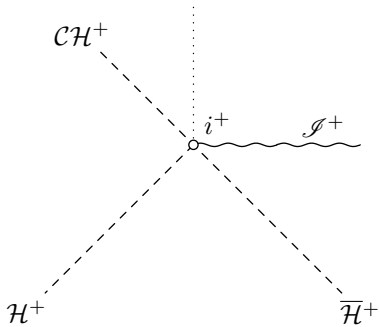

\centering
\inclfig{IntroNLPenrose}
\caption{Part of the Penrose diagram of a maximally extended Kerr--Newman--de~Sitter metric. Near and inside of the event horizon $\cH^+$, the black hole resembles a Kerr--Newman black hole (with Cauchy horizon denoted $\cCH^+$), while near and beyond the cosmological horizon $\ol\cH{}^+$, it resembles a de~Sitter spacetime with conformal boundary $\sI^+$.}
\label{FigIntroNLPenrose}
\end{figure}

\subsubsection{Black hole stability}
\label{SubsubsecIntroNLStab}

We fix the domain
\[
  \Omega^\circ=\{t_*\geq 0, r_< \leq r \leq r_>\}\subset M^\circ,
\]
with $r_< <r_{b_0,-}$ and $r_> >r_{b_0,+}$, which thus contains a neighborhood of the exterior region $r_{b_0,-}<r<r_{b_0,+}$ of a fixed non-degenerate RNdS spacetime. The RNdS solution $(g_{b_0},F_{b_0})$ induces initial data $(h_{b_0},k_{b_0},\bfE_{b_0},\bfB_{b_0})$ on
\[
  \Sigma_0=\{t_*=0\}\subset\Omega^\circ.
\]
See Figure~\ref{FigIntroNLStabPenrose}.

\begin{figure}[!ht]
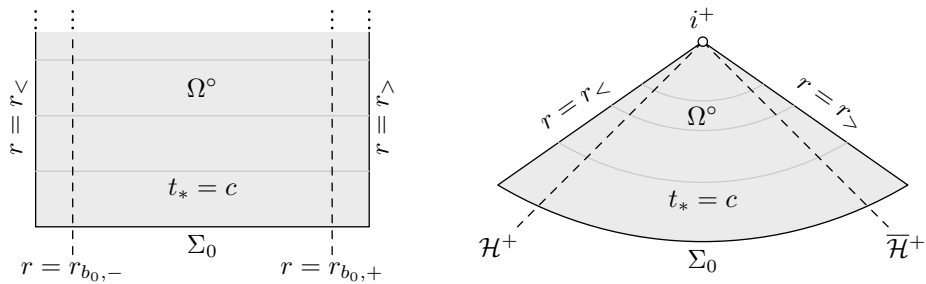

\centering
\inclfig{IntroNLStabPenrose}
\caption{\textit{Left:} The domain $\Omega^\circ$ on which we will solve the initial value problem for the Einstein--Maxwell system, with initial surface $\Sigma_0$. The location of the horizons of the RNdS solution with parameters $b_0$ is indicated by dashes. \textit{Right:} The same domain, drawn as a subset of the Penrose compactification. The artificial boundaries at $r=r_<$ and $r=r_>$ are spacelike.}
\label{FigIntroNLStabPenrose}
\end{figure}

Our main result is the \emph{full global non-linear stability of slowly rotating Kerr--Newman--de~Sitter black holes} in the neighborhood $\Omega^\circ$ of the black hole exterior, \emph{without any symmetry assumptions} on the data:

\begin{thm}
\label{ThmIntroNL}
  Suppose $(h,k,\bfE,\bfB)$ are smooth initial data on $\Sigma_0$ which satisfy the constraint equations and which are close (in a high regularity norm) to the data $(h_{b_0},k_{b_0},\bfE_{b_0},\bfB_{b_0})$ of a non-degenerate Reissner--Nordstr\"om--de~Sitter solution. Then there exist a smooth solution $(g,F)$ of the Einstein--Maxwell system \eqref{EqIntroEM} on $\Omega^\circ$, attaining the given initial data, and Kerr--Newman--de~Sitter black hole parameters $b\in\R^6$ close to $b_0$ such that
  \begin{equation}
  \label{EqIntroNLSol}
    g = g_b + \wt g,\quad F = F_b + \wt F,
  \end{equation}
  where
  \[
    \wt g,\ \wt F = \cO(e^{-\alpha t_*}),
  \]
  with $\alpha>0$ only depending on the parameters $b_0$ of the black hole we are perturbing. That is, the metric $g$ and the electromagnetic field $F$ decay exponentially fast to the stationary Kerr--Newman--de~Sitter metric $g_b$ and electromagnetic field $F_b$.
\end{thm}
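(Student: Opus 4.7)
The plan is to recast the Einstein--Maxwell system \eqref{EqIntroEM} as a quasilinear hyperbolic system by fixing a gauge, and then to solve it via a tame Nash--Moser iteration whose linearized step is controlled by a combination of mode stability and microlocal propagation estimates. For the gauge, I would use a generalized wave coordinate gauge for $g$, encoded by requiring $\Upsilon(g;g_b) := g g_b^{-1}\delta_g G_g g_b = \theta$ for a gauge source $\theta$, together with a Lorenz gauge for the electromagnetic potential $A$ (with $F=dA$), namely $\delta_g A = 0$. On the principal symbolic level this converts the reduced Einstein--Maxwell operator into a system of nonlinear wave equations. The main subtlety is that the final black hole parameters $b\in\R^6$ are unknown at the outset, so I would allow $\theta$ to range over a carefully chosen finite-dimensional space $\Theta$ of smooth one-forms, large enough to absorb every stationary obstruction to exponential decay coming from the linearized operator.

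The linearized problem is the heart of the argument. Linearizing the gauge-fixed system at a slowly rotating KNdS solution $(g_b,F_b)$, I would prove that every sufficiently regular forward solution decays as $e^{-\alpha t_*}$ modulo a finite-dimensional space spanned by (i) infinitesimal KNdS parameter changes $\partial_b(g_b,F_b)$ and (ii) pure gauge modes generated by vector fields and scalar functions of a specific form. The proof proceeds via contour deformation of the Mellin/Fourier transform in $t_*$: after inverting the stationary operator family $\hat L_b(\sigma)$ via its meromorphic continuation (constructed using Vasy's Fredholm framework, together with radial point estimates at the event and cosmological horizons and high-energy estimates at normally hyperbolic trapping), one shifts the contour of integration into $\Im\sigma<0$, picking up contributions from resonances in $\Im\sigma\geq 0$. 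I would first handle $b=b_0$ (the RNdS case) where spherical symmetry reduces the problem to scalar master equations in the sense of Kodama--Ishibashi, yielding mode stability by a direct ODE analysis; then a perturbative argument in $|\bfa|$ promotes this to slowly rotating KNdS, since resonances move continuously in $b$ and no new resonances can cross into $\Im\sigma>0$.

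Once the linear stability statement is in hand, the nonlinear scheme is set up as follows. I would solve, at each Nash--Moser step, a linearized gauge-fixed Einstein--Maxwell equation with source equal to the current nonlinear error, and simultaneously update the parameters $b$ and the gauge source $\theta\in\Theta$ so as to kill precisely the resonant part of the right-hand side. Concretely, one projects the error onto the complement of the range of the linear operator acting modulo $\partial_b(g_b,F_b)+\Theta$, and corrects by adjusting $(b,\theta)$ in order to make the residual equation solvable with exponential decay. The constraint equations, combined with the second Bianchi identity and the structure of the gauge condition, imply that the gauge functions and the electromagnetic Lorenz condition are propagated from $\Sigma_0$, provided the initial data satisfy the original Einstein--Maxwell constraints; this must be checked using an auxiliary wave equation for the constraint/gauge quantities.

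The main obstacle I expect is the linear stability step for slowly rotating KNdS with all its attendant finite-dimensional subtleties: one must simultaneously prove mode stability (absence of growing or stationary non-trivial modes other than parameter and gauge modes), compute the precise resonance at $\sigma=0$ to identify that its image is exactly the span of $\partial_b(g_b,F_b)$ modulo pure gauge, and combine this with uniform semiclassical estimates near the trapped set and radial point estimates at the horizons to close the loss-of-derivatives balance required by Nash--Moser. The coupling between metric and electromagnetic perturbations in the KNdS (as opposed to RNdS) case, where spherical symmetry is lost, is the most delicate issue, and is the place where the perturbative continuity argument from $\bfa=\bfzero$ is essential.
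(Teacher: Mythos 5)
Your overall architecture (wave map plus Lorenz gauge, reduction to a quasilinear hyperbolic system, resonance expansions via the Fredholm/radial-point/trapping machinery, mode stability at RNdS via Kodama--Ishibashi master equations promoted perturbatively to small $|\bfa|$, and a Nash--Moser iteration that updates $(b,\theta)$ to kill the resonant part of the error) matches the paper's strategy. But there is one genuine gap, and it is the ingredient the paper calls \emph{constraint damping} (\S\ref{SecCD}, Theorem~\ref{ThmCD}). Your scheme needs to know that every non-decaying resonant state of the linearized \emph{gauge-fixed} operator is a linearized KNdS solution plus a pure gauge solution. Mode stability (your Kodama--Ishibashi step) only classifies non-decaying modes of the \emph{ungauged} linearized Einstein--Maxwell system. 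To transfer it to the gauge-fixed operator you must show that a non-decaying resonant state of the gauge-fixed system automatically satisfies the linearized gauge conditions; this follows from the generalized Bianchi identity \emph{only if} the constraint propagation operators $\delta_g G_g\tdel^*$ and $\delta_g\td$ themselves have no resonances with $\Im\sigma\geq 0$. With the unmodified choices $\tdel^*=\delta_g^*$ and $\td=d$ this fails: already in the scalar Maxwell model, $\delta_g d$ has the non-decaying solution $y\equiv 1$ (see \S\ref{SubsubsecIntroStrM}). One must therefore add carefully chosen zeroth-order terms, $\tdel^* = \delta_g^*+\gamma_1\,dt_*\cdot(\cdot)-\gamma_2(\cdot)(\nabla t_*)g$ and $\td = d+\gamma_3(\cdot)\,dt_*$, and prove (semiclassically for large $\gamma_j$, or perturbatively for small $\gamma_j$) that the resulting constraint propagation operators have spectral gaps. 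Without this, the gauge-fixed operator has extra non-decaying resonances that are neither pure gauge nor parameter changes; absorbing them would force you to enlarge $\Theta$ beyond gauge modifications, and you could then no longer conclude that the solution of the modified equation solves the actual Einstein--Maxwell system.

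A related, subtler point: your appeal to constraint propagation ``at each step'' of the iteration cannot work as stated, because the intermediate Nash--Moser iterates do not satisfy the constraint equations, so there is nothing to propagate. The correct logic is that the modified gauge-fixed equation is solved globally for arbitrary Cauchy data, and only for the \emph{final} solution does one invoke the constraints together with constraint damping (now in its nonlinear guise: the gauge violation solves a wave equation all of whose solutions decay, and it vanishes to first order at $\Sigma_0$, hence vanishes identically) to recover $\Ups=0$ and hence the original system. Finally, a smaller omission: the theorem is stated for $(g,F)$ with possibly non-exact $F$ (non-zero magnetic charge, since $H^2(\Sigma_0,\R)\cong\R$), so before passing to the $4$-potential formulation $F=dA$ you need the electric--magnetic duality rotation $F\mapsto\cos(\theta)F+\sin(\theta)\star_g F$ to normalize $Q_m=0$, and to undo it at the end.
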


Thus, we only consider the future development of initial data, but Theorem~\ref{ThmIntroNL} applies \emph{mutatis mutandis} to the past development of perturbations of RNdS data provided the past domain of dependence of $\Sigma_0$ within RNdS contains a neighborhood of past timelike infinity $i^-$; an example of such an initial surface is labelled $\Sigma_i$ in Figure~\ref{FigNLIniSurface}. Phrased more geometrically, Theorem~\ref{ThmIntroNL} states that the maximal globally hyperbolic development of the given initial data contains a region of the form indicated on the right in Figure~\ref{FigIntroNLStabPenrose} (we do not construct the horizons $\cH^+$ and $\ol\cH{}^+$ here, however).

See Theorem~\ref{ThmIntroNLFull} for the detailed statement. The iteration scheme which we use to construct the solution $(g,F)$ automatically finds the final black hole parameters $b$. Theorem~\ref{ThmIntroNL} extends the earlier result on the non-linear stability of slowly rotating Kerr--de~Sitter black holes \cite{HintzVasyKdSStability}, obtained in joint work with Vasy, to the coupled Einstein--Maxwell system; if $\bfE=0$ and $\bfB=0$, then Theorem~\ref{ThmIntroNL} reduces to \cite[Theorem~1.1]{HintzVasyKdSStability}. Prior to the present work, even the \emph{mode stability} for coupled gravitational and electromagnetic \emph{linearized} perturbations of rotating KNdS (or KN) black holes was not known; we discuss this further in \S\ref{SubsecIntroLin}.

There is no experimental evidence for the existence of magnetic charges in our universe. In the context of Theorem~\ref{ThmIntroNL}, the magnetic charge of the solution is $Q_m=\frac{1}{4\pi}\int_S F$, where $S$ is any 2-sphere homologous to the sphere $S_0$ defined by $t_*=0$, $r=r_0\in(r_<,r_>)$. In particular, $Q_m$ can be computed on the level of initial data, namely $Q_m=\frac{1}{4\pi}\int_{S_0}\star_h\bfB$; if the initial data have vanishing magnetic charge, then the final black hole will have vanishing magnetic charge as well. Now $F\mapsto\frac{1}{4\pi}\int_S F$ induces an isomorphism of the cohomology group $H^2(\Sigma_0,\R)\cong\R$; thus, the vanishing of $Q_m$ is a cohomological condition which is equivalent to the condition that $F=d A$ for an electromagnetic 4-potential $A$ on $\Omega^\circ$. Thus, restricting to the setting in which there are no magnetic charges, the system \eqref{EqIntroEM} simplifies to
\begin{equation}
\label{EqIntroEMA}
  \Ric(g)+\Lambda g = 2 T(g,d A),\quad \delta_g d A = 0,
\end{equation}
which we continue to call the Einstein--Maxwell system. The formulation of its initial value problem uses the same data as the formulation for \eqref{EqIntroEM}, with the additional condition that there are no magnetic charges, i.e.\ that $\star_h\bfB$ is trivial in the cohomology group $H^2(\Sigma_0,\R)$. One then solves for the 4-potential $A$, with the electromagnetic field being $F=d A$. The introduction of $A$ gives rise to an additional gauge invariance: if $(g,A)$ solves the system \eqref{EqIntroEMA}, then adding any exact 1-form $d a$, $a\in\CI(\Omega^\circ)$, to $A$ gives another solution of \eqref{EqIntroEMA}. This will prove to be very useful, as it allows for more flexibility in the formulation of a gauge-fixed version of \eqref{EqIntroEMA}; see \S\ref{SubsecIntroStr}. Our proof of Theorem~\ref{ThmIntroNL} automatically finds a generalized wave map/Lorenz gauge within a suitable finite-dimensional family of gauges in which one can find the global solution $(g,A)$ of the initial value problem. 

In the context of Theorem~\ref{ThmIntroNL}, the fact that $H^2(\Sigma_0,\R)\cong\R$ allows one to use the electric-magnetic duality of the Einstein--Maxwell system to reduce the study of the initial value problem for the general equation \eqref{EqIntroEM} with data close to (magnetically charged) RNdS data to the study of the 4-potential formulation \eqref{EqIntroEMA} with data close to RNdS data without magnetic charge; see \S\ref{SubsecBasicDer}. \emph{For the remainder of this introduction, we will assume that the magnetic charge vanishes, and consequently only discuss the formulation \eqref{EqIntroEMA}.} The KNdS family without magnetic charge then consists of pairs $(g_b,A_b)$, with $F_b=d A_b$.

Like the magnetic charge, the electric charge $Q_e$ of the solution \eqref{EqIntroNLSol}, thus of the final state $(g_b,A_b)$, can also be computed on the level of initial data by means of $Q_e=\frac{1}{4\pi}\int_S\star_g F=\frac{1}{4\pi}\int_S\star_h\bfE$. Therefore, if the initial data are free of charges (which requires the data to be close to Schwarzschild--de~Sitter data), then the final black hole will be a slowly rotating Kerr--de~Sitter black hole, and the electromagnetic field decays exponentially. (However, our proof of Theorem~\ref{ThmIntroNL} does not simplify for uncharged data with $\bfE$ or $\bfB$ non-vanishing, see Remark~\ref{RmkNLChargeConverges}.)

From a physical perspective, the most interesting setting for Theorem~\ref{ThmIntroNL} arises when the initial data have very small (or vanishing) charge, so $|Q_e/\bhm|\ll 1$. (A black hole with higher charge-to-mass ratio would selectively attract particles of the opposite charge, see \cite[\S{12.3}]{WaldGR}.) We stress that we make \emph{no restrictions on the charge} here, apart from the assumption of non-degeneracy, as our methods extend easily to the case of large charges. The restriction to small angular momenta is more serious; see the discussion in \cite[Remark~1.5]{HintzVasyKdSStability}, which also applies in the context of the present paper. One benefit of not assuming $Q_e$ to be small is that it allows for the study of near-extremal black holes while staying close to spherical symmetry, which opens the door to a quantitative study of Penrose's Strong Cosmic Censorship conjecture in this context; see Remark~\ref{RmkIntroScalarField} for further details. Note that while the size of the perturbation of RNdS initial data allowed by Theorem~\ref{ThmIntroNL} is not specified explicitly, it must shrink to $0$ as the RNdS parameters $b_0$ approach extremality: our methods fail for extremal black holes, where in fact qualitatively different behavior is expected, see the work by Aretakis \cite{AretakisExtremalRN1,AretakisExtremalRN2}.

Theorem~\ref{ThmIntroNL} is the first definitive result on the stability of black holes under non-linearly coupled gravitational and electromagnetic perturbations; the only other non-linear stability result on black hole spacetimes known to the author is the aforementioned work \cite{HintzVasyKdSStability}. We point out however that Dafermos, Holzegel, and Rodnianski \cite{DafermosHolzegelRodnianskiSchwarzschildStability} recently proved the \emph{linear} stability of the Schwarzschild solution under linearized gravitational perturbations. We also mention the recent experimental evidence \cite{LIGOBlackHoleMerger}.

The non-linear stability of de~Sitter space in the presence of Maxwell or Yang--Mills fields was proved by Friedrich \cite{FriedrichEinsteinMaxwellYangMills}, following his earlier work on the Einstein vacuum equations \cite{FriedrichStability}, see also \cite{AndersonStabilityEvenDS}; Ringstr\"om \cite{RingstromEinsteinScalarStability} discusses a closely related problem for (non-linear) scalar fields. Based on the monumental work by Christodoulou and Klainerman \cite{ChristodoulouKlainermanStability} on the stability of Minkowski space as a solution of the Einstein vacuum equation $\Ric(g)=0$, with refinements by Bieri \cite{BieriZipserStability}, Zipser \cite{ZipserThesis} proved the stability of Minkowski space as a solution of the system \eqref{EqIntroEM} with $\Lambda=0$: Minkowski space itself solves this system with $F\equiv 0$. Lindblad and Rodnianski \cite{LindbladRodnianskiGlobalExistence,LindbladRodnianskiGlobalStability} and Speck \cite{SpeckEinsteinMaxwell} gave different proofs of (variants of) the latter two results, based on wave coordinate gauges. The non-linear stability of Kerr--Newman black holes was investigated numerically in \cite{ZilhaoCardosoHerdeiroLehnerSperhakeKNStab}, including in near-extremal regimes (large angular momenta and/or large charges); no signs of developing instabilities were found. Since such numerical simulations are very expensive, the parameter space explored is rather small. The mode stability of Kerr--Newman solutions for a wider range of parameters was checked numerically in \cite{DiasGodazgarSantosKerrNewman}. We refer the reader to \S\ref{SubsecIntroWork} for further pointers to the literature. 

\begin{rmk}
\label{RmkIntroScalarField}
  Using the techniques developed in this paper, one can couple a massless neutral scalar field into the Einstein--Maxwell system and prove the global non-linear stability of the slowly rotating KNdS family with constant scalar field. (We remark that for the Einstein--Maxwell--scalar field system, there are non-trivial dynamics even in spherical symmetry.) In order to study Penrose's Strong Cosmic Censorship conjecture in either of these contexts, it is necessary to provide a (near) sharp bound for the decay rate $\alpha$, see \cite{DafermosEinsteinMaxwellScalarStability,CostaGiraoNatarioSilvaCauchy1,CostaGiraoNatarioSilvaCauchy2,CostaGiraoNatarioSilvaCauchy3,HintzVasyCauchyHorizon,DafermosLukKerrCauchyHorI}. Our arguments can in fact easily be seen to provide a value for $\alpha$ in terms of the spectral gaps of certain wave equations (which in turn can be computed numerically); however, we do not obtain any \emph{explicit} bounds here.
  
  In fact, one could allow the scalar field to have non-zero mass and show the non-linear stability of the KNdS family with vanishing scalar field (as solutions to the scalar Klein--Gordon equation decay exponentially fast to $0$). Shlapentokh-Rothman's work on `black hole bombs' \cite{ShlapentokhRothmanBlackHoleBombs} for linear scalar Klein--Gordon equations on Kerr spacetimes suggests that non-linear stability may fail for certain values of the scalar field mass and the black hole angular momentum.
\end{rmk}

The proof of Theorem~\ref{ThmIntroNL} builds on the framework developed in \cite{HintzVasyKdSStability} for the study of the Einstein vacuum equations; we will recall this framework in \S\ref{SubsecIntroStr} below. The study of the coupled Einstein--Maxwell system presents a number of additional difficulties which we resolve in this paper:

\begin{enumerate}
\item In addition to the diffeomorphism invariance, which for the Einstein vacuum equations is eliminated by imposing a (generalized) wave coordinate gauge (see \cite{FriedrichHyperbolicityEinstein} for a very general version of this), one needs to fix a gauge for the electromagnetic 4-potential $A$ in order to transform the system \eqref{EqIntroEMA} into a quasilinear system of wave equations, called the gauge-fixed Einstein--Maxwell system.
\item In order to be able to relate non-decaying modes of the linearized gauge-fixed Einstein--Maxwell system to physical degrees of freedom, we need to establish \emph{constraint damping} for this system. Constraint damping, which under the name `stable constraint propagation' plays a central role in \cite{HintzVasyKdSStability}, first appeared in the numerics literature \cite{GundlachCalabreseHinderMartinConstraintDamping}, and was used in particular for the simulation of binary black hole mergers \cite{PretoriusBinaryBlackHole}.
\item The linearization of the system \eqref{EqIntroEMA}, as well as of the gauge-fixed system linearized around a RNdS solution, yields two decoupled equations on an SdS background if the black hole has vanishing charge $Q_e$, one being the linearized Einstein equation, the other being Maxwell's equation. For $Q_e\neq 0$ however, the equations are coupled already at the linearized level, which complicates the symbolic analysis at the trapped set (normally hyperbolic trapping) and at the horizons (radial point estimates).
\item We need to show the (generalized) mode stability for the linearization of \eqref{EqIntroEMA} around a non-degenerate RNdS solution: all generalized mode solutions are sums of linearized KNdS solutions and pure gauge solutions. In this paper, we give a full, self-contained proof of generalized mode stability, completing and extending work by Kodama and Ishibashi \cite{KodamaIshibashiMaster,KodamaIshibashiCharged}, which in turn relies on earlier work by Kodama, Ishibashi and Seto \cite{KodamaIshibashiSetoBranes} as well as Kodama and Sasaki \cite{KodamaSasakiPerturbation}.
\end{enumerate}

We postpone the detailed discussion of these issues, and the way by which we overcome them, to \S\ref{SubsecIntroStr}.

\subsubsection{The role of the cosmological constant}
\label{SubsubsecIntroNLLambda}

Working with $\Lambda>0$ simplifies several aspects of the analysis underlying the proof of Theorem~\ref{ThmIntroNL}; this is not due to the positivity of $\Lambda$ per se, but rather due to the fact that the de~Sitter black hole spacetimes have a convenient (asymptotically hyperbolic) structure far away from the black hole, namely they have a cosmological horizon $\ol\cH{}^+$. Here, we use the framework developed in a seminal paper by Vasy \cite{VasyMicroKerrdS} and extended by the author (partly in joint work with Vasy) \cite{HintzVasySemilinear,HintzQuasilinearDS,HintzVasyQuasilinearKdS}, in combination with analysis at the trapped set, starting with the important work of Wunsch--Zworski \cite{WunschZworskiNormHypResolvent}, and developed further by Dyatlov \cite{DyatlovSpectralGaps} and the author \cite{HintzVasyNormHyp,HintzPsdoInner}; see also \cite{NonnenmacherZworskiQuantumDecay} and \cite{HirschShubPughInvariantManifolds}. This followed earlier work by S\'a Barreto and Zworski \cite{SaBarretoZworskiResonances}, Bony and H\"afner \cite{BonyHaefnerDecay}, Melrose, S\'a Barreto, and Vasy \cite{MelroseSaBarretoVasyResolvent}, as well as Dyatlov \cite{DyatlovQNM,DyatlovQNMExtended,DyatlovAsymptoticDistribution}. (For a broader perspective on scattering theory and the role of resonances, see \cite{ZworskiResonanceReview}.) Using the structure of (slowly rotating) KNdS spacetimes, one can show that solutions of linear tensor-valued wave equations (with appropriate behavior at the trapped set) have an asymptotic expansion as $t_*\to\infty$ into finitely many (generalized) modes with $t_*$-dependence $e^{-i t_*\sigma}$ (times a power $t_*^j$, $j\in\N_0$), $\Im\sigma>-\alpha$, plus an \emph{exponentially decaying} $\cO(e^{-\alpha t_*})$ remainder term. In the setting of the linear stability statement, Theorem~\ref{ThmIntroLin} below, we will argue that the main term of such an asymptotic expansion is a stationary mode, i.e.\ with $\sigma=0$, corresponding to a linearized KNdS solution, which also allows us to find the final black hole parameters $b$ in the non-linear setting. In Theorem~\ref{ThmIntroNL} then, $\alpha>0$ is chosen so small that all other physical degrees of freedom decay faster than $e^{-\alpha t_*}$. We remark that it is an open problem to prove more precise asymptotics for the solution \eqref{EqIntroNLSol} in the spirit of the phenomenon of \emph{ringdown} for linear waves as in \cite{BonyHaefnerDecay,DyatlovAsymptoticDistribution}.

The exponential decay is also very helpful in the non-linear analysis, as it obviates the need to exploit any special structure of the non-linear terms of \eqref{EqIntroEMA}; this is in stark contrast to the case of semilinear or quasilinear wave equations on $(3+1)$-dimensional Minkowski space, where the null condition, introduced by Klainerman \cite{KlainermanNullCondition}, or weaker versions thereof play a crucial role in ensuring global existence.

The key difference between Kerr--Newman--de~Sitter ($\Lambda>0$) and Kerr--Newman ($\Lambda=0$) spacetimes from the point of view of stationary scattering theory (which is a key ingredient in the framework developed in \cite{HintzVasyKdSStability}) is the behavior at low frequencies: for KNdS spacetimes, this is closely related to scattering on asymptotically hyperbolic manifolds, while for Kerr--Newman spacetimes, the far end is asymptotically Euclidean, which leads to much more delicate low frequency behavior. We refer the reader to the introduction of \cite{VasyMicroKerrdS} for details and references.

The limit $\Lambda\to 0$ is thus rather singular, and Theorem~\ref{ThmIntroNL} by itself provides no information on the stability of Kerr--Newman black holes: as $\Lambda\to 0$, the exponential decay rate $\alpha$ is expected to tend to $0$ as well, and we do not have any explicit control on the size of the allowed departure of the initial data from RNdS data. The conjectured decay rate for gravitational perturbations of the Kerr spacetime is in fact polynomial with a fixed rate, see for instance the lecture notes by Dafermos and Rodnianski \cite{DafermosRodnianskiLectureNotes} as well as the works by Tataru \cite{TataruDecayAsympFlat} and Dafermos--Rodnianski--Shlapentokh-Rothman \cite{DafermosRodnianskiShlapentokhRothmanDecay}.

\subsection{Results for the linearized Einstein--Maxwell system}
\label{SubsecIntroLin}

In the course of the proof of Theorem~\ref{ThmIntroNL}, we establish the linear stability of slowly rotating KNdS black holes, which is the analogue in this setting of the linear stability result (for the linearized Einstein vacuum equations) for the Schwarzschild spacetime by Dafermos, Holzegel and Rodnianski \cite{DafermosHolzegelRodnianskiSchwarzschildStability}.

\begin{thm}
\label{ThmIntroLin}
  Fix a Kerr--Newman--de~Sitter solution $(\Omega^\circ,g_b,F_b)$, with $b$ close to the parameters of a non-degenerate Reissner--Nordstr\"om--de~Sitter spacetime. Suppose $(\hdot,\kdot,\bfEdot,\bfBdot)$ are high regularity initial data on $\Sigma$, that is, $\hdot$ and $\kdot$ are symmetric 2-tensors and $\bfEdot$ and $\bfBdot$ are 1-forms on $\Sigma_0$, which satisfy the linearization of the constraint equations around the data $(h_b,k_b,\bfE_b,\bfB_b)$. Then there exist a solution $(\gdot,\Fdot)$ of the linearization of the Einstein--Maxwell system \eqref{EqIntroEM} around $(g_b,F_b)$, attaining these data, and linearized black hole parameters $b'\in\R^6$ such that
  \begin{equation}
  \label{EqIntroLin}
    (\gdot,\Fdot) - \frac{d}{ds}(g_{b+s b'},F_{b+s b'})|_{s=0} = \cO(e^{-\alpha t_*}),
  \end{equation}
  where $\cL$ denotes the Lie derivative. That is, linearized coupled gravitational and electromagnetic perturbations of a slowly rotating KNdS black hole decay exponentially fast to a linearized KNdS solution.
\end{thm}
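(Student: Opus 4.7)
The plan is to translate the linearized system, which is underdetermined due to diffeomorphism and electromagnetic gauge invariance, into a well-posed hyperbolic problem via a generalized wave coordinate/Lorenz gauge, and then combine the microlocal Fredholm/resonance framework of Vasy and Hintz--Vasy with generalized mode stability to extract the linearized KNdS parameters and an exponentially decaying remainder. Concretely, fix the linearization of the wave gauge used in the nonlinear problem, together with a finite-dimensional family of gauge source modifications, and let $L_b$ denote the resulting principal-type second-order system acting on the pair $(\gdot,\Adot)$ of linearized metric and 4-potential perturbations. This system is strictly hyperbolic on $\Omega^\circ$, with radial point dynamics at $\cH^+$ and $\ol\cH{}^+$ and normally hyperbolic trapping at the photon region, so it fits into the Fredholm/resonance framework recalled in \S\ref{SubsubsecIntroNLLambda}.

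Given data $(\hdot,\kdot,\bfEdot,\bfBdot)$ satisfying the linearized constraints, first modify them by a linearized diffeomorphism generated by a vector field $V$ on $\Sigma_0$ and an electromagnetic gauge function $a$ so that the linearized wave/Lorenz gauge conditions and their first time derivatives hold at $\Sigma_0$; this is a standard algebraic/elliptic procedure using the linearized constraints. Solve $L_b(\gdot,\Adot)=0$ with these Cauchy data and set $\Fdot=d\Adot$. To see that the result actually solves the linearized Einstein--Maxwell system, one invokes \emph{constraint damping}: the gauge source parameters are chosen so that the second-order subsystem controlling the linearized gauge 1-form and gauge scalar admits no resonances in $\Im\sigma\geq 0$ other than those forced by the geometry; combined with the vanishing of the gauge violation at $\Sigma_0$, this forces the gauge violation to vanish identically, so $(\gdot,\Fdot)$ solves the original linearized Einstein--Maxwell system.

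The microlocal framework then yields, for any $\alpha>0$ below a threshold determined by $b_0$, a finite asymptotic expansion
\[
  (\gdot,\Adot) = \sum_j e^{-i t_*\sigma_j} t_*^{k_j} u_j + \cO(e^{-\alpha t_*}),
\]
with $\Im\sigma_j>-\alpha$. The generalized mode stability of $L_b$, established in this paper by perturbation from the RNdS case (for which the Kodama--Ishibashi scalar/vector/tensor master-variable reduction yields ODEs whose non-decaying solutions one classifies by hand), then identifies each non-decaying $u_j$ as the sum of a linearized KNdS deformation $\tfrac{d}{ds}(g_{b+sb'_j},A_{b+sb'_j})|_{s=0}$ and a pure gauge term $(\cL_{V_j}g_b,\cL_{V_j}A_b+d a_j)$; summing the $b'_j$ produces the parameter $b'$, and the residual pure gauge pieces are absorbed by a further diffeomorphism and electromagnetic gauge transformation, giving \eqref{EqIntroLin}. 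The main obstacle is the $Q_e\neq 0$ coupling: the linearized equations no longer split into a vacuum Einstein part and a Maxwell part, the trapping and radial point structures become matrix-valued with non-trivial off-diagonal coupling, and the mode-stability argument must carefully disentangle genuinely physical zero modes (variations of $b$) from pure gauge — this is the technical heart of the generalized mode stability proof built on Kodama--Ishibashi.
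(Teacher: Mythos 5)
Your toolbox is the right one (gauge fixing, constraint damping, the b-/semiclassical Fredholm framework, resonance expansions, Kodama--Ishibashi at $b_0$), but the logical architecture you describe is the one the paper explicitly flags as insufficient for rotating black holes. Your chain is: put the data into the linearized gauge at $\Sigma_0$, solve $L_b(\gdot,\Adot)=0$, propagate the gauge condition to get a solution of the ungauged linearized system, expand into resonances, and then invoke ``generalized mode stability of $L_b$, established in this paper by perturbation from the RNdS case'' to identify each non-decaying mode. That last step is the gap. Mode stability is only proved by separation of variables at the spherically symmetric solution $b_0$ (Theorem~\ref{ThmMS}); it is \emph{not} an open condition that can be transported to $b\neq b_0$ by naive perturbation, since both the rank of the non-decaying resonances of $L_b$ and the rank of the pure-gauge resonant states (which depend on the resonances of the gauge propagation operators) vary with $b$, and the required equality between them does not follow from semicontinuity. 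In the paper, mode stability of slowly rotating KNdS (Theorem~\ref{ThmLinMode}) is a \emph{corollary} of the linear stability theorem, so citing it as an input makes your argument circular; see Remark~\ref{RmkIntroStrNoCD}, which states that this route works at $b_0$ but ``is not suited \ldots even for the linear stability analysis of $\phi_b$ for $b$ close to but different from $b_0$.''

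The fix, which is the actual content of Theorem~\ref{ThmLinKNdS}, is to never use mode stability away from $b_0$. One fixes, \emph{at $b_0$}, a finite-dimensional space of range modifications --- the gauge modifications $(2\tdel^*\theta,\td\kappa)$, $(\theta,\kappa)\in\Xi$, built from the pure gauge resonant states of $L_{b_0}$, together with the terms $K_b(b')$ generating the linearized KNdS family --- and shows, using Theorems~\ref{ThmMS}, \ref{ThmCD} and \ref{ThmESG}, that the associated map $\lambda_\IVP$ onto the dual resonant states is surjective. Surjectivity of a linear map between fixed finite-dimensional spaces \emph{is} an open condition, so Theorem~\ref{ThmLinAnaSolvPertStat} transfers it to $b$ near $b_0$: for arbitrary Cauchy data one can choose $(b',\theta,\kappa)$ so that $L_b\wt r=-K_b(b')-(2\tdel^*\theta,\td\kappa)$ has an exponentially decaying solution, \emph{without} using the constraint equations. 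Only at the very end does one use the linearized constraints (and the correctly gauged Cauchy data from $D i_b$) to conclude that $\chi(g'_b(b'),A'_b(b'))+\wt r$ solves the ungauged linearized Einstein--Maxwell system in the modified gauge \eqref{EqLinKNdSIVPSol2}. You should also note that your step of modifying the initial data by $(V,a)$ on $\Sigma_0$ is replaced in the paper by the construction of gauged Cauchy data via the linearization of $i_b$, and that the passage from $\R^5$ (electric charge only, 4-potential formulation) to the $\R^6$ of the theorem statement uses the duality rotation of Lemmas~\ref{LemmaBasicDerEMRotation}--\ref{LemmaBasicDerEMRotationCharges}, which your proposal does not address.
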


We will prove this theorem by fixing a wave map/Lorenz gauge, and showing that the solution of the gauge-fixed linearized Einstein--Maxwell system satisfies \eqref{EqIntroLin} after subtraction of a pure gauge solution $\cL_V(g_b,F_b)$, with $\cL_V$ the Lie derivative along a suitable vector field $V$.

\begin{thm}
\label{ThmIntroLinMode}
  Fix a KNdS solution $(\Omega^\circ,g_b,F_b)$ as above. Suppose $\Im\sigma\geq 0$, and
  \[
    (\gdot,\Fdot) = e^{-i\sigma t_*}(\gdot_0,\Fdot_0),
  \]
  with $(\gdot_0,\Fdot_0)\in\CI(\Omega^\circ;S^2 T^*\Omega^\circ\oplus T^*\Omega^\circ)$ independent of $t_*$, is a mode solution of the Einstein--Maxwell system \eqref{EqIntroEM} linearized around $(g_b,F_b)$. Then there exist parameters $b'\in\R^6$ and a vector field $V$ on $\Omega^\circ$ such that
  \[
    (\gdot,\Fdot) = \frac{d}{ds}(g_{b+s b'},F_{b+s b'})|_{s=0} + \cL_V(g_b,F_b).
  \]
  (If $\sigma\neq 0$, then $b'=0$.) The vector field $V$ can be taken to be of the form $\sum_{j=0}^k e^{-i\sigma t_*}t_*^j V_j$, $k\leq 1$, with $V_j$ vector fields on $\Omega^\circ$ which are independent of $t_*$.
\end{thm}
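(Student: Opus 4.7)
The plan is to establish Theorem~\ref{ThmIntroLinMode} in two stages. Stage~1 handles the spherically symmetric case $b = b_0$ with $\bfa = \bfzero$ (Reissner--Nordstr\"om--de~Sitter) via separation of variables and Kodama--Ishibashi master equations; Stage~2 extends to slowly rotating KNdS backgrounds by a Fredholm perturbation argument that tracks resonances in the closed upper half-plane $\{\Im \sigma \geq 0\}$ as $b$ varies.

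\textbf{Stage 1: RNdS.} For $b = b_0$, the background is static and spherically symmetric, so after extracting $e^{-i\sigma t_*}$ I decompose $(\gdot_0, \Fdot_0)$ into scalar-type and vector-type tensor harmonics on $S^2$ indexed by angular momentum $l$. Following \cite{KodamaIshibashiCharged}, each angular sector reduces to a small system of ODEs in $r$ on $(r_{b_0,-}, r_{b_0,+})$, which can be recast using gauge-invariant master variables into coupled one-dimensional Schr\"odinger problems $(-\partial_{r_*}^2 + V_l(r) - \sigma^2)u = 0$ with non-negative effective potentials $V_l$; the gravitational and electromagnetic polarizations mix when $Q_e \neq 0$, but the coupled $2\times 2$ system admits a diagonalization into two decoupled master equations. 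For $l \geq 2$ and $\Im \sigma \geq 0$, the standard pairing argument (multiply by $\bar u$, integrate in $r_*$, use the outgoing boundary conditions forced by smoothness of the mode in the horizon-regular coordinates $t_*$) forces $u \equiv 0$, so the mode is pure gauge. The low-$l$ sectors must be treated by hand: the $l = 0$ scalar sector at $\sigma = 0$ produces the linearized changes of $(\bhm, Q_e)$; the $l = 1$ vector sector at $\sigma = 0$ produces the infinitesimal $\bfa$; all remaining $l \leq 1$ solutions with $\Im \sigma \geq 0$ are shown to be pure gauge by explicit construction of a vector field $V$ realizing them as Lie derivatives of $(g_{b_0}, F_{b_0})$.

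\textbf{Stage 2: Perturbation to KNdS.} Impose a wave-map/Lorenz gauge and regard the resulting linearized operator $L_b(\sigma)$ as a meromorphic family in $\sigma$, depending continuously on $b$, acting on the microlocal function spaces of \cite{VasyMicroKerrdS,HintzVasyKdSStability} extended to the electromagnetic sector. The set of resonances in any compact subset of $\{\Im \sigma \geq 0\}$ varies upper-semicontinuously in $b$, counted with algebraic multiplicity. At $b_0$, Stage~1 produces a single resonance at $\sigma = 0$ whose generalized kernel is spanned by the six linearized KNdS parameter derivatives together with pure gauge modes. For $b$ close to $b_0$, both of these persist unchanged in dimension, so the resonance at $\sigma = 0$ retains its multiplicity; upper semi-continuity then forbids any new resonance from entering $\{\Im \sigma \geq 0\}$, and the decomposition follows. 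The statement ``if $\sigma \neq 0$, then $b' = 0$'' is automatic since linearized KNdS deformations are stationary. The polynomial factor $t_*^j$ with $k \leq 1$ in the gauge vector field $V$ reflects a Jordan block of size at most two at $\sigma = 0$ in $L_b(\sigma)^{-1}$; a length-three block is ruled out because the parameter derivatives of the KNdS family together with stationary gauge modes already exhaust the generalized kernel at the first order.

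\textbf{Main obstacle.} The main difficulty is completing the low-$l$ analysis in Stage~1. The $l = 0$ sector at $\sigma = 0$ requires a gauge correction because the mass and charge perturbations, while stationary and spherically symmetric in the usual coordinates, are not in general smooth across the horizons in the chosen wave-map-adapted coordinates. The $l = 1$ axial (vector-type) sector at $\sigma = 0$ exhibits the Jordan block structure associated with angular momentum, and separating out the mode corresponding to $\partial_{\bfa}(g_{b_0}, F_{b_0})|_{\bfa = \bfzero}$ from pure infinitesimal rotations of $S^2$ requires explicit calculation. Beyond this, the Stage~2 argument hinges on uniform semi-Fredholm estimates for $L_b(\sigma)$ at the trapped set and at the radial points over the event and cosmological horizons; these are essentially available for the gravitational sector from \cite{HintzVasyKdSStability}, but they must be extended to the coupled system, which is nontrivial when $Q_e \neq 0$ because the gravitational and electromagnetic subprincipal contributions are coupled through the background electromagnetic field.
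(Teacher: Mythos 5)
Your Stage~1 is essentially the paper's \S\ref{SecMS}: decomposition into scalar/vector harmonics, Kodama--Ishibashi master variables, a pairing argument using positivity of the potentials, and separate treatment of $l=0$, $l=1$ and the stationary sectors. One caveat: the master potentials $V_\pm$ are \emph{not} manifestly non-negative; positivity is only obtained after an S-deformation (the passage from $V_\pm$ to $\wt V_\pm$ in \eqref{EqMS2ScSDef}--\eqref{EqMS2ScSDefPot}, resting on Lemmas~\ref{LemmaMS2ScHPos} and \ref{LemmaMS2ScHpmPos}), and the $\sigma=0$ scalar sector needs a separate derivation of the master variable since $\Phi$ involves $Z/i\sigma$. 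These are fixable details, and you flag the low-$l$ sectors as work to be done.

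Stage~2 has a genuine gap. First, you assert that at $b_0$ the gauge-fixed operator has ``a single resonance at $\sigma=0$ whose generalized kernel is spanned by the six linearized KNdS parameter derivatives together with pure gauge modes.'' Stage~1 is a statement about the \emph{ungauged} linearized system and by itself says nothing about the resonances of the gauge-fixed operator $L_{b_0}$: that operator generically has additional non-decaying resonances at frequencies $\sigma_j\neq 0$ (the paper explicitly enumerates $\sigma_1=0,\sigma_2,\dots,\sigma_N$), and --- more seriously --- a non-decaying resonant state of $L_{b_0}$ need not satisfy the linearized gauge conditions at all, in which case it does not solve the ungauged system and Stage~1 does not apply to it. Bridging this requires \emph{constraint damping}: one must choose the zeroth-order terms of $\tdel^*$ and $\td$ (the parameters $\gamma_1,\gamma_2,\gamma_3$) so that the constraint propagation operators $\delta_g G_g\tdel^*$ and $\delta_g\td$ have no resonances in $\Im\sigma\geq 0$ (Theorems~\ref{ThmCDE}, \ref{ThmCDM}, \ref{ThmCD}); with the naive choice $\gamma_3=0$ this fails ($\delta_g d$ has a zero resonance), and your proposal never addresses this. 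Second, the multiplicity-tracking argument is not closed as written: conservation of multiplicity under perturbation gives an upper bound on $\sum m_b(\sigma)$ near each $b_0$-resonance, but to conclude that every resonant state of $L_b$ in $\Im\sigma\geq 0$ is linearized KNdS plus pure gauge you must exhibit a matching \emph{lower} bound by explicitly constructed states of that form --- including the pure gauge resonances at all the nonzero $\sigma_j$, and ruling out degenerations (e.g.\ a gauge generator becoming Killing). The paper deliberately sidesteps this accounting: it proves mode stability of $L_b$ as a corollary of the linear stability statement (Theorem~\ref{ThmLinKNdS}), whose engine is the observation that surjectivity of a fixed finite-dimensional space of range modifications onto the \emph{dual} resonant states is an open condition in $b$ (Theorem~\ref{ThmLinAnaSolvPertStat}); this uses only finite-dimensionality of the resonant spaces, never their precise multiplicities. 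Your route could in principle be completed, but the two missing ingredients --- constraint damping and the exact multiplicity match --- are precisely where the work lies, and neither appears in the proposal.
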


See Theorems~\ref{ThmLinKNdS} and \ref{ThmLinMode} for the precise statements. As alluded to in \S\ref{SubsubsecIntroNLStab}, the mode stability of KNdS black holes was not rigorously known before, likewise even for the algebraically simpler Kerr--Newman solution, see \cite[Chapter~11, \S{111}]{ChandrasekharBlackHoles}. In this paper, we show that the mode stability --- and in fact the full non-linear stability --- for small $a=|\bfa|$ can be deduced from the stability for $a=0$ by using a robust perturbative framework.

For asymptotically flat black hole spacetimes, the study of mode stability, and black hole perturbation theory in general, was initiated by Regge and Wheeler \cite{ReggeWheelerSchwarzschild} for the Schwarzschild spacetime, with further contributions by Zerilli \cite{ZerilliPotential} and Vishveshwara \cite{VishveshwaraSchwarzschild}. Moncrief \cite{MoncriefRNOdd,MoncriefRNEven,MoncriefRNGaugeInv} subsequently established the mode stability of Reissner--Nordstr\"om black holes. Whiting \cite{WhitingKerrModeStability} proved mode stability for the Teukolsky equation on Kerr spacetimes, with refinements and extensions due to Shlapentokh-Rothman \cite{ShlapentokhRothmanModeStability}, Andersson--Ma--Paganini--Whiting \cite{AnderssonMaPaganiniWhitingModeStab}, as well as Civin in the Kerr--Newman case \cite{CivinKerrNewman}.

\subsection{Further related work}
\label{SubsecIntroWork}

Besides the references given in \S\ref{SubsubsecIntroNLLambda} for wave equations on black hole spacetimes with $\Lambda>0$, we mention \cite{DafermosRodnianskiSdS,IantchenkoRNdSDirac,IantchenkoKNdSDirac}, as well as Schlue's work on the stability properties of the cosmological region of Kerr--de~Sitter black holes \cite{SchlueCosmological,SchlueWeylDecay}. Related to the scattering theoretic underpinnings of the present work, we mention Warnick's physical space approach to the study of resonances \cite{WarnickQNMs}.

There is a large amount of literature on asymptotically flat black hole spacetimes; we only give a brief account here. The seminal papers by Wald \cite{WaldSchwarzschild} and Kay--Wald \cite{KayWaldSchwarzschild} proved the boundedness of linear scalar waves on the Schwarzschild spacetime; Dafermos and Rodnianski \cite{DafermosRodnianskiRedShift} gave a more robust proof, exploiting the red-shift effect, and established polynomial decay. Polynomial decay for linear scalar waves on Kerr was proved by Andersson--Blue \cite{AnderssonBlueHiddenKerr} and in the aforementioned \cite{DafermosRodnianskiShlapentokhRothmanDecay}; see also \cite{FinsterKamranSmollerYauKerr}. In his thesis, Civin obtained analogous results for waves on Kerr--Newman spacetimes \cite{CivinStability}. The precise decay rates (Price's law \cite{PriceLawI}) were obtained by Tataru \cite{TataruDecayAsympFlat}, see also \cite{MetcalfeTataruTohaneanuPriceNonstationary}, as well as \cite{DafermosRodnianskiPrice} in a spherically symmetric but non-linear context. Strichartz estimates were proved by Tataru and Tohaneanu \cite{TataruTohaneanuKerrLocalEnergy}, also in collaboration with Marzuola and Metcalfe \cite{MarzuolaMetcalfeTataruTohaneanuStrichartz}. For non-linear results, we refer to Luk's work on Kerr \cite{LukKerrNonlinear}, Ionescu--Klainerman \cite{IonescuKlainermanWavemap}, and Stogin \cite{StoginKerrWaveMap} for a wave map equation, and Dafermos--Holzegel--Rodnianski \cite{DafermosHolzegelRodnianskiKerrBw} for a scattering construction of dynamical Kerr black holes. Many of these works rely on the influential `vector field method' developed by Klainerman \cite{KlainermanUniformDecay}; see also \cite{MoschidisRp} for a discussion of recent developments.

In the context of non-scalar fields, we mention the work by Blue \cite{BlueMaxwellSchwarzschild} and Sterbenz--Tataru \cite{SterbenzTataruMaxwellSchwarzschild} on Maxwell's equation on Schwarzschild spacetimes, as well as the paper by Andersson and Blue on Kerr \cite{AnderssonBlueMaxwellKerr}. For gravitational perturbations of Schwarzschild, we recall the recent stability result by Dafermos--Holzegel--Rodnianski \cite{DafermosHolzegelRodnianskiSchwarzschildStability}; see also \cite{FinsterSmollerKerrStability}. Dirac waves on Kerr and Kerr--Newman were studied by Finster--Kamran--Smoller--Yau \cite{FinsterKamranSmollerYauDiracKerrNewman} as well as H\"afner--Nicolas \cite{HaefnerNicolasDiracKerr}.

We also mention the works \cite{BaskinParamKGondS,BaskinStrichartzDeSitterLong} by Baskin, and \cite{BaskinVasyWunschRadMink,BaskinVasyWunschRadMink2} by Baskin--Vasy--Wunsch; while they do not concern black hole spacetimes, some of the microlocal analysis used in these papers is closely related to aspects of the analysis underlying the proof of our main theorem, as will be discussed in detail in upcoming work by H\"afner and Vasy \cite{HaefnerVasyKerr}.

The study of mode stability of \emph{higher-dimensional black holes} has received considerable attention in recent years. For $D$-dimensional spacetimes with $\Lambda>0$, the mode stability of SdS was proved for $D=4$ in \cite{KodamaIshibashiMaster} and for $D=5,6$ in \cite{KodamaIshibashiCharged}; numerical results extend this to $D\leq 11$ \cite{KonoplyaZhidenkoSdSHighDim}. For RNdS spacetimes, stability is known by analytic means for $D=4,5$, and numerically for $D=6$, but there is evidence for mode \emph{instability} for near-extremal RNdS black holes with $D\geq 7$ \cite{KonoplyaZhidenkoInstability}. This surprising finding is in striking contrast to the asymptotically flat case, $\Lambda=0$, where the mode stability of Schwarzschild is known for all $D$ \cite{IshibashiKodamaHigherDim}, and that of Reissner--Nordstr\"om for $D=4,5$, with numerical results for $D\leq 11$.

\subsection{Strategy of the proof}
\label{SubsecIntroStr}

We present the general framework in a schematic form, thereby also summarizing and streamlining the ideas developed in \cite{HintzVasyKdSStability}. We consider (non-linear) gauge-invariant evolution equations of hyperbolic character in the following setup:

\begin{enumerate}[label=(\alph*)]
\item $M$ is a product manifold, $M=\R_{t_*}\times X$, and $E,F\to M$ are stationary vector bundles (i.e.\ equipped with an action of the group of translations in $t_*$).
\item $P$ is a second order differential operator acting on (suitable) smooth sections of $E$.
\item \label{ItIntroStrGaugeInv} \emph{Gauge invariance:} A gauge group $G$ acts on sections of $E$, $g\cdot\phi:=\Gamma(g)\phi$, so that for any $g\in G$ and any solution of $P(\phi)=0$, we also have $P(\Gamma(g)\phi)=0$. We assume that $T_{\Id}G=\CI(M;\wt G)$ is the space of sections of some vector bundle $\wt G\to M$.
\item \label{ItIntroStrGaugeI} \emph{Gauge fixing I:} There exists a (non-linear) operator $\Ups\in\Diff^1(M;E,F)$ and a linear operator $\cD\in\Diff^1(M;F,E)$ such that the gauge-fixed equation
  \begin{equation}
  \label{EqIntroStrGaugeFix}
    P(\phi)-\cD(\Ups(\phi))=0
  \end{equation}
  is a quasilinear system of wave equations with $\Sigma_0:=\{t_*=0\}\subset M$ as a Cauchy hypersurface.
\item \label{ItIntroStrGaugeII} \emph{Gauge fixing II:} For any $\phi$ with $P(\phi)=0$, there exists a gauge transformation $g$ (locally near $\Sigma_0$) such that $\Ups(\Gamma(g)\phi)=0$; one can find $g$ by solving a (non-linear) wave equation. On the linearized level, the equation
\[
  \Box_\phi^\Ups\frakg := D_\phi\Ups\circ(D_{\Id}\Gamma(\frakg)(\phi))=0
\]
is principally a wave equation for $\frakg\in T_{\Id}G$, with $\Sigma_0$ as a Cauchy hypersurface.
\item \label{ItIntroStrBianchi} \emph{Generalized Bianchi identity:} There exists a linear differential operator $\cB(\phi)\in\Diff^1(M;E,F)$, with coefficients depending possibly non-linearly on $\phi$, such that $\cB(\phi)P(\phi)=0$ for all $\phi$, and such that $\cB(\phi)\circ\cD\in\Diff^2(M;F)$ is principally a wave operator with $\Sigma_0$ as a Cauchy hypersurface.
\end{enumerate}

Due to the gauge invariance, an initial value problem for $P$ can in general only be solved for initial data satisfying certain constraint equations. The solution of such an initial value problem for $P$ can be found by the following well-known procedure: one arranges the gauge condition $\Ups(\phi)=0$ at $\Sigma_0$, and then solves the equation \eqref{EqIntroStrGaugeFix}, which implies that the 1-jet of $\Ups(\phi)$ vanishes at $\Sigma_0$ (see \S\ref{SubsecBasicNL} for a detailed discussion in the Einstein--Maxwell case). Since $\Ups(\phi)$ solves the wave equation
\begin{equation}
\label{EqIntroStrGaugeProp}
  \cB(\phi)\cD(\Ups(\phi))=0,
\end{equation}
one then infers that $\Ups(\phi)\equiv 0$, hence one has $P(\phi)=0$, as desired.

Suppose now that we are given a finite-dimensional family $\phi_b$ of stationary solutions, with $b\in\R^N$ close to some $b_0\in\R^N$, so $P(\phi_b)=0$. We choose the gauge $\Ups$ so that $\Ups(\phi_{b_0})=0$. We wish to study the non-linear stability of the family $\phi_b$. The key ingredients of our framework are:

\begin{enumerate}
\item \label{ItIntroStrMS} \emph{Linear mode stability at $b_0$:} A smooth (generalized) mode solution $\phidot$ of $D_{\phi_{b_0}}P(\phidot)=0$, so $\phidot(t_*,x)=\sum_{j=0}^k e^{-i t_*\sigma}t_*^j\phidot_j(x)$, with $\Im\sigma\geq 0$, $\sigma\neq 0$, is pure gauge: that is, $\phidot=D_{\Id}\Gamma(\frakg)(\phi_{b_0})$ for some $\frakg\in T_{\Id}G$. If $\sigma=0$, then $\phidot$ is equal to a pure gauge solution plus $\phi_{b_0}'(b'):=\frac{d}{ds}\phi_{b_0+s b'}|_{s=0}$ for some $b'\in\R^N$.
\item \label{ItIntroStrCD} \emph{Constraint damping at $b_0$:} All solutions $y\in\CI(M;F)$ of $(\cB(\phi_{b_0})\circ\cD)(y)=0$ are exponentially decaying in $t_*$ at a rate $\alpha>0$.
\item \label{ItIntroStrESG} \emph{High energy estimates at $b_0$:} Solutions of the linearized equation
  \begin{equation}
  \label{EqIntroStrESG}
    L_{b_0}\phidot:=D_{\phi_{b_0}}(P-\cD\circ\Ups)(\phidot)=D_{\phi_{b_0}}P(\phidot) - \cD(D_{\phi_{b_0}}\Ups(\phidot)) = 0
  \end{equation}
  (and of stationary perturbations thereof) with smooth initial data have an asymptotic expansion into a finite number of generalized modes (resonant states) with frequencies $\sigma_j\in\C$, $j=1,\ldots,M$, $\sigma_1=0$, $\Im\sigma_j\geq 0$, plus an $\cO(e^{-\alpha t_*})$ remainder.
\end{enumerate}

We stress that these assumptions only concern the linearization at the \emph{fixed solution} $\phi_{b_0}$; this will be the RNdS solution later on, so in particular we will not need to prove mode stability of slowly rotating KNdS solutions directly in order to show non-linear stability!

\begin{rmk}
  The reason for the terminology in point~\itref{ItIntroStrESG} is that obtaining such an asymptotic expansion typically relies on understanding the behavior of the Fourier transform of $L_{b_0}$, which is $\wh{L_{b_0}}(\sigma)=e^{i t_*\sigma}L_{b_0}e^{-i t_*\sigma}$ acting on $t_*$-independent sections of $E$, as $|\Re\sigma|\to\infty$ (with $\Im\sigma\geq -\alpha$) \cite{VasyMicroKerrdS}. This in turn relies on a precise understanding, which is robust under perturbations, of semiclassical microlocal estimates for $\wh{L_{b_0}}(\sigma)$ at trapping, radial sets, etc. The behavior for small frequencies is also important for obtaining exponential decay of the remainder; this was already discussed in \S\ref{SubsubsecIntroNLLambda}. See Figure~\ref{FigIntroStrRes}.
\end{rmk}

\begin{figure}[!ht]
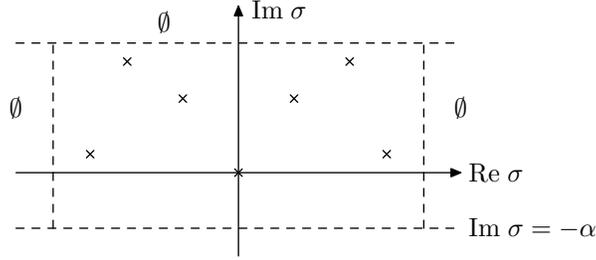

\centering
\inclfig{IntroStrRes}
\caption{Resonances of $L_{b_0}$ in the context of the Einstein--Maxwell system. Semiclassical microlocal analysis yields high energy bounds in $\Im\sigma\geq-\alpha$ and gives the absence of resonances for large $|\Re\sigma|$. Energy estimates imply the absence of resonances with $\Im\sigma\gg 0$. Only finitely many non-decaying resonances, drawn as crosses, remain. This qualitative picture persists for stationary perturbations of $L_{b_0}$.}
\label{FigIntroStrRes}
\end{figure}

\begin{rmk}
\label{RmkIntroStrNoCD}
  Points~\itref{ItIntroStrMS} and \itref{ItIntroStrESG} immediately imply the linear stability of $\phi_{b_0}$: a solution of $L_{b_0}\phidot=0$ with initial data satisfying the linearized constraints and put into the linearized gauge $D_{\phi_{b_0}}\Ups(\phidot)=0$ (see \ref{ItIntroStrGaugeII}) satisfies $\phidot=\phi'_{b_0}(b')+D_{\Id}\Gamma(\frakg)(\phi_{b_0})$ for some $b'$ and $\frakg$. Indeed, the linearization of the gauge propagation argument given above implies $D_{\phi_{b_0}}P(\phidot)=0$, thus this holds for each term in the asymptotic expansion of $\phidot$, to which in turn point~\itref{ItIntroStrMS} applies. This argument is not suited for non-linear analysis, or even for the linear stability analysis of $\phi_b$ for $b$ close to but different from $b_0$. See also \cite[Theorem~10.2]{HintzVasyKdSStability} and its subsequent discussion.
\end{rmk}

The statement in point~\itref{ItIntroStrCD} could also be called `damping of the gauge violation:' For any global solution $\phi$ of \eqref{EqIntroStrGaugeFix}, the failure $\Ups(\phi)$ of the gauge condition is exponentially damped. Arranging this means choosing the operator $\cD$ carefully; notice that in the above setup, only its principal part matters, so the task is to choose the zeroth order terms appropriately.

Every term, say $a_j(t_*,x):=e^{-i t_*\sigma_j}\phidot_j(x)$ (omitting potential powers of $t_*$ for brevity), in the asymptotic expansion of the solution of equation~\eqref{EqIntroStrESG} solves the equation $L_{b_0}(a_j)=0$ separately, hence the (linearized) generalized Bianchi identity implies $\cB(\phi_{b_0})\cD(D_{\phi_{b_0}}\Ups(a_j))=0$. But then
\begin{equation}
\label{EqIntroStrGrowingSatisfiesGauge}
  D_{\phi_{b_0}}\Ups(a_j)=0
\end{equation}
due to constraint damping (note that $a_j$ is a non-decaying mode). Thus, $a_j$ also satisfies $D_{\phi_{b_0}}P(a_j)=0$ and therefore has a very particular structure in view of the mode stability statement \itref{ItIntroStrMS}. Put differently, constraint damping ensures that what at first looks like an analytic obstacle to the solvability of the quasilinear gauge-fixed equation \eqref{EqIntroStrGaugeFix} (growing modes, such as $a_j$, of the `ungeometric' linearized gauge-fixed equation) is in fact geometric (pure gauge, or a linearized solution $\phi'_{b_0}(b')$).

One expects that for fixed non-linear wave equations, non-decaying solutions, such as $a_j$, of the linearization preclude the existence of global non-linear solutions; here however, the equation is not fixed due to the flexibility in the choice of gauge. We exploit this as follows: fixing a basis $a_j=D_{\Id}\Gamma(\frakg_j)(\phi_{b_0})$ of the finite-dimensional space of non-decaying pure gauge solutions of \eqref{EqIntroStrESG}, we can define the space
\[
  \Theta := \mathspan\{ \theta_j \} \subset \CIc(M;E),\quad \theta_j:=D_{\phi_{b_0}}\Ups(D_{\Id}\Gamma(\chi\frakg_j)(\phi_{b_0})),
\]
of gauge modifications, where $\chi=\chi(t_*)$ is a cutoff, $0$ near $\Sigma_0$ and $1$ for large $t_*$. (The inclusion into $\CIc$ follows from \eqref{EqIntroStrGrowingSatisfiesGauge}.) Note here that
\[
  L_{b_0}(D_{\Id}\Gamma(\chi\frakg_j)) = -\cD\theta_j
\]
in view of the gauge invariance \ref{ItIntroStrGaugeInv}. The point then is that one can solve
\begin{equation}
\label{EqIntroStrLb0RangeMod}
  L_{b_0}\phidot=-\cD\theta
\end{equation}
(with prescribed initial data) for $\phidot$, which is the sum of the contribution from the resonance at $\sigma_1=0$ plus an exponentially decaying tail, provided we choose $\theta\in\Theta$ appropriately (and the choice of $\theta$ can be read off from the asymptotic behavior of the solution of $L_{b_0}\phidot_0=0$ with the same initial data). Note that equation~\eqref{EqIntroStrLb0RangeMod} can be rewritten as
\[
  D_{\phi_{b_0}}P(\phidot) - \cD(D_{\Phi_{b_0}}\Ups(\phidot)-\theta) = 0,
\]
making the change of the gauge evident on the linearized level.

Suppose the family $\phi_b$ is trivial, i.e.\ the only stationary solution of interest is $\phi_{b_0}$.\footnote{This is the setting for the non-linear stability of the static model of de~Sitter space, both for the Einstein vacuum equations, see \cite[Appendix~C]{HintzVasyKdSStability}, and for the Einstein--Maxwell system.} (Mode stability then becomes the statement that all non-decaying generalized mode solutions are pure gauge, including at 0 frequency.) Then we can prove the non-linear stability of $\phi_{b_0}$ by solving the quasilinear wave equation
\[
  \Phi(\wt\phi,\theta) := P(\phi_{b_0}+\wt\phi) - \cD(\Ups(\phi_{b_0}+\wt\phi)-\theta) = 0
\]
for $\wt\phi=\cO(e^{-\alpha t_*})$ and $\theta\in\Theta$ by means of a simple Newton-type iteration scheme, starting with the guess $(\wt\phi_0,\theta_0)=(0,0)$: we solve the linear wave equation
\begin{equation}
\label{EqIntroStrNoFamily}
  D_{(\wt\phi_k,\theta_k)}\Phi(\wt\phi',\theta') = -\Phi(\wt\phi_k,\theta_k)
\end{equation}
\emph{globally} at each step, where we choose $\theta'\in\Theta$ so that this has a solution $\wt\phi'=\cO(e^{-\alpha t_*})$; and we then update $\wt\phi_{k+1}=\wt\phi_k+\wt\phi'$, $\theta_{k+1}=\theta_k+\theta'$. Note here that the space $\cD(\Theta)$ is a finite-dimensional complement of the range of $D_0\Phi(\cdot,0)$ acting on sections of size $\cO(e^{-\alpha t_*})$; if one has a robust Fredholm theory for linearizations of $\Phi(\cdot,0)$ around small $\wt\phi$, the \emph{same} space $\cD(\Theta)$ will be a complement to the range of $D_{\wt\phi}\Phi(\cdot,0)$ as well, ensuring the solvability of \eqref{EqIntroStrNoFamily}. We point out that this iteration scheme \emph{automatically} finds the gauge modification $\theta=\lim_{k\to\infty}\theta_k\in\Theta$ in which the global solution $\phi=\phi_{b_0}+\wt\phi$, $\wt\phi=\lim_{k\to\infty}\wt\phi_k$, of $P(\phi)=0$ exists. Furthermore, it is \emph{unnecessary to know} the dimension of the space of non-decaying mode solutions of $L_{b_0}$ in \eqref{EqIntroStrESG} --- \emph{only the finite-dimensionality matters}. (Without the finite-dimensionality, one would encounter additional analytic problems when dealing with the gauge modification $\theta$.)

If the family $\phi_b$ is non-trivial, we in addition need to deal with the parameter $b\in\R^N$. For the linearized equation \eqref{EqIntroStrESG}, we note that by \ref{ItIntroStrGaugeII}, there exists a linearized gauge $\frakg_b(b')$ such that
\[
  \phi^{\prime\Ups}_b(b'):=\phi'_b(b')+D_{\Id}\Gamma(\frakg_b(b'))
\]
solves $L_b(\phi^{\prime\Ups}_b(b'))=0$; thus, after adding a pure gauge solution, one can exhibit the linearization of the family $\phi_b$ as a stationary solution of $L_b(\phidot)=0$.

\begin{rmk}
\label{RmkIntroStrIncompatible}
  Since one finds $\frakg_{b_0}(b')$ by solving the wave equation
\[
  \Box_{\phi_{b_0}}^\Ups(\frakg_{b_0}(b')) = -D_{\phi_{b_0}}\Ups(\phi'_{b_0}(b')),
\]
the raison d'\^etre for $\frakg_{b_0}(b')$ is the incompatibility of $\phi_b$, with $b$ close to $b_0$, with the gauge condition $\Ups=0$.
\end{rmk}

The linear stability of $\phi_b$ for $b$ near $b_0$ can now be proved as follows: consider the space
\[
  \cZ_b := \cD(\Theta) + \{ L_b(\chi\phi^{\prime\Ups}_b(b')) \colon b'\in\R^N \} \subset \CIc(M;E),
\]
with the second summand taking care of the zero resonant states coming from the linearization of the family $\phi_b$. (Recall that the space $\cD(\Theta)$ is \emph{the same} for all $b$ close to $b_0$.) Then by construction, we can solve the initial value problem for
\[
  L_{b_0}\wt\phi' = -z \in \cZ_{b_0}
\]
with $\wt\phi'=\cO(e^{-\alpha t_*})$ exponentially decaying, provided we choose $z$ suitably; the choice of $z$ can again be directly read off from the asymptotic behavior of the solution of the initial value problem for $L_{b_0}\phidot_0=0$ with the same data. By Fredholm stability arguments as above, we can therefore solve
\begin{equation}
\label{EqIntroStrLinMod}
  L_b\wt\phi' = -z \in \cZ_b,
\end{equation}
for $b$ close to $b_0$, with $\wt\phi'=\cO(e^{-\alpha t_*})$ when one chooses $z$ suitably (again depending linearly on the initial data). Writing $z=\cD\theta+L_b(\chi\phi_b^{\prime\Ups}(b'))$, and assuming the initial data satisfy the linearized constraints, this shows that $\phidot:=\chi\phi_b^{\prime\Ups}(b')+\wt\phi'$ solves
\begin{equation}
\label{EqIntroStrLinEq}
  D_{\phi_b}P(\phidot)=0
\end{equation}
in the gauge $D_{\phi_b}\Ups(\phidot)-\theta=0$, proving the linear stability. Mode stability of $\phi_b$ is a simple consequence.

A crucial feature of this argument is that the exponential decay for the solution of \eqref{EqIntroStrLinMod} does not use any assumptions on the initial data; only the last step, relating the solution of \eqref{EqIntroStrLinMod} to a solution of the linearized equation \eqref{EqIntroStrLinEq}, uses the linearized constraints. This allows us to solve the non-linear equation as follows: writing
\begin{equation}
\label{EqIntroStrGrafted}
  \phi_{b_0,b} = (1-\chi)\phi_{b_0} + \chi\phi_b,
\end{equation}
which interpolates between $\phi_{b_0}$ initially and $\phi_b$ eventually, we solve
\begin{equation}
\label{EqIntroStrNLEq}
  \Phi(\wt\phi,b,\theta) := P(\phi_b + \wt\phi) - \cD(\Ups(\phi_b + \wt\phi) - \Ups(\phi_{b_0,b}) - \theta) = 0
\end{equation}
for $\wt\phi=\cO(e^{-\alpha t_*})$, $\theta\in\Theta$ and $b\in\R^N$. (The additional modification of the gauge term here addresses the incompatibility described in Remark~\ref{RmkIntroStrIncompatible}. We use $\phi_{b_0,b}$ rather than $\phi_b$ in order to leave the gauge condition unchanged near the Cauchy surface $\Sigma_0$.) One solves equation~\eqref{EqIntroStrNLEq} by solving a linearized equation globally at each step of a Newton (or Nash--Moser) iteration scheme.

We stress that we can solve equation~\eqref{EqIntroStrNLEq} globally for \emph{any} Cauchy data; \emph{only once one has found the solution} does one need to use the assumption that the initial data satisfy the constraints in order to conclude that $P(\phi_b+\wt\phi)=0$ in the gauge $\Ups(\phi_b+\wt\phi)-\Ups(\phi_{b_0,b})-\theta=0$.

\subsubsection{Illustration: Maxwell's equations}
\label{SubsubsecIntroStrM}

We consider the linear Maxwell equation in the 4-potential formulation, even though this does not fit exactly into the above framework.\footnote{In any case, this is not the right equation to consider for charged black holes, as it ignores the coupling of the electromagnetic field and the metric tensor.} Thus, we study
\begin{equation}
\label{EqIntroStrMEq}
  \delta_g d A = 0,
\end{equation}
where $g=g_{b_0}$ is a non-degenerate RNdS metric on the manifold $M^\circ$ defined in \eqref{EqIntroNLManifold}. In the above terminology, we thus have $P=\delta_g d$, acting on sections of the bundle $E=T^*M^\circ$. The gauge group is $G=\CI(M^\circ)$, acting by $a\cdot A:=A+d a$. We choose the Lorenz gauge $\Ups(A):=\delta_g A$, so the bundle $F$ is the trivial line bundle $\ul\R=M^\circ\times\R$. The gauge-fixed equation \eqref{EqIntroStrGaugeFix} reads $(\delta_g d + \td\delta_g)A = 0$, where $\td$ (called $-\cD$ above) is given by
\begin{equation}
\label{EqIntroStrMtd}
  \td u = d u + \gamma_3\,u\,dt_*
\end{equation}
with $\gamma_3\in\R$ specified momentarily. The generalized Bianchi identity is simply the identity $\delta_g^2=0$, so $\cB(A)=\delta_g$ in \ref{ItIntroStrBianchi} above.

The mode stability analysis for \eqref{EqIntroStrMEq} reveals\footnote{This can be deduced from \cite[Theorem~1]{HintzVasyKdsFormResonances} applied to the equation $(\delta_g d+d\delta_g)A=0$.} that for frequencies $\sigma$ with $\Im\sigma\geq 0$, $\sigma\neq 0$, (generalized) mode solutions are pure gauge, i.e.\ exact 1-forms, while at $\sigma=0$, there is a 1-dimensional space of solutions, namely $\mathspan\{A_{b_0}\}$. Arranging constraint damping requires that all solutions of $\delta_g\td y=0$ decay exponentially fast. Note that for $\gamma_3=0$, a solution is given by $y\equiv 1$, which does \emph{not} decay; one can however show that for $\gamma_3>0$ small (by perturbative arguments) or large (using semiclassical analysis), constraint damping does hold, see \S\ref{SecCD}. The high energy estimates for the operator $L_{b_0}=\delta_g d+\td\delta_g$ follow from a computation of the subprincipal operator of $L_{b_0}$ at the trapped set; this relies on \cite{HintzPsdoInner} and a continuity argument if $\gamma_3$ is small, and can be checked for general $\gamma_3>0$ by a direct calculation. We discuss this further below.

We conclude that for given initial data, we can choose a function $\theta$ within a suitable finite-dimensional space $\Theta\subset\CIc(M^\circ)$ such that
\[
  \delta_g d A + \td(\delta_g A-\theta) = 0
\]
has a solution $A=c A_{b_0} + \wt A$, with $c\in\R$ and $\wt A=\cO(e^{-\alpha t_*})$, for a suitable choice of $\theta$. If the initial data satisfy the constraint equation $\la\delta_g d A,dt_*\ra=0$ at $\Sigma_0$, and one picks Cauchy data for $A$ so that $\delta_g A=0$ at $\Sigma_0$, then we conclude that $\delta_g d A=0$ holds, and $A$ satisfies the gauge condition $\delta_g A-\theta=0$.\footnote{In fact, for $\gamma_3>0$ small, one can check that $\Theta=0$. However --- and this is crucial for Einstein--Maxwell --- even a finite-dimensional family $\Theta$ of gauge modifications would cause no problems.}

\subsubsection{Einstein--Maxwell system}
\label{SubsubsecIntroStrEM}

We now apply the above framework to the charged black hole stability problem. (In our proof of Theorem~\ref{ThmIntroNL}, we will need to modify the precise formulation only slightly, see \S\S\ref{SubsecBasicNL} and \ref{SubsecNLProof}).
\[
  P(g,A) = \bigl( 2(\Ric(g)+\Lambda g - 2 T(g,d A)), \delta_g d A \bigr),
\]
so the vector bundle is $E=S^2 T^*M^\circ\oplus T^*M^\circ$. (The factor $2$ is explained by equation~\eqref{EqBasicNLLinRic}.) The gauge group consists of diffeomorphisms $\phi$ of $M^\circ$ and smooth functions $a$, acting by $(g,A)\mapsto(\phi^*g,\phi^*A + d a)$. For the gauge, we take $\Ups=(\Ups^E,\Ups^M)$, with
\[
  \Ups^E(g)^\kappa = g^{\mu\nu}(\Gamma(g)_{\mu\nu}^\kappa-\Gamma(g_{b_0})_{\mu\nu}^\kappa),
\]
which is thus a generalized wave coordinate gauge, and
\[
  \Ups^M(g,A) = \delta_g(A-A_{b_0}),
\]
which is a generalized Lorenz gauge; see \S\ref{SubsecBasicNL} for further discussion. (The gauge source functions in both cases were chosen so as to make $\Ups(g_{b_0},A_{b_0})=0$.) The bundle $F$ is given by $F=T^*M^\circ\oplus\ul\R$. To write down the gauge-fixed equation \eqref{EqIntroStrGaugeFix}, we use the operator
\[
  \cD = (2\tdel^*, \td),
\]
with $\td$ as in \eqref{EqIntroStrMtd}, and $\tdel^*=\delta_{g_{b_0}}^*+\gamma_1\,dt_*\cdot u-\gamma_2 u(\nabla t_*)g_{b_0}$ for large $\gamma_1,\gamma_2\in\R$. The generalized Bianchi identity uses the operator
\[
  \cB(g,A) = ( \delta_g G_g, \delta_g ),
\]
where $G_g=\Id-\frac{1}{2}g\tr_g$ is the trace-reversal operator; thus $G_g\Ric(g)=\Ein(g)$ is the Einstein tensor, which is divergence-free by the second Bianchi identity.

We prove the mode stability of the RNdS solution $(g_{b_0},A_{b_0})$ in \S\ref{SecMS}, see Theorem~\ref{ThmMS}. Constraint damping at this solution is shown in \S\ref{SecCD}, see Theorem~\ref{ThmCD}, using and extending results from \cite[\S8]{HintzVasyKdSStability}.

The high energy estimates are proved in \S\ref{SecESG}, see Theorem~\ref{ThmESG}. The key step here is to analyze the behavior of the linearized gauge-fixed operator $L_{b_0}$ at the trapped set, which is located in phase space over the photon sphere and generated by null-geodesics orbiting the black hole indefinitely without escaping through the event or cosmological horizon. The relevant object is the \emph{subprincipal operator} \cite{HintzPsdoInner} at the trapped set; this is a version of the well-known subprincipal symbol, and we show in \S\ref{SubsecESGTrap} that in a suitable sense it has a favorable sign at the trapping.

In summary:

\begin{thm}[More precise version of Theorem~\ref{ThmIntroNL}]
\label{ThmIntroNLFull}
  Suppose $(h,k,\bfE,\bfB)$ are initial data on $\Sigma_0$, satisfying the constraint equations, free of magnetic charges, and close (in the topology of $H^{21}$) to the initial data of the non-degenerate RNdS solution $(g_{b_0},A_{b_0})$. Then there exist KNdS black hole parameters $b$ close to $b_0$ and exponentially decaying tails $(\wt g,\wt A)=\cO(e^{-\alpha t_*})$ such that
  \[
    (g,A) = (g_{b_0,b}+\wt g,A_{b_0,b}+\wt A)
  \]
  (see \eqref{EqIntroStrGrafted}) solves the Einstein--Maxwell system \eqref{EqIntroEMA}, attains the given initial data, and satisfies the gauge conditions
  \[
    \Ups^E(g) - \Ups^E(g_{b_0,b}) - \theta = 0, \quad \Ups^M(g,A)-\Ups^M(g,A_{b_0,b}) - \kappa = 0,
  \]
  where the gauge modification $(\theta,\kappa)$ lies in a suitable finite-dimensional space $\Xi\subset\CIc(\Omega^\circ;T^*\Omega^\circ\oplus\ul\R)$.
\end{thm}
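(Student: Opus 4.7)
The plan is to implement the general framework of \S\ref{SubsecIntroStr} in the concrete setting described in \S\ref{SubsubsecIntroStrEM}, combining the three pillars --- linear mode stability of the RNdS solution (Theorem~\ref{ThmMS}), constraint damping (Theorem~\ref{ThmCD}), and high energy estimates (Theorem~\ref{ThmESG}) --- into a Newton--Nash--Moser iteration scheme that solves the modified gauge-fixed Einstein--Maxwell system globally. Concretely, writing $\wt\phi = (\wt g,\wt A)$ and $\xi=(\theta,\kappa)$, one seeks a zero of
\[
  \Phi(\wt\phi,b,\xi) = P(g_{b_0,b}+\wt g,\,A_{b_0,b}+\wt A) - \cD\bigl(\Ups(g_{b_0,b}+\wt g,A_{b_0,b}+\wt A) - \Ups(g_{b_0,b},A_{b_0,b}) - \xi\bigr),
\]
with $\wt\phi = \cO(e^{-\alpha t_*})$, $b\in\R^6$ close to $b_0$, and $\xi\in\Xi$; the finite-dimensional space $\Xi\subset\CIc(\Omega^\circ;T^*\Omega^\circ\oplus\ul\R)$ is constructed a priori, as in \S\ref{SubsecIntroStr}, by choosing a basis of the non-decaying pure gauge resonant states of the linearized gauge-fixed operator $L_{b_0}$, applying the cutoff construction $\theta_j = D_{(g_{b_0},A_{b_0})}\Ups\circ D_{\Id}\Gamma(\chi\frakg_j)(g_{b_0},A_{b_0})$, and recording that these lie in $\CIc$ by constraint damping together with the generalized Bianchi identity.

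The heart of the argument is the global solvability, with exponential decay, of the linearized equation at each Newton step. I would first handle the reference linearization at $b=b_0$ and $\wt\phi=0$: using Theorem~\ref{ThmESG}, the solution of $L_{b_0}\phidot = -z$ with prescribed Cauchy data has an asymptotic expansion into finitely many resonant states plus an $\cO(e^{-\alpha t_*})$ tail; the finite-dimensional complement spanned by $\cD\Xi$ together with $L_{b_0}(\chi\phi_{b_0}^{\prime\Ups}(b'))$ ($b'\in\R^6$) is, by Theorems~\ref{ThmMS} and \ref{ThmCD} and the argument around \eqref{EqIntroStrGrowingSatisfiesGauge}, exactly what is needed to kill every non-decaying resonance, so one can choose $(b',\xi)$ linearly in the initial data so that the resulting $\phidot$ decays exponentially. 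Perturbative Fredholm stability of the high energy estimates then transfers this solvability first to the linearization at nearby RNdS parameters (i.e.\ to $L_b$ for $b$ near $b_0$), and then to the linearizations $D_{(\wt\phi_k,b_k,\xi_k)}\Phi$ at the Newton iterates, provided the iterates remain small in a sufficiently strong norm.

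Because $\Phi$ is quasilinear, the tame estimates required by a Nash--Moser scheme have to be extracted from the proofs of Theorems~\ref{ThmCD} and \ref{ThmESG}, i.e.\ the high-regularity estimates must depend only linearly on the high-regularity norm of the background and polynomially on a fixed low-regularity norm; granting this, the Nash--Moser machinery of \cite{HintzVasyKdSStability} produces a solution $(\wt\phi,b,\xi)$ with $\wt\phi=\cO(e^{-\alpha t_*})$ from data $H^{21}$-close to the RNdS data (the regularity threshold $21$ being inherited, up to minor adjustments to account for the electromagnetic sector, from the vacuum argument). What remains is to upgrade the solution of the modified gauge-fixed equation $\Phi=0$ to a genuine solution of the Einstein--Maxwell system \eqref{EqIntroEMA} in the prescribed gauge. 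For this one argues as in \S\ref{SubsecIntroStr} and \cite{HintzVasyKdSStability}: assuming the initial data satisfy the constraint equations and have no magnetic charge, a direct computation at $\Sigma_0$ (combined with the wave equation satisfied by the gauge one-form) shows that the 1-jet of $\Ups(g,A)-\Ups(g_{b_0,b},A_{b_0,b})-\xi$ vanishes at $\Sigma_0$; the generalized Bianchi identity \ref{ItIntroStrBianchi} then forces this quantity to satisfy the wave equation $\cB(g,A)\cD(\cdot)=0$, and uniqueness for that wave equation gives both the gauge condition globally and $P(g,A)=0$, which is the Einstein--Maxwell system.

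The main obstacle, and the place where the present paper differs most from \cite{HintzVasyKdSStability}, is the semiclassical / microlocal analysis at the trapped set for the coupled operator $L_{b_0}$: when $Q_e\neq 0$ the linearized Einstein and Maxwell equations are coupled at principal type, so one has to compute the subprincipal operator of $L_{b_0}$ on the bundle $S^2 T^*M^\circ \oplus T^*M^\circ$ at the photon sphere and verify that its numerical range is compatible with the pseudodifferential commutator estimates of \cite{HintzPsdoInner}, uniformly under stationary perturbations; once this sign condition is established, the remaining ingredients (constraint damping via the choice of $\gamma_1,\gamma_2,\gamma_3$, radial point estimates at the horizons, mode stability via Theorem~\ref{ThmMS}) slot into the scheme exactly as described in \S\ref{SubsecIntroStr}.
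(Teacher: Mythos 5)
Your proposal is correct and follows essentially the same route as the paper: a Nash--Moser iteration on the modified gauge-fixed operator, with the finite-dimensional range modification built from the non-decaying pure gauge resonances (via constraint damping, the Bianchi identity, and mode stability), solvability of the linearized steps transferred perturbatively from the reference linearization $L_{b_0}$ using the tame high-energy estimates, and the constraint propagation argument recovering the true Einstein--Maxwell system at the end. The only ingredient you leave implicit is the computation (the paper's Lemma~\ref{LemmaNLProofParamChange}) identifying the $b$-derivative of the non-linear operator with the linear theory's range modification $K_{b_0}(b')$ plus a compactly supported gauge correction in the transition region, which is what lets the parameter change be absorbed into the iteration without disturbing the gauge at $\Sigma_0$.
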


See Theorem~\ref{ThmNLKNdS} for the precise statement, describing the function spaces as well as the space $\Xi$. We stress the importance of techniques from (non-smooth) global microlocal analysis (b-analysis, semiclassical analysis) and scattering theory underlying the proof; these techniques enable us to build a framework which is sufficiently robust under perturbations of the geometry and the dynamical structure of the spacetime to allow for the global analysis of tensorial quasilinear wave equations.

\subsection{Notation}
\label{SubsecIntroNotation}

\newlength{\oldparindent}
\setlength{\oldparindent}{\parindent}
\setlength{\parindent}{0pt}

We give a list of notation used frequently throughout the present paper, together with a reference to the first appearance:

\setlength{\parindent}{\oldparindent}

\nomenstart
  \nomenitem{$A_{b_0,b}$}{4-potential interpolating between $A_{b_0}$ and $A_b$, cf.\ $g_{b_0,b}$ below}
  \nomenitem{$b_0$}{parameters of a fixed RNdS black hole, see \eqref{EqKNdS0Params}}
  \nomenitem{$b$}{black hole parameters, $b\in B$, see \S\ref{SecKNdS}}
  \nomenitem{$B$}{parameter space for slowly rotating, non-degenerate KNdS black holes, see \S\ref{SecKNdS}}
  \nomenitem{$B_m$}{parameter space for KNdS black holes allowing for magnetic charges, see \S\ref{SubsecKNdSMag}}
  \nomenitem{$\tdel^*$}{modification of the symmetric gradient, see \eqref{EqBasicNLtdel}}
  \nomenitem{$\td$}{modification of the exterior derivative, see \eqref{EqBasicNLtd}}
  \nomenitem{$\delta_g$}{(negative) divergence, $(\delta_g T)_{\mu_1\ldots\mu_N}=-T_{\lambda\mu_1\ldots\mu_N;}{}^\lambda$}
  \nomenitem{$\delta_g^*$}{symmetric gradient, $(\delta_g^*u)_{\mu\nu}=\frac{1}{2}(u_{\mu;\nu}+u_{\nu;\mu})$}
  \nomenitem{$\gamma_0$}{map assigning to a function on $\Omega$ its Cauchy data at $\Sigma_0$, see \eqref{EqBasicNLCauchyData}}
  \nomenitem{$g_{b_0,b}$}{metric interpolating between $g_{b_0}$ and $g_b$, see \eqref{EqNLMetricInterpolate}}
  \nomenitem{$g_b$}{KNdS metric with parameters $b\in B$, see \eqref{EqKNdSaMetric}}
  \nomenitem{$\theta$}{gauge source function for the wave coordinate gauge, see \eqref{EqLinKNdSIVPSol2}}
  \nomenitem{$i_b$}{map constructing Cauchy data from geometric initial data, see Proposition~\ref{PropKNdSIni}}
  \nomenitem{$\kappa$}{gauge source function for the Lorenz gauge, see \eqref{EqLinKNdSIVPSol2}}
  \nomenitem{$\mu$}{metric coefficient of the RNdS metric, see \eqref{EqKNdS0Mu}}
  \nomenitem{$\cM$}{static exterior region of an RNdS black hole, see \eqref{EqKNdS0StaticRegion}}
  \nomenitem{$M^\circ$}{underlying manifold of an extended KNdS black hole, see \eqref{EqKNdS0Mf}}
  \nomenitem{$M$}{compactification of $M^\circ$ at future infinity, see \eqref{EqKNdS0MfComp}}
  \nomenitem{$\cL_V$}{Lie derivative along the vector field $V$}
  \nomenitem{$\tcL_T V$}{equal to $\cL_V T$, see Definition~\ref{DefBasicLinLie}}
  \nomenitem{$\Omega^\circ$}{domain in $M$ on which we solve initial value problems, see \eqref{EqKNdSGeoDomCirc}}
  \nomenitem{$\Omega$}{compactification of $\Omega^\circ$ in $M$, see \eqref{EqKNdSGeoDom}}
  \nomenitem{$r_\pm$}{short for $r_{b_0,\pm}$, see \S\ref{SecMS}}
  \nomenitem{$r_{b,\pm}$}{radius of the event ($-$) and cosmological ($+$) horizons of the KNdS spacetime with parameters $b$, see Lemma~\ref{LemmaKNdSaHorizons}}
  \nomenitem{$\Sigma_0$}{Cauchy surface in $\Omega$, see \eqref{EqKNdSGeoCauchySurf}}
  \nomenitem{$S_\sub$}{subprincipal operator, see \eqref{EqLinAnaSubpr}}
  \nomenitem{$\tau$}{global boundary defining function of $M$, see \eqref{EqKNdS0MfComp}}
  \nomenitem{$\tau_0$}{null boundary defining function of $M$ near the horizons, see \eqref{EqKNdS0Tau0}}
  \nomenitem{$t$}{static or Boyer--Lindquist time coordinate, see \eqref{EqKNdS0Metric}}
  \nomenitem{$t_*$}{timelike function on $M^\circ$ which is smooth across the horizons, see \eqref{EqKNdS0CoordDef}}
  \nomenitem{$\tr_g^{ij}$}{trace in the $i$-th and $j$-th indices, $(\tr_g^{ij}T)_{\mu_1\ldots\mu_N}=T_{\mu_1\ldots\lambda\ldots}{}^\lambda{}_{\ldots\mu_N}$, with the two $\lambda$'s at the $i$-th and $j$-th slots}
  \nomenitem{$X$}{boundary of $M$ at future infinity}
  \nomenitem{$\Ups^E$}{gauge condition for the metric tensor, see \eqref{EqBasicNLUpsE}}
  \nomenitem{$\Ups^M$}{gauge condition for the 4-potential, see \eqref{EqBasicNLUpsM}}
\nomenend

\subsection*{Acknowledgments}

I would like to thank Andr\'as Vasy, Maciej Zworski, Jim Isenberg, Sergiu Klainerman, and Yakov Shlapentokh-Rothman for valuable discussions and for their interest and support. I am very grateful to Mihalis Dafermos, and to an anonymous referee, for valuable comments on both content and exposition. I would also like to thank the Miller Institute at the University of California, Berkeley, for support.

\section{Basic properties of the Einstein--Maxwell equations}
\label{SecBasic}

We shall only describe the aspects relevant for present purposes; we refer the reader to \cite[\S6]{ChoquetBruhatGR} for a detailed exposition.

\subsection{Derivation of the equations; charges and symmetries}
\label{SubsecBasicDer}

On an oriented, time-oriented 4-di\-men\-sion\-al Lorentzian spacetime $(M,g)$ with signature $(1,3)$, the Einstein--Maxwell system, with cosmological constant $\Lambda$, for the metric $g$ and the \emph{electromagnetic 4-potential} $A\in\CI(M^\circ;T^*M^\circ)$, arises as the Euler--Lagrange system for the Lagrangian density
\[
  \cL(g,A)=(R_g + 2\Lambda + 2|dA|_g^2)\,|dg|,
\]
where we define the squared norm of a 2-form $F$ by
\begin{equation}
\label{EqBasicDerNormSq}
  |F|_g^2=\sum_{\mu<\nu}F_{\mu\nu}F^{\mu\nu},
\end{equation}
raising indices using $g$; this gives
\begin{equation}
\label{EqBasicDerEM}
  \Ein(g) - \Lambda g = 2 T(g,d A),\qquad \delta_g d A = 0,
\end{equation}
where $\Ein(g)=\Ric(g)-\frac{1}{2}R_g g$ is the Einstein tensor, $\delta_g A=-A_{\mu;}{}^\mu$ the divergence (adjoint of $d$), and
\begin{equation}
\label{EqBasicDerEMTensor}
  T(g,F) = -\tr_g^{24}(F\otimes F) + \frac{1}{2}g|F|_g^2,
\end{equation}
or in index notation $T(g,F)_{\mu\nu} = -F_{\mu\lambda}F_\nu{}^\lambda + \frac{1}{4}g_{\mu\nu}F_{\kappa\lambda}F^{\kappa\lambda}$, is the \emph{energy-momentum tensor} associated with the electromagnetic tensor $F$. It is convenient to rewrite \eqref{EqBasicDerEM} by applying the trace-reversal operator $G_g=\Id-\frac{1}{2}g\tr_g$ to the first equation; since $\tr_g T(g,F)\equiv 0$, this yields
\[
  \Ric(g) + \Lambda g = 2 T(g,dA).
\]
Since the system \eqref{EqBasicDerEM} only involves $F:=d A$, one often writes it in the form
\begin{gather}
\label{EqBasicDerEMCurv1}
  \Ein(g)-\Lambda g=2 T(g,F), \\
\label{EqBasicDerEMCurv2}  d F=0,\quad \delta_g F=0.
\end{gather}
While this is locally equivalent to \eqref{EqBasicDerEM}, it is not so globally when $H^2(M^\circ,\R)\neq 0$, with $H^2(M^\circ,\R)$ the second cohomology group with real coefficients; the latter is indeed the case for the black hole spacetimes of interest in the present paper. We return to this point below.

The second Bianchi identity $\delta_g\Ein(g)\equiv 0$ applied to \eqref{EqBasicDerEMCurv1} implies the conservation law $\delta_g T(g,F)=0$. This is well-known to be satisfied provided $F$ solves \eqref{EqBasicDerEMCurv2}; concretely, we have
\begin{equation}
\label{EqBasicDerEMDelT}
  \delta_g T(g,F) = -\tr_g^{13}(\delta_g F\otimes F) - \frac{1}{2}\tr_g^{13}\tr_g^{24}(F\otimes d F).
\end{equation}

Assuming now that $(g,F)$ is a smooth solution of \eqref{EqBasicDerEMCurv1}--\eqref{EqBasicDerEMCurv2}, let $i\colon\Sigma_0\hra M^\circ$ be a spacelike hypersurface with future timelike unit normal field $N$, and let $h=-g|_{\Sigma_0}$ be the restriction of the metric tensor to $\Sigma_0$, which is thus a (positive definite) Riemannian metric. Declaring a basis $\sB$ of $T_p\Sigma_0$ to be positively oriented if and only if the basis $\{N\}\cup\sB$ of $T_p M^\circ$ is, we also have a Hodge star operator $\star_h$ on $\Sigma_0$. The electric and magnetic fields, as measured by observers with 4-velocity $N$, are then defined by
\begin{equation}
\label{EqBasicDerEandBFields}
  \bfE := -i^* \iota_N F = \star_h i^*\star_g F, \quad \bfB := i^* \iota_N\star_g F = \star_h i^*F  \in \CI(\Sigma_0;T^*\Sigma_0);
\end{equation}
thus, if $(t,x,y,z)$ are local coordinates near a point $p\in\Sigma_0$ with $N=\pa_t$ and $\{\pa_t,\pa_x,\pa_y,\pa_z\}$ an oriented orthonormal basis of $T_p M^\circ$, and we write $E_x=\bfE(\pa_x)$, etc., then
\begin{align*}
  F &= (E_x\,dx+E_y\,dy+E_z\,dz)\wedge dt \\
    &\quad + (B_x\,dy\wedge dz + B_y\,dz\wedge dx + B_z\,dx\wedge dy).
\end{align*}

\begin{rmk}
\label{RmkBasicDerHodgeStar}
  For a metric $g$ with signature $(p,q)$, $n=p+q$, we will frequently use the identities
  \[
    \star_g\star_g = (-1)^{k(n-k)+q},\quad \delta_g = (-1)^{n(k-1)+1+q}\star_g d\star_g,
  \]
  for the action on differential $k$-forms.
\end{rmk}

We moreover compute for $T_{g,F}\equiv T(g,F)$
\begin{equation}
\label{EqBasicDerEMTensorN}
\begin{gathered}
  T_{g,F}(N,N) = \frac{1}{2}(|\bfE|_h^2 + |\bfB|_h^2), \\
  T_{g,F}(N,\cdot) = \star_h(\bfB\wedge\bfE)\quad\tn{on }T\Sigma_0;
\end{gathered}
\end{equation}
$T(N,N)$ is the energy density of the electromagnetic field as measured by an observer with velocity $N$.

Observe now that the expression for $T_{g,F}(N,N)$ is invariant under `rotations'
\begin{equation}
\label{EqBasicDerEMRotation}
  (\bfE,\bfB)\mapsto(\bfE_\theta,\bfB_\theta):=(\cos(\theta)\bfE-\sin(\theta)\bfB,\sin(\theta)\bfE+\cos(\theta)\bfB);
\end{equation}
the same is true for $T_{g,F}(N,\cdot)$. On the other hand, $\bfE_\theta$ and $\bfB_\theta$ are the electric and magnetic field, respectively, corresponding to
\begin{equation}
\label{EqBasicDerEMRotationForm}
  F_\theta = \cos(\theta)F + \sin(\theta)\star_g F.
\end{equation}
It follows that $T_{g,F}(N,N)=T_{g,F_\theta}(N,N)$ for \emph{all} timelike vectors $N$; since $T_{g,F}(X,X)$ is a quadratic form in $X$, this holds for all vectors $N$. Thus, by the standard polarization identity, $T_{g,F_\theta}=T_{g,F}$ for all $\theta$. Moreover $d F_\theta=0$ and $\delta_g F_\theta=0$ for all $\theta$ provided \eqref{EqBasicDerEMCurv2} holds. Thus:

\begin{lemma}
\label{LemmaBasicDerEMRotation}
  If $(g,F)$ solves the Einstein--Maxwell system \eqref{EqBasicDerEMCurv1}--\eqref{EqBasicDerEMCurv2}, then so does $(g,F_\theta)$ for all $\theta\in\R$, with $F_\theta$ defined in \eqref{EqBasicDerEMRotationForm}.
\end{lemma}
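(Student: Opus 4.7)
The plan is to verify separately that $F_\theta$ satisfies the homogeneous Maxwell system $dF_\theta=0$, $\delta_g F_\theta=0$, and that $T(g,F_\theta)=T(g,F)$, whence the Einstein equation~\eqref{EqBasicDerEMCurv1} holds automatically for $(g,F_\theta)$.

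For the Maxwell part, I would just apply $d$ and $\delta_g$ termwise to $F_\theta=\cos(\theta)F+\sin(\theta)\star_g F$. By Remark~\ref{RmkBasicDerHodgeStar} applied with $n=4$, $q=3$, $k=2$ (so $\star_g^2=-1$ on 2-forms and $\delta_g=-\star_g d\,\star_g$, $d=\pm\star_g\delta_g\star_g$ on the relevant degrees), we have $d\star_g F=\pm\star_g\delta_g F=0$ and $\delta_g\star_g F=\pm\star_g dF=0$. This gives (i) and (ii). The only thing to watch is sign bookkeeping; nothing deeper is required.

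The substantive point is $T(g,F_\theta)=T(g,F)$, and for this I would follow the polarization strategy already gestured at just before the lemma. Given any point $p\in M^\circ$ and any future timelike unit vector $N\in T_p M^\circ$, choose a spacelike hypersurface through $p$ with normal $N$ (e.g.\ via the exponential of $N^\perp$), and extract $\bfE$, $\bfB$ and $\bfE_\theta$, $\bfB_\theta$ from $F$ and $F_\theta$ via \eqref{EqBasicDerEandBFields}. The transformation~\eqref{EqBasicDerEMRotation} applies, and a direct computation gives $|\bfE_\theta|_h^2+|\bfB_\theta|_h^2=|\bfE|_h^2+|\bfB|_h^2$ and
\[
  \bfB_\theta\wedge\bfE_\theta=(\sin\theta\,\bfE+\cos\theta\,\bfB)\wedge(\cos\theta\,\bfE-\sin\theta\,\bfB)=(\cos^2\theta+\sin^2\theta)\,\bfB\wedge\bfE=\bfB\wedge\bfE.
\]
Feeding this into \eqref{EqBasicDerEMTensorN} shows $T(g,F_\theta)(N,N)=T(g,F)(N,N)$ for every future timelike unit $N$.

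To conclude, I would polarize. The tensor $S:=T(g,F_\theta)-T(g,F)$ is a symmetric 2-tensor on $M^\circ$ whose associated quadratic form $N\mapsto S(N,N)$ vanishes on the open cone of future timelike unit vectors at each $p$; by scaling $N\mapsto\lambda N$ and by continuity this quadratic form vanishes on all of $T_p M^\circ$, and polarization of a symmetric bilinear form then yields $S\equiv 0$. The main (mild) obstacle is just keeping the signs straight in the Hodge calculations of step one; the content of step two is essentially the observation already made in \eqref{EqBasicDerEMTensorN}, combined with the manifest $\mathrm{SO}(2)$-invariance of the quadratic expressions in $(\bfE,\bfB)$ that appear there.
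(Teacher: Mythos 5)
Your proof is correct and follows essentially the same route as the paper: the Maxwell equations for $F_\theta$ via the Hodge identities of Remark~\ref{RmkBasicDerHodgeStar}, and $T(g,F_\theta)=T(g,F)$ via the rotation invariance of the expressions in \eqref{EqBasicDerEMTensorN} together with polarization. The only (inconsequential) quibble is the sign of $\delta_g$ on $2$-forms, which for $(n,q,k)=(4,3,2)$ comes out as $+\star_g d\star_g$; since every term is being shown to vanish, this does not affect the argument.
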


Let us now assume for simplicity
\begin{equation}
\label{EqBasicDerEMHole}
  M^\circ \cong \R_{t_*}\times I_r \times\Sph^2,\quad \Sigma_0 = \{t_*=0\}\subset M^\circ,
\end{equation}
with $I\subset\R$ a non-empty open interval, and assume that $\Sigma_0$ is spacelike. If $S=\{t_*=t_0,\ r=r_0\}\subset M$ (with $t_0\in\R$, $r_0\in I$) is a 2-sphere, we can define the \emph{electric} and \emph{magnetic charges}, associated with $F$ solving \eqref{EqBasicDerEMCurv2}, by
\begin{equation}
\label{EqBasicDerCharges}
  Q_e(g,F) := \frac{1}{4\pi}\int_S \star_g F,\qquad Q_m(F) := \frac{1}{4\pi}\int_S F;
\end{equation}
by Stokes' theorem, $Q_e$ and $Q_m$ are independent of the specific choice of $S$. In particular, we can take $S\subset\Sigma_0$, in which case we find, in terms of \eqref{EqBasicDerEandBFields}, that
\[
  Q_e(g,F) = Q_e(h,\bfE) := \frac{1}{4\pi}\int_S \star_h\bfE = \frac{1}{4\pi}\int_S \la\bfE,\nu\ra d\sigma,
\]
with $\nu$ the outward pointing unit normal to $S$ (with respect to $h$) and $d\sigma$ the volume element; and likewise for $Q_m(F)=Q_m(\bfB):=(4\pi)^{-1}\int_S\star_h\bfB$.

\begin{lemma}
\label{LemmaBasicDerEMRotationCharges}
  In the setting \eqref{EqBasicDerEMHole}, and given any 2-form $F$ solving \eqref{EqBasicDerEMCurv2}, there exists $\theta\in\R$ such that $Q_m(F_\theta)=0$, with $F_\theta$ defined in \eqref{EqBasicDerEMRotationForm}; for such $\theta$, we have
  \[
    Q_e(g,F_\theta)^2 = Q_m(F)^2 + Q_e(g,F)^2.
  \]
\end{lemma}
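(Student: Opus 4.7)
The plan is a direct computation with the definitions \eqref{EqBasicDerCharges}, exploiting the linearity of integration and the Hodge identity from Remark~\ref{RmkBasicDerHodgeStar}.

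\textbf{Step 1: Expand the charges of $F_\theta$ linearly.} Fixing a sphere $S$ as in the definition of the charges, I plug $F_\theta=\cos(\theta)F+\sin(\theta)\star_g F$ into the integrals $\frac{1}{4\pi}\int_S F_\theta$ and $\frac{1}{4\pi}\int_S\star_g F_\theta$. The first integral gives immediately $Q_m(F_\theta)=\cos(\theta)Q_m(F)+\sin(\theta)Q_e(g,F)$. For the second, I compute $\star_g F_\theta=\cos(\theta)\star_g F+\sin(\theta)\star_g\star_g F$, and invoke Remark~\ref{RmkBasicDerHodgeStar}: for a 2-form on a 4-dimensional Lorentzian manifold of signature $(1,3)$, $\star_g\star_g=(-1)^{2\cdot 2+3}=-1$. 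Hence $Q_e(g,F_\theta)=\cos(\theta)Q_e(g,F)-\sin(\theta)Q_m(F)$.

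\textbf{Step 2: Recognize the rotation.} The two identities assemble into
\[
  \begin{pmatrix} Q_e(g,F_\theta) \\ Q_m(F_\theta) \end{pmatrix}
  = \begin{pmatrix} \cos\theta & -\sin\theta \\ \sin\theta & \cos\theta \end{pmatrix}
    \begin{pmatrix} Q_e(g,F) \\ Q_m(F) \end{pmatrix},
\]
so $\theta\mapsto(Q_e(g,F_\theta),Q_m(F_\theta))$ is a rotation in $\R^2$. In particular $Q_e(g,F_\theta)^2+Q_m(F_\theta)^2$ is independent of $\theta$.

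\textbf{Step 3: Choose $\theta$ to kill the magnetic charge.} Since rotation acts transitively on circles centered at the origin in $\R^2$, there exists $\theta\in\R$ with $Q_m(F_\theta)=\cos(\theta)Q_m(F)+\sin(\theta)Q_e(g,F)=0$ (for example take $\theta$ with $\tan\theta=-Q_m(F)/Q_e(g,F)$ when $Q_e(g,F)\neq 0$, and $\theta=\pi/2$ otherwise). For such $\theta$, the conservation of $Q_e^2+Q_m^2$ from Step 2 gives $Q_e(g,F_\theta)^2=Q_m(F)^2+Q_e(g,F)^2$, as claimed.

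No serious obstacle is expected; the only thing to watch is the sign in $\star_g\star_g$ on 2-forms, where the Lorentzian signature $(1,3)$ makes this $-1$ rather than $+1$, and this is precisely what produces the orthogonal (rather than hyperbolic) rotation seen in Step 2. Everything else is bookkeeping.
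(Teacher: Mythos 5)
Your proof is correct and takes essentially the same route as the paper's: expand $Q_m(F_\theta)=\cos(\theta)Q_m(F)+\sin(\theta)Q_e(g,F)$ and choose $\theta$ with $(\cos\theta,\sin\theta)\perp(Q_m(F),Q_e(g,F))$. The paper's proof is terser and leaves the norm identity implicit, whereas you make it explicit via the sign $\star_g\star_g=-1$ on 2-forms in signature $(1,3)$, which turns the map $(Q_e,Q_m)\mapsto(Q_e(F_\theta),Q_m(F_\theta))$ into a Euclidean rotation preserving $Q_e^2+Q_m^2$ --- a worthwhile clarification, but the same argument.
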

\begin{proof}
  We have $Q_m(F_\theta)=\cos(\theta)Q_m(F)+\sin(\theta)Q_e(g,F)$, so it suffices to take $\theta$ such that $(\cos\theta,\sin\theta)\perp(Q_m(F),Q_e(g,F))$, which can always be arranged.
\end{proof}

Without loss of generality, we may therefore assume $Q_m(F)=0$, equivalently $F=d A$ for some 1-form $A$, reducing the study of the (Einstein--)Maxwell equations on $M$ in the form \eqref{EqBasicDerEMCurv1}--\eqref{EqBasicDerEMCurv2} to the study of the system \eqref{EqBasicDerEM}.

\subsection{Initial value problems for the non-linear system}
\label{SubsecBasicNL}

Initial data for the Einstein--Maxwell system \eqref{EqBasicDerEMCurv1}--\eqref{EqBasicDerEMCurv2} are 5-tuples
\begin{equation}
\label{EqBasicNLData}
  (\Sigma_0,h,k,\bfE,\bfB),
\end{equation}
where $\Sigma_0$ is a smooth 3-manifold with Riemannian metric $h$, $k$ is a symmetric 2-tensor, and $\bfE,\bfB$ are 1-forms on $\Sigma_0$. The initial value problem asks for a (globally hyperbolic) 4-manifold $M^\circ$ equipped with a Lorentzian metric $g$ and a 2-form $F$ solving \eqref{EqBasicDerEMCurv1}--\eqref{EqBasicDerEMCurv2}, together with an embedding of $\Sigma_0\hra M^\circ$ as a Cauchy surface, such that $h=-g|_{\Sigma_0}$ is the induced metric, $k$ the second fundamental form of $\Sigma_0$, and such that $F$ induces the given fields $\bfE,\bfB$ on $\Sigma_0$ via \eqref{EqBasicDerEandBFields}.

Evaluating \eqref{EqBasicDerEMCurv1} on the pair of vectors $(N,N)$, with $N$ the future timelike unit normal to $\Sigma_0$, and on pairs $(N,X)$ with $X\in T\Sigma_0$, and pulling back $d F=0$ and $d\star_g F=0$ to $\Sigma_0$, we find as necessary conditions for the well-posedness of the initial value problem the \emph{constraint equations}
\begin{gather}
\begin{gathered}
\label{EqBasicNLConstraints1}
  R_h - |k|_h^2 + (\tr_h k)^2 - 2\Lambda = 2(|\bfE|_h^2+|\bfB|_h^2), \\
  \delta_h k + d\tr_h k = 2\star_h(\bfB\wedge\bfE);
\end{gathered} \\
\label{EqBasicNLConstraints2}
  \delta_h\bfE = 0,\quad \delta_h\bfB = 0.
\end{gather}

For future use, we point out that if we have a solution of the form $F=d A$, then the constraint on $\bfB$ is automatically satisfied because of $d^2=0$, while the constraint $\delta_h\bfE=0$ is equivalent to
\begin{equation}
\label{EqBasicNLConstraints22}
  (\delta d A)(N) = 0,
\end{equation}
in formal analogy to the derivation of the constraints arising from \eqref{EqBasicDerEMCurv1}.

We next recall the proof that these constraints are also sufficient for the local solvability of the Einstein--Maxwell system \eqref{EqBasicDerEMCurv1}--\eqref{EqBasicDerEMCurv2} \cite[\S6.10]{ChoquetBruhatGR}. (See \cite{ChoquetBruhatGerochMGHD,SbierskiMGHD} for a discussion of the maximal globally hyperbolic development.) Thus, suppose we are given initial data \eqref{EqBasicNLData} satisfying the constraint equations, and define $M^\circ=\R_{t_*}\times\Sigma_0$. We construct a Lorentzian metric $g$ and a 2-form $F$ in a neighborhood of $\Sigma_0$ in $M^\circ$ solving the Einstein--Maxwell equations and attaining the given data at $\Sigma_0$. We eliminate the diffeomorphism invariance by fixing a gauge: following DeTurck \cite{DeTurckPrescribedRicci}, see also \cite{GrahamLeeConformalEinstein,FriedrichHyperbolicityEinstein}, we fix a pseudo-Riemannian background metric $t$ and define the \emph{gauge 1-form}
\begin{equation}
\label{EqBasicNLUpsE}
  \Ups^E(g) := g t^{-1}\delta_g G_g t.
\end{equation}
As discussed in \cite[\S2.1]{HintzVasyKdSStability}, we have $\Ups^E(g)\equiv 0$ if and only if the identity map $\Id\colon(M^\circ,g)\to(M^\circ,t)$ is a wave map. Conversely, given any metric $g$ on $M^\circ$, one may solve the wave map equation $\Box_{g,t}\phi=0$, with $\phi=\Id$ to second order at $\Sigma_0$, and then $\Ups^E(\phi_*g)=0$; in particular, if $(g,F)$ solves the Einstein--Maxwell equations, then so does $(\phi_*g,\phi_*F)$, and the latter satisfies the wave map gauge.

Aiming now to construct a solution of the initial value problem in the gauge $\Ups^E(g)=0$, we consider the modified system
\begin{equation}
\label{EqBasicNLDeTurckF}
  P_{DT}(g,F) = 0, \qquad (d\delta_g+\delta_g d)F = 0,
\end{equation}
where
\begin{equation}
\label{EqBasicNLDeTurckFOp}
  P_{DT}(g,F) := \Ric(g)+\Lambda g - \delta_g^*\Ups^E(g) - 2 T(g,F),
\end{equation}
and $(\delta_g^*u)_{\mu\nu}=\frac{1}{2}(u_{\mu;\nu}+u_{\nu;\mu})$ is the symmetric gradient. Now, \eqref{EqBasicNLDeTurckF} is a quasilinear hyperbolic system for $(g,F)$ (which is principally scalar if one multiplies the first equation by $2$).

The second equation in \eqref{EqBasicNLDeTurckF} is linear in $F$, but the expression in a local coordinate system involves second derivatives of $g$ due to the fact that $F$ is not a scalar; on the other hand, $F$ itself appears in the first equation only in undifferentiated form. Thus, one can still prove the local well-posedness of initial value problems for the evolution equation \eqref{EqBasicNLDeTurckF} if one controls $g$ in $\cC^0 H^s\cap\cC^1 H^{s-1}$ and $F$ in $\cC^0 H^{s-1}\cap\cC^1 H^{s-2}$; see the discussions in \cite[\S18.8]{TaylorPDE} and \cite[\S10.4]{ChoquetBruhatGR}.

To formulate the Cauchy problem for \eqref{EqBasicNLDeTurckF}, denote for any section $u$ of an associated bundle $E\to M^\circ$ of the frame bundle of $M^\circ$ (such as $S^2 T^*M^\circ$, or $S^2 T^*M^\circ\oplus\Lambda^2 T^*M^\circ$) its Cauchy data by
\begin{equation}
\label{EqBasicNLCauchyData}
  \gamma_0(u) := (u|_{\Sigma_0}, \cL_{\pa_{t_*}}u|_{\Sigma_0}) \in \CI(\Sigma_0;E_{\Sigma_0})\oplus\CI(\Sigma_0;E_{\Sigma_0}).
\end{equation}
Now, given initial data $(h,k,\bfE,\bfB)$ on $\Sigma_0$, one first constructs the 1-jet of the Lorentzian metric $g$ at $\Sigma_0$, that is, $g_0,g_1\in\CI(\Sigma_0;S^2 T^*_{\Sigma_0}M^\circ)$, with the following property: if $g$ is any Lorentzian metric with $\gamma_0(g)=(g_0,g_1)$, then $g$ induces the data $(h,k)$ on $\Sigma_0$, and $\Ups^E(g)=0$ at $\Sigma_0$. (Since the second fundamental form and $\Ups^E$ only depend on first derivatives of the metric tensor, these requirements are independent of the choice of $g$.) Next, $g$ determines a unit normal vector field $N$ on $\Sigma_0$ (in fact, its 1-jet), and one can thus construct $F_0\in\CI(\Sigma_0;\Lambda^2 T^*_{\Sigma_0}M^\circ)$ inducing $(\bfE,\bfB)$ on $\Sigma_0$ according to \eqref{EqBasicDerEandBFields} (with $F_0$ in place of $F$). We obtain the second piece $F_1\in\CI(\Sigma_0;\Lambda^2 T^*_{\Sigma_0}M^\circ)$ of Cauchy data for $F$ by enforcing Maxwell's equations \eqref{EqBasicDerEMCurv2} on $\Sigma_0$: the requirements $\cL_N F=d \iota_N F$ and $\cL_N\star_g F=d \iota_N\star_g F$ lead to
\[
  i^*\cL_N F=d(i^* \iota_N F),\qquad i^*(\cL_N\star_g  F)=d(i^*\iota_N\star_g F),
\]
which at $\Sigma_0$ determines $F_1=(\cL_N F)|_{\Sigma_0}$. See \S\ref{SubsecKNdSIni} for further details about the construction of Cauchy data.

We can now solve the system \eqref{EqBasicNLDeTurckF} with initial data $\gamma_0(g,F)=(g_0,F_0;g_1,F_1)$ uniquely near $\Sigma_0$. By construction, we have $d F=0$ and $\delta_g F=0$ at $\Sigma_0$; using the second equation in \eqref{EqBasicNLDeTurckF} together with the constraints \eqref{EqBasicNLConstraints2}, one in fact finds $\gamma_0(d F)=0$, $\gamma_0(\delta_g F)=0$. But $d F$ and $\delta_g F$ satisfy the wave equations $(d\delta_g+\delta_g d)(d F)=0$, $(d\delta_g+\delta_g d)(\delta_g F)=0$, hence we conclude that $d F=0$ and $\delta_g F=0$ everywhere. Therefore $\delta_g T(g,F)=0$ by \eqref{EqBasicDerEMDelT}, and the second Bianchi identity gives
\[
  0 = \delta_g G_g P_{DT}(g,F) = -\delta_g G_g\delta_g^*\Ups^E(g),
\]
which is a wave equation for $\Ups^E(g)$. Now $\Ups^E(g)|_{\Sigma_0}=0$ by construction, and then $P_{DT}(g,F)=0$ together with the constraint equations imply $\gamma_0(\Ups^E(g))=0$, as follows from evaluating $G_g P_{DT}(g,F)=0$ on pairs of vectors $(N,N)$ and $(N,X)$, $X\in T\Sigma_0$. Thus, $\Ups^E(g)=0$ everywhere, so $(g,F)$ solves \eqref{EqBasicDerEM} in the gauge $\Ups^E(g)=0$, as desired.

The specific choice of the hyperbolic formulation has dramatic consequences for the long-time behavior of solutions \eqref{EqBasicNLDeTurckF}. The first useful modification concerns the choice of gauge: namely, we may replace the gauge condition $\Ups^E(g)=0$ by $\Ups^E(g)=\theta$ for a suitable 1-form $\theta$, supported away from $\Sigma_0$. The second useful modification of the operator \eqref{EqBasicNLDeTurckFOp} is the replacement of $\delta_g^*$ by an operator
\begin{equation}
\label{EqBasicNLtdel}
  \tdel^* \in \Diff^1(M^\circ;T^*M^\circ,S^2 T^*M^\circ),
\end{equation}
independent of $g$ for simplicity, which agrees with $\delta_g^*$ to leading order --- this condition is independent of the choice of $g$ indeed --- that is, $\tdel^*$ has principal symbol $\sigma_1(\tdel^*)(\zeta)=i\zeta\otimes(\cdot)$, $\zeta\in T_p M^\circ$, $p\in M^\circ$. The constraint propagation equation becomes
\[
  \delta_g G_g\tdel^*\Ups^E(g)=0
\]
in this case. Neither modification affects the principal (high energy) part of the operator $P_{DT}$, but the low energy behavior is affected in a crucial manner, as explained in \S\ref{SubsecIntroStr}.

\emph{Under the additional assumption} $[\star_h\bfB]=0\in H^2(\Sigma_0,\R)$, one can instead solve the system \eqref{EqBasicDerEM} for $(g,A)$, with electromagnetic tensor then given by $F=d A$. We present the proof in a coordinate-invariant form which will be useful later: the system \eqref{EqBasicDerEM} has an additional gauge freedom, namely the group
\[
  \Diff(M^\circ) \ltimes \CI(M^\circ)
\]
acts on the space of solutions, with $(\phi,a)\in\Diff(M^\circ)\times\CI(M^\circ)$ acting by
\begin{equation}
\label{EqBasicNLGaugeAction}
  (g,A) \mapsto (\phi_*g,\phi_*A+d a).
\end{equation}

A typical method of fixing a gauge for $A$ is by demanding $\delta_g A=0$, called \emph{Lorenz gauge}; it is in fact convenient to note $\delta_g=-\tr_g\delta_g^*$ and to introduce the \emph{gauge function}
\begin{equation}
\label{EqBasicNLUpsM}
  \Ups^M(g,A) := \tr_g\delta_t^* A,
\end{equation}
where $t$ is a fixed background metric as above;\footnote{The point is that this expression does not involve first derivatives of $g$, which has the advantage of making the gauge-fixed Einstein--Maxwell system principally scalar without having to assign different regularities to $g$ and $A$.} thus, $\Ups^M(g,A)$ equals $-\delta_g A$ up to terms of order $0$ in $A$. Note that given any $A$, one may solve the equation $\Ups^M(g,d a)\equiv \tr_g\delta_t^*d a=-\Ups^M(g,A)$, which is a linear scalar wave equation for $a$, and then $\Ups^M(g,A+d a)=0$. If one wants to put $(g,A)$ into the wave map and Lorenz gauge simultaneously, one first finds $\phi$ with $\Ups^E(\phi_*g)=0$ by solving a wave map equation, as explained above, and then $a$ such that $\Ups^M(\phi_*g,\phi_*A+d a)=0$, as just discussed.

We then consider the gauge-fixed system
\begin{equation}
\label{EqBasicNLDeTurckLorenzA}
  P_{DT}(g,d A) = 0,\qquad P_L(g,A) = 0,
\end{equation}
with $P_{DT}$ as in \eqref{EqBasicNLDeTurckFOp}, and
\[
  P_L(g,A) := \delta_g d A - d\Ups^M(g,A).
\]
This system is again quasilinear and principally scalar (after multiplying the first equation by $2$); in fact, due to our definition of $\Ups^M(g,A)$, the equation $P_L(g,A)$ does \emph{not} involve second derivatives of $g$ (while $P_{DT}(g,d A)$ now involves first derivatives of $A$), hence $g$ and $A$ live at the same level of regularity.

In order to solve the initial value problem for the Einstein--Maxwell system for $(g,A)$ by means of the hyperbolic formulation \eqref{EqBasicNLDeTurckLorenzA}, one first constructs Cauchy data $(g_0,g_1)$ for the metric $g$ as above. Next, one constructs Cauchy data for $A$; for example, one can write $\star_h\bfB=d A_0$ on $\Sigma_0$, with $A_0\in\CI(\Sigma_0;T^*\Sigma_0)\subset\CI(\Sigma_0;T^*_{\Sigma_0}M^\circ)$ (via the metric $g$), and one can then pick $A_1\in\CI(\Sigma_0;T^*_{\Sigma_0}M^\circ)$ so that any $A\in\CI(M^\circ;T^*M^\circ)$ with $\gamma_0(A)=(A_0,A_1)$ satisfies $\star_h i^*d A=\bfB$, $-i^* \iota_N d A=\bfE$, and $\Ups^M(g,A)=0$ at $\Sigma_0$; see \S\ref{SubsecKNdSIni} for details.

Having these Cauchy data at hand, one can now solve the system \eqref{EqBasicNLDeTurckLorenzA}. The first constraint equation in \eqref{EqBasicNLConstraints2}, written as \eqref{EqBasicNLConstraints22}, shows that $N\Ups^M(g,A)=0$ at $\Sigma_0$, so $\gamma_0(\Ups^M(g,A))=0$; as before, we also find $\gamma_0(\Ups^E(g))=0$. But then
\[
  0 = \delta_g P_L(g,A) = -\delta_g d \Ups^M(g,A)
\]
implies that $\Ups^M(g,A)\equiv 0$, hence we in fact have a solution of $\delta_g d A=0$ in this gauge. This in turn, using the second Bianchi identity and \eqref{EqBasicDerEMDelT} again, implies $\Ups^E(g)\equiv 0$, and we have therefore found a solution of the Einstein--Maxwell system \eqref{EqBasicDerEM} in the gauge $\Ups^E(g)=0$, $\Ups^M(g,A)=0$.

Again, there are two immediate ways in which one can modify the particular hyperbolic operator $P_L$: first, by using a different gauge condition $\Ups^M(g,A)=\kappa$ for a suitable function $\kappa\in\CI(M^\circ)$ supported away from $\Sigma_0$; and second, by replacing the second $d$ by a first order differential operator
\begin{equation}
\label{EqBasicNLtd}
  \td \colon \CI(M^\circ) \to \CI(M^\circ;T^*M^\circ),
\end{equation}
which agrees with $d$ to leading order, i.e.\ $\sigma_1(\td)(\zeta)=i\zeta$, $\zeta\in T_p M^\circ$, $p\in M^\circ$. The constraint propagation equation for $\Ups^M(g,A)$ then becomes
\[
  \delta_g\td\Ups^M(g,A)=0.
\]
As in the discussion of modifications of $P_{DT}$, such modifications are only relevant for the low energy, long time behavior of $(g,A)$, see \S\ref{SecCD}.

In particular, this discussion applies in the case when $\Sigma_0\cong I_r\times\Sph^2$ as in \eqref{EqBasicDerEMHole} and the magnetic charge $Q_m(\bfB)$ vanishes. By the discussion around \eqref{EqBasicDerEMRotation} and Lemma~\ref{LemmaBasicDerEMRotationCharges}, the constraint equations are invariant upon replacing $(\bfE,\bfB)$ by $(\bfE_\theta,\bfB_\theta)$, and for suitable $\theta$, we indeed have $[\star_h\bfB_\theta]=0$, thus we can solve the Einstein--Maxwell system for $(g,A)$ with initial data $(\bfE_\theta,\bfB_\theta)$ for $d A$, and then $F:=(d A)_{-\theta}$ yields a solution of \eqref{EqBasicDerEMCurv1}--\eqref{EqBasicDerEMCurv2} with initial data $(\bfE,\bfB)$ for $F$. Due to the additional flexibility of the formulation \eqref{EqBasicDerEM} afforded by the gauge freedom in $A$, the formulation \eqref{EqBasicDerEM} (or rather \eqref{EqBasicNLDeTurckLorenzA}), together with this argument, is thus what we will use to study the stability problem for charged black holes.

\subsection{Initial value problems for the linearized system}
\label{SubsecBasicLin}

For our stability arguments, the key is to understand certain properties of the linearization around special solutions of the Einstein--Maxwell system, as well as of the linearization of suitable gauge-fixed formulations.

Thus, suppose $(g_s,F_s)$, $s$ near $0$, is a smooth family of solutions of the Einstein--Maxwell system \eqref{EqBasicDerEMCurv1}--\eqref{EqBasicDerEMCurv2} on a spacetime $M^\circ=\R\times\Sigma_0$; let $(g,F):=(g_0,F_0)$. Then $(\gdot,\Fdot)=\frac{d}{ds}(g_s,F_s)|_{s=0}$ satisfies the \emph{linearized Einstein--Maxwell system}
\begin{equation}
\label{EqBasicLinEM}
\begin{gathered}
  D_g(\Ric+\Lambda)(\gdot) = 2 D_{g,F}T(\gdot,\Fdot), \\
  d\Fdot=0,\quad D_{g,F}(\delta_{(\cdot)}(\cdot))(\gdot,\Fdot)\equiv\frac{d}{ds}(\delta_{g_s}F_s)\Big|_{s=0} = 0.
\end{gathered}
\end{equation}
Since for each $s$ and any fixed $\theta\in\R$, Lemma~\ref{LemmaBasicDerEMRotation} applies to $(g_s,F_s)$, we find that a solution $(\gdot,\Fdot)$ of the linearized system gives rise to a family $(\gdot,\Fdot_\theta)$ of solutions, where
\begin{equation}
\label{EqBasicLinFdotRotation}
  \Fdot_\theta := \cos(\theta)\Fdot + \sin(\theta)\frac{d}{ds}(\star_{g_s}F_s)\Big|_{s=0}
\end{equation}
only depends on $(g,F)$ and $(\gdot,\Fdot)$.

Suppose $\Sigma_0$ is spacelike for $g$. The derivatives $(\hdot,\kdot,\bfEdot,\bfBdot)$ of the initial data $(h_s,k_s,\bfE_s,\bfB_s)$ at $\Sigma_0$ then satisfy the linearizations of the constraint equations \eqref{EqBasicNLConstraints1}--\eqref{EqBasicNLConstraints2} around the data $(h,k,\bfE,\bfB)=(h_0,k_0,\bfE_0,\bfB_0)$ induced by $(g,F)$. Alternatively, if $N_s$ denotes the smooth family of future timelike unit normals to $\Sigma_0$ with respect to $g_s$, one can differentiate the constraints $((\Ein-\Lambda)(g_s)-2 T_{g_s,F_s})(N_s,V)=0$, $V\in T_{\Sigma_0}M^\circ$, at $s=0$, obtaining
\[
  \bigl(D_g(\Ein-\Lambda)(\gdot) - 2 D_{g,F}T(\gdot,\Fdot)\bigr)(N,V) = 0,\quad V\in T_{\Sigma_0}M^\circ,
\]
which is equivalent to the linearization of \eqref{EqBasicNLConstraints1}. The constraints for the electromagnetic field on the other hand read
\[
  d(i^*\Fdot) = 0,\quad D_{g,F}(\delta_{(\cdot)}(\cdot))(\gdot,\Fdot)(N) = 0,
\]
which are equivalent to the linearization of \eqref{EqBasicNLConstraints2}.

The transformation \eqref{EqBasicLinFdotRotation} induces a transformation $(\bfEdot,\bfBdot)\mapsto(\bfEdot_\theta,\bfBdot_\theta)$ on the level of initial data. If $M^\circ\cong\R\times\Sigma_0$, we can define linearized charges by differentiating \eqref{EqBasicDerCharges} for $(g_s,F_s)$ in place of $(g,F)$ at $s=0$, and then the analogue of Lemma~\ref{LemmaBasicDerEMRotationCharges} holds for the linearized fields and charges.

Given initial data satisfying the linearized constraints, we now indicate briefly how to solve the linearized system \eqref{EqBasicLinEM}: one considers the gauge-fixed version
\begin{equation}
\label{EqBasicLinGaugedF}
\begin{gathered}
  D_g(\Ric+\Lambda)(\gdot) - \delta_g^*D_g\Ups^E(\gdot) = 2 D_{g,F}T(\gdot,\Fdot), \\
  d D_{g,F}(\delta_{(\cdot)}(\cdot))(\gdot,\Fdot) + \delta_g d\Fdot = 0.
\end{gathered}
\end{equation}
(If all $g_s$ satisfy the gauge condition $\Ups^E(g_s)=0$, then $(\gdot,\Fdot)$ indeed satisfies this equation, as follows from differentiating $\delta_{g_s}^*\Ups^E(g_s)=0$ at $s=0$.) This is now a linear, principally scalar hyperbolic system, up to multiplication of the first equation by $2$, and taking into account the relative regularity of $\gdot$ and $\Fdot$, as discussed after equation~\eqref{EqBasicNLDeTurckF}. Indeed, we recall from \cite[\S3]{GrahamLeeConformalEinstein} that
\begin{equation}
\label{EqBasicNLLinRic}
  (D_g\Ric)(\gdot) = \frac{1}{2}\Box_g\gdot - \delta_g^*\delta_g G_g\gdot + \sR_g(\gdot),
\end{equation}
where $\sR_g(r)_{\mu\nu}=\Riem(g)_{\kappa\mu\nu\lambda}r^{\kappa\lambda} + \frac{1}{2}(\Ric(g)_{\mu\lambda}r_\nu{}^\lambda+\Ric(g)_{\nu\lambda}r_\mu{}^\lambda)$, and
\begin{equation}
\label{EqBasicNLLinUpsE}
  D_g\Ups^E(\gdot) = -\delta_g G_g\gdot + \sE_g(\gdot),
\end{equation}
with $\sE_g(r)_\mu=C^\lambda_{\kappa\nu}(g_{\mu\lambda}r^{\kappa\nu}-r_{\mu\lambda}g^{\kappa\nu})$ and $C_{\mu\nu}^\kappa=\frac{1}{2}(t^{-1})^{\kappa\lambda}(t_{\mu\lambda;\nu}+t_{\nu\lambda;\mu}-t_{\mu\nu;\lambda})$, while for a 1-form $u$, we have
\[
  D_g(\delta_{(\cdot)}^*u)(\gdot)_{\mu\nu}:=\frac{d}{ds}(\delta_{g+s\gdot}^*u)_{\mu\nu}|_{s=0}=-\frac{1}{2}(\gdot_{\mu\kappa;\nu}+\gdot_{\nu\kappa;\mu}-\gdot_{\mu\nu;\kappa})u^\kappa,
\]
which is thus a 0-th order differential operator with coefficients depending only on first covariant derivatives of $g$. Lastly, $T(g,F)$ is of order $0$ in both $g$ and $F$, hence its linearization in $(g,F)$ is of the same type.

If one arranges for the map taking initial data satisfying the non-linear constraints into correctly gauged Cauchy data for $(g,F)$ to be differentiable, the derivative of this map maps linearized initial data into correctly gauged Cauchy data for this linearized system. After solving \eqref{EqBasicLinGaugedF}, one can then verify that
\begin{equation}
\label{EqBasicLinFdotSol}
  d\Fdot=0, \quad D_{g,F}(\delta_{(\cdot)}(\cdot))(\gdot,\Fdot)=0,
\end{equation}
In order to conclude the argument, one needs to verify that this implies
\begin{equation}
\label{EqBasicLin2ndBianchi}
  \delta_g G_g\bigl(D_g(\Ric+\Lambda)(\gdot) - 2 D_{g,F}T(\gdot,\Fdot)\bigr) = 0,
\end{equation}
since this gives the wave equation  $\delta_g G_g\delta_g^*(D_g\Ups^E(\gdot))=0$, and it is then easy to conclude $D_g\Ups^E(\gdot)\equiv 0$, finishing the argument. In order to prove \eqref{EqBasicLin2ndBianchi}, we observe that the second Bianchi identity and \eqref{EqBasicDerEMDelT} imply
\[
  \delta_g G_g(\Ric(g)+\Lambda g-2 T(g,F)) = 2\tr_g^{13}(\delta_g F\otimes F)+\tr_g^{13}\tr_g^{24}(F\otimes d F)
\]
for \emph{all} $(g,F)$; linearizing around the $(g,F)$ at hand, for which $\Ric(g)+\Lambda g-2 T(g,F)=0$, we deduce \eqref{EqBasicLin2ndBianchi} by using $\delta_g F=0$, $d F=0$ as well as \eqref{EqBasicLinFdotSol}.

\emph{Assuming} that $F_s=d A_s$ for all $s$, with $A_s$ depending smoothly on $s$, and restricting to those linearized fields $\Fdot$ for which $[\Fdot]=0\in H^2(M^\circ,\R)$, so $\Fdot=d\Adot$, we can consider the linearized Einstein--Maxwell system in the form
\begin{equation}
\label{EqBasicLinEinsteinMaxwell}
  \sL(\gdot,\Adot) = 0,
\end{equation}
where
\begin{equation}
\label{EqBasicLinEinsteinMaxwellExpl}
\begin{split}
  &\sL(\gdot,\Adot) = \bigl(\sL_1(\gdot,\Adot),\sL_2(\gdot,\Adot)\bigr), \\
  &\qquad \sL_1(\gdot,\Adot) = D_g(\Ric+\Lambda)(\gdot) - 2 D_{g,d A}T(\gdot,d\Adot), \\
  &\qquad \sL_2(\gdot,\Adot) = D_{g,A}(\delta_{(\cdot)}d(\cdot))(\gdot,\Adot),
\end{split}
\end{equation}
One can detect the condition $[\Fdot]=0$ already on the level of linearized initial data: indeed, with $i\colon\Sigma_0\hra M^\circ$ the inclusion (which induces an isomorphism in cohomology), we simply have $i^*\Fdot=D_{h,\bfB}(\star_{(\cdot)}(\cdot))(\hdot,\bfBdot)$. (By the linearization of the second constraint in \eqref{EqBasicNLConstraints2}, this form is indeed closed.) Moreover, in the special case considered in Lemma~\ref{LemmaBasicDerEMRotationCharges}, we have $[\Fdot]=0$ if and only if the linearized magnetic charge $\dot Q_m=\int_S\Fdot$ vanishes; by means of a `rotation' of the form \eqref{EqBasicLinFdotRotation}, which on the level of initial data corresponds to a rotation of $(\bfEdot,\bfBdot)$ akin to \eqref{EqBasicDerEMRotation}, this can always be arranged. In the context of the present paper, it thus suffices to study \eqref{EqBasicLinEinsteinMaxwell}.

One can then solve the initial value problem for the linearized system \eqref{EqBasicLinEinsteinMaxwell} by considering the gauge-fixed system
\begin{equation}
\label{EqBasicLinAEq}
\begin{gathered}
  D_g(\Ric+\Lambda)(\gdot) - \delta_g^*D_g\Ups^E(\gdot) = 2 D_{g,d A}T(\gdot,d\Adot), \\
  D_{g,A}(\delta_{(\cdot)} d(\cdot))(\gdot,\Adot) - d\bigl(D_{g,A}\Ups^M(\gdot,\Adot)\bigr) = 0.
\end{gathered}
\end{equation}
Note here that the second equation indeed has principal part $(\delta_g d+d\delta_g)\Adot$, and moreover only involves up to first derivatives of $\gdot$. Thus, we can solve the initial value problem in a manner that is analogous to our treatment of \eqref{EqBasicLinGaugedF}; see \S\ref{SubsecKNdSIni} for details. The only additional input is that we need to derive the propagation equation for the linearized gauge condition $D_{g,A}\Ups^M(\gdot,\Adot)=0$. This however is straightforward: since $\delta_g d A=0$ for the solution $(g,A)$ around which we linearize, and since one \emph{always} has $\delta_g\delta_g d A=0$, one finds
\[
  \delta_g\bigl(D_{g,A}(\delta_{(\cdot)} d(\cdot))(\gdot,\Adot)\bigr) = 0,
\]
and thus \eqref{EqBasicLinAEq} implies the wave equation $\delta_g d(D_{g,A}\Ups^M(\gdot,\Adot))=0$, as desired.

The action \eqref{EqBasicNLGaugeAction} of the gauge group for the non-linear Einstein--Maxwell system induces, by linearization around $(\phi,a)=(\Id,0)$, an action of $\CI(M^\circ;T M^\circ)\times\CI(M^\circ) \ni (V,a)$ on a solution $(\gdot,\Adot)$ of \eqref{EqBasicLinEinsteinMaxwell} via
\begin{equation}
\label{EqBasicLinGaugeAction}
  (\gdot,\Adot) \mapsto (\gdot + \cL_V g, \Adot + \cL_V A + d a);
\end{equation}
equivalently, $\CI(M^\circ;T^*M^\circ)\times\CI(M^\circ)\ni(\omega,a)$ acts on $(\gdot,\Adot)$ by the same formula via the identification $V=\omega^\sharp$. We note that $\cL_{\omega^\sharp}g=2\delta_g^*\omega$. For notational convenience, we make the following definition:

\begin{definition}
\label{DefBasicLinLie}
  For a vector field $V$ and a tensor $T$, define $\tcL_T V := \cL_V T$.
\end{definition}

Using the action \eqref{EqBasicLinGaugeAction}, we can put any given solution $(\gdot,\Adot)$ of \eqref{EqBasicLinEinsteinMaxwell} into the gauge $D_g\Ups^E(\gdot)=0$, $D_{g,A}\Ups^M(\gdot,\Adot)=0$ by solving the forced wave equation
\begin{equation}
\label{EqBasicLinGaugeG}
  D_g\Ups^E(\gdot+\cL_V g)=0
\end{equation}
for the vector field $V$ --- this is indeed a wave equation, since $D_g\Ups^E\circ\tcL_g$ is equal to $\Box_g$ (on vector fields) modulo lower order terms --- and then solving the forced wave equation
\begin{equation}
\label{EqBasicLinGaugeA}
  D_{g,A}\Ups^M(\gdot+\cL_V g,\Adot+\cL_V A + d a)=0
\end{equation}
for the scalar function $a$; here, we note that $\Ups^M\circ d$ is equal to $\Box_g$ (on functions) modulo lower order terms. If one chooses trivial Cauchy data for $\omega$ and $a$ on $\Sigma_0$, then $(\gdot+\cL_V g,\Adot+\cL_V A+d a)$ solves \eqref{EqBasicLinEinsteinMaxwell}, and induces the same initial data $(\hdot,\kdot,\bfEdot,\bfBdot)$ as $(\gdot,\Adot)$.

Furthermore, as in the non-linear setting, we have a considerable freedom in the precise formulation of the gauge-fixed equation \eqref{EqBasicLinAEq}, in that we can replace $\delta_g^*$ in the first equation and the second $d$ in the second equation, respectively, by any operator $\tdel^*$, resp.\ $\td$, with the same principal symbol, thus modifying the way in which the gauge term enters the equation. The propagation equations for the linearized gauge terms then become
\begin{equation}
\label{EqBasicLinGaugePropMod}
  \delta_g G_g\tdel^*(D_g\Ups^E(\gdot))=0,\quad \delta_g\td(D_{g,A}\Ups^M(\gdot,\Adot))=0.
\end{equation}
Moreover, we can use different gauge conditions, for example $D_g\Ups^E(\gdot)=\theta$ and $D_{g,A}\Ups^M(\gdot,\Adot)=\kappa$. In fact, the choice of the hyperbolic formulation of the \emph{non-linear} Einstein--Maxwell system will to a large extent be made so that the linearized system has desirable properties, specifically that is implements constraint damping and has good high frequency properties.

\section{Kerr--Newman--de~Sitter black holes}
\label{SecKNdS}

With the cosmological constant $\Lambda>0$ fixed, the KNdS family of solutions of the Einstein--Maxwell system is parameterized by the mass $\bhm>0$ of the black hole, the angular momentum per unit mass $a\in\R$, and the electric charge $Q_e\in\R$. It will be convenient to parameterize the angular momentum redundantly by a vector $\bfa\in\R^3$. We thus package these parameters into the parameter space
\[
  B = \{ (\bhm,\bfa,Q_e) \} \subset \R\times\R^3\times\R.
\]
In \S\ref{SubsecKNdSMag}, we will extend this parameter space to also allow for magnetic charges, in which case we need to use the more general formulation \eqref{EqBasicDerEMCurv1}--\eqref{EqBasicDerEMCurv2} of the Einstein--Maxwell equations.

We first introduce the Reissner--Nordstr\"om--de~Sitter (RNdS) subfamily of subextremal, non-rotating charged black holes in \S\ref{SubsecKNdS0}; these spacetimes have parameters $(\bhm,\bfzero,Q_e)$, and we shrink the parameter space $B$ to be a small neighborhood of a fixed member of the RNdS family. In \S\ref{SubsecKNdSa}, we then show how to realize the family of slowly rotating KNdS metrics with parameters in $B$ as a smooth family of stationary metrics on a fixed 4-manifold.

\subsection{Reissner--Nordstr\"om--de~Sitter black holes}
\label{SubsecKNdS0}

We fix a set of parameters
\begin{equation}
\label{EqKNdS0Params}
  b_0 = (\bhm[0],\bfzero,Q_{e,0});
\end{equation}
the RNdS solution of the Einstein--Maxwell equations is then described by the spherically symmetric, static metric
\begin{equation}
\label{EqKNdS0Metric}
  g_{b_0} = \mu(r)\,dt^2 - \mu(r)^{-1}\,dr^2 - r^2\slg,
\end{equation}
$\slg$ the round metric on $\Sph^2$, where
\begin{equation}
\label{EqKNdS0Mu}
  \mu(r) = 1-\frac{2\bhm[0]}{r}-\frac{\Lambda r^2}{3} + \frac{Q_{e,0}^2}{r^2},
\end{equation}
and the 4-potential and electromagnetic field
\begin{equation}
\label{EqKNdS0PotentialStatic}
  \breve A_{b_0} = -Q_{e,0} r^{-1}\,dt,\quad F_{b_0}=d\breve A_{b_0} = Q_{e,0}r^{-2}\,dr\wedge dt.
\end{equation}
(We reserve the name $A_{b_0}$ for the 4-potential that we eventually use, see \eqref{EqKNdS0Potential}.) The notation for the electric charge is consistent; indeed, one has $Q_e(g_{b_0},F_{b_0})=Q_{e,0}$, while $Q_m(F_{b_0})=0$, see \eqref{EqBasicDerCharges}. We assume that the parameter $b_0\in\R^5$ is \emph{non-degenerate}:

\begin{definition}
\label{DefKNdS0NonDeg}
  The parameters $(\Lambda,\bhm[0],Q_{e,0})$, with $\Lambda>0$, are \emph{non-degenerate} if and only if for $b_0=(\bhm[0],\bfzero,Q_{e,0})$ the two largest positive roots
  \begin{equation}
  \label{EqKNdS0Horizons}
    r_{b_0,-}<r_{b_0,+}
  \end{equation}
  of $\mu$ are simple (thus $\mu(r)>0$ for $r\in(r_{b_0,-},r_{b_0,+})$), and in the interval $(r_{b_0,-},r_{b_0,+})$, the function $r^{-2}\mu$ has a unique non-degenerate critical point $r_P$.
\end{definition}

The expressions \eqref{EqKNdS0Metric} and \eqref{EqKNdS0PotentialStatic} are then valid in the \emph{static region}
\begin{equation}
\label{EqKNdS0StaticRegion}
  \cM = \R_t \times \cX,\quad \cX = (r_{b_0,-},r_{b_0,+})_r \times \Sph^2.
\end{equation}
Let us discuss conditions for non-degeneracy: the sharp condition for uncharged black holes, i.e.\ $Q_{e,0}=0$ is
\[
  0<9\Lambda\bhm[0]^2<1,
\]
see \cite[\S2]{HintzVasyCauchyHorizon}. Since non-degeneracy is an open condition, black holes with $0<9\Lambda\bhm[0]^2<1$ and small charge $Q_{e,0}\neq 0$ are non-degenerate as well. Another class of (nearly extremal) non-degenerate spacetimes can be obtained by fixing $|Q_{e,0}|<\bhm[0]$ and then taking $\Lambda>0$ small. Following \cite[Appendix~A]{KodamaIshibashiCharged}, we determine the conditions on $\Lambda,\bhm[0]$ and $Q_{e,0}$ in general:

\begin{prop}
\label{PropRNdSNondeg}
  The parameters $(\Lambda,\bhm,Q)$ are the parameters of a non-de\-gen\-er\-ate RNdS spacetime if and only if $Q=0$ and $0<9\Lambda\bhm^2<1$, or $Q\neq 0$ and
  \begin{equation}
  \label{EqRNdSNondeg}
    D := 9\bhm^2-8 Q^2>0, \quad \max\Bigl(0,\frac{6(\bhm-\sqrt D)}{(3\bhm-\sqrt D)^3}\Bigr) < \Lambda < \frac{6(\bhm+\sqrt D)}{(3\bhm+\sqrt D)^3}.
  \end{equation}
\end{prop}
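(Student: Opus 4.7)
The plan is to recast the problem as a study of the level sets of an auxiliary function on $(0,\infty)$, and then to explicitly compute its critical values. Set
\[
  f(r) := \frac{1}{r^2} - \frac{2\bhm}{r^3} + \frac{Q^2}{r^4},
\]
so that $r^{-2}\mu(r) = f(r) - \Lambda/3$. The positive roots of $\mu$ correspond bijectively to the positive solutions of $f(r) = \Lambda/3$, and the critical points of $r^{-2}\mu$ in $(0,\infty)$ coincide with those of $f$. A direct computation gives
\[
  f'(r) = -\frac{2}{r^5}(r^2 - 3\bhm r + 2 Q^2),
\]
so that critical points exist iff the discriminant $D = 9\bhm^2 - 8 Q^2 \geq 0$, and for $D>0$ they are
\[
  r_\pm^{\rm crit} = \tfrac{1}{2}(3\bhm \pm \sqrt{D}),
\]
both positive. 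Note that $r_+^{\rm crit}$ is a non-degenerate critical point of $f$ precisely when $D>0$.

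Next I would analyse the monotonicity and asymptotic behaviour of $f$ in the two cases. For $Q=0$, $f$ increases from $-\infty$ on $(0,3\bhm)$ and decreases to $0$ on $(3\bhm,\infty)$, with unique critical value $f(3\bhm) = 1/(27\bhm^2)$; the equation $f=\Lambda/3$ then has two positive simple roots $r_- < r_+$ iff $9\Lambda\bhm^2 <1$, with unique non-degenerate critical point $r_P = 3\bhm\in(r_-,r_+)$. For $Q\neq 0$, the asymptotics are $f(0^+)=+\infty$, $f(\infty)=0$, and $f$ is decreasing on $(0,r_-^{\rm crit})$, increasing on $(r_-^{\rm crit},r_+^{\rm crit})$, decreasing on $(r_+^{\rm crit},\infty)$. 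Therefore $f=\Lambda/3$ has exactly three simple positive roots (with the two largest bracketing $r_+^{\rm crit}$) iff
\begin{equation}
\label{EqProposalSandwich}
  f(r_-^{\rm crit}) < \Lambda/3 < f(r_+^{\rm crit}),
\end{equation}
and in this case the unique non-degenerate critical point of $r^{-2}\mu$ in $(r_{b_0,-},r_{b_0,+})$ is $r_P = r_+^{\rm crit}$, as required by Definition~\ref{DefKNdS0NonDeg}.

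The remaining step is to rewrite \eqref{EqProposalSandwich} explicitly. At a critical point $r_i^{\rm crit}$ of $f$ we have $r_i^2 = 3\bhm r_i - 2Q^2$, hence
\[
  f(r_i^{\rm crit})\cdot r_i^4 = r_i^2 - 2\bhm r_i + Q^2 = \bhm r_i - Q^2.
\]
Substituting $r_i = (3\bhm\pm\sqrt D)/2$ and using the identity
\[
  3\bhm^2 + \bhm\sqrt{D} - 2Q^2 = \tfrac{1}{4}(\bhm+\sqrt D)(3\bhm+\sqrt D)
\]
(which follows from $2Q^2 = (9\bhm^2-D)/4$), a short calculation yields
\[
  f(r_\pm^{\rm crit}) = \frac{2(\bhm \pm \sqrt D)}{(3\bhm\pm\sqrt D)^3}.
\]
This transforms the upper bound in \eqref{EqProposalSandwich} into $\Lambda < 6(\bhm+\sqrt D)/(3\bhm+\sqrt D)^3$. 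For the lower bound, observe that $\bhm r_-^{\rm crit} - Q^2$ has the same sign as $\bhm - \sqrt D$ (equivalently, as $\bhm^2 - Q^2$): when $|Q|\leq \bhm$ one has $f(r_-^{\rm crit})\leq 0$, so the lower bound is automatic for $\Lambda>0$; when $|Q|>\bhm$ it gives $\Lambda > 6(\bhm-\sqrt D)/(3\bhm-\sqrt D)^3>0$. Combined these reproduce \eqref{EqRNdSNondeg}. The only step requiring care is the algebraic identification of $f(r_\pm^{\rm crit})$ with the quotients appearing in \eqref{EqRNdSNondeg}; I would perform that as a single explicit computation after factoring $(3\bhm\pm\sqrt D)$ out of the numerator, and verify the formula in the limit $Q\to 0$ (where it must reduce to $1/(9\bhm^2)$) as a sanity check.
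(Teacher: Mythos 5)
Your argument is correct and arrives at the same bounds by the same elementary calculus, but it is organized around a different function: you study the level sets of $f=r^{-2}\mu+\Lambda/3$, whose critical points are the explicitly computable roots $(3\bhm\pm\sqrt D)/2$ of $r^2-3\bhm r+2Q^2$, whereas the paper studies $\mu$ itself, locating its critical points implicitly as the solutions of $\wh\mu=\Lambda$ for the auxiliary function $\wh\mu=3 r^{-4}(\bhm r-Q^2)$ and then comparing their positions to $r_{1-},r_{2-}$. Your choice buys two small simplifications: the sandwich condition $f(r_-^{\rm crit})<\Lambda/3<f(r_+^{\rm crit})$ involves explicit critical values rather than the paper's sign conditions $\mu(r_1)<0$, $\mu(r_2)>0$ at implicitly defined points, and the non-degenerate critical point $r_P$ of $r^{-2}\mu$ required by Definition~\ref{DefKNdS0NonDeg} is automatically the local maximum $r_+^{\rm crit}$ of $f$, instead of being identified in a separate step at the end; the key algebra (evaluating at $r=(3\bhm\pm\sqrt D)/2$ using $r^2=3\bhm r-2Q^2$) is identical in both versions, since $\wh\mu=3f$ at the critical points of $f$. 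Two minor points. First, a sign slip: $\bhm-\sqrt D$ has the same sign as $Q^2-\bhm^2$, not $\bhm^2-Q^2$ (indeed $\bhm^2>D\iff 8Q^2>8\bhm^2$); your subsequent conclusions --- $f(r_-^{\rm crit})\leq 0$ for $|Q|\leq\bhm$, and a positive lower bound for $|Q|>\bhm$ --- are nevertheless correct. Second, for the ``only if'' direction you should say explicitly that when the sandwich condition fails (with $Q\neq 0$) your monotonicity analysis leaves either a single positive root or a configuration in which the second-largest root is double, so that non-degeneracy indeed fails; this is immediate from the shape of $f$ but is the step that upgrades your ``exactly three simple roots iff sandwich'' to the stated equivalence.
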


In particular, for $Q<\bhm$, the lower bound for $\Lambda$ simply reads $\Lambda>0$.

\begin{proof}[Proof of Proposition~\ref{PropRNdSNondeg}]
  Write
  \[
    \mu=1-\frac{2\bhm}{r}-\frac{\Lambda r^2}{3}+\frac{Q^2}{r^2},
  \]
  and
  \[
    \mu'=\frac{2 r}{3}(\wh\mu-\Lambda), \quad \wh\mu=\frac{3}{r^2}\Bigl(\frac{\bhm}{r}-\frac{Q^2}{r^2}\Bigr).
  \]
  Since $\mu(r)\to\infty$ as $r\to 0+$ and $\mu(r)\to-\infty$ as $r\to\infty$, non-degeneracy requires that there exists an $\rho\in(0,r_-)$ such that $\mu'(\rho)=0$ and
  \[
    \mu(\rho) = 1-\frac{3\bhm}{\rho}+\frac{2 Q^2}{\rho^2} < 0;
  \]
  this can only hold if $D=9\bhm^2-8 Q^2>0$. In this case, define
  \begin{equation}
  \label{EqRNdSCriticalPoints}
    r_{1-} := \frac{3\bhm - \sqrt D}{2}, \quad r_{2-} := \frac{3\bhm + \sqrt D}{2};
  \end{equation}
  thus,
  \[
    \mu'(r)=0 \quad\Longrightarrow\quad \mu(r)=r^{-2}(r-r_{1-})(r-r_{2-}).
  \]
  Next, we note that $\wh\mu'=3r^{-5}(4 Q^2-3\bhm r)=0$ iff $r=\wh r:=\frac{4 Q^2}{3\bhm}$, while $\pm\wh\mu'<0$ for $r>0$, $\pm(r-\wh r)>0$, so $\wh\mu$ is strictly monotonically increasing for $0<r<\wh r$ and decreasing for $r>\wh r$. We remark that $r_{1-}<\wh r<r_{2-}$. Now, the non-degeneracy implies the existence of roots $0<r_1<r_2$ of the equation $\mu'=0$, or equivalently of the equation $\wh\mu=\Lambda$, and they satisfy $\mu(r_1)<0$, $\mu(r_2)>0$. See Figure~\ref{FigRNdSNondeg}. The first inequality is equivalent to $r_1\in(r_{1-},r_{2-})$, and the second then to $r_2>r_{2-}$. The latter can be rephrased as $\wh\mu(r_{2-})>\Lambda$, which reads
  \[
    \Lambda < \frac{6(\bhm+\sqrt{D})}{(3\bhm+\sqrt{D})^3};
  \]
  the former on the other hand is equivalent to $\min(\wh\mu(r_{1-}),\wh\mu(r_{2-}))<\Lambda$; since $\wh\mu(r_{2-})>\Lambda$, this simply reads $\wh\mu(r_{1-})<\Lambda$, or
  \[
    \Lambda > \frac{6(\bhm-\sqrt D)}{(3\bhm-\sqrt D)^3},
  \]
  thus proving the necessity of \eqref{EqRNdSNondeg}.

  \begin{figure}[!ht]
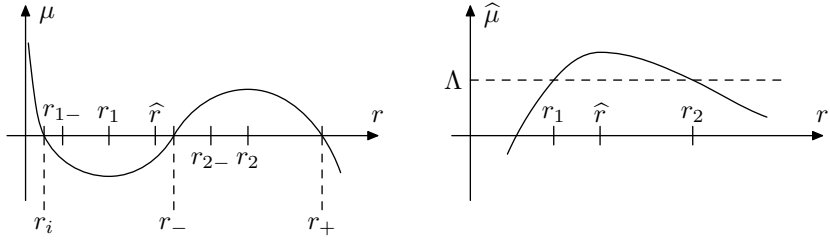

  \centering
  \inclfig{RNdSNondeg}
  \caption{\textit{Left:} Schematic graph of $\mu$, with its zeros $r_i$, $r_-$ and $r_+$, critical points $r_1$ and $r_2$, and the critical points $r_{1-}$ and $r_{2-}\equiv r_P$ of $r^{-2}\mu$. \textit{Right:} Schematic graph of $\wh\mu$, with its critical point $\wh r$.}
  \label{FigRNdSNondeg}
  \end{figure}
  
  Conversely, the above arguments show that \eqref{EqRNdSNondeg} implies the existence of $r_1,r_2$ with $r_{1-}<r_1<r_{2-}<r_2$ such that $\mu(r_1)<0$, $\mu(r_2)>0$; in particular, since $r^2\mu$ is a quartic polynomial with a negative root (due to $(r^2\mu)|_{r=0}>0$ and $\mu(r)\to-\infty$ as $r\to-\infty$), $\mu$ has exactly three simple positive roots $r_i,r_-,r_+$ with
  \begin{equation}
  \label{EqRNdSRoots}
    0<r_i<r_{1-}<r_-<r_{2-}<r_+.
  \end{equation}
  It remains to check that $r^{-2}\mu$ has a unique non-degenerate critical point $r_P$ in $(r_-,r_+)$; but
  \[
    -\frac{r^5}{2}(r^{-2}\mu)' = r^2-3\bhm r+2 Q^2=(r-r_{1-})(r-r_{2-}),
  \]
  hence we have $r_P=r_{2-}$, and $(r^{-2}\mu)''<0$ at $r=r_P$. The proof is complete.
\end{proof}

The proof also shows that for non-degenerate RNdS spacetimes, $\mu$ has a unique non-degenerate maximum at some radius $r_c\in(r_{b_0,-},r_{b_0,+})$.

The singularity of the expression \eqref{EqKNdS0Metric} at $r=r_{b_0,\pm}$ is merely a coordinate singularity. One way to extend the metric past $r=r_{b_0,\pm}$ is to introduce, near $r=r_{b_0,\pm}$, the function
\begin{equation}
\label{EqKNdS0NullCoord}
  t_0 = t - T_0(r),\quad T_0'=\pm\mu^{-1},
\end{equation}
in which case we find
\begin{equation}
\label{EqKNdS0MetricNull}
  g_{b_0} = \mu(r)\,dt_0^2 \pm 2\,dt_0\,dr - r^2\slg.
\end{equation}
This extends analytically across $r=r_{b_0,\pm}$; we can take the 4-potential to be $-Q_{e,0}r^{-1}dt_0$, which differs from $\breve A_{b_0}$ by an exact form. By analyticity, the thus extended metric and 4-potential continue to solve the Einstein--Maxwell system. As we will discuss in more detail in \S\ref{SubsecKNdSGeo}, there is an \emph{event horizon} $\cH^+$ at $r=r_{b_0,-}$, and a \emph{cosmological horizon} $\ol\cH{}^+$ at $r=r_{b_0,+}$. Note that level sets of $t_0$ are null hypersurfaces transversal to the event horizon (near $r_{b_0,-}$) and the cosmological horizon (near $r_{b_0,+}$). This form of the metric will be useful for calculations near the horizons.

Since the function $t_0$, defined only locally near $r_{b_0,\pm}$, does not give a single global extension of the metric, we instead define a function $T(r)\in\CI((r_{b_0,-},r_{b_0,+}))$ such that
\begin{equation}
\label{EqKNdS0CoordT}
  T'=\pm(\mu^{-1}+c_{b_0,\pm})\quad\tn{near }r_{b_0,\pm},
\end{equation}
with $c_{b_0,\pm}(r)$ smooth for $r$ up to and including $r_{b_0,\pm}$, and put
\begin{equation}
\label{EqKNdS0CoordDef}
  t_* = t - T(r).
\end{equation}
One can choose $c_{b_0,\pm}$ and thus $t_*$ in such a way that $dt_*$ is everywhere timelike; see Figure~\ref{FigKNdS0TimeFns}.
\begin{figure}[!ht]
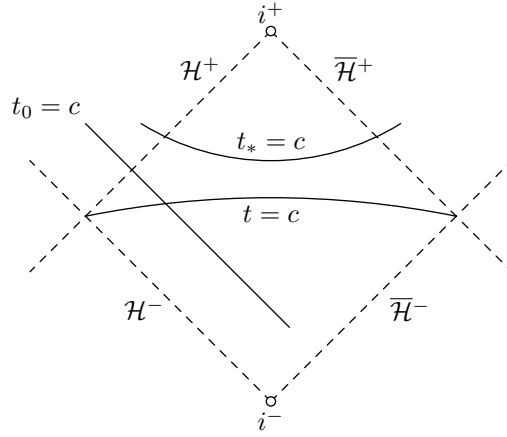

\centering
\inclfig{KNdS0TimeFns}
\caption{Penrose diagram of a RNdS spacetime, together with level sets of the static time coordinate $t$, the null coordinate $t_0$ from \eqref{EqKNdS0NullCoord} (for the bottom sign), and the timelike function $t_*$ from \eqref{EqKNdS0CoordDef}.}
\label{FigKNdS0TimeFns}
\end{figure}
In fact, the proof of \cite[Lemma~3.1]{HintzVasyKdSStability} carries over verbatim to give:
\begin{lemma}
\label{LemmaKNdS0Ext}
  Let $r_c$ denote the (unique and non-degenerate) critical point of $\mu$ in $(r_{b_0,-},r_{b_0,+})$, and put $c^2=\mu(r_c)^{-1/2}$. For a suitable choice of $t_*$, the metric $g_{b_0}$ and the dual metric $G_{b_0}$ take the form
  \begin{gather*}
    g_{b_0} = \mu\,dt_*^2 - 2\nu\,dt_*\,dr - c^2\,dr^2 - r^2\slg, \\
    G_{b_0} = c^2\,\pa_{t_*}^2 - 2\nu\,\pa_{t_*}\,\pa_r - \mu\,\pa_r^2 - r^{-2}\slG,
  \end{gather*}
  where the smooth function $\nu=\nu(r)$, defined near $[r_{b_0,-},r_{b_0,+}]$, is given by
  \[
    \nu(r) = \mp\sqrt{1-c^2\mu},\quad \pm(r-r_c)>0.
  \]
  In particular, $|dt_*|^2=c^2>0$ is constant.
\end{lemma}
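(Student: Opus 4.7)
The plan is to reverse-engineer the function $T(r)$ in the definition $t_* = t - T(r)$ by matching coefficients against the desired form. Substituting $dt = dt_* + T'(r)\,dr$ into the static form \eqref{EqKNdS0Metric} gives
\[
  g_{b_0} = \mu\,dt_*^2 + 2\mu T'\,dt_*\,dr + \bigl(\mu (T')^2 - \mu^{-1}\bigr)\,dr^2 - r^2\slg.
\]
Demanding that the $dr^2$ coefficient be the constant $-c^2$ forces $T'(r) = \pm\mu^{-1}\sqrt{1-c^2\mu}$, and then the cross term is automatically $-2\nu\,dt_*\,dr$ with $\nu = -\mu T' = \mp\sqrt{1-c^2\mu}$. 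For $T'$ to be real on $(r_{b_0,-},r_{b_0,+})$ I need $c^2\mu\leq 1$ throughout; since $\mu$ attains its maximum at the unique critical point $r_c$ (see the remark after Proposition~\ref{PropRNdSNondeg}), the sharp choice is $c^2 = \mu(r_c)^{-1}$, for which equality holds only at $r_c$.

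Next I verify regularity of $T'$ in $r$. Near the horizons $r_{b_0,\pm}$, $\mu$ has simple zeros while $\sqrt{1-c^2\mu}\to 1$ is smooth up to the horizon; expanding $\sqrt{1-c^2\mu} = 1 - \tfrac12 c^2\mu + O(\mu^2)$ gives
\[
  T'(r) = \pm\mu^{-1} + \text{(smooth in $r$ up to $r_{b_0,\pm}$)},
\]
which is exactly the form prescribed by \eqref{EqKNdS0CoordT}. At the interior critical point $r_c$, the non-degeneracy assumption of Definition~\ref{DefKNdS0NonDeg} yields the Taylor expansion $1 - c^2\mu = \tfrac{c^2|\mu''(r_c)|}{2}(r-r_c)^2 + O((r-r_c)^3)$, so $\sqrt{1-c^2\mu} = |r-r_c|\,h(r)$ for a smooth positive function $h$. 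Choosing the sign of $T'$ to match the sign of $r-r_c$ (equivalently, the sign convention $\pm(r-r_c)>0$ for $\nu = \mp\sqrt{1-c^2\mu}$) eliminates the absolute value, turning $\nu(r) = -(r-r_c)h(r)$ and $T'(r) = \mu^{-1}(r-r_c)h(r)$ into smooth functions of $r$ across $r_c$. Integrating yields a smooth $T$ on $(r_{b_0,-}, r_{b_0,+})$, and the metric extends analytically across the horizons via \eqref{EqKNdS0NullCoord}.

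The dual metric follows by inverting the $2\times 2$ block in $(dt_*,dr)$: its determinant is $-\mu c^2 - \nu^2 = -\mu c^2 - (1-c^2\mu) = -1$, giving the inverse block $\bigl(\begin{smallmatrix} c^2 & -\nu \\ -\nu & -\mu \end{smallmatrix}\bigr)$, which produces the claimed formula for $G_{b_0}$. In particular $|dt_*|^2 = G_{b_0}(dt_*,dt_*) = c^2$ is a positive constant, so in signature $(+,-,-,-)$ the covector $dt_*$ is uniformly timelike, as asserted.

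The main obstacle is the smoothness of $T$ (equivalently $\nu$) across $r_c$: a priori $\sqrt{1-c^2\mu}$ is only $C^0$ there, and it is precisely the non-degeneracy of the maximum of $\mu$ at $r_c$ that, combined with the sign flip above, promotes it to $C^\infty$; everything else reduces to the algebraic identity $-\mu c^2 - \nu^2 = -1$ built into the construction.
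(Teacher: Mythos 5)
Your computation is correct and is essentially the argument the paper invokes by citing \cite[Lemma~3.1]{HintzVasyKdSStability}: solve for $T'$ by forcing the $dr^2$-coefficient (equivalently $|dt_*|^2$) to be constant, use the non-degenerate interior maximum of $\mu$ together with the sign flip to get smoothness of $\nu=-(r-r_c)h(r)$ across $r_c$, and check the determinant identity $-\mu c^2-\nu^2=-1$ for the dual metric. You also correctly read the normalization as $c^2=\mu(r_c)^{-1}$ (the statement's ``$c^2=\mu(r_c)^{-1/2}$'' should be $c=\mu(r_c)^{-1/2}$), which is exactly the borderline choice needed for $\sqrt{1-c^2\mu}$ to vanish at $r_c$ and hence for $\nu$ to be smooth there.
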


This provides an extension of the metric to the 4-manifold
\begin{equation}
\label{EqKNdS0Mf}
  M^\circ := \R_{t_*} \times X,\qquad X=I_r\times\Sph^2,
\end{equation}
where
\begin{equation}
\label{EqKNdS0MfRad}
  I_r=(r_{I,-},r_{I,+}), \quad r_{I,\pm}=r_{b_0,\pm}\pm 2\eps
\end{equation}
for $\eps>0$ small and fixed. We can then define the electromagnetic field on $M^\circ$ via
\begin{equation}
\label{EqKNdS0Potential}
  A_{b_0} = -Q_{e,0}r^{-1}\,dt_*,\quad F_{b_0} = Q_{e,0}r^{-2}\,dr\wedge dt_*.
\end{equation}
The thus extended metric and potential $(g_{b_0},A_{b_0})$ furnish a solution of the Einstein--Maxwell system \eqref{EqBasicDerEM}.

The stationary structure of RNdS spacetimes is conveniently encoded by partially compactifying $M^\circ$ at future infinity. Thus, we define
\begin{equation}
\label{EqKNdS0MfComp}
\begin{split}
  M &= \bigl(M^\circ \sqcup ([0,\infty)_\tau \times X)\bigr) / \sim, \\
    &\qquad (t_*,x)\sim(\tau,x),\quad \tau:=e^{-t_*},
\end{split}
\end{equation}
with the algebra of smooth functions on $M$ generated by $\tau$ and smooth functions on $X$; thus, $\tau$ is a boundary defining function. We often identify
\[
  \pa M = \{\tau=0\} \cong X.
\]
By stationarity $g_{b_0}$ is a smooth non-degenerate Lorentzian b-metric on $M$,
\[
  g_{b_0} \in \CI(M;S^2\,\Tb^*M),
\]
and $A_{b_0}$, resp.\ $F_{b_0}$, is a smooth b-1-form, resp.\ b-2-form,
\[
  A_{b_0}\in\CI(M;\Tb^*M),\quad F_{b_0}\in\CI(M;\Lambda^2\,\Tb^*M).
\]
Indeed, these assertions follow from $dt_*=-\frac{d\tau}{\tau}$. See also Appendix~\ref{SecB}.

Near the horizons, the coordinates used in \eqref{EqKNdS0MetricNull} give rise to a local boundary defining function of $M$, namely
\begin{equation}
\label{EqKNdS0Tau0}
  \tau_0:=e^{-t_0},
\end{equation}
which is equivalent to (i.e.\ a smooth positive multiple of) $\tau$ on its domain of definition. In the black hole exterior region, but away from the horizons, i.e.\ for $r\in(r_{b_0,-},r_{b_0,+})$ bounded away from $r_{b_0,\pm}$, the static time coordinate $t$ also gives rise to a smooth boundary defining function, namely $\tau_s:=e^{-t}$, which is equivalent to $\tau$ there.

\subsection{Slowly rotating KNdS black holes}
\label{SubsecKNdSa}

We next recall the form of the KNdS solution of the Einstein--Maxwell system (see e.g.\ \cite[\S{A}]{PodolskyGriffithsKerrNewmanAcc}), given parameters
\[
  b = (\bhm,\bfa,Q_e),\quad a=|\bfa|\neq 0.
\]
In Boyer--Lindquist coordinates $(t,r,\theta,\phi)$ on $\R_t\times(r_{b,-},r_{b,+})\times\Sph^2_{\theta,\phi}$, with $r_{b,\pm}$ defined momentarily, and with the polar coordinates chosen such that the axis $\theta=0$ through the north poles of the spheres $r=const.$ is parallel to $\bfa$, we let
\begin{equation}
\label{EqKNdSaMetric}
\begin{split}
  g_b &= -\rho_b^2\Bigl(\frac{dr^2}{\wt\mu_b}+\frac{d\theta^2}{\kappa_b}\Bigr) + \frac{\wt\mu_b}{(1+\lambda_b)^2\rho_b^2}(dt-a\sin^2\theta\,d\phi)^2 \\
    &\qquad - \frac{\kappa_b\sin^2\theta}{(1+\lambda_b)^2\rho_b^2}(a\,dt-(r^2+a^2)\,d\phi)^2,
\end{split}
\end{equation}
where
\begin{gather*}
  \lambda_b=\frac{\Lambda a^2}{3},\quad \kappa_b=1+\lambda\cos^2\theta, \quad \rho_b^2=r^2+a^2\cos^2\theta, \\
  \wt\mu_b=(r^2+a^2)\Bigl(1-\frac{\Lambda r^2}{3}\Bigr)-2\bhm r+(1+\lambda_b)^2 Q_e^2;
\end{gather*}
then $r_{b,-}<r_{b,+}$ are the largest two roots of $\wt\mu_b$, see Lemma~\ref{LemmaKNdSaHorizons} below. In these coordinates, the 4-potential can be taken to be
\[
  \breve A_b = -\frac{Q_e r}{\rho_b^2}(dt-a\sin^2\theta\,d\phi),
\]
and the electromagnetic tensor is correspondingly given by
\begin{align*}
  F_b = d\breve A_b &= \frac{Q_e}{\rho_b^4}\bigl((r^2-a^2\cos^2\theta)\,dr\wedge(dt-a\sin^2\theta\,d\phi) \\
    &\qquad\qquad - 2 a r\cos\theta\sin\theta\,d\theta\wedge(a\,dt-(r^2+a^2)\,d\phi)\bigr).
\end{align*}
Thus, $Q_m(F_b)=0$. On the other hand, we compute
\begin{align*}
  \star_{g_b}F_b&=-\frac{Q_e}{\rho_b^4}\bigl(2 a r\cos\theta\,dr\wedge(dt-a \sin^2\theta\,d\phi) \\
  &\qquad\quad\qquad + (r^2-a^2\cos^2\theta)\sin\theta\,d\theta\wedge(a\,dt-(r^2+a^2)\,d\phi)\bigr),
\end{align*}
hence
\[
  Q_e(g_b,F_b)=\frac{1}{4\pi}\int_S \star_{g_b}F_b = \frac{Q_e}{2} \int_0^\pi \frac{(r^2-a^2\cos^2\theta)(r^2+a^2)}{(r^2+a^2\cos^2\theta)^2}\sin\theta\,d\theta = Q_e,
\]
as can be seen by first substituting $z=\cos\theta$, and then $y=\arctan(a z/r)$; this justifies the normalization of $Q_e$ in the form of $g_b$ and $\breve A_b$.

Since we focus on slowly rotating black holes in this paper, we assume that the parameter space $B$ is a small neighborhood of $b_0$. We remark however that in the present section, we could equally well work much more generally with any KNdS spacetime which is non-degenerate in a suitable sense, as discussed in the Kerr--de~Sitter setting in \cite[\S6.1]{VasyMicroKerrdS} and the discussion around \cite[equation~(6.13)]{VasyMicroKerrdS}.

\begin{lemma}
\label{LemmaKNdSaHorizons}
  There exist unique smooth functions $b\mapsto r_{b,\pm}$, defined for $b$ near $b_0$, which agree with $r_{b_0,\pm}$ in \eqref{EqKNdS0Horizons} at $b=b_0$, such that $\wt\mu_b(r_{b,\pm})=0$. Thus, $r_{b,\pm}\in I_r$ for $b$ near $b_0$, with $I_r$ defined in \eqref{EqKNdS0MfRad}.
\end{lemma}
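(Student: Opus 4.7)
The plan is to apply the implicit function theorem to the defining equation $\wt\mu_b(r)=0$, viewed as an equation for $r$ depending on the parameter $b$. The function $(b,r)\mapsto\wt\mu_b(r)$ is jointly smooth on a neighborhood of $\{b_0\}\times I_r$, since it is a polynomial in $r$ with smooth dependence on the components $(\bhm,a,Q_e)$ of $b$ (where $a=|\bfa|$ enters only through $a^2$ and $\lambda_b=\Lambda a^2/3$, both of which are smooth in $\bfa$).

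First I would observe that at the distinguished parameter $b_0=(\bhm[0],\bfzero,Q_{e,0})$ we have $a=0$, so $\lambda_{b_0}=0$ and
\[
  \wt\mu_{b_0}(r) = r^2\Bigl(1-\frac{2\bhm[0]}{r}-\frac{\Lambda r^2}{3}+\frac{Q_{e,0}^2}{r^2}\Bigr) = r^2\mu(r).
\]
Since $r_{b_0,\pm}>0$, the zeros of $\wt\mu_{b_0}$ at $r=r_{b_0,\pm}$ come precisely from the zeros of $\mu$. By the non-degeneracy hypothesis (Definition~\ref{DefKNdS0NonDeg}), these are \emph{simple} zeros of $\mu$, hence simple zeros of $\wt\mu_{b_0}$; equivalently,
\[
  \pa_r\wt\mu_{b_0}(r_{b_0,\pm}) = r_{b_0,\pm}^2\,\mu'(r_{b_0,\pm}) \neq 0.
\]

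With this non-vanishing transversal derivative, the implicit function theorem applied at each of $(b_0,r_{b_0,-})$ and $(b_0,r_{b_0,+})$ yields unique smooth functions $b\mapsto r_{b,\pm}$, defined on a (possibly smaller) neighborhood of $b_0$ in $B$, satisfying $\wt\mu_b(r_{b,\pm})=0$ and $r_{b,\pm}|_{b=b_0}=r_{b_0,\pm}$. The inclusion $r_{b,\pm}\in I_r$ for $b$ close to $b_0$ then follows from continuity together with the definition $I_r=(r_{b_0,-}-2\eps,r_{b_0,+}+2\eps)$ in \eqref{EqKNdS0MfRad}, by shrinking the neighborhood of $b_0$ if necessary.

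The proof is essentially routine; the only thing to check carefully is the transversality, and this is exactly what the non-degeneracy assumption on $b_0$ provides. No genuine obstacle is expected.
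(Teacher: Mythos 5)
Your proof is correct and follows exactly the paper's argument: the simplicity of the roots $r_{b_0,\pm}$ of $\mu=r^{-2}\wt\mu_{b_0}$ gives the transversality needed for the implicit function theorem. Your additional remark that $\wt\mu_b$ depends on $\bfa$ only through $a^2=|\bfa|^2$ (hence smoothly) is a worthwhile detail the paper leaves implicit.
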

\begin{proof}
  This follows from the simplicity of the roots $r_{b_0,\pm}$ of $\mu=r^{-2}\wt\mu_{b_0}$ and the implicit function theorem.
\end{proof}

We now describe the extension of $g_b$, $F_b$ and $\breve A_b$ (up to a gauge change) beyond the horizons $r=r_{b,\pm}$; this discussion is similar to \cite[\S3.2]{HintzVasyKdSStability}. Thus, we write
\[
  t_* = t - T_b(r), \quad \phi_* = \phi - \Phi_b(r),
\]
where
\[
  T_b' = \pm\Bigl(\frac{(1+\lambda_b)(r^2+a^2)}{\wt\mu_b} + c_{b,\pm}\Bigr), \quad
  \Phi_b' = \pm\Bigl(\frac{a(1+\lambda_b)}{\wt\mu_b} + \tilde c_{b,\pm}\Bigr);
\]
a simple argument, see \cite[Lemma~3.4]{HintzVasyKdSStability}, shows that one can choose $c_{b,\pm}$, resp.\ $a^{-1}\tilde c_{b,\pm}$, to agree with $c_{b_0,\pm}$ (defined implicitly in Lemma~\ref{LemmaKNdS0Ext}), resp.\ $0$, for $b=b_0$, and to depend smoothly on $b$ in the neighborhood $B$ of $b_0$. That is, fixing $r_{b_0,-}<r_1<r_2<r_{b_0,+}$, one can arrange $c_{b,-}(r),a^{-1}\tilde c_{b,-}(r)\in\CI(B_b\times(r_{I,-},r_2)_r)$ and $c_{b,+}(r),a^{-1}\tilde c_{b,+}(r)\in\CI(B_b\times(r_1,r_{I,+})_r)$. One then computes the dual metric $G_b$ of $g_b$ to be
\begin{equation}
\label{EqKNdSaDualMetric}
\begin{split}
  \rho_b^2 G_b &= -\wt\mu_b(\pa_r\mp c_{b,\pm}\pa_{t_*}\mp\tilde c_{b,\pm}\pa_{\phi_*})^2 \\
    &\qquad \pm 2 a(1+\lambda_b)(\pa_r \mp c_{b,\pm}\pa_{t_*} \mp \tilde c_{b,\pm}\pa_{\phi_*})\pa_{\phi_*} \\
    &\qquad \pm 2(1+\lambda_b)(r^2+a^2)(\pa_r\mp c_{b,\pm}\pa_{t_*}\mp\tilde c_{b,\pm}\pa_{\phi_*})\pa_{t_*} \\
    &\qquad - \frac{(1+\lambda_b)^2}{\kappa_b\sin^2\theta}(a\sin^2\theta\,\pa_{t_*} + \pa_{\phi_*})^2 - \kappa_b\pa_\theta^2,
\end{split}
\end{equation}
see \cite[equation~(3.17)]{HintzVasyKdSStability}. By construction, the two choices of sign give the same result in the overlap region $r_1<r<r_2$, and the arguments leading up to \cite[Proposition~3.5]{HintzVasyKdSStability} apply directly here as well, thus proving that
\[
  g_b \in \CI(B_b\times M^\circ;S^2 T^*M^\circ)
\]
is a smooth family of stationary metrics on $M^\circ$, and hence, in the compactified picture, $g_b \in \CI(B_b\times M;S^2\,\Tb^*M)$.

Next, we consider the vector potential
\[
  \breve A_b = -\frac{Q_e r}{\rho_b^2}\bigl(dt_*\pm c_{b,\pm}\,dr - a\sin^2\theta(d\phi_*\pm\tilde c_{b,\pm}\,dr)\bigr) \mp \frac{Q_e(1+\lambda_b)r}{\wt\mu_b}\,dr.
\]
The last term is an exact differential, hence we can define the 4-potential on $M^\circ$ by
\[
  A_b = -\frac{Q_e r}{\rho_b^2}\bigl(dt_*\pm c_{b,\pm}\,dr - a\sin^2\theta(d\phi_*\pm\tilde c_{b,\pm}\,dr)\bigr) \mp \chi\frac{Q_e(1+\lambda_b)r}{\wt\mu_b}\,dr,
\]
with $\chi=\chi(r)$ a smooth cutoff, identically $0$ near $r=r_{b,\pm}$, and identically $1$ on a large subinterval of $(r_{b,-},r_{b,+})$; where $\chi\equiv 1$, both choices of sign yield the same value of $A_b$, and near $r_{b,+}$, resp.\ $r_{b,-}$, we use the top, resp.\ bottom, sign. This expression is smooth near the poles $\theta=0,\pi$ of $\Sph^2$, as a simple coordinate change shows. The smoothness of $A_b$ as a smooth 1-form on $M^\circ$ depending on $b\in B$ near $b_0$ is easily established, similarly to the proof of \cite[Proposition~3.5]{HintzVasyKdSStability}: the smooth dependence of $a\tilde c_{b,\pm}\sin^2\theta\,dr$ on the parameter $b$ follows from that of $a^2\sin^2\theta=|\bfa|^2-\la\bfa,p/|p|\ra^2$ at a point $p\in X\subset\R^3$ (via the polar coordinate map), while the smooth dependence of $a\sin^2\theta\,d\phi_*=(a\pa_{\phi_*})^\flat$ (using the musical isomorphism on Euclidean $\R^3$) was established in the reference.

One can  alternatively use $c_{b,\pm}\equiv 0$, $\tilde c_{b,\pm}\equiv 0$ near $r_{b,\pm}$, giving rise to modifications $t_0$ and
\[
  \phi_0 = \phi - \Phi_b^0(r),\quad (\Phi_b^0)'=\pm\frac{a(1+\lambda_b)}{\wt\mu_b},
\]
of the functions $t_*$ and $\phi_*$; in the coordinates $(t_0,r,\theta,\phi_0)$ then, the expressions for $g_b$ and $A_b$ are analytic, and the fact that $(g_b,\breve A_b)$ and hence $(g_b,A_b)$ solves the Einstein--Maxwell system in the region of validity $r_{b,-}<r<r_{b,+}$ of the Boyer--Lindquist coordinates continues analytically to $M^\circ$. On the compactified manifold $M$, we can use $\tau_0$ as in \eqref{EqKNdS0Tau0} as a boundary defining function near the horizons.

The electromagnetic tensor
\[
  F_b = d A_b
\]
is then also a smooth (in $b$ near $b_0$) family of 2-forms on $M^\circ$. In the compactified picture, we in fact have $A_b\in\CI(B_b\times M;\Tb^*M)$ and $F_b\in\CI(B_b\times M;\Lambda^2\,\Tb^*M)$.

Given the smooth families $g_b$, $A_b$ and $F_b$, we can now define linearized KNdS solutions:
\begin{definition}
\label{DefKNdSaLin}
  For $b\in B$ and $b'\in\R^5$, define
  \[
    g'_b(b') = D_b(g_{(\cdot)})(b') = \frac{d}{ds}g_{b+s b'}|_{s=0},
  \]
  and similarly $A'_b(b')$ and $F'_b(b')$.
\end{definition}

Thus, $(g'_b(b'),A'_b(b'))$, resp.\ $(g'_b(b'),F'_b(b'))$, is a solution of the Einstein--Maxwell system linearized around $(g_b,A_b)$, resp.\ $(g_b,F_b)$, see \eqref{EqBasicLinEinsteinMaxwell}, resp.\ \eqref{EqBasicLinEM}.

\subsection{KNdS black holes with magnetic charge}
\label{SubsecKNdSMag}

Lemmas~\ref{LemmaBasicDerEMRotation} and \ref{LemmaBasicDerEMRotationCharges} immediately give the form of the metrics and electromagnetic tensors of magnetically charged KNdS black holes. Concretely, consider the enlarged parameter space
\begin{equation}
\label{EqKNdSMagParam}
  B_m = \{ (\bhm,\bfa,Q_e,Q_m) \} \subset \R\times\R^3\times\R\times\R;
\end{equation}
we take $B_m$ to be a small neighborhood of the parameters
\[
  b_{m,0} = (\bhm[0],\bfzero,Q_{e,0},Q_{m,0})
\]
of a non-degenerate RNdS spacetime with magnetic charge. We then define the map
\[
  \rho \colon B_m \ni (\bhm,\bfa,Q_e,Q_m) \mapsto (\bhm,\bfa,\sqrt{Q_e^2+Q_m^2}) \in B,
\]
and then, in the notation of \S\ref{SubsecKNdSa},
\begin{equation}
\label{EqKNdSMagSols}
\begin{split}
  b_m = (\bhm,&\bfa,Q_e,Q_m) \in B_m \\
    & \Longrightarrow \ g_{b_m} := g_{\rho(b_m)},\quad F_{b_m} := Q_e F_{(\bhm,\bfa,1)} + Q_m \star_{g_{b_m}} F_{(\bhm,\bfa,1)}.
\end{split}
\end{equation}
(For $Q_m=0$, this notation is consistent with the notation of the previous section for non-magnetically charged KNdS black holes.) Thus, the parameter range for general charged RNdS black holes is the same as for purely electrically charged RNdS black holes with charge $\sqrt{Q_e^2+Q_m^2}$. Since
\[
  T(g_{b_m},F_{b_m}) = T(g_{b_m}, F_{\rho(b_m)}) = T(g_{\rho(b_m)},F_{\rho(b_m)})
\]
by the discussion preceding Lemma~\ref{LemmaBasicDerEMRotation}, one concludes that $(g_{b_m},F_{b_m})$ is a solution of the Einstein--Maxwell system \eqref{EqBasicDerEMCurv1}--\eqref{EqBasicDerEMCurv2}; furthermore,
\[
  Q_e(g_{b_m},F_{b_m}) = Q_e,\quad Q_m(g_{b_m},F_{b_m}) = Q_m
\]
by construction.

\begin{rmk}
\label{RmkKNdSMagSmooth}
  In the expression \eqref{EqKNdSaMetric} for the electrically charged KNdS metric, the charge appears always in the second power, hence $g_{b_m}$ depends smoothly on $b_m\in B_m$. For $F_{b_m}$, the smooth dependence is then clear.
\end{rmk}

\subsection{Geometric and dynamical properties of KNdS spacetimes}
\label{SubsecKNdSGeo}

We refer the reader to \cite[\S6]{VasyMicroKerrdS} for a detailed discussion of the Kerr--de~Sitter geometry; the geometry of KNdS spacetimes is entirely analogous, thus we shall be brief. For $b$ near $b_0$, we define the dual metric function $G_b\in\CI(\Tb^*M)$ by $G_b(z,\zeta)=|\zeta|_{(G_b)_z}^2$, $z\in M$, $\zeta\in\Tb^*_z M$, and the characteristic set
\[
  \Sigma_b = G_b^{-1}(0) = \Sigma_b^+ \cup \Sigma_b^- \subset \rcTbdual M\setminus o,
\]
where $o$ is the 0-section of the vector bundle $\Tb^*M$. Since $\Sigma_b$ is conic in the fibers, it can be identified with its boundary at fiber infinity $\pa\Sigma_b\subset\Sb^*M$; here,
\[
  \Sigma_b^\pm = \Bigl\{ \zeta\in\Sigma_b \colon \pm\Big\la\zeta,-\frac{d\tau}{\tau}\Big\ra>0 \Bigr\}
\]
are the future ($+$) and past ($-$) light cones.

The null-geodesic flow near the b-conormal bundles
\[
  \cL_{b,\pm}=\Nb^*\{r=r_{b,\pm}\}\setminus o\subset\Sigma_b
\]
of the horizons has exactly the same structure (saddle point in the normal directions) as in the KdS case discussed in \cite[\S3.3]{HintzVasyKdSStability}. Indeed, the only difference between KNdS and KdS metrics is the precise form of the function $\wt\mu_b$, which however does not play any role in the relevant computations; only the non-degeneracy is important. Thus, let us use the coordinates
\[
  \sigma\,\frac{d\tau}{\tau}+\xi\,dr+\zeta\,d\phi+\eta\,d\theta
\]
on $\Tb^*M$ near $r=r_{b,\pm}$. Let then $\cL_{b,\pm}^\bullet=\cL_{b,\pm}\cap\Sigma_b^\bullet$, $\bullet=+,-$, denote the future (superscript `$+$'), resp.\ past (superscript `$-$'), component of the b-conormal bundles of the event (subscript `$+$'), resp.\ cosmological (subscript `$-$') horizon, and put $\cL_b^\pm=\cL_{b,+}^\pm\cap\cL_{b,-}^\pm$. The \emph{generalized radial sets}
\begin{equation}
\label{EqKNdSGeoRadialSet}
  \cR_{b,(\pm)}^{(\pm)} := \cL_{b,(\pm)}^{(\pm)}\cap \Tb^*_X M
\end{equation}
are then invariant under the $\ham_{g_b}$ flow; thus, their boundaries $\pa\cR_{b,(\pm)}^{(\pm)}\subset\Sb^*_X M$ at fiber infinity are invariant under the rescaled Hamilton flow $|\xi|^{-1}\ham_{G_b}$. As shown in \cite[\S3.3]{HintzVasyKdSStability}, $\pa\cR_b^+$ has a stable manifold $\pa\cL_b^+$ transversal to $\Sb^*_X M$, and an unstable manifold $\Sigma_b^+\cap\rcTbdual[X]M$ within $\rcTbdual[X]M$, with `stable' and `unstable' reversed when replacing `$+$' by `$-$'. Quantitatively, there exist smooth positive functions $\beta_{b,\pm,0},\beta_{b,\pm}\in\CI(\cR_{b,\pm})$ (see \cite[equation~(3.27)]{HintzVasyKdSStability}) such that with the defining function
\begin{equation}
\label{EqKNdSGeoFiberDefFn}
  \wh\rho=|\xi|^{-1}
\end{equation}
of fiber infinity $\Sb^*M\subset\rcTbdual M$, we have
\begin{equation}
\label{EqKNdSGeoRadPtQuant}
  \pm\wh\rho^{-1}\ham_{G_b}\wh\rho=\beta_{b,\bullet,0}, \quad \mp\tau^{-1}\ham_{G_b}\tau = \beta_{b,\bullet,0}\beta_{b,\bullet}\quad \text{at}\ \cR_{b,\bullet}^\pm,\ \bullet=+,-.
\end{equation}
See Figure~\ref{FigKNdSGeoRad}.

\begin{figure}[!ht]
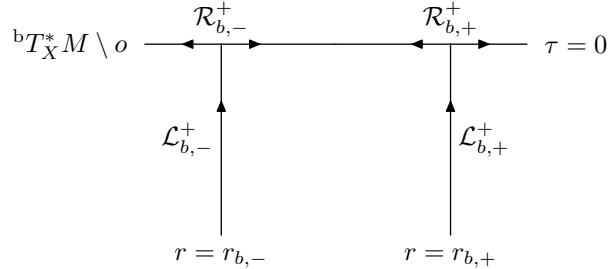

\centering
\inclfig{KNdSGeoRadial}
\caption{The future-directed null-geodesic flow near the b-conormal bundle $\cL^+_{b,\pm}$ of the event (`$-$'), resp.\ cosmological (`$+$') horizon. The saddle point structure of the rescaled Hamilton flow at $\pa\cR^+_{b,\pm}$, with unstable manifold $\Sigma_b^+\cap\rcTbdual[X]M$, is closely related to the classical red-shift effect.}
\label{FigKNdSGeoRad}
\end{figure}

We next compute the location of the trapped set in the exterior region $\cM$ of the RNdS spacetime with parameters $b_0$ as in \eqref{EqKNdS0Params}; we drop `$b_0$' from the notation. Writing covectors $\zeta\in\Tb^*M$ in $r_-<r<r_+$ using $\tau_s=e^{-t}$ as
\begin{equation}
\label{EqKNdSGeoTrapDual}
  \zeta = \sigma\,\frac{d\tau_s}{\tau_s} + \xi\,dr + \eta,\quad \eta\in T^*\Sph^2,
\end{equation}
the dual metric function equals $G_{b_0}=\mu^{-1}\sigma^2-\mu\xi^2-r^{-2}|\eta|^2$, with Hamilton vector field
\[
  \ham_G = 2\mu^{-1}\sigma\,\tau_s\pa_{\tau_s} - 2\mu\xi\pa_r - r^{-2}\ham_{|\eta|^2} + (\mu^{-2}\mu'\sigma^2+\mu'\xi^2-2r^{-3}|\eta|^2)\pa_\xi.
\]
In $\cM$, where $\mu>0$, we have $\ham_G r=-2\mu\xi=0$ if and only if $\xi=0$, in which case $\ham_G^2 r=0$ if and only if $\ham_G\xi=0$, which in the characteristic set $\Sigma$ is equivalent to $(\mu r^{-2})'=0$. By the non-degeneracy assumption on $b_0$, this equation has a unique solution $r_P\in(r_{b_0,-},r_{b_0,+})$, and the trapped set is
\begin{equation}
\label{EqKNdSGeoTrapSet}
  \Gamma = \{ (0,r_P,\omega;\sigma,0,\eta)\in \Tb^*_X M\setminus o \colon \mu^{-1}\sigma^2=r^{-2}|\eta|^2 \} \subset \Tb^*_X M;
\end{equation}
it has two connected components $\Gamma^\pm=\Gamma\cap\Sigma^\pm$. The Hamilton vector field at $\Gamma$ (as a b-vector field on $\Tb^*M$ restricted to $\Gamma$) is
\begin{equation}
\label{EqKNdSGeoTrapHam}
  \ham_G = 2\mu^{-1}\sigma\,\tau_s\pa_{\tau_s} - r^{-2}\ham_{|\eta|^2}.
\end{equation}

\begin{lemma}
\label{LemmaKNdSGeoNormHyp}
  For all non-degenerate RNdS spacetimes, the null-geodesic flow near the trapped set $\Gamma$ is $r$-normally hyperbolic for every $r$, see \cite{WunschZworskiNormHypResolvent}.
\end{lemma}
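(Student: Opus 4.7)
The plan is to reduce the claim to a direct check of the Wunsch--Zworski \cite{WunschZworskiNormHypResolvent} criterion, following the same template as the Schwarzschild--de~Sitter and KdS analyses in \cite[Lemma~3.6]{HintzVasyKdSStability}. The RNdS case is in fact slightly simpler than KdS: spherical symmetry makes $\Gamma$ a smooth conic codimension-$2$ submanifold of $\Sigma$ on which the Hamilton flow splits cleanly.

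\textbf{Step 1: tangential flow.} From \eqref{EqKNdSGeoTrapHam}, the restricted flow on $\Gamma$ is the sum of the b-time translation $2\mu(r_P)^{-1}\sigma\,\tau_s\pa_{\tau_s}$ and the rescaled round-sphere geodesic flow $-r_P^{-2}\ham_{|\eta|^2}$. The first is a constant-speed translation, and the second is the geodesic flow on $(\Sph^2,\slg)$, all of whose orbits are closed. Consequently the tangential maximal Lyapunov exponent vanishes, and \emph{any} strictly positive transverse expansion rate will suffice for $r$-normal hyperbolicity at every $r$.

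\textbf{Step 2: transverse hyperbolicity.} I would next compute $\ham_G^2 r$ on $\Sigma$: starting from $\ham_G r = -2\mu\xi$ and the expression for $\ham_G$ preceding \eqref{EqKNdSGeoTrapSet}, and using the characteristic constraint $\sigma^2 = \mu(\mu\xi^2+r^{-2}|\eta|^2)$ to eliminate $\sigma$, a short calculation collapses the resulting expression to
\begin{equation*}
  \ham_G^2 r\big|_\Sigma = -2|\eta|^2 f'(r), \qquad f(r) := r^{-2}\mu(r).
\end{equation*}
Taking transverse coordinates $\rho := r-r_P$ and $\xi$ on $\Sigma$, the linearization of the Hamilton flow at $\Gamma$ then reads $\dot\rho = -2\mu(r_P)\xi$ and $\dot\xi = \mu(r_P)^{-1}|\eta|^2 f''(r_P)\,\rho$, with eigenvalues $\pm|\eta|\sqrt{-2 f''(r_P)}$. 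The proof of Proposition~\ref{PropRNdSNondeg} identifies $r_P=r_{2-}$ as a strict local maximum of $f$ (the sign analysis of $\wh\mu$ shows $f$ is increasing on $(r_{1-},r_{2-})$ and decreasing on $(r_{2-},r_+)$), so $f''(r_P)<0$ and the eigenvalues are real, nonzero, and of opposite sign. This produces an invariant continuous splitting $N\Gamma = E^s\oplus E^u$ of the normal bundle inside $\Sigma$ with minimal transverse expansion rate strictly positive at fiber infinity after rescaling by $\wh\rho$ from \eqref{EqKNdSGeoFiberDefFn}.

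\textbf{Step 3: conclusion.} Combining the vanishing tangential Lyapunov spectrum from Step~1 with the strictly positive transverse expansion rate from Step~2 yields $r$-normal hyperbolicity in the sense of \cite{HirschShubPughInvariantManifolds,WunschZworskiNormHypResolvent} for every $r\in\N$. I do not anticipate any serious obstacle: the only way RNdS differs from SdS here is the algebraic form of $\mu$, which enters the argument solely through the location $r_P$ and the sign of $f''(r_P)$, both controlled by the non-degeneracy assumption in Definition~\ref{DefKNdS0NonDeg}. The only mild subtlety is confirming that $r_P$ is a \emph{maximum} of $f$ rather than a minimum (so $f''(r_P)<0$ and not $>0$), which is immediate from the monotonicity analysis of $\wh\mu$ already carried out in the proof of Proposition~\ref{PropRNdSNondeg}.
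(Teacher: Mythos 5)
Your proposal is correct and follows essentially the same route as the paper: vanishing tangential expansion by spherical symmetry, then linearization of $\ham_G$ in the transverse coordinates $r-r_P$ and $\xi$, giving an off-diagonal matrix whose real eigenvalues of opposite sign come from $(r^{-2}\mu)''(r_P)<0$ (guaranteed by non-degeneracy, as shown in the proof of Proposition~\ref{PropRNdSNondeg}). The paper phrases the transverse computation directly on $N^*(\Gamma\cap\{\sigma=1\})$ rather than via $\ham_G^2 r$, but the resulting matrix and conclusion (equal minimal and maximal transverse rates, hence $r$-normal hyperbolicity for every $r$) are identical.
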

\begin{proof}
  By the homogeneity of $\ham_G$, it is sufficient to work at the frequency $\sigma=1$. The expansion rate of the $\ham_G$ flow within $\Gamma$ equals $0$ by spherical symmetry: null-geodesics within $\Gamma$ are simply (affinely reparameterized) geodesics on the round $\Sph^2$.
  
  We compute the linearization of $\ham_G$ in the normal directions to $\Gamma$ using the normal coordinates $r-r_P$ and $\xi$: the action of $\ham_G$ on $N^*(\Gamma\cap\{\sigma=1\})$ (within $\tau_s=0$, $\sigma=1$)\footnote{Here, $N^*(\Gamma\cap\{\sigma=1\})$ is the quotient $\cI/\cI^2$, where $\cI$ is the space of $\CI$ functions on $\{\tau_s=0,\ \sigma=1\}$ vanishing on $\Gamma$.} in the basis $\{d(r-r_P),d\xi\}$ is given by the matrix
  \[
    [\ham_G] = \begin{pmatrix}
      0          & \mu^{-2}r^2(r^{-2}\mu)''|_{r=r_P} \\
      -2\mu(r_P) & 0
    \end{pmatrix},
  \]
  with both off-diagonal entries strictly negative. Therefore, the minimal and maximal expansion rates of the $\ham_G$ flow are equal, and are given by the positive eigenvalue of $[\ham_G]$.
\end{proof}

Recalling the notation \eqref{EqKNdS0Mf}--\eqref{EqKNdS0MfRad}, we will study linear and non-linear waves on the domain with corners
\begin{equation}
\label{EqKNdSGeoDom}
  \Omega := [0,1]_\tau \times Y \subset M,
\end{equation}
where
\[
  Y= J\times\Sph^2 \subset X,\qquad J=[r_<,r_>],\ r_< :=r_{b_0,-}-\eps,\ r_> :=r_{b_0,+}+\eps.
\]
We also write
\begin{equation}
\label{EqKNdSGeoDomCirc}
  \Omega^\circ := [0,\infty)_{t_*} \times Y \subset M^\circ
\end{equation}
for the uncompactified domain. Apart from the boundary at future infinity, $\Omega$ has three boundary hypersurfaces: one is the Cauchy surface
\begin{equation}
\label{EqKNdSGeoCauchySurf}
  \Sigma_0 := \{\tau=1\} \cap \Omega = \{t_*=0\} \cap \Omega^\circ \cong Y.
\end{equation}
on which we pose initial/Cauchy data by means of the map $\gamma_0$ defined in \eqref{EqBasicNLCauchyData}; the other two are the lateral boundaries
\[
  \{\tau\leq 1,\ r=r_{b_0,\pm}\pm\eps\} \cap \Omega,
\]
both of which are spacelike: indeed, their outward pointing conormals $\pm dr$ are timelike as can be seen from \eqref{EqKNdSaDualMetric}, and they are future timelike if we take $\eps>0$ small. See Figure~\ref{FigKNdSDomain}.

\begin{figure}[!ht]
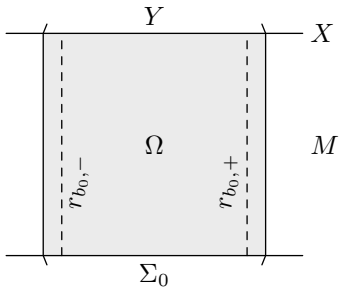

\centering
\inclfig{KNdSDomain}
\caption{The domain $\Omega\subset M$, with boundary at future infinity $Y\subset X=\pa M$; the event and cosmological horizon of the RNdS metric $g_{b_0}$ are indicated by dashed lines. The Cauchy hypersurface is $\Sigma_0$.}
\label{FigKNdSDomain}
\end{figure}

Therefore, linear wave equations with Cauchy data posed on $\Sigma_0$ and no data on the lateral boundary hypersurfaces are well-posed in $\Omega^\circ$.

\subsection{Constructing gauged Cauchy data from initial data sets}
\label{SubsecKNdSIni}

We return to the 4-potential formulation \eqref{EqBasicDerEM} of the Einstein--Maxwell system, hence to black holes with vanishing magnetic charge, and denote by $b_0$ the RNdS parameters \eqref{EqKNdS0Params}. The construction in \S\ref{SubsecKNdSa} gives rise to KNdS initial data
\[
  (h_b,k_b,\bfE_b,\bfB_b)
\]
on $\Sigma_0$, induced by the metric $g_b$ and the 4-potential $A_b$. We now show how to construct correctly gauged Cauchy data for the gauge-fixed system \eqref{EqBasicNLDeTurckLorenzA} from initial data given on $\Sigma_0$ which are close to the data $(h_{b_0},k_{b_0},\bfE_{b_0},\bfB_{b_0})$ induced by the RNdS metric with parameters $b_0$. We use the DeTurck (or wave map type) gauge 1-form for the Einstein part of the system,
\begin{equation}
\label{EqKNdSIniGaugeE}
  \Ups^E(g) := g g_{b_0}^{-1} \delta_g G_g g_{b_0},
\end{equation}
i.e.\ taking the background metric in \eqref{EqBasicNLUpsE} to be $g_{b_0}$, and the Lorenz type gauge function for the Maxwell part,
\begin{equation}
\label{EqKNdSIniGaugeM}
  \Ups^M(g,A) := \tr_g \delta_{g_{b_0}}^* A,
\end{equation}
see \eqref{EqBasicNLUpsM}. Since the realization of the KNdS family as a family of stationary metrics, described in \S\ref{SubsecKNdSa}, does not respect these gauges in general, we consider modifications by additional gauge source functions. We thus consider, for fixed $b\in B$, the gauge
\begin{equation}
\label{EqKNdSIniGaugeEM}
  \Ups^E(g) - \Ups^E(g_b) = 0, \quad \Ups^M(g,A-A_b) = 0;
\end{equation}
note that $(g,A)=(g_b,A_b)$ does satisfy these gauge conditions. In order to capture the vanishing magnetic charge, we consider for $s>3/2$ the subspace
\begin{equation}
\label{EqKNdSIniSpace}
\begin{split}
  Z^s &:= \Bigl\{ (h,k,\bfE,\bfB) \in H^{s+1}(\Sigma_0;S_{>0}^2 T^*\Sigma_0)\times H^s(\Sigma_0;S^2 T^*\Sigma_0) \\
    &\hspace{11em}\times H^s(\Sigma_0;T^*\Sigma_0)\times H^{s+1/2}(\Sigma_0;T^*\Sigma_0) \colon \\
    &\hspace{4em} d\star_h\bfB=0,\ \int_{\Sph^2}\star_h\bfB=0 \Bigr\},
\end{split}
\end{equation}
where $S^2_{>0}T^*\Sigma_0$ denotes the fiber bundle of positive definite inner products on $T\Sigma_0$. We now prove:

\begin{prop}
\label{PropKNdSIni}
  There exist a neighborhood
  \begin{align*}
    &(h_{b_0},k_{b_0},\bfE_{b_0},\bfB_{b_0}) \in \cU \subset Z^2
  \end{align*}
  of RNdS initial data and a smooth map
  \begin{align*}
    i_b \colon \cU \cap Z^s
        &\to H^{s+1}(\Sigma_0;S^2 T^*_{\Sigma_0}M^\circ) \times H^s(\Sigma_0;T^*_{\Sigma_0}M^\circ) \\
        &\quad \times H^s(\Sigma_0;S^2 T^*_{\Sigma_0}M^\circ) \times H^{s-1}(\Sigma_0;T^*_{\Sigma_0}M^\circ),
  \end{align*}
  for all $s\geq 2$, induced by continuity by a map on smooth sections (i.e.\ by a map $i_b$ for $s=\infty$), so that for $(h,k,\bfE,\bfB)\in\cU$, the sections
  \[
    (g_0,A_0;g_1,A_1) = i_b(h,k,\bfE,\bfB)
  \]
  induce the data $(h,k,\bfE,\bfB)$ on $\Sigma_0$, and they satisfy the gauge conditions \eqref{EqKNdSIniGaugeEM} in the sense that for any section $(g,A)$ of $S^2 T^*M^\circ\oplus T^*M^\circ$ near $\Sigma_0$ with $\gamma_0(g,A)=(g_0,A_0;g_1,A_1)$, the conditions~\eqref{EqKNdSIniGaugeEM} hold at $\Sigma_0$.

  Moreover, for exact KNdS data with parameter $b$, we have $i_b(h_b,k_b,\bfE_b,\bfB_b)=\gamma_0(g_b,A_b)$.
\end{prop}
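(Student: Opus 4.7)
The plan is to construct $i_b$ by carrying out, with parameters and with smooth dependence on the data, the construction of correctly-gauged Cauchy data from initial data sketched in \S\ref{SubsecBasicNL}; the KNdS pair $(g_b, A_b)$ plays the role of reference throughout, so that the consistency $i_b(h_b, k_b, \bfE_b, \bfB_b) = \gamma_0(g_b, A_b)$ holds automatically.

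For the metric Cauchy data, I would first define $g_0$ pointwise by matching the four transverse components $g_0(\pa_{t_*}, \cdot)$ to those of $g_b|_{\Sigma_0}$ and setting $g_0|_{T\Sigma_0 \times T\Sigma_0} = -h$; this uniquely determines $g_0$, and any metric $g$ restricting to $g_0$ on $\Sigma_0$ induces the Riemannian metric $h$. For $g_1 = \cL_{\pa_{t_*}} g|_{\Sigma_0}$, the requirement that the second fundamental form of $\Sigma_0$ equal $k$ pins down the six tangential components (a linear system invertible since $g_0$ is non-degenerate and $\Sigma_0$ is spacelike), and the four remaining transverse components are then fixed by the four scalar equations $\Ups^E(g)|_{\Sigma_0} = \Ups^E(g_b)|_{\Sigma_0}$. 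Invertibility of this last $4 \times 4$ system reduces to non-degeneracy of the principal symbol of $\Ups^E$ at the conormal $dt_*$, which holds because $dt_*$ is timelike by Lemma~\ref{LemmaKNdS0Ext}.

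For the Maxwell data, I would exploit the cohomology $H^2(\Sigma_0, \R) \cong \R$, detected by integration over an $\Sph^2$-fiber of $\Sigma_0 \cong I_r \times \Sph^2$. The conditions $d\star_h \bfB = 0$ and $\int_{\Sph^2} \star_h \bfB = 0$ from the definition of $Z^s$, together with $[i^* F_b] = 0$ (since $F_b = dA_b$), imply that the 2-form $\star_h \bfB - i^* F_b$ on $\Sigma_0$ is exact. Writing $i^* A_0 = i^* A_b + \alpha$, I would then solve $d\alpha = \star_h \bfB - i^* F_b$ by Hodge theory, imposing the Coulomb gauge $\delta_h \alpha = 0$ together with orthogonality of $\alpha$ to the harmonic 1-forms on $(\Sigma_0, h)$; this determines $\alpha$ uniquely with smooth dependence on $(h, \bfB)$ by elliptic regularity, and $\alpha = 0$ when $(h, \bfB) = (h_b, \bfB_b)$. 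The transverse component $A_0(\pa_{t_*})$ I would set equal to $A_b(\pa_{t_*})|_{\Sigma_0}$. Finally, $A_1 = \cL_{\pa_{t_*}} A|_{\Sigma_0}$ would be determined by the four scalar equations consisting of $\bfE = -i^* \iota_N dA|_{\Sigma_0}$ (three equations, with $N$ the $g_0$-unit normal) and the gauge equation $\Ups^M(g, A - A_b)|_{\Sigma_0} = 0$, a $4 \times 4$ linear system whose invertibility is again a symbol computation at $dt_*$.

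Smoothness of $i_b$ on the stated Sobolev spaces would follow by tracking regularity through each step: the algebraic steps preserve regularity trivially, and the Hodge step for $\alpha$ gains one derivative. The main technical obstacle is the invertibility of the two $4 \times 4$ linear systems that determine the transverse components of $g_1$ and all of $A_1$; however, both reduce to the non-degeneracy of the principal symbols of $\Ups^E$ and $\Ups^M$ evaluated at the timelike conormal $dt_*$, which can be verified at $b_0$ by direct computation and then extended to a neighborhood $\cU$ by continuity.
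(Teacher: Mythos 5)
Your construction is correct and, for the most part, coincides with the paper's: the metric Cauchy data are obtained exactly as you describe (the paper simply cites \cite[Proposition~3.10]{HintzVasyKdSStability} for this), $A_0$ is built relative to the reference $i^*A_b$ by producing a primitive of the exact 2-form $\star_h\bfB-i^*F_b=\star_h\bfB-\star_{h_b}\bfB_b$, the transverse component of $A_0$ is set to $\iota_{\pa_{t_*}}A_b$, and $A_1$ is determined from the three $\bfE$-equations plus the single Lorenz-gauge equation. The paper organizes this last system triangularly --- the $\bfE$-equations involve only the tangential part $A_1^\sharp$ with invertible coefficient $c=dt_*(N)\neq 0$, and the gauge equation then fixes $a_1$ via $G_0(\pa_{t_*},\pa_{t_*})a_1=(\text{known})$ with $G_0(\pa_{t_*},\pa_{t_*})>0$ --- which is the content of your ``$4\times 4$ symbol computation at the timelike conormal.'' The one genuinely different ingredient is your use of Hodge theory (Coulomb gauge, orthogonality to harmonic forms) to solve $d\alpha=\star_h\bfB-i^*F_b$. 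The paper instead builds an explicit, $h$-independent homotopy operator $\cA^\sharp$ adapted to the product structure $\Sigma_0\cong J\times\Sph^2$ (integration in $r$ \`a la Bott--Tu, plus inversion of $\sld$ on the sphere via the decomposition $u'=\slstar u_0+\sld u''$). This buys two things your route has to work harder for: since $\Sigma_0$ is a compact manifold \emph{with boundary}, a Hodge-theoretic solution requires specifying boundary conditions (absolute/relative) and verifying that the resulting Green's operator depends smoothly on the $H^{s+1}$-metric $h$ in order to get smoothness of $i_b$; the paper's $\cA^\sharp$ is a fixed bounded linear map $H^{s+1/2}\to H^s$ independent of the data, so smooth dependence is automatic and the vanishing $\alpha=0$ at exact KNdS data is immediate. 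These are standard technical points and do not constitute a gap, but you should at least state the boundary conditions and the smooth dependence of the elliptic solution operator on $h$ if you go the Hodge route.
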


Since the number of derivatives we use in the application of Proposition~\ref{PropKNdSIni} is rather large, see the statement of Theorem~\ref{ThmIntroNLFull}, we do not make any efforts to optimize the regularity assumptions.

We remark that in contrast to \cite[Proposition~3.10]{HintzVasyKdSStability}, the definition of the space $Z^s$ incorporates one of the constraint equations \eqref{EqBasicNLConstraints2}, which is necessary since we study the Einstein--Maxwell system using the potential $A$ rather than the electromagnetic 2-form $F$.

\begin{proof}[Proof of Proposition~\ref{PropKNdSIni}]
  The metric components of the map $i_b$, that is, the map $(h,k,\bfE,\bfB)\mapsto(g_0,g_1)$, was already constructed in \cite[Proposition~3.10]{HintzVasyKdSStability}; the construction there yields a smooth map $i_b$ on Sobolev spaces as well. We thus focus on the construction of the Cauchy data $(A_0,A_1)$ of the 4-potential $A$.

  The first step is to construct a bounded linear map
  \begin{equation}
  \label{EqKNdSIniInverse}
    \cA^\sharp \colon \Bigl\{ u\in H^{s+1/2}(\Sigma_0;\Lambda^2 T^*\Sigma_0) \colon d u=0,\ \int_{\Sph^2} u=0 \Bigr\} \to H^s(\Sigma_0;T^*\Sigma_0)
  \end{equation}
  such that $d\circ\cA^\sharp=\Id$. We present a direct argument, using the structure $\Sigma_0\cong J\times\Sph^2$, $J=[r_<,r_>]$, of $\Sigma_0$. Thus, let $\pi\colon\Sigma_0\to\Sph^2$ be the projection onto the second factor, and let $j\colon\Sph^2\hra\Sigma_0$ be the embedding $\omega\mapsto(r_<,\omega)$. Then the map $K\colon H^\sigma(\Sigma_0;\Lambda T^*\Sigma_0)\to H^\sigma(\Sigma_0;\Lambda T^*\Sigma_0)$, $\sigma\geq 0$, defined by linear extension from
  \[
    K(f(r,\omega) dr\wedge \pi^*u)(r,\omega) := (\pi^*u)(r,\omega)\int_{r_<}^r f(s,\omega)\,ds,\quad K(f(r,\omega)\pi^*u):=0,
  \]
  for $f\in H^\sigma(\Sigma_0)$ and $u\in\CI(\Sph^2;\Lambda T^*\Sph^2)$, satisfies $\Id-\pi^*j^*=d K+K d$. (See Bott--Tu \cite{BottTuAlgebraicTopology}.) Therefore, for closed $u$ as in \eqref{EqKNdSIniInverse}, we have $u=d K u + \pi^*(j^* u)$. Now $u':=j^*u\in H^s(\Sph^2;\Lambda^2 T^*\Sph^2)$ can be written uniquely as $u'=\slstar u_0+\sld u''$ where $u_0\in\R$ and $u''\in H^{s+1}(\Sph^2;T^*\Sph^2)$. Here, $\sld,\sldelta,\slstar$ denote the exterior differential, codifferential and Hodge star on $\Sph^2$. Since $0=\int u'=4\pi u_0$, we conclude that $u=d\wt A_0^\sharp$ with $\wt A_0^\sharp=\cA^\sharp u:=K u+\pi^* u''$, finishing the construction of the map \eqref{EqKNdSIniInverse}.

  Returning to the construction of the map $i_b$, we define
  \[
    \wt A_0^\sharp := \cA^\sharp(\star_h\bfB-\star_{h_b}\bfB_b),\quad A_0^\sharp:=i^*A_b + \wt A_0^\sharp \in H^s(\Sigma_0;T^*\Sigma_0);
  \]
  thus $d A_0^\sharp=i^*(\star_h\bfB)$, and $A_0^\sharp=i^*A_b$ if $(h,\bfB)=(h_b,\bfB_b)$. We make the ansatz
  \[
    A_j = a_j\,dt_* + A_j^\sharp,\quad j=0,1,
  \]
  with $a_j\in H^{s-j}(\Sigma_0)$ and $A_1^\sharp\in H^{s-1}(\Sigma_0;T^*\Sigma_0)$ to be determined. Note now that the knowledge of $g_0$ determines the future unit normal vector field $N$ to $\Sigma_0$, $N\in H^{s+1}(\Sigma_0;T_{\Sigma_0}M)$, which we can write as $N=c\pa_{t_*}+X$ with $0\neq c\in H^{s+1}(\Sigma_0)$ for some vector field $X\in H^{s+1}(\Sigma_0;T\Sigma_0)$ on $\Sigma_0$. The requirement that $-i^*\iota_N d A=\bfE$ at $\Sigma_0$ for $A=A_0+t_* A_1$ then reads $\bfE = c(d a_0-A_1^\sharp) - \iota_X d A_0^\sharp$. Observe that if $N$ and $\bfE$ are induced by $(g_b,A_b)$, then this holds for $a_0=\iota_{\pa_{t_*}}A_b$ and $A_1^\sharp=0$ by definition of $\bfE_b$ and due to the fact that $A_b$ is stationary. In general then, we define
  \[
    a_0 := \iota_{\pa_{t_*}}A_b, \quad A_1^\sharp:=d a_0 - c^{-1}(\bfE + \iota_X d A_0^\sharp),
  \]
  so in particular $A_1^\sharp=0$ if $(h,k,\bfE,\bfB)=(h_b,k_b,\bfE_b,\bfB_b)$.

  Lastly, we arrange the gauge condition $\Ups^M(g,A-A_b)=\tr_{g_0}\delta_{g_{b_0}}^*(A-A_b)=0$ at $\Sigma_0$, which is an equation for $a_1$ of the form
  \[
    G_0(\pa_{t_*},\pa_{t_*}) a_1 \in H^{s-1}
  \]
  here $G_0$ is the dual metric of $g_0$. This determines $a_1\in H^{s-1}(\Sigma_0)$ uniquely: indeed, $G_0(\pa_{t_*},\pa_{t_*})>0$ since this holds with $G_b$ replaced by $G_0$, and in general $G_0$ is close to $G_b$, so the positivity persists. In view of the uniqueness, we have $a_1=0$ if the data $(h,k,\bfE,\bfB)$ are induced by $(g_b,A_b)$, as claimed. The proof is complete.
\end{proof}

We can use the map $i_b$ to construct gauged Cauchy data for the initial value problem for the \emph{linearized} Einstein--Maxwell system \eqref{EqBasicLinEinsteinMaxwell}, linearized around $(g_b,A_b)$ and with initial data $(\hdot,\kdot,\bfEdot,\bfBdot)$; we need to assume that the linearized magnetic charge vanishes, i.e.\ the 2-form
\begin{equation}
\label{EqKNdSIniLinearizedFromPotential}
  D_{h_b,\bfB_b}(\star_{(\cdot)}(\cdot))(\hdot,\bfBdot)
\end{equation}
is closed and has vanishing integral over a non-trivial $\Sph^2\hra\Sigma_0$. For simplicity, we assume the linearized data are $\CI$. We then define
\[
  h(s) = h_b+s\hdot,\ k(s)=k_b+s\kdot,\ \bfE(s)=\bfE_b+s\bfEdot
\]
and
\[
  \star_{h+s\hdot}\bfB(s) = \star_h\bfB + s D_{h_b,\bfB_b}(\star_{(\cdot)}(\cdot))(\hdot,\bfBdot)
\]
for $s$ close to $0$; thus $h'(0)=\hdot$ etc., and $d(s):=(h(s),k(s),\bfE(s),\bfB(s))\in\cU$. It follows that $i_b(d(s))$ is well-defined and smooth in $s$, and then
\[
  (\gdot_0,\Adot_0;\gdot_1,\Adot_1) = \frac{d}{ds}i_b(d(s))\big|_{s=0} = D_{(h_b,k_b,\bfE_b,\bfB_b)}i_b(\hdot,\kdot,\bfEdot,\bfBdot)
\]
gives Cauchy data satisfying the gauge conditions
\begin{equation}
\label{EqKNdSIniLinGauge}
  D_{g_b}\Ups^E(\gdot)=0, \quad \Ups^M(g_b,\Adot)=0,
\end{equation}
for $\gdot$ and $\Adot$ with $\gamma_0(\gdot,\Adot)=(\gdot_0,\Adot_0;\gdot_1,\Adot_1)$.

\section{Linear analysis on slowly rotating KNdS spacetimes}
\label{SecLinAna}

We recall from \cite[\S5]{HintzVasyKdSStability} the main results on the global regularity and asymptotic behavior of solutions to initial value problems for linear wave equations on (asymptotically stationary perturbations of) slowly rotating KNdS\footnote{The results in \cite{HintzVasyKdSStability} are stated for slowly rotating KdS spacetimes, but extend immediately to slowly rotating KNdS spacetimes as well.} spacetimes, in particular in the presence of finite-dimensional spaces of non-decaying or exponentially growing resonant states. We only state those results which are directly needed for our present purposes; we refer the reader to \cite[\S5]{HintzVasyKdSStability} for a detailed discussion of the background, as well as for the proofs.

Let $E\to M$ be a stationary vector bundle, so $E=\pi_X^*E_X$, where $\pi_X\colon M=[0,\infty)_\tau\times X\to X$ is the projection, and $E_X\to X$ is a smooth complex finite rank vector bundle. Let then $L\in\Diffb^2(M;E)$ be an operator acting on sections of $E$ satisfying the following assumptions:
\begin{enumerate}
  \item $L$ is stationary, i.e.\ commutes with dilations in $\tau$.
  \item For fixed KNdS parameters $b$ close to $b_0$, the principal symbol of $L$ is $\sigma_{\bl,2}(L)=G_b\otimes\Id$, where $G_b\in\CI(\Tb^*M)$ is the dual metric function of $g_b$, and $\Id$ is the fiber-wise identity map on $\pi^*E\to\Tb^*M\setminus o$, with $\pi\colon\Tb^*M\to M$ the projection.
  \item With $\wh\rho$, defined in \eqref{EqKNdSGeoFiberDefFn}, denoting the defining function of fiber infinity $\Sb^*M\subset\rcTbdual M$ near the radial set $\cR_{b,\pm}$ at $r=r_{b,\pm}$, see \eqref{EqKNdSGeoRadialSet}, we define
  \[
    \beta \in \CI(\pa\cR_b),\quad \beta|_{\pa\cR_{b,\pm}}=\beta_{b,\pm} > 0,
  \]
  recalling the definition of $\beta_{b,\pm}$ from \eqref{EqKNdSGeoRadPtQuant}. Fixing a positive definite inner product on $E$ at $r=r_{b,\pm}$, write
  \begin{equation}
  \label{EqLinAnaWhBetaPM}
    \pm\wh\rho\sigma_{\bl,1}\Bigl(\frac{1}{2i}(L-L^*)\Bigr) = -\beta_{b,\bullet,0}\wh\beta_\bullet
  \end{equation}
  at $\pa\cR_{b,\bullet}^\pm$, with $\bullet=+,-$ specifying the cosmological (`$+$') and event (`$-$') horizon. Thus, $\wh\beta_\pm\in\CI(\pa\cR_b;\End(\pi^*E))$ is pointwise self-adjoint, and we define
  \begin{equation}
  \label{EqLinAnaWhBeta}
    \wh\beta := \inf_{\pa\cR_b}\wh\beta_\pm
  \end{equation}
  to be its smallest eigenvalue.
  \item Recall from \cite{HintzPsdoInner} the subprincipal operator, defined in a local trivialization of $E$ by
    \begin{equation}
    \label{EqLinAnaSubpr}
      S_\sub(L) = -i\ham_{G_b}\otimes\Id + \sigma_\sub(L) \in \Diffb^1(\Tb^*M\setminus o;\pi^*E),
    \end{equation}
    where we trivialize $\Omegab^\half(M)$ using $|d g_b|^\half$ to define the subprincipal symbol $\sigma_\sub(L)$. This operator is homogeneous of degree $1$ with respect to dilations in the fibers of $\Tb^*M\setminus o$. Denote $\sigma=\sigma_{\bop,1}(\tau D_\tau)$, see also \eqref{EqKNdSGeoTrapDual}. We then require that for all $\alpha_\Gamma>0$, there exists a smooth positive definite fiber inner product $h$ on $\pi^*E$ near $\Gamma$ such that
    \begin{equation}
    \label{EqLinAnaTrap}
      \pm\frac{1}{2 i|\sigma|}(S_\sub(L)-S_\sub(L)^*) < \alpha_\Gamma
    \end{equation}
    at $\Gamma^\pm=\Gamma\cap\{\pm\sigma<0\}$, where the adjoint on the left hand side is taken with respect to the symplectic volume b-density (defined by continuity from $M^\circ$ where it is equal to the standard symplectic volume density) and the fiber inner product $h$; note that the left hand side is a section of $\End(\pi^*E)$ which is pointwise self-adjoint with respect to $h$, and \eqref{EqLinAnaTrap} is a bound for its eigenvalues. (In fact it suffices to demand \eqref{EqLinAnaTrap} for some fixed constant $\alpha_\Gamma>0$ which depends on dynamical quantities associated with the trapping at $\Gamma$, see also \cite[Remark~5.1]{HintzVasyKdSStability}.)
\end{enumerate}

Thus, $\wh\beta$ controls the threshold regularity for microlocal propagation into $\pa\cR_b$, while the condition \eqref{EqLinAnaTrap} guarantees that solutions of $L u=0$ which have above-threshold regularity have an asymptotic expansion into resonances, up to an exponentially decaying remainder term. In order to state this precisely, we first recall the definition of the Mellin-transformed normal operator family $\wh L(\sigma)=\tau^{-i\sigma}L\tau^{i\sigma}$ acting on $\CIc(X;E_X)$; we then have:

\begin{thm}
\label{ThmLinAnaMero}
  (See \cite[Theorem~5.4]{HintzVasyKdSStability}.) There exist $\alpha>0$ and $C,C_1,C_2>0$ such that the following holds for fixed $s>1/2+\alpha\sup(\beta)-\wh\beta$: let\footnote{See Appendix~\ref{SecB} for the definition of Sobolev spaces $\Hext^s$ of extendible distributions.} $\cX^s=\{u\in\Hext^s(Y;E_Y)\colon\wh L(\sigma)\in\Hext^{s-1}(Y;E_Y)\}$, then
  \[
    \wh L(\sigma)\colon\cX^s \to \Hext^{s-1}(Y;E_Y),\quad \Im\sigma\geq-\alpha,
  \]
  is a holomorphic family of Fredholm operators. The inverse $\wh L(\sigma)^{-1}$ exists for $\Im\sigma>C_2$ (so $\wh L(\sigma)^{-1}$ is a meromorphic operator family) as well as for $\Im\sigma=-\alpha$, and the high energy estimate
  \begin{equation}
  \label{EqLinAnaMeroEst}
    \|u\|_{\Hext^s_{\la\sigma\ra^{-1}}(Y;E_Y)} \leq C\|\wh L(\sigma)u\|_{\Hext^{s-1}_{\la\sigma\ra^{-1}}}
  \end{equation}
  holds for $\Im\sigma>-\alpha$ and $|\Re\sigma|>C_1$, as well as for $\Im\sigma=-\alpha$.
\end{thm}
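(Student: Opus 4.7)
The plan is to fit $\wh L(\sigma)$ into the Vasy-type Fredholm framework for b-operators, in which the Fredholm property at each $\sigma$ is assembled from microlocal estimates at the various invariant sets of the Hamilton flow of $G_b$ on $X$, and the high energy/meromorphy statements come from the semiclassical versions of these estimates together with analytic Fredholm theory. Concretely, I would prove estimates of the form
\[
  \|u\|_{\Hext^s_{\la\sigma\ra^{-1}}} \le C\bigl(\|\wh L(\sigma)u\|_{\Hext^{s-1}_{\la\sigma\ra^{-1}}} + \|u\|_{\Hext^{-N}_{\la\sigma\ra^{-1}}}\bigr),
\]
and a dual estimate for the adjoint $\wh L(\sigma)^*$ acting on dual Sobolev spaces of appropriate opposite regularity, for $\Im\sigma\ge -\alpha$ and $|\Re\sigma|\ge C_1$ in the semiclassical version (reading $h=\la\sigma\ra^{-1}$) and without the semiclassical parameter for bounded $\sigma$.

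The proof of these estimates proceeds by combining four microlocal building blocks, exactly as in Vasy's original paper \cite{VasyMicroKerrdS} and its vector-bundle extension in \cite{HintzVasyKdSStability}. First, elliptic regularity handles $\Ell(\wh L(\sigma))$, the complement of the characteristic set $\Sigma_b\cap T^*X$. Second, real principal type propagation of singularities (and its semiclassical version) transports regularity along bicharacteristics of $G_b$ within the characteristic set; propagating in the forward direction along the future light cone $\Sigma_b^+$ and in the backward direction along $\Sigma_b^-$ delivers the information from the radial sets toward the trapped set (and conversely for the dual estimate). Third, radial point estimates at the generalized radial sets $\cR_{b,\pm}$ at the horizons (see \eqref{EqKNdSGeoRadPtQuant}) allow propagation of $\Hext^s$ regularity out of or into $\pa\cR_b$, provided $s$ is above the threshold $1/2+\alpha\sup(\beta)-\wh\beta$ dictated by \eqref{EqLinAnaWhBetaPM}--\eqref{EqLinAnaWhBeta}; this is the precise point where the bundle endomorphism $\wh\beta_\pm$ enters, with the argument being identical to the one in \cite{HintzVasyKdSStability}. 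Fourth, at the normally hyperbolic trapped set $\Gamma$ one invokes the trapping estimate of Dyatlov together with its refinement in \cite{HintzPsdoInner} for operators on vector bundles: the positivity condition \eqref{EqLinAnaTrap} on the skew-adjoint part of $S_\sub(L)$, measured in a suitable fiber metric, is precisely what guarantees that the loss of regularity at the trapped set can be absorbed into a subthreshold error, provided the dynamical quantity $\alpha_\Gamma$ is chosen smaller than the minimal expansion rate at $\Gamma$ computed in Lemma~\ref{LemmaKNdSGeoNormHyp}.

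Once these estimates are in hand, Fredholm mapping $\cX^s\to\Hext^{s-1}$ for each $\sigma$ with $\Im\sigma\ge-\alpha$ follows by standard functional analytic arguments (the compact error term and the dual estimate give finite-dimensional kernel and cokernel), and the family is holomorphic by construction. For $\Im\sigma\gg 0$ one obtains invertibility from a standard energy estimate on the Cauchy problem using the timelike character of $dt_*$ in Lemma~\ref{LemmaKNdS0Ext} (which for $L=\wh L(\sigma)$ translates to an $L^2$ estimate with a large coercive term $\Im\sigma\cdot\|u\|^2$), and then analytic Fredholm theory upgrades this to meromorphy on $\Im\sigma>-\alpha$. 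The semiclassical versions of the same four estimates, combined for $|\Re\sigma|\ge C_1$, yield the high energy bound \eqref{EqLinAnaMeroEst} and in particular the absence of resonances at high frequency in the strip $\Im\sigma\ge-\alpha$; since the remaining resonances in $\Im\sigma>-\alpha$ form a discrete set, one finally shrinks $\alpha$ slightly so that the line $\Im\sigma=-\alpha$ avoids all resonances, which gives the pointwise invertibility there.

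The main obstacle is the vector-valued estimate at the trapped set: the scalar normally hyperbolic trapping estimate of Wunsch--Zworski/Dyatlov does not by itself control principal-type bundle coupling, and one needs the subprincipal-operator formalism of \cite{HintzPsdoInner} to see that \eqref{EqLinAnaTrap} is the correct positivity hypothesis and that the constant $\alpha_\Gamma$ can be taken arbitrarily small by a suitable choice of fiber metric. All other ingredients (elliptic, propagation, radial) are formally the same as in the scalar case up to insertion of the identity endomorphism at principal symbol level, and the Mellin/semiclassical reduction of $L\in\Diffb^2$ to the family $\wh L(\sigma)$ is purely formal given the stationarity assumption.
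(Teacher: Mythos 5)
Your proposal is correct and follows exactly the route the paper takes: the theorem is quoted from \cite[Theorem~5.4]{HintzVasyKdSStability}, whose proof is precisely the Vasy-type assembly of elliptic, real-principal-type, radial-point and normally hyperbolic trapping estimates (the latter via Dyatlov's spectral gap result made applicable by the condition \eqref{EqLinAnaTrap} through the subprincipal operator formalism of \cite{HintzPsdoInner}), combined with energy estimates for $\Im\sigma\gg 0$, analytic Fredholm theory, and a final shrinking of $\alpha$ to avoid resonances on $\Im\sigma=-\alpha$. The paper itself only records the one-sentence remark following the theorem, of which your sketch is a faithful expansion.
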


Indeed, due to \eqref{EqLinAnaTrap}, Dyatlov's results \cite{DyatlovSpectralGaps} apply at the trapped set, which in concert with radial point estimates \cite{VasyMicroKerrdS} yields the estimate \eqref{EqLinAnaMeroEst}. \emph{We fix $\alpha$ as in the statement of this theorem, and always assume that $s$ satisfies the lower bound.}

We define
\[
  \Res(L) \subset \{\sigma\in\C\colon \Im\sigma>-\alpha\}
\]
to be the set of poles of $\wh L(\sigma)^{-1}$ in $\Im\sigma>-\alpha$; for each $\sigma\in\Res(L)$, there are associated finite-dimensional spaces
\begin{align*}
  \Res(L,\sigma) &= \Bigl\{r=\sum_{j=0}^k e^{-i\sigma t_*}t_*^j r_j(x)\colon L r=0,\ r_j\in\CI(Y;E_Y)\Bigr\}, \\
  \Res^*(L,\sigma) &= \Bigl\{r=\sum_{j=0}^k e^{-i\bar\sigma t_*}t_*^j r_j(x)\colon L^*r=0,\ r_j\in\dot\sD(Y;E_Y)\Bigr\},
\end{align*}
of resonant, resp.\ dual resonant states. Thus, for $k=0$, these states are \emph{mode solutions} of $L r=0$, resp.\ $L^* r=0$, which are smooth on $Y$, resp.\ distributional with support in $\{r_{b,-}\leq r\leq r_{b,+}\}$; for $k>0$, they are \emph{generalized mode solutions}. For subsets $V\subset\C$, we also write
\[
  \Res(L,V) := \bigoplus_{\sigma\in V}\Res(L,\sigma),\quad \Res^*(L,V) := \bigoplus_{\sigma\in V}\Res^*(L,\sigma).
\]
Let us write
\begin{equation}
\label{EqLinAnaResTotal}
  R = \bigoplus_{\sigma\in\Res(L)}\Res(L,\sigma),\quad R^* = \bigoplus_{\sigma\in\Res(L)}\Res^*(L,\sigma),
\end{equation}
for the finite-dimensional total spaces of resonant and dual resonant states (recall the implicit assumption $\Im\sigma>-\alpha$). We can then define the continuous linear map
\[
  \lambda \colon \Hbsupp^{-\infty,\alpha}([0,1]_\tau;\Hext^{s-1}(Y;E_Y)) \ni f \mapsto \la f,\cdot\ra \in \cL(R^*,\bar\C).
\]
We recall from \cite[Proposition~5.7]{HintzVasyKdSStability} that $\lambda(f)=0$ is equivalent to the holomorphicity of $\wh L(\sigma)^{-1}\wh f(\sigma)$ in $\Im\sigma>-\alpha$; since $L$ is dilation-invariant, $\lambda(f)=0$ thus guarantees that the forward solution of $L u=f$ does not contain any resonant states from $R$, hence is exponentially decaying at rate $\alpha$. See Theorem~\ref{ThmLinAnaSolvFixed} for the precise statement.

Next, we recall from \cite[Definition~5.6]{HintzVasyKdSStability} the Banach space of initial data and forcing terms,
\[
  D^{s,\alpha}(\Omega;E) := \Hbext^{s,\alpha}(\Omega;E) \oplus \Hext^{s+1}(\Sigma_0;E_{\Sigma_0}) \oplus \Hext^s(\Sigma_0;E_{\Sigma_0}),
\]
equipped with the direct sum norm. Given $u\in\CI(\Omega;E)$, we denote by
\[
  \gamma_0(u) = (u|_{\Sigma_0}, \cL_{\pa_{t_*}}u|_{\Sigma_0})
\]
its Cauchy data, analogously to the definition \eqref{EqBasicNLCauchyData}. The map controlling the asymptotic behavior for the initial value problem $(L,\gamma_0)u=(f,u_0,u_1)\in D^{s,\alpha}(\Omega;E)$ is then
\[
  \lambda_\IVP \colon D^{s-1,\alpha}(\Omega;E) \ni (f,u_0,u_1) \mapsto \lambda(H f+[L,H](u_0+u_1 t_*)) \in \cL(R^*,\ol\C),
\]
where $H=H(t_*)$ is the Heaviside function; note that $[L,H]$, being a first order differential operator with coefficients which are $\delta$ distributions at $\Sigma_0$ (undifferentiated for the coefficients of the highest derivatives, and at most once differentiated otherwise), acting on $u_0+u_1 t_*$ produces an element of the space
\[
  \Hbsupp^{-3/2-0,\alpha}([0,1]_\tau;\Hext^{s-1}(Y;E_Y))
\]
which only depends on $u_0$ and $u_1$.

We can now describe the asymptotic behavior of solutions of the initial value problem
\[
  (L,\gamma_0)u = (f,u_0,u_1) \in D^{s,\alpha}(\Omega;E)
\]
under the assumptions of Theorem~\ref{ThmLinAnaMero}. Namely, $u$ has an asymptotic expansion into resonant states, up to an exponentially decaying remainder:
\begin{equation}
\label{EqLinAnaAsympExp}
  u = \sum_{\sigma\in\Res(L)} u_\sigma + \wt u,
\end{equation}
where $u_\sigma\in\Res(L,\sigma)$, and $\wt u\in\Hbext^{s,\alpha}(\Omega;E)$. More concretely, enumerating the resonances $\Res(L)=\{\sigma_1,\ldots,\sigma_N\}$, we have
\[
  u = \sum_{j=1}^N \sum_{k=1}^{d_j} \sum_{\ell=0}^{n_{j k}} u_{j k\ell} e^{-i\sigma_j t_*}t_*^\ell a_{j k\ell}(x) + \wt u,
\]
where $u_{j k\ell}\in\C$ are complex numbers (determined by the data), while the collection of resonant states
\[
  \sum_{\ell=0}^{n_{j k}} e^{-i\sigma_j t_*}t_*^\ell a_{j k\ell}(x), \quad k=1,\ldots,d_j,
\]
spans $\Res(L,\sigma_j)$.

The fundamental result guaranteeing the solvability of initial value problems for $L$ in decaying function spaces after suitably modifying the data by elements of a fixed finite-dimensional space is the following:

\begin{thm}
\label{ThmLinAnaSolvFixed}
  (See \cite[Corollaries~5.8 and 5.12]{HintzVasyKdSStability}.) With $\alpha>0$ and $s$ as in Theorem~\ref{ThmLinAnaMero}, suppose $\cZ\subset D^{s,\alpha}(\Omega;E)$ is a finite-dimensional linear subspace. If the map $\lambda_\IVP|_\cZ\colon\cZ\to\cL(R^*,\ol\C)$ is surjective, then for all $(f,u_0,u_1)\in D^{s,\alpha}(\Omega;E)$, there exists an element $z\in\cZ$, depending continuously on $(f,u_0,u_1)$, such that the solution of the initial value problem
  \[
    (L,\gamma_0)u=(f,u_0,u_1)+z
  \]
  satisfies $u\in\Hbext^{s,\alpha}(\Omega)$.
\end{thm}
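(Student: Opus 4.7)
The plan is to reduce Theorem~\ref{ThmLinAnaSolvFixed} to the single fact that the map $\lambda_\IVP$ precisely measures the obstruction to the forward solution of $(L,\gamma_0)u=(f,u_0,u_1)$ lying in $\Hbext^{s,\alpha}(\Omega;E)$, a fact already encoded (for forcing problems with zero Cauchy data) in the discussion of $\lambda$ preceding the theorem and, by hypothesis, in \cite[Proposition~5.7]{HintzVasyKdSStability}. Concretely, I would reformulate the initial value problem as a forward forcing problem on $M$: if $u$ is the forward solution of $(L,\gamma_0)u=(f,u_0,u_1)$, and we let $\tilde u = H\cdot u$ where $H=H(t_*)$ is the Heaviside function, then $\tilde u\in\Hbsupp^{\cdot,\cdot}$ solves
\[
  L\tilde u \;=\; H f \;+\; [L,H](u_0+u_1 t_*) \;=:\; F(f,u_0,u_1),
\]
with $F(f,u_0,u_1)\in\Hbsupp^{-3/2-0,\alpha}([0,1]_\tau;\Hext^{s-1}(Y;E_Y))$ depending continuously and linearly on the data.

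Next, I would exploit the meromorphy of $\wh L(\sigma)^{-1}$ on $\Im\sigma>-\alpha$ and the high energy bound~\eqref{EqLinAnaMeroEst} from Theorem~\ref{ThmLinAnaMero}: taking the Mellin transform of the above equation and applying $\wh L(\sigma)^{-1}$, the inverse Mellin transform along $\Im\sigma\gg 0$ recovers $\tilde u$, and contour-shifting down to $\Im\sigma=-\alpha$ picks up residues exactly at the resonances $\sigma\in\Res(L)$. By \cite[Proposition~5.7]{HintzVasyKdSStability} applied to $F(f,u_0,u_1)$, the sum of these residues vanishes if and only if $\lambda(F(f,u_0,u_1))=\lambda_\IVP(f,u_0,u_1)=0$, in which case the contour shift together with \eqref{EqLinAnaMeroEst} yields $\tilde u\in\Hbext^{s,\alpha}$, hence $u\in\Hbext^{s,\alpha}(\Omega;E)$.

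The surjectivity hypothesis now enters trivially. Since $\cZ$ is finite-dimensional and $\lambda_\IVP|_\cZ\colon\cZ\to\cL(R^*,\ol\C)$ is surjective, we may choose a continuous linear right inverse $S\colon\cL(R^*,\ol\C)\to\cZ$ (pick any algebraic complement of $\Ker(\lambda_\IVP|_\cZ)$ in $\cZ$ and invert). Define
\[
  z \;:=\; -S\bigl(\lambda_\IVP(f,u_0,u_1)\bigr)\;\in\;\cZ;
\]
then $\lambda_\IVP$ applied to the modified data $(f,u_0,u_1)+z$ vanishes, since $\lambda_\IVP$ is linear and $\lambda_\IVP(z)=-\lambda_\IVP(f,u_0,u_1)$. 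By the previous paragraph, the forward solution of $(L,\gamma_0)u=(f,u_0,u_1)+z$ lies in $\Hbext^{s,\alpha}(\Omega;E)$, and $z$ depends continuously on $(f,u_0,u_1)$ because $\lambda_\IVP$ and $S$ are continuous linear maps on the respective Banach (resp.\ finite-dimensional) spaces.

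There is no serious obstacle in this argument once Theorem~\ref{ThmLinAnaMero} and \cite[Proposition~5.7]{HintzVasyKdSStability} are available; the only point requiring a little care is to verify that the residue computation genuinely identifies the obstruction to exponential decay as an element of $\cL(R^*,\ol\C)$ dual to the total space $R^*$ of dual resonant states in \eqref{EqLinAnaResTotal}, and that the duality pairing used in defining $\lambda$ on $\Hbsupp^{-\infty,\alpha}$-distributions extends naturally to the distributional forcing $[L,H](u_0+u_1 t_*)$ — but this is exactly the content of the definition of $\lambda_\IVP$ preceding the theorem, and follows from the fact that $R^*$ consists of states supported in $\{r_{b,-}\leq r\leq r_{b,+}\}$ while $[L,H](u_0+u_1 t_*)$ is supported at $\Sigma_0$, so the pairing is well defined.
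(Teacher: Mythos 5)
Your argument is correct and is essentially the proof given in the cited reference \cite[Corollaries~5.8 and 5.12]{HintzVasyKdSStability} (the present paper only quotes the result): rewrite the Cauchy problem as a forcing problem via the Heaviside cutoff, shift the Mellin-transform contour using Theorem~\ref{ThmLinAnaMero} so that the obstruction to membership in $\Hbext^{s,\alpha}$ is exactly the vanishing of $\lambda_\IVP$, and then use surjectivity of $\lambda_\IVP|_\cZ$ together with a linear right inverse to choose $z$ cancelling that obstruction. The only cosmetic remark is that the well-definedness of the pairing defining $\lambda_\IVP$ rests on the regularity/duality matching (dual resonant states are smooth in $t_*$ with values in supported distributions on $Y$, pairing against $\delta$-coefficients in $t_*$ with values in $\Hext^{s-1}(Y;E_Y)$) rather than on disjointness of supports, but this does not affect the argument.
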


This result suffices to prove the linear stability of non-degenerate RNdS spacetimes, see the discussion at the beginning of \S\ref{SecLin}.

We proceed to discuss the stability under perturbations. First, we consider finite-dimensional families of stationary perturbations. Thus, suppose $W\subset\R^{N_W}$ is an open neighborhood of some fixed $w_0\in\R^{N_W}$, and suppose that for each $w\in W$, we are given a stationary (dilation-invariant) operator
\[
  L_w\in\Diffb^2(M;E),
\]
depending continuously on $w$, such that $L_{w_0}$ satisfies the assumptions stated above for $L$. Suppose moreover that
\[
  \sigma_{\bl,2}(L_w)(\zeta) = |\zeta|_{G_{b(w)}}^2\otimes\Id,
\]
with $b(w)\in B$ depending continuously on $w\in W$. As shown in \cite[\S5.1.2]{HintzVasyKdSStability}, the conclusions of Theorem~\ref{ThmLinAnaMero} hold for the operator $L_w$, $w\in W$, as well, shrinking $W$ if necessary. One then has:

\begin{thm}
\label{ThmLinAnaSolvPertStat}
  (See \cite[Corollary~5.12]{HintzVasyKdSStability}.) Let $V$ be a small open neighborhood of $\Res(L_{w_0})$. Suppose $N_\cZ\in\N_0$, and suppose
  \[
    z \colon W\times\C^{N_\cZ} \to D^{s,\alpha}(\Omega;E)
  \]
  is continuous, and linear in the second argument. For $w\in W$, define the map
  \[
    \lambda_w \colon \C^{N_\cZ} \ni \bfc \mapsto \lambda_\IVP(z(w,\bfc)) \in \cL(\Res^*(L_w,V),\ol\C).
  \]
  Assume that $\lambda_{w_0}$ is surjective. Then $\lambda_w$ for $w\in W$ near $w_0$ is surjective as well, and there exists a continuous map
  \[
    W \times D^{s,\alpha}(\Omega;E) \ni (w,(f,u_0,u_1)) \mapsto \bfc \in \C^{N_\cZ},
  \]
  linear in the second argument, such that the solution of the initial value problem
  \[
    (L_w,\gamma_0)u=(f,u_0,u_1) + z(w,\bfc)
  \]
  satisfies $u\in\Hb^{s,\alpha}(\Omega;E)$, with continuous dependence on $(w,(f,u_0,u_1))$.
\end{thm}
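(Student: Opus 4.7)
The plan is to reduce to Theorem~\ref{ThmLinAnaSolvFixed} applied to $L_w$ for each $w$ near $w_0$; this requires two extensions of the fixed-operator analysis. First, the meromorphic Fredholm theory of Theorem~\ref{ThmLinAnaMero} for $\wh{L_w}(\sigma)^{-1}$ must hold uniformly in $w$, with a common spectral gap $\alpha$ and a common bounded region containing all resonances in $\Im\sigma>-\alpha$. Second, the hypothesis that $\lambda_w$ is surjective must persist under perturbation, which demands a continuous identification of the dual resonant spaces $\Res^*(L_w,V)$ as $w$ varies.

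For the first extension, by continuous dependence of the principal symbol $G_{b(w)}$, the subprincipal operator $S_\sub(L_w)$, the radial-point quantities $\beta_{b(w),\pm,0}$ and $\wh\beta_\pm$, and the trapping bound \eqref{EqLinAnaTrap} on $w$ --- as indicated by the discussion of stationary perturbations in \cite[\S5.1.2]{HintzVasyKdSStability} --- Theorem~\ref{ThmLinAnaMero} extends to $L_w$ with the same $\alpha$ after shrinking $W$, and one may arrange $\Res(L_w)\subset V$. For the second extension, pick a positively oriented contour $\gamma$ enclosing $V$ that lies in the joint resolvent set of $\{\wh{L_w}(\sigma):w\in W\}$, and consider the Riesz-type spectral projectors obtained by integrating $\wh{L_w}(\sigma)^{-1}$ along $\gamma$; the ranges of these projectors and their adjoints furnish natural finite-dimensional representations of $\Res(L_w,V)$ and $\Res^*(L_w,V)$, respectively. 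A perturbative Fredholm argument --- using that $\wh{L_w}(\sigma)-\wh{L_{w_0}}(\sigma)$ is a continuous family of second-order b-differential operators, small as bounded maps $\cX^s\to\Hext^{s-1}$ --- shows that $\wh{L_w}(\sigma)^{-1}$ depends continuously on $w$ uniformly on the compact contour $\gamma$. Hence these projectors depend continuously on $w$, the dimension $N_R:=\dim\Res^*(L_w,V)$ is locally constant, and applying the adjoint projector to a fixed basis of $\Res^*(L_{w_0},V)$ yields a continuous basis of $\Res^*(L_w,V)$ for $w$ near $w_0$.

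Via this basis, identify $\cL(\Res^*(L_w,V),\ol\C)\cong\C^{N_R}$; in these coordinates, $\lambda_w\colon\C^{N_\cZ}\to\C^{N_R}$ is a continuous family of linear maps. Since surjectivity between fixed finite-dimensional spaces is an open condition, $\lambda_{w_0}$ surjective implies $\lambda_w$ surjective for $w$ near $w_0$, and a continuous right inverse $\lambda_w^\dagger$ (for instance the Moore--Penrose pseudoinverse) exists. Setting
\[
  \bfc(w,(f,u_0,u_1)) := -\lambda_w^\dagger\bigl(\lambda_\IVP(f,u_0,u_1)\bigr),
\]
which is continuous in $w$ and linear in $(f,u_0,u_1)$, ensures that the modified forcing $(f,u_0,u_1)+z(w,\bfc)$ pairs to zero with every element of $\Res^*(L_w,V)=\Res^*(L_w)$. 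The contour-deformation and Mellin inversion argument underlying \cite[Corollary~5.8]{HintzVasyKdSStability}, applied to the operator $L_w$ using the uniform bounds from the first extension, then produces the desired exponentially decaying solution $u\in\Hb^{s,\alpha}(\Omega;E)$ depending continuously on $(w,(f,u_0,u_1))$. The chief technical point is the uniform continuity of $\wh{L_w}(\sigma)^{-1}$ along $\gamma$, which simultaneously underwrites the continuous variation of the spectral projectors and of the $\lambda_w$-data of the forcing; this is precisely the content of the perturbative Fredholm theory for stationary b-differential perturbations developed in \cite[\S5.1.2]{HintzVasyKdSStability}.
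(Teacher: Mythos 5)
Your proposal is correct and takes essentially the same route as the paper, which proves this statement only by citation to \cite[Corollary~5.12]{HintzVasyKdSStability}: the uniform-in-$w$ extension of Theorem~\ref{ThmLinAnaMero} is precisely the content of \cite[\S5.1.2]{HintzVasyKdSStability}, and the continuous identification of $\Res^*(L_w,V)$ via contour-integral spectral projectors, the openness of surjectivity, and a continuous linear right inverse feeding into the fixed-operator solvability result is exactly how that corollary is established. Nothing further is needed.
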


This result will allow us to deduce the linear stability of slowly rotating KNdS black holes perturbatively from the linear stability of RNdS black holes, see Theorem~\ref{ThmLinKNdS}.

Finally, we discuss the extension to operators which are stationary up to an exponentially decaying, finite regularity remainder, which is the central analytic ingredient in our proof of non-linear stability in \S\ref{SecNL}. Thus, with $L_w$ as above, we consider operators
\[
  L_{w,\wt w} = L_w + \wt L_{w,\wt w},
\]
where
\[
  \wt w\in\wt W^s = \{\wt u\in\Hbext^{s,\alpha}(\Omega;E) \colon \|\wt u\|_{\Hbext^{14,\alpha}}<\eps\},
\]
with $\eps>0$ small and $s\geq 14$; further $\wt L_{w,\wt w}\in\Hbext^{s,\alpha}(\Omega)\Diffb^2(\Omega;E)$ is principally scalar, with coefficients decaying at the rate $\alpha$ given in Theorem~\ref{ThmLinAnaMero}. We assume $\wt L_{w_0,0}=0$, and we require
\[
  \|\wt L_{w_1,\wt w_1}-\wt L_{w_2,\wt w_2}\|_{\Hbext^{s,\alpha}\Diffb^2} \lesssim \bigl(|w_1-w_2|+\|\wt w_1-\wt w_2\|_{\Hbext^{s,\alpha}}\bigr),
\]
with the implicit constant depending on $s$. For $s=14$, this in particular states that $\wt L_{w,\wt w}$ is a bounded family of operators in $\Hbext^{14,\alpha}\Diffb^2(\Omega;E)$. Then:

\begin{thm}
\label{ThmLinAnaSolvPert}
  (See \cite[Theorem~5.14]{HintzVasyKdSStability}.) Suppose the eigenvalue $\wh\beta$, defined in \eqref{EqLinAnaWhBeta}, satisfies $\wh\beta\geq -1$. Let $N_\cZ\in\N_0$, and suppose
  \[
    z \colon W\times\wt W^s\times\C^{N_\cZ} \to D^{s,\alpha}(\Omega;E)
  \]
  is a continuous map, linear in the last argument. Suppose moreover that the map
  \[
    \C^{N_\cZ} \ni \bfc \mapsto \lambda_\IVP(z(w_0,0,\bfc)) \in \cL(R^*,\ol\C),
  \]
  with $R^*$ as in \eqref{EqLinAnaResTotal} for $L=L_{w_0,0}$, is surjective. Then there exists a continuous map
  \[
  \begin{split}
    S\colon W\times\wt W^\infty\times D^{\infty,\alpha}(\Omega;E) &\ni (w,\wt w,(f,u_0,u_1)) \\
      &\qquad \mapsto (\bfc, u)\in \C^{N_\cZ}\otimes\Hbext^{\infty,\alpha}(\Omega;E),
  \end{split}
  \]
  linear in the last argument, such that $u$ solves the initial value problem
  \[
    (L_{w,\wt w},\gamma_0)u=(f,u_0,u_1) + z(w,\wt w,\bfc).
  \]
  Furthermore, the map $S$ satisfies the estimates
  \begin{equation}
  \label{EqLinAnaSolvPertEst}
  \begin{gathered}
    |\bfc| \leq C\|(f,u_0,u_1)\|_{D^{13,\alpha}}, \\
    \|u\|_{\Hbext^{s,\alpha}} \leq C_s\bigl(\|(f,u_0,u_1)\|_{D^{s+3,\alpha}} + (1+\|\wt w\|_{\Hbext^{s+4,\alpha}})\|(f,u_0,u_1)\|_{D^{13,\alpha}}\bigr),
  \end{gathered}
  \end{equation}
  for $s\geq 10$. In fact, $S$ is defined for any $\wt w$ and $(f,u_0,u_1)$ for which the right hand sides of these estimates are finite; the thus extended $S$ still satisfies the same estimates.
\end{thm}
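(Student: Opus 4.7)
The plan is to reduce to the stationary perturbation result, Theorem~\ref{ThmLinAnaSolvPertStat}, by treating $\wt L_{w,\wt w}$ as a forcing term. Writing the equation as
\[
  (L_w,\gamma_0) u = (f - \wt L_{w,\wt w} u,\, u_0,\, u_1) + z(w,\wt w,\bfc),
\]
one defines a map $T\colon u \mapsto u'$, where given $u$ one invokes Theorem~\ref{ThmLinAnaSolvPertStat} with right-hand side $(f-\wt L_{w,\wt w}u,u_0,u_1)$ to produce $\bfc(u)\in\C^{N_\cZ}$ (uniquely determined by the linear asymptotic map $\lambda_\IVP$, using surjectivity at $w_0$ together with Fredholm stability under perturbation from $w_0$ to $w$ and the resulting identification of $\Res^*(L_w,V)$ with $R^*$) and the corresponding $u'\in\Hbext^{s,\alpha}(\Omega;E)$. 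A fixed point of $T$ is then the sought solution.

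For existence, I would work first at the base regularity $s=13$ (the exponent appearing in the estimate for $\bfc$). The key point is that the Lipschitz assumption together with $\wt L_{w_0,0}=0$ yields
\[
  \|\wt L_{w,\wt w}\|_{\Hbext^{14,\alpha}\Diffb^2} \lesssim |w-w_0|+\|\wt w\|_{\Hbext^{14,\alpha}} \leq |w-w_0|+\eps.
\]
Combined with the continuity estimate $\|T(u_1)-T(u_2)\|_{\Hbext^{13,\alpha}} \leq C\|\wt L_{w,\wt w}(u_1-u_2)\|_{\Hbext^{12,\alpha}}$ from Theorem~\ref{ThmLinAnaSolvPertStat}, the multiplication property $\Hbext^{14,\alpha}\cdot\Hbext^{13,\alpha}\hookrightarrow\Hbext^{13,\alpha}$ (valid since $14>\half\dim\Omega+1$ and weights add favorably), shrinking $W$ and $\eps$ makes $T$ a contraction on a small ball in $\Hbext^{13,\alpha}$. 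This gives a unique low-regularity solution $u$ together with $\bfc$, and the first estimate in \eqref{EqLinAnaSolvPertEst} follows from the $s=13$ bound of Theorem~\ref{ThmLinAnaSolvPertStat} applied to the fixed point.

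To obtain the tame estimate for $s\geq 10$, I would propagate regularity by rewriting the fixed-point identity as
\[
  (L_w,\gamma_0) u = (f - \wt L_{w,\wt w}u,\, u_0,\, u_1) + z(w,\wt w,\bfc),
\]
and applying Theorem~\ref{ThmLinAnaSolvPertStat} at level $s+3$. The bound on the right-hand side uses a Moser-type product estimate: since $\wt L_{w,\wt w}$ has coefficients of $\Hbext^{s+3,\alpha}$-type, one has
\[
  \|\wt L_{w,\wt w}u\|_{\Hbext^{s+2,\alpha}} \lesssim \|\wt w\|_{\Hbext^{s+4,\alpha}}\|u\|_{\Hbext^{13,\alpha}} + \|u\|_{\Hbext^{s+4,\alpha}}\cdot(|w-w_0|+\eps),
\]
where the loss of two derivatives accounts for the $\Diffb^2$ order of $\wt L$, and the additional shift between $s+2$ and $s+4$ on the coefficient side comes from the standard low-high Moser split (so that the high-regularity factor carries a small coefficient). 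The smallness on the second term allows one to absorb $\|u\|_{\Hbext^{s+4,\alpha}}$ after iterating from $s=13$ upward; induction on $s$ then yields the tame bound in \eqref{EqLinAnaSolvPertEst}. Continuity of $(w,\wt w,(f,u_0,u_1))\mapsto(\bfc,u)$ follows by running the same contraction argument for differences and using the Lipschitz dependence of $\wt L_{w,\wt w}$.

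The main difficulty is the tame estimate step, specifically the combinatorics of the derivative loss: one must verify that the Moser-type product inequalities in weighted extendible b-Sobolev spaces close precisely with the loss ``$s \mapsto s+4$'' stated in the theorem, which requires inspecting the interaction of the $+3$ loss inherent to Theorem~\ref{ThmLinAnaSolvPertStat} (from the auxiliary regularity of $z$ and the standard one-derivative loss in the inverse of $L_w$) with the second-order nature of $\wt L$ and the Sobolev embedding threshold used in multiplying coefficients by $u$. Once the arithmetic of losses is verified in the b-setting — for which the relevant multiplication properties follow from the boundedness theorems for $\Psib$-operators acting on $\Hbext^{s,\alpha}$ — the iteration and fixed-point arguments are standard, and the extension to smooth $\wt w$ and data follows by the same estimates applied at every $s$.
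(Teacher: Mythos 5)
There is a genuine gap: your fixed-point scheme loses derivatives and cannot close. The map $T$ you define sends $u$ to the solution of $(L_w,\gamma_0)u'=(f-\wt L_{w,\wt w}u,u_0,u_1)+z$, and Theorem~\ref{ThmLinAnaSolvPertStat} produces $u'\in\Hbext^{s,\alpha}$ only from forcing in $\Hbext^{s,\alpha}$ --- there is no derivative gain in these spaces (the high energy estimate \eqref{EqLinAnaMeroEst} already gives up a power of $\la\sigma\ra$ at the trapping, so the data space $D^{s,\alpha}$ pairs $f\in\Hbext^{s,\alpha}$ with a solution in $\Hbext^{s,\alpha}$). Since $\wt L_{w,\wt w}\in\Hbext^{s,\alpha}\Diffb^2$ is genuinely second order, $u\in\Hbext^{13,\alpha}$ only yields $\wt L_{w,\wt w}u\in\Hbext^{11,2\alpha}$, hence $T(u)\in\Hbext^{11,\alpha}$: $T$ does not map a ball in $\Hbext^{13,\alpha}$ into itself, and the contraction estimate $\|T(u_1)-T(u_2)\|_{\Hbext^{13,\alpha}}\leq C\|\wt L_{w,\wt w}(u_1-u_2)\|_{\Hbext^{12,\alpha}}$ you invoke is not what Theorem~\ref{ThmLinAnaSolvPertStat} provides. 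The same two-derivative loss makes the absorption step in your tame estimate circular: to bound $\|u\|_{\Hbext^{s,\alpha}}$ you must already control $\|u\|_{\Hbext^{s+4,\alpha}}$, and smallness of $\eps$ cannot compensate for a loss of derivatives --- this is exactly the obstruction that forces the Nash--Moser scheme in the non-linear problem. A telltale symptom is that your argument never uses the hypothesis $\wh\beta\geq-1$, which is specific to this theorem and absent from Theorem~\ref{ThmLinAnaSolvPertStat}.

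For comparison, the present paper does not prove this statement but cites \cite[Theorem~5.14]{HintzVasyKdSStability}, and the proof there avoids any iteration in regularity. One first establishes the energy, real principal type, radial point, and normally hyperbolic trapping estimates \emph{directly for the non-smooth operator} $L_{w,\wt w}$, whose principal part is a decaying, finite-regularity perturbation of the smooth stationary $G_{b(w)}$; the hypothesis $\wh\beta\geq-1$ enters here because the radial point threshold condition must hold for a whole range of weights, not just the final weight $\alpha$. This produces a solution in a space with a weak (a priori growing) weight. The decay is then upgraded by writing $L_w u=(f-\wt L_{w,\wt w}u)+z$ and applying the resonance expansion of the \emph{stationary} operator $L_w$ a finite number of times: the coefficients of $\wt L_{w,\wt w}$ carry an extra factor of $\tau^\alpha$, so each pass improves the \emph{weight} of $u$ rather than demanding more regularity, and the process terminates once the weight $\alpha$ is reached. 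The specific numerology in \eqref{EqLinAnaSolvPertEst} records the fixed, finite derivative losses of this two-stage argument, not the bookkeeping of an infinite iteration.
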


\section{Mode stability for the linearized Einstein--Maxwell system}
\label{SecMS}

We now study non-decaying (generalized) mode solutions of the Einstein--Max\-well system \eqref{EqBasicLinEinsteinMaxwell} linearized around the non-de\-gen\-er\-ate RNdS spacetime $(M,g_{b_0})$. We show that the only non-trivial such solutions (i.e.\ which are not pure gauge solutions) are stationary and correspond to infinitesimal changes in the parameters of the black hole. \emph{For the rest of this section, we drop the subscript `$b_0$', so $(g,A)\equiv(g_{b_0},A_{b_0})$, etc. We also write $Q\equiv Q_e$.}

\begin{thm}
\label{ThmMS}
  Let $\sigma\in\C$, $\Im\sigma\geq 0$, and $k\in\N_0$, and let
  \begin{equation}
  \label{EqMSGenMode}
    (\gdot,\Adot) = \sum_{j=0}^k e^{-i\sigma t_*}t_*^j (\gdot_j,\Adot_j), \quad \gdot_j\in\CI(Y;S^2 T^*_Y\Omega^\circ),\ \Adot_j\in\CI(Y;T^*_Y\Omega^\circ),
  \end{equation}
  be a generalized mode solution of the linearized Einstein--Maxwell system $\sL(\gdot,\Adot)=0$, linearized around the RNdS solution $(g,A)$, see \eqref{EqBasicLinEinsteinMaxwellExpl}. Then there exist $V\in\CI(\Omega^\circ;\TC\Omega^\circ)$ and $a\in\CI(\Omega^\circ;\C)$ as well as parameters $b'\in\R^5$ such that
  \begin{equation}
  \label{EqMSRes}
    (\gdot,\Adot) = (g'(b'),A'(b')) + (\cL_V g,\cL_V A + d a).
  \end{equation}
  In fact, $(V,a)$ is a generalized mode as well, containing at most one higher power of $t_*$ than $(\gdot,\Adot)$. Thus, if $\sigma\neq 0$, then $b'=0$, and $(\gdot,\Adot)$ is a pure gauge solution, while generalized mode solutions with frequency $\sigma=0$ are linearized KNdS solutions up to a pure gauge solution.
\end{thm}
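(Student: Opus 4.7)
The plan is to follow a Kodama--Ishibashi-style mode decomposition adapted to the RNdS background, first establishing the pure mode case ($k=0$) and then deducing the generalized mode case by induction on $k$. For the induction step, note that $\sL$ has $t_*$-independent coefficients and is of order $2$ in $\pa_{t_*}$; substituting \eqref{EqMSGenMode} and matching the highest power of $t_*$ yields $\wh\sL(\sigma)(\gdot_k,\Adot_k)=0$, so the top coefficient is itself an honest mode. Invoking the $k=0$ result gives $(\gdot_k,\Adot_k)=(g'(b'),A'(b'))+(\cL_{V_k}g,\cL_{V_k}A+da_k)$, with $b'=0$ when $\sigma\neq 0$. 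Taking $\wt V=e^{-i\sigma t_*}t_*^k V_k$ and $\wt a=e^{-i\sigma t_*}t_*^k a_k$ and subtracting the corresponding gauge solution (plus a $t_*^k$ multiple of the linearized KNdS piece when $\sigma=0$) kills the leading coefficient; since the Leibniz rule for $\cL$ introduces lower-order $t_*^j$ terms, the generator acquires at most one extra power of $t_*$, matching the stated bound. Iterating reduces to $k=0$.

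For the pure mode case, I would exploit the $SO(3)$ symmetry of $(g,A)$ to decompose $(\gdot_0,\Adot_0)$ into tensor spherical harmonics, separating scalar-type (even parity) and vector-type (odd parity) perturbations within each $(\ell,m)$ sector; the linearized system decouples across these sectors. For $\ell\geq 2$, construct Moncrief/Kodama--Ishibashi gauge-invariant master variables; because $Q\neq 0$, the gravitational and electromagnetic sectors couple into a $2\times 2$ Schr\"odinger-type system in $(t,r)$, which a Chandrasekhar-type algebraic transformation decouples into two independent Regge--Wheeler-type scalar master equations. The heart of the argument is then mode stability for each of these scalar equations on the exterior $r\in(r_{b_0,-},r_{b_0,+})$: extending the candidate mode smoothly across both horizons in the manner of \cite{HintzVasyKdsFormResonances} and running a boundary-pairing/Whiting-type argument rules out non-trivial modes with $\Im\sigma\geq 0$, while in the $\ell\geq 2$ sectors any $\sigma=0$ solution regular at both horizons must vanish. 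A standard reconstruction then writes $(\gdot_0,\Adot_0)|_{(\ell,m)}$ as pure gauge.

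The low multipoles $\ell\in\{0,1\}$, which lie outside the master equation scheme, require a direct treatment. For $\ell=0$ one imposes a spherically symmetric gauge and explicitly integrates the resulting radial ODE system, identifying the non-gauge part at $\sigma=0$ with variations $\delta\bhm$ and $\delta Q$ and checking that all non-stationary modes are pure gauge. For $\ell=1$ vector type, the stationary modes parametrize infinitesimal rotations $\delta\bfa\in\R^3$; for $\ell=1$ scalar type, every mode with $\Im\sigma\geq 0$ is pure gauge. Combined with the $\ell\geq 2$ analysis and the induction in $k$, this proves \eqref{EqMSRes}.

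The main obstacle I anticipate is the coupled master system for $\ell\geq 2$: finding a Chandrasekhar-type decoupling valid for all parameters in the non-degenerate range that produces effective potentials amenable to a single-equation mode stability argument, and carefully excluding zero-energy resonant states. A second delicate point is organizing the reconstruction and the low-$\ell$ computations so that the gauge generator produced is itself a generalized mode with polynomial degree at most $k+1$, which is needed for the induction in Step~1 to close.
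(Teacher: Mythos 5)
Your overall strategy --- spherical harmonic decomposition, Kodama--Ishibashi gauge-invariant master variables, decoupling of the coupled gravitational/electromagnetic $2\times 2$ system into two scalar Schr\"odinger equations, S-deformation/positivity of the potentials plus a boundary-pairing argument, and separate treatment of $l=0,1$ --- is essentially the route the paper takes, and the anticipated difficulties you list (finding the decoupling constants valid for the whole non-degenerate parameter range, zero-energy states, and controlling the polynomial degree of the gauge generator) are exactly the delicate points.

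However, your induction on $k$ has a genuine gap at the step where the top coefficient contains a non-gauge piece. Matching the highest power of $t_*$ indeed gives $\wh\sL(\sigma)(\gdot_k,\Adot_k)=0$, and when the $k=0$ result says $(\gdot_k,\Adot_k)$ is \emph{pure gauge}, subtracting $(\cL_{t_*^k e^{-i\sigma t_*}V_k}g,\,\cL_{t_*^k e^{-i\sigma t_*}V_k}A+d(t_*^k e^{-i\sigma t_*}a_k))$ works, because $(\cL_W g,\cL_W A+d a)$ solves $\sL=0$ for \emph{any} $W$ and $a$. But when $\sigma=0$ and $(\gdot_k,\Adot_k)$ contains a linearized KNdS piece $(g'(b'),A'(b'))$ with $b'\neq 0$ and $k\geq 1$, your proposed subtraction of ``a $t_*^k$ multiple of the linearized KNdS piece'' destroys the solution property: $t_*^k(g'(b'),A'(b'))$ is \emph{not} a solution of $\sL=0$ (the operator does not commute with multiplication by $t_*^k$, and this tensor is neither a derivative of the KNdS family nor of the form $\cL_W(g,A)+(0,d a)$). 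So the difference is no longer a generalized mode solution and the induction hypothesis cannot be invoked. The paper is explicit about this obstruction and circumvents it by restricting the clean induction (Corollary~\ref{CorMS2Log}) to the $l\geq 2$ sectors, where all modes are pure gauge, and by treating generalized modes \emph{directly} in the sectors admitting non-gauge stationary solutions: for vector $l=1$ the constancy of $c=r^4\hstar\hd(r^{-1}f)-4QrK$ together with the positive-potential wave equation \eqref{EqMS1vASWave} forces $rK$ to be stationary, after which a single (not $t_*^k$-weighted) linearized KNdS solution is subtracted; for $l=0$ the Birkhoff-type argument applies to general spherically symmetric perturbations without any mode ansatz. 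To close your argument you would either need to prove separately that the $t_*^k$-coefficient for $k\geq 1$ can never contain a linearized KNdS piece, or restructure the low-multipole analysis to handle generalized modes from the start, as the paper does. A related, smaller omission: in the scalar $l=1$ sector the gauge generator needed to impose $H_T^E=0$ at $\sigma=0$ a priori grows linearly in $t_*$, and ruling out that growth (the $\lambda=0$ argument via the horizon behavior of $\Psi_{-,1}^0$) is a nontrivial part of establishing the degree bound $k+1$ you need.
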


This result relies in a crucial manner on the fact that the spacetime under consideration has low dimension (here $4$), as discussed in \S\ref{SubsecIntroWork}. We will follow the method of Kodama--Ishibashi for the study of perturbations of Schwarzschild--de~Sitter spacetimes \cite{KodamaIshibashiMaster}, building on \cite{KodamaIshibashiSetoBranes,KodamaSasakiPerturbation}, and extended to the charged case in \cite{KodamaIshibashiCharged}. The latter paper deals with a more general problem (working in general dimension, and including source terms beyond Maxwell),\footnote{In the notation of \cite{KodamaIshibashiCharged}, we discuss the case $n=2$, $\kappa^2=2$ (see \cite[equation~\KI{2}{12}]{KodamaIshibashiCharged}), $q=Q$, $\lambda=\Lambda/3$ (see \cite[equation~\KI{2}{14}]{KodamaIshibashiCharged}), so $E_0=Q/r^2$ (see \cite[equation~\KI{2}{9}]{KodamaIshibashiCharged}), and there are a number of sign changes due to the different sign convention adopted here.} but does not provide a full discussion of a number of special types of perturbations: spherically symmetric perturbations, which correspond to changes in the black hole parameters by the Birkhoff theorem, as pointed out in \cite[\S5]{KodamaIshibashiCharged}; scalar $l=1$ perturbations (see~\eqref{EqMSScalar} below), which are special due to their relationship with rotational Killing vector fields on $\Sph^2$ --- we extend the treatment in \cite[Appendix~D]{KodamaIshibashiCharged} by giving a detailed description of their pure gauge character as needed for the study of generalized modes; and lastly stationary, non spherically symmetric, perturbations, which were previously only treated in the uncharged case in \cite[\S4]{KodamaIshibashiMaster} --- we show how the arguments in the reference can be extended to the charged case. Since dealing with these special types necessitates working out the full perturbation equations, we give a detailed proof of Theorem~\ref{ThmMS} for \emph{all} types of perturbations in the setting of interest in the present paper; thus, we include a complete discussion also of those cases (non-stationary modes with $l\geq 2$ and vector $l=1$ modes) for which a sufficient treatment was already given in \cite{KodamaIshibashiCharged}.

The first step in the proof is a decomposition of $(\gdot,\Adot)$ into scalar and vector spherical harmonics; since the RNdS metric $g$ is spherically symmetric, each component by itself will be annihilated by $\sL$. To explain the decomposition, we write
\begin{equation}
\label{EqMSCalcMetric}
  g = \hg - r^2\slg
\end{equation}
on the 4-dimensional manifold $M^\circ=\hX_x\times\Sph_\omega^2$, with $\slg=\slg(\omega,d\omega)$ the round metric on $\Sph^2$, while $\hg=\hg(x,dx)$ is a Lorentzian metric on the oriented 2-dimensional manifold $\hX=\R_{t_*}\times I_r$, with $I_r$ defined in \eqref{EqKNdS0MfRad}; in static coordinates,
\begin{equation}
\label{EqMShg}
  \hg=\mu\,dt^2-\mu^{-1}\,dr^2,\quad \mu=1-\frac{2\bhm}{r}-\frac{\Lambda r^2}{3}+\frac{Q^2}{3},
\end{equation}
which extends past the horizons as discussed in \S\ref{SubsecKNdS0}. This product decomposition comes equipped with projections
\[
  \hpi\colon M^\circ\to\hX, \quad \slpi\colon M^\circ\to\Sph^2.
\]
We call functions \emph{aspherical} if they are constant on the fibers of $\wh\pi$, i.e.\ functions of $(t_*,r)$ only. We will henceforth identify
\[
  \hpi^*\CI(\hX)\cong\CI(\hX),\quad \hpi^*\CI(\hX;T^*\hX)\cong\CI(\hX;T^*\hX), \quad \tn{etc.}
\]
We split the cotangent bundle $T^*M^\circ$ into its \emph{aspherical} and \emph{spherical} parts,
\begin{equation}
\label{EqMSCalcCtgt}
  T^*M^\circ = \TAS^* \oplus \TS^*,\quad \TAS^*=\hpi^*T^*\hX,\ \TS^*=\slpi^*T^*\Sph^2;
\end{equation}
correspondingly, a 1-form $u\in\CI(M^\circ;T^*M^\circ)$ has a unique decomposition $u=u_1+u_2$, with $u_1\in\CI(M^\circ;\TAS^*)$ aspherical and $u_2\in\CI(M^\circ;\TS^*)$ spherical. The induced splitting of symmetric 2-tensors takes the form
\begin{equation}
\label{EqMSCalcS2}
  S^2T^*M^\circ = S^2\TAS^* \oplus (\TAS^*\otimes\TS^*) \oplus S^2\TS^*,
\end{equation}
where we identify $a\otimes s\in\TAS^*\otimes\TS^*$ with $a\otimes s+s\otimes a=2a\otimes_s s\in S^2T^*M^\circ$; thus, the splitting \eqref{EqMSCalcS2} induces a decomposition of symmetric 2-tensors on $M^\circ$ into an \emph{aspherical part} in $\CI(M^\circ;S^2\TAS^*)$, a \emph{mixed part} in $\CI(M^\circ;\TAS^*\otimes\TS^*)$, and a \emph{spherical part} in $\CI(M^\circ;S^2\TS^*)$.

Observe now that
\[
  L^2(M^\circ;\TAS^*)\cong L^2(\hX)\otimes L^2(\Sph^2;T^*\Sph^2),
\]
likewise for sections of the various other bundles; hence we can decompose the perturbation $(\gdot,\Adot)\in L^2(M^\circ;S^2T^*M^\circ\oplus T^*M^\circ)$ into an infinite sum of products of functions/1-forms/symmetric 2-tensors on $\hX$ and functions/1-forms/symmetric 2-tensors on $\Sph^2$. Let us describe this in more detail: consider the Helmholtz decomposition of a 1-form $u\in\CI(\Sph^2;T^*\Sph^2)$, namely $u=\sld v+w'$ with $v\in\CI(\Sph^2)$ and $w'\in\CI(\Sph^2;T^*\Sph^2)$, $\sldelta w'=0$. Rewriting the latter as $\sld\slstar w'=0$ and using that $H^1(\Sph^2)=0$, we have $w'=\slstar\sld w$, $w\in\CI(\Sph^2)$. From this, one can easily deduce that a complete orthogonal basis of $L^2(\Sph^2;T^*\Sph^2)$ of eigen-1-forms of the Hodge Laplacian $\slDelta_H=\sld\sldelta+\sldelta\sld$ is given by $\{\sld Y_l^m,\slstar\sld Y_l^m\colon l\in\N,|m|\leq l\}$, where the $Y_l^m$ are the scalar spherical harmonics; the eigenvalue for an eigen-1-form with index $l$ is $l(l+1)$. Thus, we have an orthogonal decomposition of the $(l(l+1)-1)$ eigenspace of the tensor Laplacian $\slDelta=\slDelta_H-\slRic=\slDelta_H-1$ into the scalar part
\[
  \sld\Sph, \quad \slDelta\Sph=l(l+1)\Sph, \ \Sph\in\CI(\Sph^2),\ l\geq 1,
\]
and the vector part $\vect\in\CI(\Sph^2;T^*\Sph^2)$,
\begin{equation}
\label{EqMSVectEigen}
  \slDelta\vect=(l(l+1)-1)\vect, \quad \sldelta\vect=0.
\end{equation}

On $\Sph^2$, the Helmholtz decomposition of symmetric 2-tensors $u\in\CI(\Sph^2;S^2T^*\Sph^2)$ gives $u=\sldelta^*_0 v + w\slg$, where $\sldelta^*_0=\sldelta^*+\frac{1}{2}\slg\sldelta$ is the trace-free symmetric gradient, the adjoint of the divergence $\sldelta$ acting on trace-free symmetric 2-tensors; this uses that $\ker\sltr\cap\ker\sldelta=0$ on $\Sph^2$, see \cite[\S{III}]{HiguchiSpherical}. Decomposing $v$ into 1-form spherical harmonics and further into scalar and vector parts, and likewise decomposing $w$ into scalar spherical harmonics, we conclude that it suffices to consider two classes of perturbations $(\gdot,\Adot)$: the first class consists of \emph{scalar perturbations} (also called \emph{even parity} modes \cite{ReggeWheelerSchwarzschild})
\begin{equation}
\label{EqMSScalar}
\begin{split}
  \gdot &= \wt f\Sph - \frac{2 r}{k}(f\otimes_s\sld\Sph) + 2r^2\bigl(H_L\Sph\slg+H_T\slH_k\Sph\bigr), \\
  \Adot &= \wt K\Sph - \frac{r}{k}K\sld\Sph,
\end{split}
\end{equation}
where we define a rescaling of the traceless Hessian,
\begin{equation}
\label{EqMSHessian}
  \slH_k := \frac{1}{k^2}\sldelta^*\sld + \frac{1}{2}\slg;
\end{equation}
$\Sph$ is a scalar eigenfunction, $\slDelta\Sph=k^2\Sph$, $k=(l(l+1))^{1/2}$, $l\geq 2$, and
\[
  H_L,H_T,K\in\CI(\hX), \quad f,\wt K\in\CI(\hX;T^*\hX),\quad \wt f\in\CI(\hX;\Lambda^2 T^*\hX)
\]
are aspherical. The second class consists of \emph{vector perturbations} (also called \emph{odd parity} modes)
\begin{equation}
\label{EqMSVector}
\begin{split}
  \gdot &= 2 r f\otimes_s\vect - \frac{2}{k}r^2 H_T\sldelta^*\vect, \\
  \Adot &= r K\vect,
\end{split}
\end{equation}
with $\vect$ as in \eqref{EqMSVectEigen}, and $k=(l(l+1)-1)^{1/2}$, $l\geq 2$. In \S\ref{SubsecMS2}, we show that all non-decaying generalized mode solutions with $l\geq 2$ are pure gauge.

There are three additional special cases:

\begin{enumerate}
\item Scalar perturbations with $l=0$, hence $\Sph$ is constant, take the form
  \[
    \gdot = \wt f + 2 r^2 H_L\slg, \quad \Adot=\wt K,
  \]
  and must be treated separately; they give rise to changes in the mass and charge of the black hole by a generalization of the Birkhoff theorem to the case of positive cosmological constant, as we discuss in \S\ref{SubsecMS0}.

\item For scalar perturbations with $l=1$ (so $k^2=2$), the quantity $H_T$ is not defined since $(\sldelta^*\sld+\slg)\Sph=0$, thus
  \begin{equation}
  \label{EqMSScalar1}
    \gdot = \wt f\Sph - \frac{2 r}{k}(f\otimes_s\sld\Sph) + 2 r^2 H_L\Sph\slg,\quad \Adot=\wt K\Sph+\frac{r}{k}K\sld\Sph;
  \end{equation}
  non-decaying generalized mode perturbations of this type will be shown to be pure gauge in \S\ref{SubsecMS1s}.

\item Lastly, if $\vect$ is a divergence-free $l=1$ 1-form spherical harmonic, as in \eqref{EqMSVectEigen}, then $\vect$ is a linear combination of $\slstar\sld Y_1^m$, $m=-1,0,1$ and hence easily checked to be a Killing 1-form, $\sldelta^*\vect=0$. Thus, such vector $l=1$ perturbations are described by
  \begin{equation}
  \label{EqMSVector1}
    \gdot = 2 r f\otimes\vect, \quad \Adot = r K\vect.
  \end{equation}
  In \S\ref{SubsecMS1v}, we show that non-decaying perturbations of this type are pure gauge for $\sigma\neq 0$, or give rise to infinitesimal changes in the angular momentum parameter for $\sigma=0$.
\end{enumerate}

Observe that the RNdS solution $(g,A)$ is spherically symmetric; in fact, it is invariant under the full orthogonal group $O(3)$ acting on $M^\circ$, with the action diagonal on $\hX\times\Sph^2$, namely trivial on $\hX$ and the standard action on $\Sph^2$. Therefore, the linearized operator $\sL$ commutes with $O(3)$. One can easily check that the only $O(3)$-invariant differential operators of order $\leq 2$ acting between sections over $\Sph^2$ of the trivial bundle, $T^*\Sph^2$ and $S^2T^*\Sph^2$ are $\slg$, $\sltr$, $\sld$, $\sldelta$ (on 1-forms and on symmetric 2-tensors), $\sldelta^*$, $\slDelta$ (on functions, 1-forms and symmetric 2-tensors) and their compositions of order $\leq 2$.\footnote{The $O(3)$-invariance, as opposed to merely $SO(3)$-invariance, rules out operators related to the orientation of $\Sph^2$ such as $\slstar$.} Thus, all spherical operators appearing in the aspherical--spherical decomposition of $\sL$ in the splittings \eqref{EqMSCalcCtgt} and \eqref{EqMSCalcS2} are given by these operators and their compositions. One can then immediately verify that $\sL$ preserves the class of scalar perturbations \eqref{EqMSScalar} associated with the \emph{fixed} spherical harmonic $\Sph$, and likewise $\sL$ preserves the class of vector perturbations \eqref{EqMSVector} associated with the \emph{fixed} divergence-free spherical harmonic 1-form $\vect$; in other words, $\sL$ does \emph{not} mix different eigenfunctions within eigenspaces. (One can also verify this fact directly by looking at the explicit form of all the terms in $\sL$ as given in \S\ref{SubsecMSCalc}.)

The arguments proving that all non-decaying generalized mode perturbations which do not belong to one of the exceptional classes are pure gauge solutions, proceed by first considering the transformations of the quantities $H_L,H_T,K,f,\wt K,\wt f$ upon adding a pure gauge solution preserving the class of scalar or vector perturbations under consideration, and thus finding all combinations of these quantities which are gauge-invariant; if one then shows that these gauge-invariant quantities must vanish, this will imply that the perturbation is trivial, i.e.\ pure gauge. We point out $\sL(\gdot,\Adot)$ itself \emph{is} gauge-invariant by virtue of $(g,A)$ solving the Einstein--Maxwell system: indeed, we clearly have $\sL(0,d a)=0$ for any function $a$, and moreover for any vector field $V$, we have
\[
  \sL(\gdot+\cL_V g,\Adot+\cL_V A) = \cL_V \bigl( (\Ric+\Lambda)(g)-2 T(g,d A), \delta_g d A \bigr) \equiv 0.
\]
Thus, one can write the equation $\sL(\gdot,\Adot)=0$ in terms of the gauge-invariant quantities. We will follow this strategy for each of the four cases considered above. 

\begin{rmk}
\label{RmkMSModeByMode}
  In our application of Theorem~\ref{ThmMS} to the study of non-decaying resonant states of the linearized gauge-fixed Einstein--Maxwell system in \S\ref{SecLin}, we will know \emph{a priori} that the total space of such resonant states $(\gdot,\Adot)$ is finite-dimensional. Thus, every such $(\gdot,\Adot)$ only has \emph{finitely} many non-zero components in the expansion into spherical harmonics; hence, using the mode stability result for each individual harmonic, we can write $(\gdot,\Adot)$ as in \eqref{EqMSRes}, with $(V,a)$ being a \emph{finite} sum of suitable (vector) spherical harmonics (with additional $(t_*,r)$ dependence). Therefore, smoothness of $V$ and $a$ is automatic.
  
  To prove Theorem~\ref{ThmMS} as stated, we also need to deal with the situation that $(\gdot,\Adot)$ has infinitely many non-zero components in the spherical harmonic expansion. However, an inspection of the argument producing the gauge transformations $(V,a)$ --- which will be (generalized) modes $(V,a)=\sum_{j=0}^k e^{-i\sigma t_*}t_*^j(V_j,a_j)$ themselves --- in the various cases shows that for a mode $(\gdot,\Adot)$ with multipole moment $l$, the function $(V_j,a_j)$, measured in $\cC^N$ in the radial variable $r$, has size bounded by $l^2$ times the radial $\cC^{N-2}$ size of $(\gdot,\Adot)$ for any $N$. Thus, for a general smooth mode $(\gdot,\Adot)$, one can sum up the gauge transformations obtained for each spherical harmonic part of $(\gdot,\Adot)$, and the resulting full gauge transformation is smooth as well.
\end{rmk}

\subsection{Detailed calculation of the linearized Einstein--Maxwell operator}
\label{SubsecMSCalc}

Recall the definition~\eqref{EqBasicLinEinsteinMaxwellExpl} of $\sL$, $\sL_1$, and $\sL_2$. For $\sL_1$, we use \eqref{EqBasicNLLinRic}, so
\[
  D_g(\Ric+\Lambda)(\gdot) = \frac{1}{2}\Box_g\gdot-\delta_g^*\delta_g G_g\gdot + \sR_g\gdot + \Lambda\gdot.
\]
In addition, writing $F=d A$ and $\Fdot=d\Adot$, we have
\begin{equation}
\label{EqMSDT}
\begin{split}
  D_g T(\gdot,F)_{\mu\kappa} &= \gdot^{\nu\lambda}\Bigl(F_{\mu\nu}F_{\kappa\lambda} - \frac{1}{2}g_{\mu\kappa}F_{\rho\nu}F^\rho{}_\lambda\Bigr) + \frac{1}{2}\gdot_{\mu\kappa}|F|_g^2, \\
  D_F T(g,\Fdot)_{\mu\kappa} &= -g^{\nu\lambda}(F_{\mu\nu}\Fdot_{\kappa\lambda}+\Fdot_{\mu\nu}F_{\kappa\lambda}) + G(F,\Fdot)\,g_{\mu\kappa},
\end{split}
\end{equation}
where $G$ denotes the inner product on 2-forms, with quadratic form given by \eqref{EqBasicDerNormSq}. For the second component $\sL_2$ of $\sL$, we note $D_A(\delta_g d(\cdot))(\Adot)=\delta_g d\Adot$; moreover:

\begin{lemma}
\label{LemmaMSLinDelta}
  For a metric $g$ and a 2-form $F$, we have
  \begin{equation}
  \label{EqMSLinDelta}
    D_g(\delta_{(\cdot)}F)(\gdot)_\nu = \gdot^{\mu\kappa}F_{\mu\nu;\kappa} - (\delta_g G_g\gdot)^\kappa F_{\kappa\nu} + \frac{1}{2}(\gdot_{\nu\kappa;\mu}-\gdot_{\nu\mu;\kappa})F^{\mu\kappa}.
  \end{equation}
\end{lemma}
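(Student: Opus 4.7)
The plan is a direct index computation. Starting from $(\delta_g F)_\nu = -g^{\mu\kappa}F_{\mu\nu;\kappa}$, where
\[
  F_{\mu\nu;\kappa} = \partial_\kappa F_{\mu\nu} - \Gamma^\lambda_{\kappa\mu}F_{\lambda\nu} - \Gamma^\lambda_{\kappa\nu}F_{\mu\lambda},
\]
I would linearize in $g$, observing that the only dependence on $g$ is through $g^{\mu\kappa}$ and the two Christoffel symbols (since $F$ is treated as fixed). The first contribution is immediate: $D_g(g^{\mu\kappa})(\gdot) = -\gdot^{\mu\kappa}$, which together with the overall minus sign yields the first term $\gdot^{\mu\kappa}F_{\mu\nu;\kappa}$ in the stated formula.

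Next, I would use the standard identity
\[
  D_g\Gamma^\lambda_{\mu\nu}(\gdot) = \tfrac12 g^{\lambda\rho}(\gdot_{\mu\rho;\nu}+\gdot_{\nu\rho;\mu}-\gdot_{\mu\nu;\rho}).
\]
For the variation of the first Christoffel term, contract with $g^{\mu\kappa}$ and use symmetry of $\gdot$: two of the three pieces combine to give $\gdot^{\kappa\lambda}{}_{;\kappa}$, while the third gives $-\tfrac12(\tr_g\gdot)^{;\lambda}$. Comparing with
\[
  (\delta_g G_g\gdot)^\lambda = -\gdot^{\lambda\kappa}{}_{;\kappa} + \tfrac12(\tr_g\gdot)^{;\lambda},
\]
this contribution equals $-(\delta_g G_g\gdot)^\lambda F_{\lambda\nu}$, reproducing the second term of the claim after renaming $\lambda\to\kappa$.

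For the variation of the second Christoffel term $\Gamma^\lambda_{\kappa\nu}F_{\mu\lambda}$, I would again apply the formula for $D_g\Gamma$ and contract with $g^{\mu\kappa}$. The piece coming from $\gdot_{\kappa\rho;\nu}$ in $D_g\Gamma$ produces (after raising) the expression $\tfrac12 \gdot^{\mu\lambda}{}_{;\nu}F_{\mu\lambda}$, which vanishes since $\gdot^{\mu\lambda}{}_{;\nu}$ is symmetric in $\mu,\lambda$ and $F_{\mu\lambda}$ antisymmetric. The remaining two pieces give $\tfrac12(\gdot_\nu{}^{\lambda;\mu}-\gdot^\mu{}_\nu{}^{;\lambda})F_{\mu\lambda}$, which (after lowering back and relabeling the dummy indices) is exactly $\tfrac12(\gdot_{\nu\kappa;\mu}-\gdot_{\nu\mu;\kappa})F^{\mu\kappa}$, the third term of the claim.

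I do not expect a real obstacle here; the only point requiring mild care is the bookkeeping when splitting the $D_g\Gamma$ terms into a symmetric (in the $F$-indices) piece that drops out and the antisymmetric remainder, and keeping track of the signs coming from $\delta_g=-\nabla^\mu$ and from $D_g(g^{-1})=-\gdot$. Everything else is a routine rearrangement of indices.
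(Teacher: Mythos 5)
Your computation is correct and is essentially identical to the paper's proof: both linearize $-g^{\mu\kappa}F_{\mu\nu;\kappa}$ directly via the variation of the inverse metric and the standard formula for $D_g\Gamma$, identify the contracted first Christoffel variation with $-(\delta_g G_g\gdot)^\kappa$, and discard the $\gdot^{\mu\lambda}{}_{;\nu}F_{\mu\lambda}$ piece by symmetry against antisymmetry. No gaps.
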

\begin{proof}
  The derivative of
  \[
    (\delta_g F)_\nu=-g^{\mu\kappa}F_{\mu\nu;\kappa}=-g^{\mu\kappa}(\pa_\kappa F_{\mu\nu}-\Gamma(g)_{\mu\kappa}^\lambda F_{\lambda\nu}-\Gamma(g)_{\nu\kappa}^\lambda F_{\mu\lambda})
  \]
  in $g\in\CI(M^\circ,S^2 T^*M^\circ)$ in the direction $\gdot$ is equal to
  \begin{align*}
    \gdot^{\mu\kappa}F_{\mu\nu;\kappa} + \frac{1}{2}&\bigl((2\gdot_\mu{}^{\lambda;\mu} - \gdot_\mu{}^{\mu;\lambda})F_{\lambda\nu} + (\gdot_\nu{}^{\lambda;\mu}+\gdot^{\mu\lambda;}{}_\nu-\gdot_\nu{}^{\mu;\lambda})F_{\mu\lambda}\bigr) \\
      & = \gdot^{\mu\kappa}F_{\mu\nu;\kappa} - (\delta_g G_g\gdot)^\lambda F_{\lambda\nu} + \frac{1}{2}(\gdot_\nu{}^{\lambda;\mu}-\gdot_\nu{}^{\mu;\lambda})F_{\mu\lambda},
  \end{align*}
 since $\gdot^{\mu\lambda;}{}_\nu F_{\mu\lambda}=0$ by antisymmetry in $(\mu,\lambda)$.
\end{proof}

Using the decomposition \eqref{EqMSCalcMetric} of the RNdS metric and working in the splittings \eqref{EqMSCalcCtgt} and \eqref{EqMSCalcS2}, we proceed to calculate the form of the operators comprising $\sL$ in more detail. Let us denote the natural geometric operators associated with $\hg$ and $\slg$ by hats and slashes, respectively. Furthermore, we use abstract indices $a,b,c,d,e$ for coordinates on $\hX$, and abstract indices $i,j,k,\ell,m$ for coordinates on $\Sph^2$; we always work in product coordinates. All indices are raised and lowered using the metric $g$, with the exception that we write $\slg^{ij}=(\slg^{-1})_{ij}$ for the inverse metric on $\Sph^2$.

The Christoffel symbols of $g$ are then given by
\[
\setarraystretch
  \begin{array}{lll}
    \Gamma^c_{a b}=\hGamma^c_{a b},& \Gamma^c_{a j}=0, & \Gamma^c_{i j}=r r^{;c}\slg_{ij}, \\
    \Gamma^k_{a b}=0, & \Gamma^k_{a j}=r^{-1}r_{;a}\delta_j^k, & \Gamma^k_{i j}=\slGamma^k_{i j},
  \end{array}
\]
with $r_{;a}=\hnabla_a r$; we shall also write $\rho=\hd r$. The components of the Riemann curvature tensor are
\[
\setarraystretch
  \begin{array}{llll}
    R_{a b c d}=\hR_{a b c d}, & R_{i b c d}=0, & R_{i j c d}=0, & R_{i j k d}=0, \\
    R_{i b k d}=r r_{;b d}\slg_{i k}, & \multicolumn{3}{c}{R_{i j k \ell}=-r^2(1+|\rho|^2)(\slg_{i k}\slg_{j\ell}-\slg_{i\ell}\slg_{j k}).}
  \end{array}
\]
Here $|\rho|^2=r_{;a}r^{;a}$ (which will in fact be negative in our application). Correspondingly, the Ricci tensor takes the form
\begin{gather*}
  \Ric_{a b}=\hRic_{a b}-2r^{-1}r_{;a b}, \quad \Ric_{a j}=0, \\
  \Ric_{i j}=(-r\hBox r + (1+|\rho|^2))\slg_{i j},
\end{gather*}
where $\hBox r\equiv\Box_\hg r=-r_{;a}{}^a$. In the RNdS setting, we compute
\begin{equation}
\label{EqMSCalcHatQuantities}
\begin{gathered}
  r_{;a b} = -\frac{\mu'}{2}\hg_{a b}, \quad \hBox r=\mu', \quad \hR_{a b c d}=\frac{1}{2}\mu''(\hg_{a c}\hg_{b d}-\hg_{a d}\hg_{b c}), \\
  \hRic_{a b}=\frac{1}{2}\mu''\hg_{a b}, \quad \hR=\mu'', \quad |\rho|^2=-\mu,
\end{gathered}
\end{equation}
with $\hR$ the scalar curvature of the metric $\hg$. Next,
\[
  g = \begin{pmatrix} \hg \\ 0 \\ -r^2\slg \end{pmatrix},\quad \tr_g = \begin{pmatrix} \htr & 0 & -r^{-2}\sltr \end{pmatrix},
\]
and the trace reversal operator $G_g u=u-\frac{1}{2}g\tr_g u$ takes the form
\begin{equation}
\label{EqMSCalcTraceRev}
  G_g=
   \begin{pmatrix}
     G_\hg                    & 0 & \frac{1}{2}r^{-2}\hg\,\sltr \\
     0                        & 1 & 0 \\
     \frac{1}{2}r^2\slg\,\htr & 0 & G_\slg
   \end{pmatrix}.
\end{equation}
For the action of $(\sR_g u)_{\mu\nu}=R_{\kappa\mu\nu\lambda}u^{\kappa\lambda}+\frac{1}{2}(\Ric_\mu^\lambda u_{\lambda\nu}+\Ric_\nu^\lambda u_{\mu\lambda})$ on aspherical tensors, we compute
\begin{gather*}
  (\sR_g u)_{a b} = (\sR_\hg u)_{a b} - r^{-1}(r_{;a}{}^c u_{c b}+r_{;b}{}^c u_{a c}), \\
  (\sR_g u)_{a j} = 0, \quad (\sR_g u)_{i j}=-r r_{;a b}u^{a b}\slg_{i j};
\end{gather*}
on mixed tensors,
\begin{gather*}
  (\sR_g u)_{a b} = 0, \quad (\sR_g u)_{i j} = 0, \\
  (\sR_g u)_{a j}=\Bigl(\frac{1}{2}\hRic_a^b-2 r^{-1}r_{;a}{}^b\Bigr)u_{b j} + \frac{1}{2}\Bigl(\frac{\hBox r}{r}-\frac{1+|\rho|^2}{r^2}\Bigr)u_{a j};
\end{gather*}
on spherical tensors,
\begin{gather*}
  (\sR_g u)_{a b}=-r^{-3} r_{;a b}\slg^{i j}u_{i j}, \quad (\sR_g u)_{a j}=0, \\
  (\sR_g u)_{i j}=\frac{\hBox r}{r}u_{i j}-\frac{2(1+|\rho|^2)}{r^2}(G_\slg u)_{i j}.
\end{gather*}
These expressions can be simplified for the RNdS metric using \eqref{EqMSCalcHatQuantities}, giving
\[
  2\sR_g
  =\openbigpmatrix{4pt}
     2\mu'' G_\hg+2r^{-1}\mu' & 0                               & r^{-3}\mu'\hg\,\sltr \\
     0                        & \hRic+r^{-2}(\mu-1)+3r^{-1}\mu' & 0 \\
     r\mu'\slg\,\htr          & 0                               & 4r^{-2}(\mu-1)G_\slg+2r^{-1}\mu'
   \closebigpmatrix.
\]
Next, we compute first covariant derivatives: for aspherical tensors,
\begin{equation}
\label{EqMSCalcCovAsph}
\setarraystretch
  \begin{array}{ll}
    u_{a b;c}=\hnabla_c u_{a b}, & u_{a b;k}=\slnabla_k u_{a b}, \\
    u_{a j;c}=0, & u_{a j;k}=-r r^{;b}u_{a b}\slg_{j k}, \\
    u_{i j;c}=0, & u_{i j;k}=0;
  \end{array}
\end{equation}
for mixed tensors,
\[
\setarraystretch
  \begin{array}{ll}
    u_{a b;c}=0, & u_{a b;k}=-r^{-1}(r_{;a}u_{k b}+r_{;b}u_{a k}), \\
    u_{a j;c}=(\hnabla_c-r^{-1}r_{;c})u_{a j}, & u_{a j;k}=\slnabla_k u_{a j}, \\
    u_{i j;c}=0, & u_{i j;k}=-r r^{;a}(\slg_{i k}u_{a j}+\slg_{j k}u_{a i});
  \end{array}
\]
for spherical tensors,
\begin{equation}
\label{EqMSCalcCovSph}
\setarraystretch
  \begin{array}{ll}
    u_{a b;c}=0, & u_{a b;k}=0, \\
    u_{a j;c}=0, & u_{a j;k}=-r^{-1}r_{;a}u_{j k}, \\
    u_{i j;c}=(\hnabla_c-2 r^{-1}r_{;c})u_{i j}, & u_{i j;k}=\slnabla_k u_{i j}.
  \end{array}
\end{equation}
We then compute $(\delta_g u)_\nu=-u_{\mu\nu;}{}^\mu$, acting on general symmetric 2-tensors, to be equal to
\[
  \delta_g
  =\begin{pmatrix}
     r^{-2}\hdelta r^2 & -r^{-2}\sldelta   & -r^{-3}\rho\,\sltr \\
     0                 & r^{-2}\hdelta r^2 & -r^{-2}\sldelta
   \end{pmatrix},
\]
where in the second column $(\sldelta u)_a=-\slnabla^j u_{a j}=-\slg^{j k}\slnabla_k u_{a j}$; invariantly, one views $\TAS^*\otimes\TS^*\cong T^*\hX\boxtimes T^*\Sph^2$ and observes that the connection $\slnabla$ on $T^*\Sph^2$ induces a partial connection
\[
  \slnabla \colon \CI(M^\circ;T^*\hX\boxtimes T^*\Sph^2)\to \CI(M^\circ;T^*\Sph^2\otimes(T^*\hX\boxtimes T^*\Sph^2)).
\]
The operator $(\hdelta u)_i=-\hnabla^a u_{i a}$ has an analogous invariant interpretation. The adjoint $(\delta_g^* u)_{\mu\nu}=\frac{1}{2}(u_{\mu;\nu}+u_{\nu;\mu})$ is given by
\begin{equation}
\label{EqMSCalcDelGStar}
  \delta_g^*
  =\begin{pmatrix}
     \hdelta^*        & 0 \\
     \frac{1}{2}\sld  & \frac{1}{2}r^2\hd r^{-2} \\
     -r\slg\iota_\rho & \sldelta^*
   \end{pmatrix},
\end{equation}
where for an aspherical 1-form $u$ we write $\iota_u$ for the contraction with the vector $u^\sharp=\hG(u,\cdot)$. The composition $\delta_g G_g$ can be simplified using $\htr\,G_\hg=0=\sltr\,G_\slg$, and we obtain
\[
  \delta_g G_g
   = \begin{pmatrix}
       r^{-2}\hdelta r^2+\frac{1}{2}\hd\,\htr & -r^{-2}\sldelta & -\frac{1}{2}r^{-2}\hd\,\sltr \\
       \frac{1}{2}\sld\,\htr & r^{-2}\hdelta r^2 & -r^{-2}\sldelta - \frac{1}{2}r^{-2}\sld\,\sltr
     \end{pmatrix}
\]
Using \eqref{EqMSCalcCovAsph}--\eqref{EqMSCalcCovSph}, we can also compute the form of the tensor wave operator $(\Box_g u)_{\mu\nu}=-u_{\mu\nu;\kappa}{}^\kappa$: it equals
\begin{align*}
  \Box_g &= \hBox-r^{-2}\slDelta \\
   &\quad + \diag(-2 r^{-1}\hnabla_\rho + 4 r^{-2}\rho\otimes_s\iota_\rho, \\
   &\hspace{5em} 4 r^{-2}\rho\otimes\iota_\rho + (-r^{-1}\hBox r+r^{-2}|\rho|^2), \\
   &\hspace{5em} 2 r^{-1}\hnabla_\rho - 2r^{-1}\hBox r) \\
   &\quad
   + \begin{pmatrix}
       0                         & 4 r^{-3}\rho\otimes_s\sldelta & 2 r^{-4}(\rho\otimes\rho)\sltr \\
       -2 r^{-1}\sld \iota_\rho  & 0                             & 2 r^{-3}\rho\otimes\sldelta \\
       2\slg\iota_\rho\iota_\rho & -4 r^{-1}\sldelta^*\iota_\rho & 0
     \end{pmatrix};
\end{align*}
here, $\hBox$ operating on aspherical tensors is the tensor wave operator, operating on mixed tensors the 1-form wave operator acting on the aspherical part, and operating on a spherical tensor the scalar wave operator acting in the aspherical variables; the operator $\slDelta$ is defined in an analogous fashion. We remark that the Ricci term $\frac{1}{2}\hRic_a^b u_{b j}$ in the aspherical part of $\sR_g$ acting on aspherical tensors plus the tensor wave operator $\frac{1}{2}(\hBox u)_{a j}$ (acting as the 1-form wave operator in the aspherical variables) gives the Hodge--d'Alembertian $\frac{1}{2}(\hBox_H u)_{a j}$, $\hBox_H=\hd\hdelta+\hdelta\hd$ acting on aspherical 1-forms.

Next, let $A\in\CI(\hX;T^*\hX)$ be an aspherical 1-form, and let $F=d A=\hd A$, which is thus also aspherical. Since $\hX$ is 2-dimensional and orientable, with volume form
\[
  \hvol = dt_*\wedge dr,
\]
we automatically have
\[
  F=-Q r^{-2}\hvol
\]
for some aspherical function $Q$; in fact, for the RNdS electromagnetic field $F$, \emph{$Q$ is constant}, as we will assume from now on. For $T(g,F)_{\mu\nu}=-F_{\mu\kappa}F_\nu{}^\kappa+\frac{1}{2}g_{\mu\nu}|F|^2$ (with $|F|^2=\frac{1}{2}F_{\kappa\lambda}F^{\kappa\lambda}=-Q^2 r^{-4}$), we then recall \eqref{EqMSDT} and note that $u^{c d}\hvol_{a c}\hvol_{b d}=u_{a b}-\hg_{a b}\htr\,u$ since $\hg$ has Lorentzian signature, further $(\hstar u)_a=\hvol^b{}_a u_b$ for aspherical 1-forms $u$; one then finds
\begin{gather*}
  D_g T(\cdot,F)
  = -\frac{1}{2}Q^2 r^{-4}
    \begin{pmatrix}
      \hg\,\htr-1   & 0 & 0 \\
      0             & 1 & 0 \\
      r^2\slg\,\htr & 0 & 1
    \end{pmatrix}, \\
  D_A T(g,d(\cdot))
  =Q r^{-2}
     \begin{pmatrix}
       \hg \hstar\hd     & 0 \\
       -\hstar\sld       & \hstar\hd \\
       r^2\slg \hstar\hd & 0
     \end{pmatrix},
\end{gather*}
where we use that for an aspherical 2-form $u$, we have $\slG(u,\hvol)=\hstar u$ (indeed, $\hstar 1=\hvol$ and $\hstar\hvol=-1$). The Hodge dual $\hstar$ in the second row only acts on the $T^*\hX$ component. This finishes the computation of the form of all terms in the linearization of $\Ric(g)+\Lambda g-2 T(g,d A)$.

Next, we study the linearization of $(g,A)\mapsto \delta_g d A$. We use the splitting
\[
  \Lambda^2 T^*M^\circ = \Lambda^2\TAS^* \oplus (\TAS^*\wedge\TS^*) \oplus \Lambda^2\TS^*
\]
of the 2-form bundle over $M^\circ$, and in addition use the identification $\TAS^*\wedge\TS^*\cong\TAS^*\otimes\TS^*\cong T^*\hX\boxtimes T^*\Sph^2$ via $a\wedge s\mapsto a\otimes s$; then
\[
  d =
   \begin{pmatrix}
     \hd   & 0 \\
     -\sld & \hd \\
     0     & \sld
   \end{pmatrix}
\]
on 1-forms, and the divergence on 2-forms is given by
\[
  \delta_g =
    \begin{pmatrix}
      r^{-2}\hdelta r^2 & r^{-2}\sldelta & 0 \\
      0                 & \hdelta        & -r^{-2}\sldelta
    \end{pmatrix}.
\]
For $F$ as above, we calculate the first covariant derivatives
\[
  \begin{array}{lll}
    F_{a b;c}=-2 r^{-1}r_{;c}F_{a b}, & F_{a j;c}=0, & F_{i j;c}=0, \\
    F_{a b;k}=0, & F_{a j;k}=-Q r^{-1}(\hstar\hd r)_a\slg_{j k}, & F_{i j;k}=0;
  \end{array}
\]
thus, using Lemma~\ref{LemmaMSLinDelta}, we find
\begin{align*}
  D_g(\delta_{(\cdot)}F)=Q r^{-3}
  &\begin{pmatrix}
    2\hstar\iota_\rho & 0                  & r^{-2}(\hstar\rho)\sltr \\
    0                 & \iota_{\hstar\rho} & 0
  \end{pmatrix} \\
  & + Q r^{-2}
    \begin{pmatrix}
      r^{-2}\hstar\hdelta r^2+\frac{1}{2}\hstar\hd\,\htr & -r^{-2}\hstar\,\sldelta & -\frac{1}{2}r^{-2}\hstar\hd\,\sltr \\
      0 & 0 & 0
    \end{pmatrix} \\
  & - Q r^{-2}
    \begin{pmatrix}
      \hstar\hdelta+\hstar\hd\,\htr & 0 & 0 \\
      0 & \hstar\hd - r^{-1}\hstar(\rho\wedge) & 0
    \end{pmatrix}.
\end{align*}
This can be simplified, noting that $\hstar(\rho\wedge)=\iota_{\hstar\rho}=-\iota_\rho\hstar$, $2 r^{-1}\iota_{\hstar\rho}-\hstar\hd=-r^2\hdelta r^{-2}\hstar$, and $r^{-2}\hdelta r^2=\hdelta-2r^{-1}\iota_\rho$. Thus,
\[
  D_g(\delta_{(\cdot)}F)
  =Q r^{-2}
   \begin{pmatrix}
     -\frac{1}{2}\hstar\hd\,\htr & -r^{-2}\hstar\,\sldelta & -\frac{1}{2}\hstar\hd r^{-2}\sltr \\
     0 & -r^2\hdelta r^{-2}\hstar & 0
   \end{pmatrix}.
\]

For the calculation of gauge terms, we recall that $\cL_{u^\sharp}g=2\delta_g^* u$ for 1-forms $u$, with $\delta_g^*$ given in \eqref{EqMSCalcDelGStar}; furthermore, for $A\in\CI(\hX;T^*\hX)$ aspherical, we have
\[
  A_{a;b}=\hnabla_b A_a, \quad A_{a;j}=0, \quad A_{i;b}=0, \quad A_{i;j}=-r r^{;b}A_b\slg_{i j},
\]
hence $u\mapsto (\cL_{u^\sharp}A)_\mu=u_{b;\mu}A^b+u^\nu A_{\nu;\mu}$ is given by
\begin{equation}
\label{EqMSCalcLieA}
  \cL_{(\cdot)^\sharp}A
    = \begin{pmatrix}
        \hd\iota_{(\cdot)}A+\iota_{(\cdot)}\hd A & 0 \\
        \sld\,\iota_A & 0
      \end{pmatrix}.
\end{equation}

In order to compute the action of $\sL$ on the spherical harmonic decomposition (and at the same time confirming that $\sL$ preserves the scalar--vector decomposition in the strong sense described in the paragraphs preceding \S\ref{SubsecMSCalc}), we note that on functions, we have $\slDelta\sld=\sld(\slDelta-1)$ (with $\slDelta$ the \emph{tensor} Laplacian on 1-forms), while on 1-forms, we have $\slDelta\sldelta=\sldelta(\slDelta+1)$. Moreover, acting on 1-forms,
\[
  \slDelta\sldelta^* = \sldelta^*\slDelta - 3\sldelta^* - 2\slg\sldelta.
\]
In particular, for a 1-form $\vect $ with $\slDelta\vect=k^2\vect$, $\sldelta\vect=0$, then
\[
  \slDelta\sldelta^*\vect = (k^2-3)\sldelta^*\vect.
\]
If $\slDelta\Sph=k^2\Sph$ for a scalar function $\Sph$, this also gives
\[
  \slDelta\slH_k\Sph = (k^2-4)\slH_k\Sph,
\]
where we recall the definition of $\slH_k$ from \eqref{EqMSHessian}. This uses the identity $\slDelta\slg=\slg\slDelta$, thus $\slDelta\slg\Sph=k^2\slg\Sph$.\footnote{These three calculations provide us with a complete orthogonal basis of $L^2(\Sph^2;S^2T^*\Sph^2)$ and thus with the spectrum of $\slDelta$ acting on symmetric 2-tensors.} We also note that on 1-forms, we have
\[
  \sldelta\sldelta^* = \frac{1}{2}(\slDelta+\sld\sldelta-1),
\]
thus for $\vect$ and $\Sph$ as above,
\[
  \sldelta\sldelta^*\vect = \frac{k^2-1}{2}\vect,
\]
while
\[
  \sldelta\slH_k\Sph = \frac{k^2-2}{2 k^2}\sld\Sph.
\]

Lastly, we calculate the form of geometric operators on the Lorentzian manifold $(\hX,\hg)$; specifically, we only consider the exterior (static) region where $\mu>0$, so the metric takes the form $\hg=\mu\,dt^2-\mu^{-1}\,dr^2$, $\mu=\mu(r)$. We split
\begin{equation}
\label{EqMSCalchXBundles}
  T^*\hX = \la \hd t\ra\oplus\la \hd r\ra, \quad  S^2 T^*\hX=\la \hd t^2\ra\oplus\la 2 \hd t\,\hd r\ra\oplus \la \hd r^2\ra,
\end{equation}
so $\rho=\hd r=(0,1)$, further
\[
  \hd=\begin{pmatrix}\pa_t\\ \pa_r\end{pmatrix},
  \quad
  \hBox=-\mu^{-1}\pa_t^2 + \pa_r\mu\pa_r
\]
on scalar functions, while on 1-forms $\iota_\rho=(0,-\mu)$,
\begin{gather*}
  \hdelta = \begin{pmatrix} -\mu^{-1}\pa_t & \pa_r\mu \end{pmatrix},
  \quad
  \hdelta^*
    =\begin{pmatrix}
       \pa_t                       & -\frac{1}{2}\mu\mu' \\
       \frac{1}{2}\mu\pa_r\mu^{-1} & \frac{1}{2}\pa_t \\
       0                           & \mu^{-1/2}\pa_r\mu^{1/2}
     \end{pmatrix}, \\
  \hnabla_\rho = \begin{pmatrix} -\mu^{3/2}\pa_r\mu^{-1/2} & 0 \\ 0 & -\mu^{1/2}\pa_r\mu^{1/2} \end{pmatrix}, \\
  \hstar = \begin{pmatrix} 0 & \mu \\ \mu^{-1} & 0 \end{pmatrix},
  \quad
  2\rho\otimes_s(\cdot)
    =\begin{pmatrix} 
       0 & 0 \\
       1 & 0 \\
       0 & 2
     \end{pmatrix}
\end{gather*}
and on symmetric 2-tensors
\begin{equation}
\label{EqMSCalchXSym2}
\begin{gathered}
  \hdelta
   =\begin{pmatrix}
      -\mu^{-1}\pa_t & \pa_r\mu & 0 \\
      \frac{1}{2}\mu'\mu^{-2} & -\mu^{-1}\pa_t & \mu^{-1/2}\pa_r\mu^{3/2}
    \end{pmatrix},
  \quad
  \iota_\rho
   =\begin{pmatrix}
      0 & -\mu & 0 \\
      0 & 0    & -\mu
    \end{pmatrix}, \\
  \hnabla_\rho
   =\begin{pmatrix}
      -\mu^2\pa_r\mu^{-1} & 0 & 0 \\
      0 & -\mu\pa_r & 0 \\
      0 & 0 & -\pa_r\mu
    \end{pmatrix}
\end{gathered}
\end{equation}
and
\[
  \hBox
   =-\mu^{-1}\pa_t^2+\mu\pa_r^2
    +
    \begin{pmatrix}
      -\mu'\pa_r+\frac{\mu'^2}{2\mu}-\mu'' & 2\mu'\pa_t                     & -\frac{1}{2}\mu\mu'^2 \\
      \frac{\mu'}{\mu^2}\pa_t              & \mu'\pa_r - \frac{\mu'^2}{\mu} & \mu'\pa_t \\
      -\frac{\mu'^2}{2\mu^3}               & \frac{2\mu'}{\mu^2}\pa_t       & 3\mu'\pa_r+\frac{\mu'^2}{2\mu}+\mu''
    \end{pmatrix}.
\]

\subsection{Modes with \texorpdfstring{$l\geq 2$}{l at least 2}}
\label{SubsecMS2}

We show that both scalar and vector generalized mode solutions of the linearized Einstein--Maxwell system with $l\geq 2$ are pure gauge. Concretely, we prove this directly for mode solutions, i.e.\ we prove the $k=0$ case of Theorem~\ref{ThmMS} for $l\geq 2$, but with a stronger conclusion:

\begin{prop}
\label{PropMS2Nolog}
  Let $\sigma\in\C$, $\Im\sigma\geq 0$, and let
  \[
    (\gdot,\Adot)=e^{-i\sigma t_*}(\gdot_0,\Adot_0),\quad \gdot_0\in\CI(Y;S^2 T^*_Y\Omega^\circ),\ \Adot_0\in\CI(Y;T^*_Y\Omega^\circ),
  \]
  be a mode solution of $\sL(\gdot,\Adot)=0$ of the form \eqref{EqMSScalar} or \eqref{EqMSVector} with $l\geq 2$. Then there exist $V_0\in\CI(Y;T_Y\Omega^\circ)$ and $a\in\CI(Y)$ such that for $(V,a)=e^{-i\sigma t_*}(V_0,a_0)$, we have
  \[
    (\gdot,\Adot) = (\cL_V g,\cL_V A + d a).
  \]
\end{prop}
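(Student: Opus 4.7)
The plan is to prove this proposition case-by-case (scalar $l\geq 2$ and vector $l\geq 2$), following the Kodama--Ishibashi scheme of isolating gauge-invariant master quantities and establishing mode stability at their level.

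\textbf{Step 1: Gauge transformations within each class.} First I would work out how pure gauge perturbations act on the component functions $(\tilde f, f, H_L, H_T, \tilde K, K)$ in the scalar case and $(f, H_T, K)$ in the vector case. Since $\sL$ preserves the aspherical/scalar/vector decomposition, the gauge vector field must lie in the corresponding class: in the scalar case, I would take $V=V^a(x)\Sph\,\pa_a + \tfrac{r}{k}L(x)(\sld\Sph)^\sharp$ and $a=c(x)\Sph$, where $V^a,L,c$ are functions on $\hX$; in the vector case, $V=rL(x)\vect^\sharp$ and $a=0$. Using the formulas \eqref{EqMSCalcDelGStar} for $\delta_g^*$ and \eqref{EqMSCalcLieA} for $\cL A$, one reads off explicit linear transformation laws, e.g.\ in the vector case $H_T\mapsto H_T+kL$ and $f\mapsto f+r^2\hd(L/r)$, $K\mapsto K+Q r^{-2}L$. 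The transformation in the scalar case is analogous but with two aspherical and two scalar gauge parameters.

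\textbf{Step 2: Gauge normalization and gauge-invariant variables.} For the vector case with $l\geq 2$ (so $k^2=l(l+1)-1\geq 5$), I would use $L$ to impose $H_T=0$; the residual gauge freedom is then trivial at the level of modes. For the scalar case with $l\geq 2$, the Hessian part $\slH_k\Sph$ is nonzero, so I can use $L$ to kill $H_T$ and then use the aspherical $V^a$ to kill $f$ (the Regge--Wheeler gauge); this uses $l\geq 2$ crucially. The scalar gauge freedom $c$ can then be used to impose a Lorenz-like normalization on $\wt K$. After this normalization, the remaining components are the gauge-invariant master quantities (in the Kodama--Ishibashi--Moncrief sense).

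\textbf{Step 3: Reduction to master equations.} Plugging the normalized ansatz into $\sL(\gdot,\Adot)=0$, using the explicit operators computed in \S\ref{SubsecMSCalc} together with the spherical identities $\slDelta\sld=\sld(\slDelta-1)$, $\slDelta\sldelta^*\vect=(k^2-3)\sldelta^*\vect$, etc., the equations decouple: the angular factors cancel and one obtains a closed system on $\hX$. In the vector case this system, after elimination, reduces to a single second-order wave equation on $(\hX,\hg)$ for a master variable (the Cunningham--Price--Moncrief / Maxwell master pair on RNdS) of Regge--Wheeler type,
\[
  \hBox\,\Phi + V_\ell(r)\Phi = 0,
\]
with a strictly positive effective potential $V_\ell$ for $l\geq 2$. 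The scalar case yields a coupled $2\times 2$ system (the Moncrief--Zerilli pair for gravity--electromagnetism on RNdS), which by the algebraic transformation of Kodama--Ishibashi \cite{KodamaIshibashiCharged} likewise decouples into two Regge--Wheeler equations with positive potentials.

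\textbf{Step 4: Mode stability for the master equations.} The final and main step is to show that for $\Im\sigma\geq 0$, any smooth mode solution $\Phi=e^{-i\sigma t_*}\Phi_0(r)$ of the master equation on $\hX$, extending smoothly across the horizons, must vanish. This is a standard $1{+}1$-dimensional mode stability statement: for $\sigma\neq 0$ with $\Im\sigma\geq 0$, a boundary pairing/energy argument using the positivity of $V_\ell$ and outgoing conditions at $r=r_\pm$ (enforced by smoothness at the horizons in the coordinates $t_*,\tau_0$) rules out nontrivial modes; for $\sigma=0$, the resulting ODE has two-dimensional solution space and explicit integration shows the only smooth solution is $\Phi=0$ (this is where positivity of $V_\ell$ and the condition $l\geq 2$ are decisive). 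This step is the main technical obstacle. Once $\Phi=0$ is established, tracing back through the definitions shows that all the gauge-invariant components vanish, hence the original normalized $(\gdot,\Adot)$ vanishes.

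\textbf{Step 5: Assembly.} The constructed gauge $(V,a)$ is of the form $e^{-i\sigma t_*}(V_0,a_0)$ because the normalization in Step 2 was achieved by solving linear algebraic (not differential) equations for the mode coefficients of $L,V^a,c$. Thus $(\gdot,\Adot)=(\cL_V g,\cL_V A+da)$ with $(V,a)$ a pure mode at the same frequency, completing the proof with no logarithmic term generated.
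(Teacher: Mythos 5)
Your overall strategy --- decomposing into scalar/vector harmonics, passing to gauge-invariant (or gauge-normalized) quantities, reducing to decoupled master Schr\"odinger equations on $\hX$, and killing modes by positivity of the potential --- is exactly the route the paper takes. However, there is a genuine gap in the stationary scalar sector. The Kodama--Ishibashi master variable is $\Phi=(\tfrac{2Z}{i\sigma}-r(X+Y))/H$: its very definition divides by $\sigma$, because the constraint \eqref{EqMS2ScXYZConstraint} is what lets one express $Z/i\sigma$ in terms of $X,Y,\cA,\cA'$. Your Step 4 asserts that ``for $\sigma=0$ the resulting ODE has two-dimensional solution space and explicit integration shows the only smooth solution is $\Phi=0$,'' but at $\sigma=0$ there is no such ODE to integrate until you have constructed a substitute master variable. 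The paper spends all of \S\ref{SubsubsecMS2Sc0} on this: one must take the $\sigma\to 0$ limit of the coefficients $C_{\bullet\pm}(\sigma)$ (which are singular where $\mu'(r)=4r\sigma^2/k^2$, i.e.\ at the critical point $r_2$ of $\mu$ when $\sigma=0$), verify by a separate computation that the resulting $\Psi^0_\pm$ is nonetheless smooth at $r_2$, and check that it still satisfies the master equation globally. A related subtlety you do not address: the Maxwell potential $\cA$ with $N=\hstar\hd\cA$ is a priori only a \emph{generalized} mode $\cA=N_0 t_*+\cA_1$ when $\sigma=0$, and one needs $Z\equiv 0$ from \eqref{EqMS2ScwtEtr} plus the first equation of \eqref{EqMS2ScfEStatic} to conclude $\cA$ is actually stationary. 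Without these steps the stationary scalar case --- which is precisely where the linearized KNdS solutions could in principle hide --- is not handled.

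Two smaller points. First, in the vector case your transformation law $K\mapsto K+Qr^{-2}L$ is wrong: since $A$ is aspherical and the gauge 1-form $u=rL\vect$ is purely spherical, \eqref{EqMSCalcLieA} gives $\cL_{u^\sharp}A=0$, so $K$ is already gauge-invariant ($\upd K=0$); the gauge-invariant pair is $(\hd(r^{-1}f)$ or $J=f+\tfrac{r}{k}\hd H_T$, together with $K)$. Second, the decoupled scalar potentials $V_\pm$ are \emph{not} positive as they stand; positivity only holds for the S-deformed potentials $\wt V_\pm$ in \eqref{EqMS2ScSDef}--\eqref{EqMS2ScSDefPot}, and even that requires the sign lemmas $H>0$, $H_\pm>0$ (which use the non-degeneracy of the RNdS parameters). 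One also has to verify that the master variables, built from the static-coordinate components $X,Y,Z$, remain regular at $r=r_\pm$ where the static splitting degenerates, cf.\ \eqref{EqMS2ScRegAtHorizon}; this regularity is what justifies the boundary-pairing argument at the horizons.
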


Thus, $(\gdot,\Adot)$ can be written as a pure gauge solution with gauge parameters $(V,a)$ \emph{which are modes as well}, that is, they do not contain powers of $t_*$. Before turning to the proof of this proposition, we note that it implies Theorem~\ref{ThmMS} for generalized $l\geq 2$ modes:

\begin{cor}
\label{CorMS2Log}
  Let $\sigma\in\C$, $\Im\sigma\geq 0$, $k\in\N_0$, and let $(\gdot,\Adot)$ as in \eqref{EqMSGenMode} be a generalized mode solution of $\sL(\gdot,\Adot)=0$ of the form \eqref{EqMSScalar} or \eqref{EqMSVector} with $l\geq 2$. Then there exist $V_j\in\CI(Y;T_Y\Omega^\circ)$ and $a_j\in\CI(Y)$, $j=0,\ldots,k$, such that for the gauge functions
  \[
    (V,a) = \sum_{j=0}^k e^{-i\sigma t_*}t_*^j (V_j,a_j),
  \]
  we have $(\gdot,\Adot)=(\cL_V g,\cL_V A+d a)$.
\end{cor}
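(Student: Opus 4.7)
The plan is to induct on $k$, with Proposition~\ref{PropMS2Nolog} providing the base case $k=0$. Fix $\sigma$, $l \geq 2$, and assume the corollary is already known for all generalized modes of the same type of degree strictly less than $k$. The core idea is that the top-degree coefficient of a generalized mode solution must itself satisfy the frozen-frequency mode equation, so Proposition~\ref{PropMS2Nolog} can be used to cancel it by a $t_*^k$-multiple of a stationary pure gauge transformation, leaving a residual of one lower degree to which the inductive hypothesis applies.

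\textbf{Extracting the top-order mode equation.} Since $\sL$ is stationary and of second order, a direct computation using $\pa_{t_*}^m(e^{-i\sigma t_*} t_*^j u(x)) = e^{-i\sigma t_*}\bigl((-i\sigma)^m t_*^j + O(t_*^{j-1})\bigr) u(x)$ shows
\[
  \sL\Bigl(\sum_{j=0}^k e^{-i\sigma t_*} t_*^j (\gdot_j,\Adot_j)\Bigr) = e^{-i\sigma t_*}\Bigl(t_*^k\, \wh{\sL}(\sigma)(\gdot_k,\Adot_k) + \sum_{i<k} t_*^i R_i\Bigr),
\]
so the vanishing of the $t_*^k$ coefficient forces $e^{-i\sigma t_*}(\gdot_k,\Adot_k)$ to be a genuine mode solution of $\sL = 0$. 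Because the spherical-harmonic structure is preserved coefficient-by-coefficient in $t_*$, this mode is of scalar (resp.\ vector) type with the same multipole index $l$ as $(\gdot,\Adot)$.

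\textbf{Lifting the base-case gauge to a polynomial-in-$t_*$ transformation.} Apply Proposition~\ref{PropMS2Nolog} to obtain stationary $V_k \in \CI(Y;T_Y\Omega^\circ)$ and $a_k \in \CI(Y)$ such that, writing $\wh V_k := e^{-i\sigma t_*} V_k$ and $\wh a_k := e^{-i\sigma t_*} a_k$,
\[
  \bigl(e^{-i\sigma t_*}\gdot_k,\ e^{-i\sigma t_*}\Adot_k\bigr) = \bigl(\cL_{\wh V_k}g,\ \cL_{\wh V_k}A + d\wh a_k\bigr).
\]
Set $V := t_*^k \wh V_k$ and $a := t_*^k \wh a_k$. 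The identities
\[
  \cL_{fX}g = f\,\cL_X g + 2\, df \otimes_s X^\flat, \qquad \cL_{fX}\omega = f\,\cL_X\omega + (\iota_X\omega)\, df,
\]
applied with $X = \wh V_k$, $f = t_*^k$, and $d(t_*^k) = k t_*^{k-1}\, dt_*$, yield
\[
  \bigl(\cL_V g,\ \cL_V A + da\bigr) = e^{-i\sigma t_*} t_*^k (\gdot_k,\Adot_k) + k t_*^{k-1}\bigl(2\,dt_* \otimes_s \wh V_k^\flat,\ (\iota_{\wh V_k}A + \wh a_k)\, dt_*\bigr).
\]
Thus the top $t_*^k$-part matches that of $(\gdot,\Adot)$, and the correction is a generalized mode of degree $\leq k-1$.

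\textbf{Induction step and obstacle.} Because $\sL$ annihilates pure gauge perturbations, the residual
\[
  (\gdot',\Adot') := (\gdot,\Adot) - (\cL_V g,\ \cL_V A + da)
\]
solves $\sL(\gdot',\Adot') = 0$ and is a generalized mode of degree at most $k-1$. The inductive hypothesis then yields vector fields $V_j$ and functions $a_j$, $j=0,\ldots,k-1$, realizing $(\gdot',\Adot')$ as a pure gauge solution, and summing with $(V_k,a_k)$ gives the desired $(V,a) = \sum_{j=0}^k e^{-i\sigma t_*} t_*^j (V_j,a_j)$. The only bookkeeping obstacle is verifying that the correction $k t_*^{k-1}(2\,dt_* \otimes_s \wh V_k^\flat,\, (\iota_{\wh V_k}A + \wh a_k)\, dt_*)$ stays within the fixed scalar-or-vector $l$-harmonic class so that the hypothesis applies verbatim; this is guaranteed by the $O(3)$-equivariance of the construction in Proposition~\ref{PropMS2Nolog} (so $V_k$ and $a_k$ are themselves of pure multipole type $l$) together with the observation that $dt_*$ and the contraction $\iota_{\wh V_k}A$ (against the spherically symmetric $A$) are aspherical. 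No further analytic input is required; the corollary is essentially an algebraic consequence of the proposition.
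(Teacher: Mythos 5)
Your proposal is correct and follows essentially the same route as the paper: induct on $k$, observe that the top coefficient $e^{-i\sigma t_*}(\gdot_k,\Adot_k)$ is itself a mode solution of $\sL=0$, apply Proposition~\ref{PropMS2Nolog} to write it as a pure gauge solution with stationary gauge parameters $(V_k,a_k)$, subtract the pure gauge solution generated by $t_*^k e^{-i\sigma t_*}(V_k,a_k)$, and apply the inductive hypothesis to the resulting generalized mode of degree $\leq k-1$. The extra Lie-derivative bookkeeping and the check that the residual stays in the fixed scalar/vector $l$-harmonic class are correct and merely make explicit what the paper leaves implicit.
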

\begin{proof}
  This is completely analogous to \cite[Lemma~10.1]{HintzVasyKdSStability}. Indeed, proceeding by induction, with the base case $k=0$ being Proposition~\ref{PropMS2Nolog}, we note that $\sL(\gdot,\Adot)=0$ implies in particular that $\sL(e^{-i\sigma t_*}(\gdot_k,\Adot_k))=0$, hence Proposition~\ref{PropMS2Nolog} gives $e^{-i\sigma t_*}(\gdot_k,\Adot_k)=(\cL_{V'}g,\cL_{V'}A+d a')$ for some $(V',a')=e^{-i\sigma t_*}(V_k,a_k)$, with $(V_k,a_k)$ stationary as in the statement of the corollary. But then
  \[
    (\gdot',\Adot') = (\gdot,\Adot) - (\cL_{t_*^k V'}g, \cL_{t_*^k V'}A + d(t_*^k a') )
  \]
  still solves $\sL(\gdot',\Adot')=0$, and is a generalized mode with the exponent of the highest power of $t_*$ at most $k-1$; applying the inductive hypothesis finishes the proof.
\end{proof}

The proof of Proposition~\ref{PropMS2Nolog} proceeds differently in the two cases \eqref{EqMSScalar} and \eqref{EqMSVector}; we discuss the scalar case in \S\S\ref{SubsubsecMS2Sc} and \ref{SubsubsecMS2Sc0}, and the vector case in \S\ref{SubsubsecMS2Vec}.

\subsubsection{Non-stationary scalar perturbations\texorpdfstring{ ($\sigma\neq 0$)}{}}
\label{SubsubsecMS2Sc}

Suppose $(\gdot,\Adot)$ is a scalar perturbation as in \eqref{EqMSScalar}, so
\[
  \gdot = \begin{pmatrix} \wt f \Sph \\ -\frac{r}{k}f\otimes\sld\Sph \\ 2 r^2(H_L\Sph\slg + H_T\slH_k\Sph) \end{pmatrix},
  \quad
  \Adot = \begin{pmatrix} \wt K \Sph \\ -\frac{r}{k}K\,\sld\Sph \end{pmatrix}
\]
in the splittings \eqref{EqMSCalcCtgt} and \eqref{EqMSCalcS2}. We consider its modifications by pure gauge solutions of the same type: these take the form
\begin{equation}
\label{EqMS2ScPureGauge}
  (\upd\gdot,\upd\Adot) = (\cL_{u^\sharp}g,\cL_{u^\sharp}A+d a),  \quad
  u = \begin{pmatrix}
        T\Sph \\
        -\frac{r}{k}L\sld\Sph \\
      \end{pmatrix},
  \ a=P\Sph,
\end{equation}
where $T\in\CI(\hX;T^*\hX)$ and $L,P\in\CI(\hX)$ are aspherical, while $\slDelta\Sph=k^2\Sph$, $k^2=l(l+1)$, $l\geq 2$. Using \eqref{EqMSCalcDelGStar} and \eqref{EqMSCalcLieA}, and writing $\upd\gdot$ and $\upd\Adot$ in terms of $\upd\wt f$, $\upd f$, $\upd H_L$, $\upd H_T$, $\upd\wt K$ and $\upd K$ analogously to \eqref{EqMSScalar}, we have
\begin{equation}
\label{EqMS2ScPureGaugeMods}
  \setarraystretch
  \begin{array}{ll}
    \upd\wt f=2\hdelta^*T,    & \upd f=-\frac{k}{r}T + r\hd(r^{-1}L), \\
    \upd H_T = -\frac{k}{r}L, & \upd H_L=-r^{-1}\iota_\rho T+\frac{k}{2 r}L, \\
    \upd\wt K = \hd\iota_A T+\iota_T\hd A + \hd P, & \upd K=-\frac{k}{r}(\iota_A T+P);
  \end{array}
\end{equation}
thus, defining
\begin{equation}
\label{EqMS2ScbfX}
  \bfX := \frac{r}{k}\Bigl(f+\frac{r}{k}\hd H_T\Bigr),
\end{equation}
which has $\upd\bfX=-T$, we conclude that the quantities\footnote{In these expressions, the operators $\hdelta^*$, $\iota_\rho\equiv\iota_{\rho^{\wh\sharp}}$ and $\iota_\bfX\equiv\iota_{\bfX^{\wh\sharp}}$, with $\wh\sharp$ indicating that one uses $\hg$ to compute the musical isomorphisms, the metric $\hg$ is fixed, i.e.\ these operators are \emph{not} subject to the modifications by $(\upd\gdot,\upd\Adot)$; thus, for instance, $\upd(\hdelta^*\bfX)=\hdelta^*(\upd\bfX)$.}
\begin{equation}
\label{EqMS2ScGaugeInv}
\begin{split}
  \wt F &:= \wt f+2\hdelta^*\bfX \in \CI(\hX;S^2T^*\hX), \\
  J &:= H_L + \frac{1}{2}H_T - r^{-1}\iota_\rho\bfX \in \CI(\hX), \\
  N &:= \wt K+\hd\Bigl(\frac{r}{k}K\Bigr)+\iota_\bfX\hd A \in \CI(\hX;T^*\hX)
\end{split}
\end{equation}
are gauge-invariant, i.e.\ $\upd\wt F=0$ etc. Moreover, they completely describe the perturbation $(\gdot,\Adot)$ up to pure gauge terms: indeed, if they all vanish, then one easily verifies
\begin{equation}
\label{EqMS2ScPureGaugeDescr}
 (\gdot,\Adot) = (\cL_{u^\sharp}g,\cL_{u^\sharp}A+d a),
 \quad
  u = \begin{pmatrix}
        -\bfX\Sph \\
        \frac{r^2}{k^2}H_T\hd\Sph
      \end{pmatrix},\ 
  a = -\frac{r}{k}K+\iota_A\bfX.
\end{equation}
Note that $u$ and $a$ are modes if $\gdot$ and $\Adot$ are modes.

Since the linearized Einstein--Maxwell equation is gauge-invariant, we can equivalently express it as a system for the gauge-invariant quantities \eqref{EqMS2ScGaugeInv}. This is most easily done using the following procedure: one first picks a gauge, i.e.\ one adds a pure gauge solution $(\upd\gdot,\upd\Adot)$ to $(\gdot,\Adot)$ for suitable $T$, $L$, $P$ as in \eqref{EqMS2ScPureGauge}, in which certain non gauge-invariant quantities take a simple form or vanish: concretely, let us take $T=\bfX$, $L=\frac{r}{k}H_T$ and $P=\frac{r}{k}K-\iota_A\bfX$, in which case $\bfX+\upd\bfX=0$, $H_T+\upd H_T=0$ (thus $f+\upd f=0$) and $K+\upd K=0$. Replacing $(\gdot,\Adot)$ by $(\gdot+\upd\gdot,\Adot+\upd\Adot)$, which may thus assume $\bfX=0$, $H_T=0$ (thus $f=0$) and $K=0$; the gauge-invariant quantities then are simply $\wt F=\wt f$, $J=H_L$ and $N=\wt K$. Next, using the calculations in \S\ref{SubsecMSCalc}, one can express the linearized Einstein--Maxwell system, acting on the new $(\gdot,\Adot)$, as a system of equations for quantities $\wt f,H_L,\wt K$. The resulting system of equations, which is expressed purely in terms of $\wt F$, $J$ and $N$, must be equal to the linearized Einstein--Maxwell system acting on our original $(\gdot,\Adot)$ by the gauge-invariance of this system.

\begin{rmk}
  This is analogous to local coordinate computations of invariantly defined operators in differential geometry: the calculations often simplify dramatically when one uses normal coordinates at a point; once one interprets the final result in invariant terms, the resulting expression is then the correct one in any coordinate system.
\end{rmk}

Writing the scalar type symmetric 2-tensor $2\sL_1(\gdot,\Adot)$ analogously to \eqref{EqMSScalar} in terms of the quantities $\wt f^E$, $f^E$, $H_T^E$ and $H_L^E$, and the scalar type 1-form $\sL_2(\gdot,\Adot)$ in terms of the quantities $\wt K^E$ and $K^E$, one finds $\sL(\gdot,\Adot)=0$ to be equivalent to the system
\begin{align}
\label{EqMS2ScwtfE}
    \begin{split}
      \wt f^E &= (\hBox-2\hdelta^*\hdelta-\hdelta^*\hd\,\htr)\wt F+2 r^{-1}(2\hdelta^*\iota_\rho\wt F-\hnabla_\rho\wt F) \\
        &\qquad +4\hdelta^*\hd J + 8 r^{-1}\rho\otimes_s\hd J \\
        & \qquad + (\mu''-k^2 r^{-2})\wt F + (2 Q^2 r^{-4}-\mu'')\hg\,\htr\wt F -4 Q r^{-2}\hg\,\hstar\hd N = 0,
    \end{split} \\
\label{EqMS2ScfE}
   -\frac{r}{k}f^E &= -\hdelta\wt F+2\hd J-r\hd r^{-1}\htr\wt F + 4 Q r^{-2}\hstar N = 0, \\
\label{EqMS2ScHLE}
    \begin{split}
      2 r^2 H_L^E &= \hBox(2 r^2 J)+2 r\iota_\rho\hdelta\wt F - 2\iota_\rho\iota_\rho\wt F + r\iota_\rho\hd\,\htr\wt F + \Bigl(r\mu'+2 Q^2 r^{-2}+\frac{k^2}{2}\Bigr)\htr\wt F \\
        &\qquad + (4\Lambda r^2+4 Q^2 r^{-2}-2 k^2)J - 4 Q\hstar\hd N = 0,
    \end{split} \\
\label{EqMS2ScHTE}
    2 r^2 H_T^E &= -k^2\htr\wt F = 0, \\
\label{EqMS2ScwtKE}
    \wt K^E &= r^{-2}\hdelta r^2\hd N-k^2 r^{-2}N - \frac{1}{2}Q r^{-2}\hstar\hd\,\htr\wt F-2 Q r^{-2}\hstar\hd J = 0, \\
\label{EqMS2ScKE}
    -\frac{r}{k}K^E &= -\hdelta N = 0.
\end{align}
By \eqref{EqMS2ScHTE}, all terms involving $\htr\wt F$ may be dropped. Plugging the expression
\[
  \hdelta\wt F = 2\hd J + 4 Q r^{-2}\hstar N
\]
from \eqref{EqMS2ScfE} into \eqref{EqMS2ScwtfE}, the term $4\hdelta^*\hd J$ drops out, and we obtain a wave equation for $\wt F$ (coupled to $J$ and $N$). Further, by \eqref{EqMS2ScKE}, we can add the vanishing term $\hd\hdelta N$ to \eqref{EqMS2ScwtKE}, the point being that then equations~\eqref{EqMS2ScwtfE}, \eqref{EqMS2ScHLE} (divided by $2 r^2$) and \eqref{EqMS2ScwtKE} can be written as a system
\begin{equation}
\label{EqMS2ScBoxOfPerturbation}
  \hBox P - \sD P = 0, \quad P=\begin{pmatrix}\wt F\\J\\N\end{pmatrix},
\end{equation}
with $\sD$ a stationary differential operator on $\hX$ of order $\leq 1$ acting on the bundle $S^2 T^*\hX\oplus\ulR\oplus T^*\hX$. When $P$ is a smooth mode, this equation is equivalent to an ODE on the 1-dimensional space $t_*=0$ with regular singular points at the horizons where $\mu=0$; thus, the vanishing of $P$ in the static black hole exterior region $\mu>0$ implies the vanishing of $P$ on all of $\hX$. (A more general argument, not relying on the 2-dimensional nature of $\hX$, uses unique continuation on the two components $\{\pm(r-r_\pm)>0\}$ of $\{\mu<0\}$, which are both asymptotically de Sitter-like spaces; see \cite[Lemma~1]{ZworskiRevisitVasy} or \cite[Proposition~5.3]{VasyWaveOndS}.) Our goal is therefore to prove $P=0$ in $\mu>0$.

By \eqref{EqMS2ScKE}, we may write
\begin{equation}
\label{EqMS2ScMaxwellMaster}
  N=\hstar\hd\cA, \quad \cA\in\CI(\hX),
\end{equation}
and then by equation~\eqref{EqMS2ScwtKE}, $0=r^2\hstar\wt K^E=\hd(r^2\hBox\cA-k^2\cA-2 Q J)$; since $\cA$ is only determined up to additive constants, we may thus normalize it such that
\begin{equation}
\label{EqMS2ScMaxwellEqn}
  \hBox\cA - k^2 r^{-2}\cA - 2 Q r^{-2} J = 0;
\end{equation}
this is \cite[equation~\KI{5}{20}]{KodamaIshibashiCharged} in the present context. We point out that $\sigma\neq 0$ allows us to choose $\cA$ to be a mode with frequency $\sigma$ as well; indeed, writing $N=e^{-i\sigma t_*}N_0$, $\hstar N_0=N_{00}\,dt_*+N_{01}\,dr$ and making the ansatz $\cA=e^{-i\sigma t_*}\cA_0$, with $N_{00}$, $N_{01}$ and $\cA_0$ functions of $r$ only, we conclude from $\hd\,\hstar N=0$ that $\pa_r N_{00}+i\sigma N_{01}=0$, while $\hd\cA=\hstar N$ is equivalent to $N_{00}=-i\sigma\cA_0$ and $N_{01}=\pa_r\cA_0$; the first equation is simply solved by $\cA_0=i\sigma^{-1}N_{00}$, and the second equation holds automatically then.

Next, we turn to the equations \eqref{EqMS2ScwtfE}--\eqref{EqMS2ScHTE}. They are not independent due to the (linearized) second Bianchi identity, see \eqref{EqBasicLin2ndBianchi}: indeed, suppose $\sL_2(\gdot,\Adot)=0$ (thus, the linearized stress-energy-momentum tensor satisfies the conservation law $\delta_g D_{g,F}T(\gdot,d\Adot)=0$), which is sufficient for \eqref{EqBasicLin2ndBianchi} to hold; then the identity $\delta_g G_g\sL_1(\gdot,\Adot)\equiv 0$ reads
\begin{align*}
  \delta_g G_g&
  \begin{pmatrix}
    \wt f^E\Sph \\
    -\frac{r}{k}f^E\otimes\sld\Sph \\
    2 r^2\bigl(H_L^E\Sph\slg + H_T^E\slH_k\Sph\bigr)
  \end{pmatrix} = 0 \nonumber\\
  &
  \quad\Longleftrightarrow\quad
  \left\{
  \begin{aligned}
    2 r^{-2}\hdelta(r^2\wt f^E)+\hd(\htr\wt f^E)+\frac{2 k}{r}f^E-4 r^{-2}\hd(r^2 H_L^E) &= 0, \\
    \htr\wt f^E - \frac{2}{k}r^{-2}\hdelta(r^3 f^E) - \frac{2(k^2-2)}{k^2}H_T^E &= 0.
  \end{aligned}
  \right.
\end{align*}
Thus, $f^E=0$ and $H_T^E=0$, i.e.\ equations \eqref{EqMS2ScfE} and \eqref{EqMS2ScHTE}, imply\footnote{This can of course also be checked directly from the equations \eqref{EqMS2ScwtfE}--\eqref{EqMS2ScHTE}.} $\htr\wt f^E=0$ and $\hdelta(r^2\wt E)=0$ for\footnote{By \eqref{EqMSCalcTraceRev}, this is the aspherical part of the tensor $2 G_g\sL_1(\gdot,\Adot)$, which is the sum of the linearized Einstein tensor and the contribution from the linearized stress-energy-momentum tensor.}
\[
  \wt E:=G_\hg\wt f^E+2 H_L^E\hg;
\]
thus, we can recover $\wt f^E$ and $H_L^E$ uniquely from $\wt E$ as the trace-free and pure trace part (up to a constant factor). By \eqref{EqMSCalchXSym2}, the $dr$-component of the latter equation reads
\[
  \frac{\mu'}{2\mu^2}\wt E_{t t} - r^2\mu^{-1}\pa_t \wt E_{t r} + \mu^{-1/2}\pa_r(r^2\mu^{3/2}\wt E_{r r}) = 0;
\]
since $\mu'/2\mu^2\neq 0$ almost everywhere, we conclude that $\wt E_{t t}=0$ is automatically satisfied if $\wt E_{t r}=0$ and $\wt E_{r r}=0$. Thus, the full system \eqref{EqMS2ScwtfE}--\eqref{EqMS2ScHTE} is equivalent to $f^E=0$, $H_T^E=0$, $\wt E_{t r}=0$ and $\wt E_{r r}=0$. \emph{We continue to tacitly use $H_T^E=0$, that is $\htr\wt F=0$.} Following \cite[equation~\KI{5}{27}]{KodamaIshibashiCharged}, we write
\begin{equation}
\label{EqMS2ScXYZDef}
  \wt F + 2 J \hg = \begin{pmatrix} \mu X \\ -\mu^{-1}Z \\ -\mu^{-1}Y \end{pmatrix}
\end{equation}
in the splitting \eqref{EqMSCalchXBundles}; we can recover $\wt F$ and $J$ via
\begin{equation}
\label{EqMS2ScXYZDef2}
  \wt F = \begin{pmatrix} \frac{\mu}{2}(X-Y) \\ -\mu^{-1}Z \\ \frac{1}{2\mu}(X-Y) \end{pmatrix}, \quad J = \frac{X+Y}{4}.
\end{equation}
Using the calculations following \eqref{EqMSCalchXBundles}, the equation $f^E=0$, so $\hdelta(\wt F+2 J\hg)=4 Q r^{-2}\hd\cA$, then reads
\begin{equation}
\label{EqMS2ScfEStatic}
\begin{gathered}
  \pa_t X + \pa_r Z = -4 Q r^{-2}\pa_t\cA, \\
  -\pa_r Y + \mu^{-2}\pa_t Z + \frac{\mu'}{2\mu}(X-Y) = 4 Q r^{-2}\pa_r\cA.
\end{gathered}
\end{equation}
The equation $\wt E_{t r}=\wt f^E_{t r}=0$ takes the form
\begin{equation}
\label{EqMS2ScwtEtr}
  \pa_t\pa_r X - \frac{\mu'}{2\mu}\pa_t X + \pa_t\pa_r Y + \Bigl(2 r^{-1}-\frac{\mu'}{2\mu}\Bigr)\pa_t Y + \frac{k^2}{r^2\mu}Z = 0,
\end{equation}
which corresponds to \cite[equation~\KI{5}{33c}]{KodamaIshibashiCharged}. Since $X$, $Y$, $Z$ and $\cA$ are modes, we have $\pa_t X=-i\sigma X$ etc., hence \eqref{EqMS2ScfEStatic} and \eqref{EqMS2ScwtEtr} imply for $X'=\pa_r X$ etc.\ the equations
\begin{equation}
\label{EqMS2ScXYZPrime}
\begin{split}
  \begin{pmatrix} X' \\ Y' \\ \frac{Z'}{i\sigma} \end{pmatrix} &= T \begin{pmatrix} X \\ Y \\ \frac{Z}{i\sigma} \end{pmatrix} + f, \\
  &\qquad
  T=
  \begin{pmatrix}
    0                 & \frac{\mu'}{\mu}-\frac{2}{r} & \frac{k^2}{r^2\mu}-\frac{\sigma^2}{\mu^2} \\
    \frac{\mu'}{2\mu} & -\frac{\mu'}{2\mu} & \frac{\sigma^2}{\mu^2} \\
    1                 & 0                  & 0
  \end{pmatrix},
  \ f=\frac{4 Q}{r^2}\begin{pmatrix}\cA' \\ -\cA' \\ \cA \end{pmatrix},
\end{split}
\end{equation}
which are \cite[equations~\KI{5}{34a}--\KI{5}{34c}]{KodamaIshibashiCharged} in the present context. Lastly, the equation $\mu\wt E_{r r}=\frac{1}{2}(\mu^{-1}\wt f^E_{t t}+\mu\wt f^E_{r r})-2 H_L^E=0$ takes the form
\begin{equation}
\label{EqMS2ScwtErr}
\begin{split}
  &\mu^{-1}\pa_t^2 X - \frac{\mu'}{2}\pa_r X + \Bigl(-\Lambda+\frac{Q^2}{r^4}\Bigr)X \\
  &\quad + \mu^{-1}\pa_t^2 Y - \Bigl(\frac{\mu'}{2}+\frac{2\mu}{r}\Bigr)\pa_r Y + \Bigl(\frac{k^2-2}{r^2}+\Lambda+\frac{3 Q^2}{r^4}\Bigr)Y \\
  &\quad + \frac{4}{r\mu}\pa_t Z + \frac{4 k^2 Q}{r^4}\cA = 0,
\end{split}
\end{equation}
cf.\ \cite[equations~\KI{5}{31a} and \KI{5}{35}]{KodamaIshibashiCharged}. Upon taking the Fourier transform in $t$ as above and using \eqref{EqMS2ScXYZPrime}, this reduces to a linear constraint on $X$, $Y$ and $Z$, namely
\begin{equation}
\label{EqMS2ScXYZConstraint}
  \gamma\begin{pmatrix}X \\ Y \\ \frac{Z}{i\sigma} \end{pmatrix} = h,
\end{equation}
with
\begin{align*}
  \gamma &= \Bigl(-\frac{\sigma^2}{\mu}-\Lambda+\frac{Q^2}{r^4}-\frac{(\mu')^2}{4\mu}-\frac{\mu'}{r}, \\
  &\qquad\quad -\frac{\sigma^2}{\mu}+\frac{k^2-2}{r^2}+\Lambda+\frac{3 Q^2}{r^4}-\frac{(\mu')^2}{4\mu}+\frac{2\mu'}{r}, \frac{2\sigma^2}{r\mu}-\frac{k^2\mu'}{2 r^2\mu}\Bigr), \\
  h &= -\frac{4 Q}{r^2}\Bigl(\frac{2\mu}{r}\cA' + \frac{k^2}{r^2}\cA\Bigr).
\end{align*}
(Multiplying \eqref{EqMS2ScXYZConstraint} by $-\mu r^2$ and using the expression \eqref{EqMShg} for $\mu$, this agrees with \cite[equation~\KI{5}{36}]{KodamaIshibashiCharged}.) Thus, a generic linear combination $\Phi$ of $X,Y,Z$ with $\cC^2$ coefficients (depending on $r$ only) satisfies a second order ODE with $\cC^0$ coefficients, and moreover $X$, $Y$ and $Z$ can be expressed as a linear combination of $\Phi$ and $\Phi'$ with continuous coefficients. Let us briefly explain how this works: writing $v=(X,Y,Z/i\sigma)$, and letting $\ell\colon\R_r\to(\R^3)^*$ be a $\cC^2$ function, we wish to derive a second order ODE for $\Phi:=\ell(v)$ provided $v$ solves the constrained system\footnote{There is a consistency condition for such a system to be well-posed. Indeed, differentiating the constraint, we find the necessary pointwise condition $(\gamma'+T^t\gamma)(v_0)-h'=0$ for all $v_0\in\R^3$ with $\gamma(v_0)=h$, which is equivalent to $\gamma'+T^t\gamma=\alpha\gamma$ for a scalar $\alpha$ satisfying $\alpha h=h'$.}
\begin{equation}
\label{EqMS2ScODERedux}
  v' = T v + f, \quad \gamma(v) = h.
\end{equation}
Denoting by $T^t$ the transpose of $T$, we have
\[
  \Phi' = \ell_1(v) + f_1, \quad \ell_1=\ell'+T^t\ell,\quad f_1=\ell(f),
\]
and $\Phi''=\ell_2(v)+f_2$ with
\[
  \ell_2=\ell''+2 T^t\ell'+(T')^t\ell+(T^t)^2\ell, \quad f_2=(2\ell'+T^t\ell)(f)+\ell(f').
\]
Thus, we need to find functions $a$, $b$ such that $\ell_2+a\ell_1+b\ell=0$ in $(\R^3)^*/\R\gamma$; this can be solved by solving
\begin{equation}
\label{EqMS2ScODEReduxLin}
  \ell_2 = -\begin{pmatrix} a & b & c \end{pmatrix} \begin{pmatrix} \ell_1 \\ \ell \\ \gamma \end{pmatrix}
\end{equation}
for $a$, $b$ and $c$, which can be done under the non-degeneracy condition that the $3\times 3$ matrix $(\ell_1,\ell,\gamma)$ is invertible. It then follows that $\Phi$ solves the equation
\[
  \Phi'' + a\Phi' + b\Phi  = F, \quad F := f_2 + a f_1 - c h.
\]
Conversely, for a non-degenerate choice of $\ell$, and for $\Phi=\ell(v)$ solving this ODE, one can recover the components of the solution $v$ of \eqref{EqMS2ScODERedux} by solving the linear system
\[
  \begin{pmatrix} \ell \\ \ell_1 \\ \gamma \end{pmatrix} v = \begin{pmatrix} \Phi \\ \Phi'-\ell(f) \\ h \end{pmatrix}
\]
for $v$.

In our application, we want to obtain a master equation for $\Phi$ of Schr\"odinger type, namely $(\mu\pa_r)^2\Phi - (V - \sigma^2)\Phi = F$, where $V$ is a potential and $F$ a forcing term with no dependence on $\sigma$. Expanding the derivative and dividing by $\mu^2$, we thus want to find $\ell$ such that $a=\mu^{-1}\mu'$ and $b=-\mu^{-2}(V(r)-\sigma^2)$. Moreover, for simplicity, we may ask for the forcing to not contain second or higher derivatives of $\cA$, thus we require $\ell(f')=0$, which holds if we take\footnote{Another motivation for choosing the coefficients of $X$ and $Y$ equal is the requirement that the master variables we will describe below be regular at the event and cosmological horizons; see the discussion around equation~\eqref{EqMS2ScRegAtHorizon}.} $\ell=(\alpha,\alpha,\beta)$ with $\alpha,\beta$ to be determined; the independence of $F$ on $\sigma$ then requires $c$ to be a function of $r$ only. Using these pieces of information as an ansatz for \eqref{EqMS2ScODEReduxLin},\footnote{The necessary symbolic calculations are quite lengthy and were performed using \texttt{math\-e\-mat\-i\-ca}.} one can first equate coefficients of the (finite) Taylor series in $\sigma$ beginning with the highest power $\sigma^2$; this gives an algebraic relation expressing $c$ in terms of $\alpha$. The terms with $\sigma$ in the first power trivially agree, being $0$. Next, the first and second components of \eqref{EqMS2ScODEReduxLin} take a very similar form; their difference yields $2\alpha'+(r\beta)'=0$, which suggests taking $\beta=-2 r^{-1}\alpha$. Then, one can solve for the potential $V$ (which only appears linearly) in terms of $\alpha$, leaving us with the task of determining $\alpha$; but $\alpha$ can then be seen to satisfy a simple first order ODE with solution
\begin{equation}
\label{EqMS2ScalphaH}
  \alpha = -\frac{r}{H}, \qquad H(r) := m+3 x-4 z,
\end{equation}
up to a constant prefactor, where we define
\begin{equation}
\label{EqMS2Scmxyz}
  m:=k^2-2,\quad x:=\frac{2\bhm}{r},\quad y:=\frac{\Lambda r^2}{3},\quad z:=\frac{Q^2}{r^2}.
\end{equation}
(The quantity $y$ will be used later.) Here, we note:

\begin{lemma}
\label{LemmaMS2ScHPos}
  In $[r_-,r_+]$, we have $H>0$; indeed, this holds for $l\geq 1$ ($k^2\geq 2$).
\end{lemma}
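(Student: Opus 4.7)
\emph{Proof plan.} The strategy is to reduce $H>0$ to an elementary polynomial inequality that can be controlled using the factorization of $\mu$ already derived in Proposition~\ref{PropRNdSNondeg}. Recalling $x=2\bhm/r$ and $z=Q^2/r^2$, and multiplying through by $r^2$, the claim becomes
\[
  H r^2 = m r^2 + 6\bhm r - 4 Q^2 > 0 \quad\tn{on }[r_-,r_+],
\]
with $m=k^2-2\geq 0$ under the hypothesis $k^2\geq 2$.

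The key algebraic input is the identity $r^2-3\bhm r+2Q^2=(r-r_{1-})(r-r_{2-})$ from the proof of Proposition~\ref{PropRNdSNondeg}, where the critical radii $r_{1-},r_{2-}$ of $r^{-2}\mu$ are defined in \eqref{EqRNdSCriticalPoints}. Rewriting $6\bhm r - 4 Q^2 = 2[r^2-(r-r_{1-})(r-r_{2-})]$, we obtain
\[
  H r^2 = (m+2)r^2 - 2(r-r_{1-})(r-r_{2-}).
\]
From \eqref{EqRNdSRoots} we have the ordering $0<r_{1-}<r_-<r_{2-}<r_+$, which naturally splits $[r_-,r_+]$ into two sub-intervals where $(r-r_{1-})(r-r_{2-})$ has a definite sign.

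On $[r_-,r_{2-}]$, the factors satisfy $r-r_{1-}>0$ and $r-r_{2-}\leq 0$, so the product is $\leq 0$ and hence $H r^2 \geq (m+2)r^2 > 0$. On $[r_{2-},r_+]$, both factors are nonnegative, and since $r_{1-},r_{2-}>0$ we have the strict bounds $0\leq r-r_{1-}<r$ and $0\leq r-r_{2-}<r$, yielding $(r-r_{1-})(r-r_{2-}) < r^2$ strictly; combined with $m\geq 0$ this gives $H r^2 > (m+2)r^2-2r^2 = m r^2\geq 0$. In both cases $H r^2>0$, so $H>0$. There is no real obstacle to this argument—it is a direct consequence of the location of the roots of $\mu$ and $r^{-2}\mu$ established in Proposition~\ref{PropRNdSNondeg}—and the same computation will also make clear why the case $l=0$ (where one would have $m=-2$) must be excluded, consistent with the separate treatment of spherically symmetric perturbations in \S\ref{SubsecMS0}.
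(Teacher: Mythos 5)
Your proof is correct, and it reaches the conclusion by a slightly different elementary route than the paper. The paper drops the $m r^2$ term immediately, uses monotonicity of $6\bhm r$ together with $r>r_{1-}$ on $[r_-,r_+]$ to reduce to the single endpoint inequality $6\bhm r_{1-}-4Q^2>0$, and then verifies that by an explicit squaring computation ($9\bhm^2-4Q^2>3\bhm\sqrt D\Leftrightarrow 16Q^4>0$). You instead keep everything packaged via the identity $H r^2=(m+2)r^2-2(r-r_{1-})(r-r_{2-})$ and conclude from a sign analysis split at $r_{2-}$; this trades the algebraic verification for the geometric observation that both factors lie in $[0,r)$ on $[r_{2-},r_+]$, and it has the added benefit of making transparent why $m=-2$ (i.e.\ $l=0$) fails. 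Both arguments rest on exactly the same inputs, namely \eqref{EqRNdSCriticalPoints} and the root ordering \eqref{EqRNdSRoots}. One small caveat: when $Q=0$ you have $r_{1-}=0$, so your claimed strict bound $r-r_{1-}<r$ degenerates to equality; the product bound $(r-r_{1-})(r-r_{2-})<r^2$ still survives because the other factor satisfies $r-r_{2-}<r$ strictly, but you should either note this or dispose of the $Q=0$ case separately at the outset (where $Hr^2=mr^2+6\bhm r>0$ is immediate), as the paper does.
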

\begin{proof}
  Using merely that $l\geq 1$, hence $k^2-2\geq 0$, we have $r^2 H\geq 6\bhm r-4 Q^2$. For $Q=0$, this is clearly positive, so let us assume $Q\neq 0$. Then, by \eqref{EqRNdSCriticalPoints} and \eqref{EqRNdSRoots}, this can be estimated from below, using $D=9\bhm^2-8 Q^2>0$, by
  \[
    6\bhm r_{1-} - 4 Q^2 > 3\bhm(3\bhm-\sqrt{D}) - 4 Q^2,
  \]
  whose positivity in turn is equivalent to $9\bhm^2-4 Q^2>3\bhm\sqrt{D}$; since the left hand side is positive, squaring this inequality and cancelling terms gives the equivalent condition $16 Q^4>0$, which is evident.
\end{proof}

Thus, since $\sigma\neq 0$, we may take as our master variable for the system \eqref{EqMS2ScXYZPrime} and \eqref{EqMS2ScXYZConstraint} the function
\begin{equation}
\label{EqMS2ScPhiMaster}
  \Phi := \frac{\frac{2 Z}{i\sigma}-r(X+Y)}{H},
\end{equation}
which recovers \cite[equation~\KI{5}{37}]{KodamaIshibashiCharged}; we then have
\begin{equation}
\label{EqMS2ScMaster1}
  \mu(\mu\Phi')' - (V_\Phi-\sigma^2)\Phi = F_\Phi\cA,
\end{equation}
where $V_\Phi$ and $F_\Phi$ are given in equation~\eqref{EqFormulasVPhiFPhi}. (This recovers \cite[equations~\KI{5}{43}, \KI{5}{44} and \KI{C}{1}]{KodamaIshibashiCharged}.) The expressions for $X$, $Y$ and $Z/i\sigma$ as linear combinations of $\Phi$, $\Phi'$, $\cA$ and $\cA'$ are
\begin{equation}
\label{EqMS2ScRecoverXYZ}
\begin{split}
  X &= \Bigl(\frac{\sigma^2 r}{\mu}-\frac{P_{X 0}}{2 r H^2}\Bigr)\Phi + \frac{P_{X 1}}{2 H}\Phi' + \frac{2 Q P_{X\cA}}{r^2 H^2}\cA + \frac{8 Q\mu}{r H}\cA', \\
  Y &= \Bigl(-\frac{\sigma^2 r}{\mu}-\frac{P_{Y 0}}{2 r H^2}\Bigr)\Phi + \frac{P_{Y 1}}{2 H}\Phi' + \frac{2 Q P_{Y\cA}}{r^2 H^2}\cA - \frac{8 Q\mu}{r H}\cA', \\
  \frac{Z}{i\sigma} &= \frac{P_Z}{2 H}\Phi - r\mu\Phi' + \frac{8 Q\mu}{r H}\cA, \\
\end{split}
\end{equation}
where the functions $P_{X 0}$, $P_{X 1}$, $P_{X\cA}$, $P_{Y 0}$, $P_{Y 1}$, $P_{Y\cA}$ and $P_Z$ are given in equation~\eqref{EqFormulasPX0thruPZ}. (These equations are \cite[equations~\KI{5}{45}, \KI{5}{46} and \KI{C}{4}--\KI{C}{14}]{KodamaIshibashiCharged}, up to constant prefactors, in the present context.\footnote{In \cite[equation~\KI{C}{9b}]{KodamaIshibashiCharged}, the factor $r$ on the right hand side is extraneous.})

The equation \eqref{EqMS2ScMaster1} is coupled to the Schr\"odinger equation obtained from \eqref{EqMS2ScMaxwellEqn} by using the mode nature of $\Phi$, namely
\begin{equation}
\label{EqMS2ScMaster2prelim}
  \mu(\mu\cA')' - \Bigl(\frac{\mu k^2}{r^2}-\sigma^2\Bigr)\cA = \frac{Q\mu}{2 r^2}(X+Y).
\end{equation}
We can evaluate $X+Y$ using \eqref{EqMS2ScRecoverXYZ}, together with the identities
\begin{gather*}
  \frac{P_{X 0}+P_{Y 0}}{2 r H^2} = \frac{H}{r}-\frac{P_Z}{r H}, \quad \frac{P_{X 1}+P_{Y 1}}{2 H} = -2\mu, \quad \frac{2 Q(P_{X\cA}+P_{Y\cA})}{r^2 H^2} = \frac{16 Q\mu}{r^2 H},
\end{gather*}
to compute the form of $V_\cA$ and $F_\cA$. Thus, \eqref{EqMS2ScMaster2prelim} is equivalent to
\begin{equation}
\label{EqMS2ScMaster2}
  \mu(\mu\cA')' - (V_\cA-\sigma^2)\cA = F_{\cA 0}\Phi + F_{\cA 1}\mu\Phi'
\end{equation}
with
\[
  V_\cA = \mu\Bigl(\frac{k^2}{r^2} + \frac{8 Q^2\mu}{r^4 H}\Bigr), \quad F_{\cA 0} = -\frac{Q\mu}{2 r^2}\Bigl(\frac{H}{r}-\frac{P_Z}{H r}\Bigr), \quad F_{\cA 1} = -\frac{Q\mu}{r^2}.
\]
This is in agreement with what one obtains from \cite[equations~\KI{5}{45a}, \KI{5}{45b}, \KI{5}{46a}, \KI{5}{46b}, \KI{C}{9a}, \KI{C}{9b}, \KI{C}{16}]{KodamaIshibashiCharged}, see also \cite[equation~\KI{5}{49}]{KodamaIshibashiCharged}.

The procedure to find non-trivial linear combinations $\Psi:=a\Phi+b\cA$, with $a$, $b$ functions of $r$, which satisfy a single decoupled Schr\"odinger equation is straightforward: by \eqref{EqMS2ScMaster1} and \eqref{EqMS2ScMaster2}, the requirement that $\Psi$ satisfy an equation of the form $\mu(\mu\Psi')'-(V-\sigma^2)\Psi=0$ with $V$ only depending on $r$ translates into the conditions
\begin{gather*}
  a V_\Phi + b F_{\cA 0}+ \mu\mu' a'+\mu^2 a'' - a V = 0, \\
  b' = 0, \quad b F_{\cA 1}+2\mu a'=0, \quad a F_\Phi+b V_\cA - b V = 0.
\end{gather*}
The second equation implies that $b$ is constant. If $Q=0$, this system has two solutions
\begin{equation}
\label{EqMS2ScDecouplingConstantsQ0}
  (a_+,b_+)=(0,1),\quad (a_-,b_-)=(1,0).
\end{equation}
For $Q\neq 0$, we must have $b\neq 0$ (for otherwise the last equation gives $a=0$), so the third equation and the explicit form of $F_{\cA 1}$ yield $a=b(c-Q/2 r)$, with the constant $c$ is to be determined. The last equation gives
\begin{equation}
\label{EqMS2ScMasterPotential}
  V = V_\cA + \Bigl(c-\frac{Q}{2 r}\Bigr)F_\Phi;
\end{equation}
one then obtains possible values for $c$ by plugging this into the first equation, which upon division by $b$ reads
\[
  \Bigl(c-\frac{Q}{2 r}\Bigr)\Bigl(V_\Phi-V_\cA-\Bigl(c-\frac{Q}{2 r}\Bigr)F_\Phi\Bigr) + F_{\cA 0} + \frac{Q\mu}{r^2}\Bigl(\frac{\mu'}{2}-\frac{\mu}{r}\Bigr) = 0;
\]
this equation turns out to indeed have the two \emph{constant} solutions $c_\pm$,
\begin{equation}
\label{EqMS2ScCpmWtc}
  c_+ = -\frac{Q m}{2\tilde c}, \quad c_- = \frac{1}{8 Q}\tilde c,\qquad \tilde c:=3\bhm+\sqrt{9\bhm^2+4 Q^2 m}.
\end{equation}
Let us thus define
\begin{equation}
\label{EqMS2ScApmBpm}
  \begin{aligned}
    a_+ &= c_+ - \frac{Q}{2 r}, & b_+ &= 1, \\
    a_- &= \frac{\tilde c}{6\bhm} - \frac{2 Q^2}{3\bhm r}, & b_- &= \frac{4 Q}{3\bhm},
  \end{aligned}
\end{equation}
which are normalized so as to depend smoothly on $Q$ and to reduce to the solutions \eqref{EqMS2ScDecouplingConstantsQ0} for $Q=0$. We obtain the two master quantities
\[
  \Psi_\pm=a_\pm\Phi+b_\pm\cA,
\]
from which conversely $\Phi$ and $\cA$ are uniquely determined. ($\Psi_+$ and $\Psi_-$ are constant multiples of the quantities defined in \cite[equation~\KI{5}{56}]{KodamaIshibashiCharged} by comparison with \cite[equations~\KI{5}{57a} and \KI{5}{57b}]{KodamaIshibashiCharged}, see also \cite[equation~\KI{5}{59}]{KodamaIshibashiCharged}.) We then obtain the decoupled equations
\begin{equation}
\label{EqMS2ScFinalDecoupled}
  \mu(\mu\Psi_\pm')' - (V_\pm-\sigma^2)\Psi_\pm = 0,
\end{equation}
where the potentials $V_\pm$ can be computed by plugging $c=c_\pm$ into \eqref{EqMS2ScMasterPotential}; the explicit expressions are given in \cite[equation~\KI{5}{61}]{KodamaIshibashiCharged}, which, following \cite[equations~\KI{6}{21} and \KI{6}{23}]{KodamaIshibashiCharged}, we can also write as
\begin{equation}
\label{EqMS2ScSDef}
  V_\pm + \mu S_\pm' - S_\pm^2 = \wt V_\pm, \quad S_\pm := \mu S^0_\pm,\ S^0_\pm:=(\log H_\pm)',
\end{equation}
where
\[
  H_+:=1-\frac{4 Q^2}{\tilde c r},\ H_-:=k^2-2 + \frac{6\bhm}{r}\Bigl(1-\frac{4 Q c_+}{3\bhm}\Bigr),
\]
which are well-defined even for $l\geq 1$, and the potentials
\begin{equation}
\label{EqMS2ScSDefPot}
  \wt V_+ = \frac{k^2\mu}{r^2 H_+}, \quad \wt V_- = \frac{k^2(k^2-2)\mu}{r^2 H_-},
\end{equation}
are well-defined near $[r_-,r_+]$ and positive in $\mu>0$ due to the following lemma.

\begin{lemma}
\label{LemmaMS2ScHpmPos}
  In $[r_-,r_+]$, we have $H_\pm>0$; indeed, this holds for $l\geq 1$ ($k^2\geq 2$).
\end{lemma}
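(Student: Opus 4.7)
The plan is to treat the two functions $H_\pm$ separately, using the explicit algebraic form of $\tilde c$ and $c_+$ and the radial bounds provided by the non-degeneracy analysis in Proposition~\ref{PropRNdSNondeg}, in a spirit completely analogous to the proof of Lemma~\ref{LemmaMS2ScHPos}.

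I would first dispense with $H_-$ by a direct algebraic simplification. Substituting $c_+=-Qm/(2\tilde c)$ into the definition gives
\[
  H_- = m + \frac{6\bhm}{r} + \frac{4 Q^2 m}{\tilde c\,r},
\]
and since $m=k^2-2\geq 0$ for $l\geq 1$, $\bhm>0$, $\tilde c\geq 6\bhm>0$ (from $\sqrt{9\bhm^2+4 Q^2 m}\geq 3\bhm$), and $r>0$ on $[r_-,r_+]$, all three summands are non-negative and the middle one is strictly positive. Hence $H_->0$ immediately, with no need to invoke Proposition~\ref{PropRNdSNondeg}. This handles the case $Q=0$ (where $H_+\equiv 1$) as well, so the remainder of the argument only needs to treat $H_+$ with $Q\neq 0$.

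For $H_+$, the task reduces to showing $\tilde c\,r>4Q^2$ on $[r_-,r_+]$. Using $\tilde c\geq 6\bhm$ and $r\geq r_-$, it suffices to prove $6\bhm\,r_->4Q^2$. Since $r_->r_{1-}=(3\bhm-\sqrt D)/2$ with $D=9\bhm^2-8 Q^2>0$ (from Proposition~\ref{PropRNdSNondeg}), I would reduce to
\[
  6\bhm\,r_{1-} - 4 Q^2 = 3\bhm(3\bhm-\sqrt D)-4Q^2 = 9\bhm^2-4 Q^2 - 3\bhm\sqrt D,
\]
and show the right-hand side is non-negative, which is equivalent to $9\bhm^2-4Q^2\geq 3\bhm\sqrt D$. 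Both sides are positive (the left since $9\bhm^2>8Q^2>4Q^2$), so squaring gives the equivalent inequality $(9\bhm^2-4Q^2)^2\geq 9\bhm^2(9\bhm^2-8Q^2)$, which simplifies to $16 Q^4\geq 0$ — evident, and strict when $Q\neq 0$.

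The argument is essentially the same algebraic manipulation already worked out in the proof of Lemma~\ref{LemmaMS2ScHPos}, so I do not expect any genuine obstacle; the only subtle point is to notice that $\tilde c\geq 6\bhm$, which trivializes the bound on the coefficient of $Q^2$ in $H_+$, and to use the reduction $r\geq r_->r_{1-}$ together with the squaring trick rather than any direct analysis of the roots of $\mu$. Once $H_\pm>0$ on $[r_-,r_+]$, the potentials $\wt V_\pm$ in \eqref{EqMS2ScSDefPot} are smooth and positive there, as required for the subsequent mode stability analysis.
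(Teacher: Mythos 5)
Your proof is correct and follows essentially the same route as the paper: the $H_-$ case reduces to the sign observation $Qc_+\leq 0$ (which your explicit expansion $H_-=m+\tfrac{6\bhm}{r}+\tfrac{4Q^2m}{\tilde c r}$ makes transparent), and the $H_+$ case reduces, via $\tilde c\geq 6\bhm$ and $r\geq r_->r_{1-}$, to the inequality $3\bhm(3\bhm-\sqrt D)-4Q^2>0$ already established by the squaring trick in the proof of Lemma~\ref{LemmaMS2ScHPos}.
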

\begin{proof}
  Since $Q c_+\leq 0$, we certainly have $H_->0$. On the other hand, the inequality $H_+>0$ is equivalent to $(3\bhm+\sqrt{9\bhm^2+4(k^2-2)Q^2})r>4 Q^2$, which is clearly satisfied if $Q=0$, so let us assume $Q\neq 0$. We claim that this inequality holds under the weaker condition $r>r_{1-}$, see \eqref{EqRNdSCriticalPoints} and \eqref{EqRNdSRoots}: indeed, for $r=r_{1-}$, its left hand side (for $k^2\geq 2$) is minimized for $k^2=2$, in which case the inequality is equivalent to
  \[
    3\bhm(3\bhm-\sqrt{9\bhm^2-8 Q^2})-4 Q^2 > 0,
  \]
  which holds true, as we saw in the proof of Lemma~\ref{LemmaMS2ScHPos}.
\end{proof}

The point of the \emph{S-deformation} \eqref{EqMS2ScSDef}, see \cite[equations~\KI{6}{4}--\KI{6}{6}]{KodamaIshibashiCharged}, is that if we define the smooth function $\varphi_\pm$ in a neighborhood of $[r_-,r_+]$ by $\varphi_\pm'=S_\pm^0$ (thus we can take $\varphi_\pm=\log H_\pm$), then $\mu\pa_r+S_\pm=e^{-\varphi_\pm}\mu\pa_r e^{\varphi_\pm}$, and
\begin{equation}
\label{EqMS2ScConjMasterEqn}
  e^{-\varphi_\pm}\bigl((\mu\pa_r)^*(\mu\pa_r) + \wt V_\pm\bigr) e^{\varphi_\pm} = -(\mu\pa_r)^2 + V_\pm,
\end{equation}
with the adjoint on the left taken relative to the density $e^{-2\varphi_\pm}\mu^{-1}\,dr$ on $(r_-,r_+)$.

We are now ready to show that the mode $(\gdot,\Adot)$ with $\sigma\neq 0$ is a pure gauge mode; we merely have to show that $\Psi_\pm\equiv 0$ in their domain of definition $(r_-,r_+)$, where $\mu>0$. To do so, we first determine the precise behavior of $\Psi_\pm$ near $r=r_\pm$, which in turn is directly read off from the behavior of $\cA$ and $\Phi$. For the former, we simply observe that $\cA=e^{-i\sigma t_*}\cA_0$ with $\cA_0$ smooth in a neighborhood of $[r_-,r_+]$, as discussed after \eqref{EqMS2ScMaxwellEqn}. For the latter, we recall the definitions \eqref{EqMS2ScXYZDef} and \eqref{EqMS2ScPhiMaster}; now $\wt F=e^{-i\sigma t_*}\wt F_0$ and $J=e^{-i\sigma t_*}J_0$ with $\wt F_0\in\CI([r_-,r_+];S^2 T^*_{\{t_*=0\}}\hX)$ and $J_0\in\CI([r_-,r_+])$; now, writing
\[
  \wt F_0 + 2 J_0\hg = u\,dt_*^2 + 2 v\,dt_*\,dr + w\,dr^2
\]
with $u,v,w\in\CI([r_-,r_+])$, and using $t_*=t-T(r)$ with $dt_*=dt\mp(\mu^{-1}\pm c_\pm)$ from \eqref{EqKNdS0CoordDef}, we find that in the decomposition \eqref{EqMSCalchXBundles}, $\wt F_0$ takes the form
\begin{equation}
\label{EqMS2ScRegAtHorizon}
  \wt F_0 + 2 J_0\hg
  =\begin{pmatrix}
     u \\
     v\mp u(\mu^{-1}\pm c_\pm) \\
     w \mp 2 v(\mu^{-1}\pm c_\pm) + u(\mu^{-2}\pm 2\mu^{-1}c_\pm+c_\pm^2)
   \end{pmatrix};
\end{equation}
therefore, writing the modes $X$, $Y$ and $Z$ as $X=e^{-i\sigma t_*}X_0$ etc., we find that
\[
  Z_0 = -\mu v\pm u(1\pm\mu c_\pm)
\]
is smooth on $[r_-,r_+]$, as is
\[
  X_0+Y_0 = -\mu w \pm 2 v(1\pm\mu c_\pm) \mp 2 u c_\pm - \mu u c_\pm^2,
\]
and therefore so is $e^{i\sigma t_*}\Phi$, and thus so are the $t_*$-independent functions $\Psi_{\pm,0}:=e^{i\sigma t_*}\Psi_\pm$. Thus, the $t$-independent function $\Psi_{\pm,1}:=e^{i\sigma T}e^{\varphi_\pm}\Psi_{\pm0}$ satisfies
\begin{equation}
\label{EqMS2ScConjMasterEqn2}
  \bigl((\mu\pa_r)^*(\mu\pa_r)+(\wt V_{\pm}-\sigma^2)\bigl)\Psi_{\pm,1}=0,
\end{equation}
see \eqref{EqMS2ScConjMasterEqn}, and it lies in the space $|r-r_\pm|^{-\frac{i\sigma}{2\kappa_\pm}}\CI$ up to and including the horizon at $r=r_\pm$, where
\begin{equation}
\label{EqMS2ScSurfGrav}
  \kappa_\pm=\frac{1}{2}|\mu'(r_\pm)|
\end{equation}
are the surface gravities of the horizons. By standard arguments (pairing \eqref{EqMS2ScConjMasterEqn2} with $\Psi_{\pm,1}$ and integrating by parts for $\Im\sigma>0$; using a boundary pairing argument for $\sigma\in\R\setminus\{0\}$), equation~\eqref{EqMS2ScConjMasterEqn2} and the positivity of the potentials $\wt V_\pm$ imply $\Psi_{\pm,1}=0$, as desired.

This finishes the proof of Proposition~\ref{PropMS2Nolog} for scalar perturbations with $\sigma\neq 0$.

\subsubsection{Stationary scalar perturbations\texorpdfstring{ ($\sigma=0$)}{}}
\label{SubsubsecMS2Sc0}

Next, we discuss the modifications required to treat static perturbations, i.e.\ with frequency $\sigma=0$, still assuming $l\geq 2$. Thus, all gauge-invariant quantities $\wt F$, $J$, $N$, and therefore $X$, $Y$ and $Z$, are stationary.

First, we discuss the nature of $\cA$ in \eqref{EqMS2ScMaxwellMaster}: writing $\hstar N=N_0\,dt_*+N_1\,dr$ with $N_0$ and $N_1$ functions of $r$ only, the equation $\hd\,\hstar N=0$ gives $\pa_r N_0=0$; hence, writing $\cA_1(r)=\int_{r_-}^r N_1(s)\,ds$, we have the unique (up to additive constants) solution $\cA=N_0 t_*+\cA_1$, which is a generalized mode with frequency $0$ and grows at most linearly in $t_*$.

Now in the case $Q=0$, i.e.\ perturbing around an uncharged Schwarzschild--de~Sitter black hole, the electromagnetic and gravitational degrees of freedom decouple: equation~\eqref{EqMS2ScMaxwellEqn} and the positivity of $k^2 r^{-2}$ directly imply $\cA\equiv 0$. In general, for any value of $Q$, equation~\eqref{EqMS2ScwtEtr} implies
\begin{equation}
\label{EqMS2Sc0Zvanish}
  Z\equiv 0,
\end{equation}
hence if $Q\neq 0$, we obtain $\pa_t\cA=0$ from the first equation in \eqref{EqMS2ScfEStatic}. We conclude that for any value of $Q$, the quantity $\cA$ is necessarily stationary, and the rest of the discussion will apply to the cases $Q=0$ and $Q\neq 0$ simultaneously.

The derivation of the master variable $\Phi$, defined in \eqref{EqMS2ScPhiMaster}, and thus the derivation of the form of $\Psi_\pm$ given in the previous section does not directly work for $\sigma=0$. We remedy this as follows: for $\sigma\neq 0$, the linear constraint \eqref{EqMS2ScXYZConstraint} shows that $Z/i\sigma$ can be written as a linear combination of $X$, $Y$, $\cA$ and $\cA'$; thus, so can $\Phi$, and therefore, making the dependence of the construction on $\sigma\in\C$ explicit,
\[
  \Psi_\pm(\sigma) = C_{X\pm}(\sigma)X + C_{Y\pm}(\sigma)Y + C_{\cA\pm}(\sigma)\cA + C_{\cA'\pm}(\sigma)\cA',
\]
where the $C_{\bullet\pm}(\sigma)$ are rational functions of $r$ depending analytically on $\sigma\in\C$. Concretely, defining
\[
  \wt H(\sigma) := H (k^2\mu' - 4 r\sigma^2),
\]
and recalling the quantities $m$, $x$, $y$ and $z$ from \eqref{EqMS2Scmxyz}, we have
\begin{equation}
\label{EqMS2Sc0Coeffs}
  \begin{aligned}
    C_{X\pm} &= \frac{a_\pm P_X}{3\wt H}, & C_{Y\pm} &= \frac{a_\pm P_Y}{3\wt H}, \\
    C_{\cA+} &= 1 + \frac{\mu (m+2) P_\cA}{\tilde c r \wt H}, & C_{\cA-} &= b_-\Bigl(1 + \frac{6 a_-\mu (m+2)x}{r \wt H}\Bigr), \\
    C_{\cA'+} &= \frac{2\mu^2 P_\cA}{\tilde c\wt H}, & C_{\cA'-} &= \frac{4 b_-\mu^2(\tilde c-4 r z)}{r \wt H},
  \end{aligned}
\end{equation}
where the functions $P_X$, $P_Y$ and $P_\cA$, given in equation~\eqref{EqFormulasPXthruPA}, are smooth in $[r_-,r_+]$. For later use, we conversely compute the expressions for $X$, $Y$ and $\cA$ in terms of $\Psi_\pm(\sigma)$ and $\Psi_\pm'(\sigma)$: defining the positive function
\begin{equation}
\label{EqMS2Sc0HatC}
  \hat c:=r^{-1}(\tilde c-3\bhm)=r^{-1}\sqrt{9\bhm^2+4 m Q^2},
\end{equation}
we have
\begin{equation}
\label{EqMS2Sc0PsipmToXYA}
\begin{split}
  X &= -\Bigl(\frac{4 Q}{\hat c\mu}\sigma^2 + \frac{2 Q}{\hat c r^2 H^2}P_{X+}\Bigr)\Psi_+ + \Bigl(\frac{3\bhm}{\hat c\mu}\sigma^2+\frac{3}{8\tilde c\hat c H^2}P_{X-}\Bigr)\Psi_- \\
      &\quad\qquad - \frac{2 Q}{\hat c r H}Q_+\Psi'_+ + \frac{3 x}{4\hat c H} Q_-\Psi'_-, \\
  Y &= \Bigl(\frac{4 Q}{\hat c\mu}\sigma^2 - \frac{4 Q z}{r\hat c\tilde c H^2}P_{Y+}\Bigr)\Psi_+ - \Bigl(\frac{3\bhm}{\hat c\mu}\sigma^2 + \frac{x}{8\tilde c\hat c H^2}P_{Y-}\Bigr)\Psi_- \\
      &\quad\qquad + \frac{2 Q}{\hat c r}\Bigl(4\mu+\frac{Q_+}{H}\Bigr)\Psi'_+ - \frac{3 x}{4\hat c}\Bigl(4\mu+\frac{Q_-}{H}\Bigr)\Psi'_-, \\
  \cA &= \frac{\tilde c r^{-1}-4 z}{2\hat c}\Psi_+^0 + \frac{2 z(\tilde c+m r)}{b_-\hat c\tilde c}\Psi_-^0,
\end{split}
\end{equation}
where the functions $P_{X\pm}$, $P_{Y\pm}$ and $Q_\pm$, given in equation~\eqref{EqFormulasPXpthruPym}, are independent of $\sigma$, and smooth in $[r_-,r_+]$. From \eqref{EqMS2Sc0Coeffs}, we see that the functions $C_{\bullet\pm}(\sigma)$ are smooth in $\{r\colon\mu'(r)\neq 4 r\sigma^2/k^2\}$.

One can now check that if one defines
\begin{equation}
\label{EqMS2Sc0MasterVar}
  \Psi_\pm^0 := C_{X\pm}(0)X + C_{Y\pm}(0)Y + C_{\cA\pm}(0)\cA + C_{\cA'\pm}(0)\cA',
\end{equation}
then $\Psi_\pm^0$ solves the master equation \eqref{EqMS2ScFinalDecoupled}, for $\sigma=0$, away from the isolated critical point $r_2\in(r_-,r_+)$ of $\mu$ in the black hole exterior region. That is,\footnote{Without further calculations, this \emph{almost} follows directly from \eqref{EqMS2ScFinalDecoupled}: if $(X,Y,\cA)$ are arbitrary, i.e.\ not necessarily solutions of the linearized Einstein--Maxwell system, then the equation~\eqref{EqMS2ScFinalDecoupled} is equal to some linear combination (with coefficients meromorphic in $\sigma$) of the components of the linearized Einstein--Maxwell system and their derivatives; thus, \emph{if} the coefficients of this linear combination are regular at $\sigma=0$, then we can indeed conclude that for a stationary perturbation $(X,Y,\cA)$, the master variables $\Psi_\pm^0$ solve \eqref{EqMS2Sc0Master} in their domain of definition.}
\begin{equation}
\label{EqMS2Sc0Master}
  \mu(\mu(\Psi_\pm^0)')' - V_\pm\Psi_\pm^0 = 0,\quad r\neq r_2.
\end{equation}
Note that $\Psi_\pm^0$ is bounded by $1/\mu'(r)$ near $r_2$; indeed, $\mu'\Psi_\pm^0$ is smooth on $(r_-,r_+)$, as follows from the form \eqref{EqMS2Sc0Coeffs} of the coefficients. We contend that $\Psi_\pm^0$ is in fact smooth at $r_2$ as well. To see this, we note that by \eqref{EqMS2Sc0Zvanish}, equation~\eqref{EqMS2ScfEStatic}, evaluated at $r_2$, yields $Y'=-4Q r^{-2}\cA'$, which when plugged into \eqref{EqMS2ScwtErr} gives the linear relation
\begin{align*}
  C'_X X &+ C'_Y Y + C'_\cA \cA + C'_{\cA'} \cA' = 0, \\
  & C'_X = -\Lambda+\frac{Q^2}{r^4}, \ C'_Y=\frac{k^2-2}{r^2}+\Lambda+\frac{3 Q^2}{r^4},\ C'_\cA=\frac{4 k^2 Q}{r^4}, \ C'_{\cA'}=\frac{8 Q\mu}{r^3},
\end{align*}
at $r_2$; one can then check that at $r_2$, the vectors
\[
  (C_{X\pm}(0),C_{Y\pm}(0),C_{\cA\pm}(0),C_{\cA'\pm}(0))
\]
are both scalar multiples of $(C'_X,C'_Y,C'_\cA,C'_{\cA'})$, which implies that the smooth function $\mu'\Psi_\pm^0$ vanishes at the simple zero $r_2$ of $\mu'$. Therefore, $\Psi_\pm^0$ is smooth, as claimed, and consequently, $\Psi_\pm^0$ satisfies \eqref{EqMS2Sc0Master} globally on $(r_-,r_+)$.

Lastly, we observe that at either horizon $r=r_-$ or $r=r_+$, we have $C_{X\pm}(\sigma)=C_{Y\pm}(\sigma)$ for $\sigma$ near $0$, as follows from the construction in the previous section, or directly from the calculation $P_Y-P_X=12 H\mu$ and $\mu(r_\pm)=0$. This shows that $\Psi_\pm^0\in\CI([r_-,r_+])$. Using the S-deformation \eqref{EqMS2ScConjMasterEqn}, we thus find that $\Psi_{\pm,1}^0:=e^{\varphi_\pm}\Psi_\pm^0\in\CI([r_-,r_+])$ solves the equation \eqref{EqMS2ScConjMasterEqn2} for $\sigma=0$, that is
\begin{equation}
\label{EqMS2Sc0MasterTilde}
  \bigl((\mu\pa_r)^*(\mu\pa_r) + \wt V_\pm\bigr)\Psi_{\pm,1}^0 = 0,
\end{equation}
with the adjoint defined in \eqref{EqMS2ScConjMasterEqn}; recall here the form of the potentials from \eqref{EqMS2ScSDefPot}. Pairing this against $\chi(\eps^{-1}\mu)\Psi_{\pm,1}^0$, with $\chi(x)$ smooth in $x\geq 0$, vanishing for $x$ near $0$ and identically $1$ for large $x$, and letting $\eps\to 0+$ implies, using the positivity of $\wt V_\pm$, that $\Psi_{\pm,1}^0\equiv 0$, and thus $\Psi_\pm^0\equiv 0$. Now, $X$, $Y$ and $\cA$ can be expressed as linear combinations of\footnote{At frequencies $\sigma\neq 0$, this is again automatic, as $\Phi$ and $\cA$ are linear combinations of $\Psi_\pm$ by the definition of the latter, and then $X$ and $Y$ can be recovered from $\Phi$, $\cA$ and their derivatives by means of \eqref{EqMS2ScRecoverXYZ}.} $\Psi_\pm^0$ and $(\Psi_\pm^0)'$; these expressions are obtained from \eqref{EqMS2Sc0PsipmToXYA} by putting $\sigma=0$. Therefore $X=Y=\cA\equiv 0$, hence all gauge-invariant quantities vanish in $\mu>0$, and the unique continuation argument involving equation~\eqref{EqMS2ScBoxOfPerturbation} shows that they vanish on all of $\hX$.

Since the discussion around equation~\eqref{EqMS2ScPureGaugeDescr} applies without any restriction on $\sigma$, the proof of Proposition~\ref{PropMS2Nolog} is now complete.

\subsubsection{Vector perturbations}
\label{SubsubsecMS2Vec}

Given a vector perturbation \eqref{EqMSVector}, so
\[
  \gdot = \begin{pmatrix} 0 \\ r f\otimes\vect \\ -\frac{2}{k}r^2 H_T\sldelta^*\vect \end{pmatrix},
  \quad
  \Adot = \begin{pmatrix} 0 \\ r K\vect \end{pmatrix}
\]
in the splittings \eqref{EqMSCalcCtgt} and \eqref{EqMSCalcS2}, we consider pure gauge modifications of the same type,
\begin{equation}
\label{EqMS2VecGauge}
  (\upd\gdot,\upd\Adot)=(\cL_{u^\sharp}g,\cL_{u^\sharp}A), \quad u = r L\vect,
\end{equation}
with $L\in\CI(\hX)$. (There are no gauge modifications of $A$ which are of vector type.) Writing $\upd\gdot$ and $\upd\Adot$ in terms of $\upd f$, $\upd H_T$ and $\upd K$ analogously to \eqref{EqMSVector}, the expressions \eqref{EqMSCalcDelGStar} and \eqref{EqMSCalcLieA} give
\[
  \upd f = r\hd(r^{-1}L), \quad \upd H_T=-k r^{-1}L,\quad \upd K=0.
\]
Therefore, the perturbation $(\gdot,\Adot)$ is described by the gauge-invariant quantities
\[
  J := f + \frac{r}{k}\hd H_T \in \CI(\hX;T^*\hX), \quad K\in\CI(\hX).
\]
Indeed, suppose $J=0$ and $K=0$, then the choice $u=r L\vect$, $L=-k^{-1}r H_T$, gives $(\gdot,\Adot)=(\gdot,0)=(\cL_{u^\sharp}g,\cL_{u^\sharp}A)$, thus displaying $(\gdot,\Adot)$ as a pure gauge perturbation; note that $u^\sharp$ is a mode if $(\gdot,\Adot)$ is a mode.

Consider first the linearized second Bianchi identity \eqref{EqBasicLin2ndBianchi}, which holds provided $\sL_2(\gdot,\Adot)=0$: expressing the vector type symmetric 2-tensor $\sL_1(\gdot,\Adot)$ analogously to \eqref{EqMSVector} in terms of $f^E$ and $H_T^E$, this gives (without needing to assume $\sL_1(\gdot,\Adot)=0$)
\[
  \delta_g G_g \begin{pmatrix} 0 \\ r f^E\otimes\vect \\ -\frac{2}{k}r^2 H_T^E\sldelta^*\vect \end{pmatrix} \equiv 0
  \quad \Longleftrightarrow \quad
  r^{-2}\hdelta r^3 f^E + \frac{k^2-1}{k}H_T^E \equiv 0.
\]
In particular, $f^E=0$ implies $H_T^E=0$, since $k^2-1\neq 0$. Thus, instead of analyzing the system $\sL(\gdot,\Adot)=0$, one can equivalently study
\begin{equation}
\label{EqMS2VecBianchi}
  \hpi_2\sL_1(\gdot,\Adot)=0, \quad \sL_2(\gdot,\Adot)=0,
\end{equation}
where
\[
  \hpi_2 \colon \begin{pmatrix} 0 \\ r f^E\otimes\vect \\ -\frac{2}{k}r^2 H_T^E\sldelta^*\vect \end{pmatrix} \mapsto f^E.
\]
The linearized Einstein--Maxwell equation (and hence this system), being gauge-invariant, can be expressed in terms of the quantities $J$ and $K$: following the recipe explained in \S\ref{SubsubsecMS2Sc}, this calculation is most easily done by working in the gauge with $H_T=0$, in which case the gauge-invariant quantity $J$ is related to the perturbation variable $f$ simply by $J=f$. After some algebraic manipulations using the calculations in \S\ref{SubsecMSCalc}, the system \eqref{EqMS2VecBianchi} is then found to be equivalent to
\begin{equation}
\label{EqMS2VecSys}
\begin{gathered}
  r^{-2}\hdel r^4\hd(r^{-1}J) - (k^2-1)r^{-1}J - 4 Q r^{-2}\hstar\hd(r K)=0, \\
  \hBox(r K) - (k^2+1)r^{-1}K - Q\hstar\hd(r^{-1}J) = 0.
\end{gathered}
\end{equation}
The second Bianchi identity reads $\hdelta(r J)=0$, which follows from the first equation after applying $\hd r^2$. It implies that we can write
\begin{equation}
\label{EqMS2VecStarrF}
  \hstar r J = \hd(r\Phi)
\end{equation}
for some function $\Phi$ on $\hX$. After multiplication from the left by $r^2\hstar$, the first equation in \eqref{EqMS2VecSys} becomes 
\[
  \hd r^4\hdel r^{-2}\hd(r\Phi) - (k^2-1)\hd(r\Phi)-\hd(4 Q r K) = 0,
\]
therefore $r^4\hdel r^{-2}\hd(r\Phi) - (k^2-1)r\Phi - 4 Q r K=c$ is constant; since we can change $\Phi$ by adding $r^{-1}c'$ for any $c'\in\R$ without affecting \eqref{EqMS2VecStarrF}, we may assume $c=0$, so
\begin{equation}
\label{EqMS2VecSys1prelim}
  r\hdelta r^{-2}\hd r\Phi - (k^2-1)r^{-2}\Phi - 4 Q r^{-2} K = 0.
\end{equation}
This gives
\begin{equation}
\label{EqMS2VecSys1}
  \hBox\Phi - r^{-2}\Bigl(k^2+1-\frac{6\bhm}{r}+\frac{4 Q^2}{r^2}\Bigr)\Phi - 4 Q r^{-2} K=0.
\end{equation}
(This recovers \cite[equations~\KI{5}{14} and \KI{5}{15}]{KodamaIshibashiMaster} with $n=2$, $k_V=k$, $M=\bhm$ and $Q=0$, $K=0$.) On the other hand, the second equation in \eqref{EqMS2VecSys} can be rewritten, using \eqref{EqMS2VecStarrF} and \eqref{EqMS2VecSys1prelim}, as
\[
  \hBox(r K) - r^{-2}\Bigl(k^2+1+\frac{4 Q^2}{r^2}\Bigr)(r K) - (k^2-1)Q r^{-3}\Phi = 0.
\]
It is convenient to combine this with \eqref{EqMS2VecSys1} into a system for
\[
  \Psi:=\begin{pmatrix} (k^2-1)^{1/2}\Phi \\ 2 r K \end{pmatrix},
\]
namely
\begin{align*}
  \hBox\Psi &- (V + T)\Psi = 0, \\
    & \quad V = r^{-2}\Bigl(k^2+1-\frac{3\bhm}{r}+\frac{4 Q^2}{r^2}\Bigr), \\
    & \quad
      T = r^{-3}
          \begin{pmatrix}
            -3\bhm           & 2 Q(k^2-1)^{1/2} \\
            2 Q(k^2-1)^{1/2} & 3\bhm
          \end{pmatrix},
\end{align*}
with $\hBox$ acting component-wise; for $\Lambda=0$, this recovers \cite[equations~(18)--(20)]{MoncriefRNOdd}. Since $T$ is a constant coefficient symmetric matrix, this system can be diagonalized and yields scalar wave equations $\hBox\Psi_\pm-V_\pm\Psi_\pm=0$ for suitable linearly independent constant coefficient linear combinations $\Psi_\pm$ of the components of $\Psi$, where
\[
  V_\pm = V \pm 2Q(k^2-1)^{1/2}r^{-3} = r^{-2}\Bigl(\bigl((k^2-1)^{1/2}\pm Q r^{-1}\bigr)^2 + 2 - \frac{3\bhm}{r} + \frac{3 Q^2}{r^2}\Bigr);
\]
thus, $2r^2 V_\pm\geq 3\mu+1+\Lambda r^2+\frac{3 Q^2}{r^2}>0$ in the black hole exterior region $(r_-,r_+)$, where $\mu>0$. This implies that the operators $\hBox-V_\pm$ do not have any resonances in $\Im\sigma\geq 0$. Since the functions $\Psi_\pm$ are non-decaying modes, this implies $\Psi_\pm=0$, hence $J=0$ and $K=0$, and therefore $(\gdot,\Adot)$ is a pure gauge solution.

The proof of Proposition~\ref{PropMS2Nolog} is complete.

\subsection{Scalar \texorpdfstring{$l=1$}{l=1} modes}
\label{SubsecMS1s}

For scalar modes with $l=1$, so $k^2=2$, we now show how to obtain the analogue of Proposition~\ref{PropMS2Nolog}, i.e.\ only considering modes, rather than generalized modes, which as in Corollary~\ref{CorMS2Log} implies the $l=1$ scalar part of Theorem~\ref{ThmMS}.

Not assuming $(\gdot,\Adot)$ to be a mode at first, we recall from \eqref{EqMSScalar1} that for such perturbations, the component $H_T$ is no longer defined, so
\begin{equation}
\label{EqMS1sPerturbation}
  \gdot = \begin{pmatrix} \wt f\Sph \\ -\frac{r}{k}f\otimes\sld\Sph \\ 2 r^2 H_L\Sph\slg \end{pmatrix},
  \quad
  \Adot = \begin{pmatrix} \wt K\Sph \\ -\frac{r}{k}K\,\sld\Sph \end{pmatrix}
\end{equation}
in the splittings \eqref{EqMSCalcCtgt} and \eqref{EqMSCalcS2}. Likewise, the corresponding component $H^E_T$ of the tensor $2\sL_1(\gdot,\Adot)$ (see the discussion preceding \eqref{EqMS2ScwtfE}) is not defined. Let us \emph{define} $H_T=0$, and define $\bfX$, $\wt F$, $J$ and $N$ as in \eqref{EqMS2ScbfX} and \eqref{EqMS2ScGaugeInv}. Pure gauge modifications of the form \eqref{EqMS2ScPureGauge} change $\wt f$ etc.\ exactly as in \eqref{EqMS2ScPureGaugeMods}, with the equation for $\upd H_T$ absent, in particular, using the notation of \eqref{EqMS2ScPureGaugeMods},
\begin{equation}
\label{EqMS1sUpdXK}
  \upd\bfX = -T + \frac{r^2}{k}\hd(r^{-1}L), \quad \upd K=-\frac{k}{r}(\iota_A T+P).
\end{equation}
Thus, given any $L$, we can choose $T$ and $P$ so as to make $\bfX+\upd\bfX=0$ and $K+\upd K=0$; furthermore, if $L$ is a mode, then so are $T$ and $P$. Then, working in a gauge with
\begin{equation}
\label{EqMS1sGauge}
  \bfX=0, \quad K=0,
\end{equation}
we have
\begin{equation}
\label{EqMS1swtFJN}
  \wt F=\wt f, \quad J=H_L, \quad N=\wt K,
\end{equation}
and as in the discussion preceding \eqref{EqMS2ScwtfE}, we conclude that the linearized Einstein--Maxwell equation $\sL(\gdot,\Adot)=0$ is equivalent to the system of equations \eqref{EqMS2ScwtfE}, \eqref{EqMS2ScfE}, \eqref{EqMS2ScHLE}, \eqref{EqMS2ScwtKE}, and \eqref{EqMS2ScKE}. We stress that this formulation of $\sL(\gdot,\Adot)=0$ is \emph{only} valid in the gauge \eqref{EqMS1sGauge}. The quantities $\wt F$, $J$ and $N$ are no longer gauge-invariant, rather
\begin{equation}
\label{EqMS1sUpdPertVars}
  \upd\wt F = \frac{2}{k}\hdelta^*r^2\hd r^{-1}L,
  \quad
  \upd J=\frac{k}{2 r}L-\frac{r}{k}\iota_\rho\wh d(r^{-1}L),
  \quad
  \upd N=\frac{r^2}{k}\iota_{\hd(r^{-1}L)}\hd A,
\end{equation}
with the understanding that for any choice of $L$, we choose $T$ and $P$ as explained after \eqref{EqMS1sUpdXK} to enforce our choice of gauge. Working in the gauge \eqref{EqMS1sGauge} (thus $f=0$), we note that the vanishing of the quantities $\wt F$, $J$ and $N$ is equivalent to $(\gdot,\Adot)\equiv 0$.

\emph{Defining} $H^E_T:=-\frac{k^2}{2 r^2}\htr\wt F$, we claim that one can arrange for the absent equation~\eqref{EqMS2ScHTE}, $H^E_T=0$, to hold by a suitable choice of gauge, namely we need to solve
\begin{equation}
\label{EqMS1sUpdHET}
  \upd H^E_T = \frac{k}{r^2}\hdelta r^2\hd r^{-1}L = -k\Box_g(r^{-1}L) = -H^E_T.
\end{equation}
Thus, arranging $H^E_T+\upd H^E_T=0$ amounts to solving a scalar wave equation for $r^{-1}L$.

Let us now consider mode solutions with frequency $\sigma\in\C$, $\Im\sigma\geq 0$, $\sigma\neq 0$ in $t_*$. Define $\cA$ as in \eqref{EqMS2ScMaxwellMaster}, as well as $X$, $Y$ and $Z$ as in \eqref{EqMS2ScXYZDef}; these four functions can then be taken to be modes with frequency $\sigma$. Writing $H^E_T=e^{-i\sigma t_*}H^E_{T,0}$, equation~\eqref{EqMS1sUpdHET} for $L=e^{-i\sigma t_*}\hL(r)$ becomes the stationary equation
\[
  \wh{\Box_g}(\sigma)(r^{-1}\hL) = k^{-1}H^E_{T,0},
\]
which has a smooth solution $\hL$ since $\wh{\Box_g}(\sigma)^{-1}$ exists by our assumptions on $\sigma$. Thus, choosing $\hL$, thus $L$, and thus the gauge in this manner (i.e.\ adding a suitable pure gauge solution to $(\gdot,\Adot)$), the linearized Einstein--Maxwell system is again given by the system \eqref{EqMS2ScwtfE}--\eqref{EqMS2ScKE}. We can then follow the arguments in \S\ref{SubsubsecMS2Sc}, using the non-negativity of $\wt V_\pm$ in \eqref{EqMS2ScSDefPot}, to conclude that the (no longer gauge-invariant) quantities \eqref{EqMS1swtFJN} all vanish, in addition to $\bfX$ and $K$ by our choice of gauge \eqref{EqMS1sGauge}; but then $(\gdot,\Adot)=(0,0)$. Therefore, the original $(\gdot,\Adot)$ is a pure gauge solution, as desired.

For stationary perturbations, so $\sigma=0$, the function $H^E_T$ is stationary as well; since the scalar wave operator $\Box_g$ has a rank $1$ resonance at $0$ with resonant states given by constants, the equation \eqref{EqMS1sUpdHET} for $L$ can be solved by
\begin{equation}
\label{EqMS1sStationaryGaugeMod}
  r^{-1}L=\lambda t_*+r^{-1}L_0,
\end{equation}
with $\lambda\in\R$ and $L_0\in\CI(\hX)$ radial. Thus $L$ may not be stationary at this point, but rather grows linearly in general; however we will show that necessarily $\lambda=0$. For any $L$ as in \eqref{EqMS1sStationaryGaugeMod}, $\hd(r^{-1}L)$ is stationary still, hence adding to $(\gdot,\Adot)$ the pure gauge solution corresponding to this $L$ (and taking $T$ and $P$ as explained after \eqref{EqMS1sUpdXK}), the new perturbation variables $\wt F$ and $N$ are still stationary, see \eqref{EqMS1sUpdPertVars}; furthermore $\htr\wt F=0$, and $J-\frac{\lambda k t_*}{2}$ is stationary. If we define $X$, $Y$ and $Z$ as in \eqref{EqMS2ScXYZDef2}, we thus conclude that $Z=:Z_0$ is stationary, while
\[
  X = \lambda k t_* + X_{*,0}, \quad Y = \lambda k t_* + Y_{*,0}
\]
with $X_{*,0},Y_{*,0}\in\CI(\hX)$ stationary. Next, $\cA$ in \eqref{EqMS2ScMaxwellMaster} can be written as
\[
  \cA = \lambda' t_* + \cA_{*,0}
\]
with $\lambda'\in\R$ and $\cA_{*,0}\in\CI(\hX)$ stationary, see the first paragraph of \S\ref{SubsubsecMS2Sc0}. Using $t_*=t-T(r)$ as in \eqref{EqKNdS0CoordDef}, with $T\in\CI((r_-,r_+))$, $T(r)\to+\infty$ as $r\to r_\pm$, we write
\[
  X = \lambda k t + X_0, \quad Y = \lambda k t + Y_0, \quad \cA = \lambda' t + \cA_0,
\]
where
\begin{equation}
\label{EqMS1sPertVarSing}
  X_0=X_{*,0}-\lambda k T, \quad Y_0=Y_{*,0}-\lambda k T, \quad \cA_0=\cA_{*,0}-\lambda' T;
\end{equation}
note that $X_0$ etc.\ are only defined in $(r_-,r_+)$. Evaluating \eqref{EqMS2ScwtEtr}, we find (using $k=\sqrt{2}$) that $Z=Z_0$ is given by
\[
  Z_0=-\lambda k r\Bigl(1-\frac{3\bhm}{r}+\frac{2 Q^2}{r^2}\Bigr).
\]
(In contrast to the stationary $l\geq 2$ scalar-type case discussed in \S\ref{SubsubsecMS2Sc0}, we cannot conclude $Z_0\equiv 0$ in the present setting yet.) The first equation in \eqref{EqMS2ScfEStatic} then yields
\begin{equation}
\label{EqMS1sLambdaPrime}
  \lambda' = -\frac{Q}{k}\lambda;
\end{equation}
this is the same as what one obtains from comparing the $t$-coefficients in equation~\eqref{EqMS2ScwtErr}.

We now contend that the stationary functions $X_0$, $Y_0$ and $\cA_0$ satisfy \emph{the same equations} as $X$, $Y$ and $\cA$ in the stationary $l\geq 2$ case considered in \S\ref{SubsubsecMS2Sc0}; this is a consequence of the fact that, given our definition of $Z_0$, the equations \eqref{EqMS2ScfEStatic}, \eqref{EqMS2ScwtEtr} and \eqref{EqMS2ScwtErr} are either identities, or (equating coefficients of $t$ and constant coefficients, respectively) precisely the equations used in \S\ref{SubsubsecMS2Sc0}. In detail, since $\pa_t Z\equiv 0$, the second equation in \eqref{EqMS2ScfEStatic} is an equation for $X_0$, $Y_0$ and $\cA_0$ (formally obtained by deleting the term with $Z$, and replacing $X$ by $X_0$, etc.), likewise for the constant (in $t$) coefficient of \eqref{EqMS2ScwtErr} (formally, deleting all terms involving $\pa_t$, and replacing $X$ by $X_0$, etc.); in both of the resulting equations, $Z_0$ \emph{does not appear}, thus making the resulting first order ODE system take the same form as the system we implicitly used in \S\ref{SubsubsecMS2Sc0} (where we had $Z\equiv 0$). Lastly, equation~\eqref{EqMS2ScMaxwellEqn} for $\cA$ gives two equations, one corresponding to the coefficient of $t$, which one can verify is an identity given \eqref{EqMS1sLambdaPrime}, and one corresponding to the constant coefficient, which is formally obtained by replacing $\cA$ by $\cA_0$ and $J=(X+Y)/4$ by $(X_0+Y_0)/4$. This proves our contention.

Since all manipulations of equations in our entire mode stability discussion are purely algebraic, we can now follow the arguments in \S\ref{SubsubsecMS2Sc0}, replacing $X$ by $X_0$, etc., to deduce the master equations \eqref{EqMS2Sc0MasterTilde} for $\Psi_{\pm,1}^0=e^{\varphi_+}\Psi_\pm^0$, with $\Psi_\pm^0$ given in \eqref{EqMS2Sc0MasterVar}. (One may set $k^2=2$ in the discussion there without incurring any singular terms.)

Let us first consider the situation for $\Psi_{+,1}^0$: the coefficients $C_{X\pm}(0)$ etc.\ are smooth down to $\mu=0$, and we have
\[
  \bigl(C_{X+}(0),C_{Y+}(0),C_{\cA+}(0),C_{\cA'+}(0)\bigr) = (Q/4,Q/4,1,0)
\]
there. But then, using \eqref{EqMS1sLambdaPrime} in the expressions \eqref{EqMS1sPertVarSing}, the most singular contributions to $\Psi_{+,1}^0$ of size $T\sim|\log\mu|$ cancel due to $Q/4(-\lambda k)+Q/4(-\lambda k)+1\cdot(-\lambda')=0$, therefore in fact
\[
  \Psi_{+,1}^0 \in \CI([r_-,r_+]) + \mu T\cdot\CI([r_-,r_+]),
\]
which suffices to deduce $\Psi_{+,1}^0\equiv 0$ in $(r_-,r_+)$ from the master equation \eqref{EqMS2Sc0MasterTilde} by using the positivity of the potential $\wt V_+$, as explained in the paragraph following this equation.

On the other hand, the coefficients in the definition of $\Psi_{-,1}^0$ at $r=r_\pm$ (where $\mu=0$) are given by
\[
  \bigl(C_{X-}(0),C_{Y-}(0),C_{\cA-}(0),C_{\cA'-}(0)\bigr) = \frac{1}{6\bhm}(2 Q^2-3\bhm r_\pm,2 Q^2-3\bhm r_\pm,8 Q,0);
\]
in this case,
\[
  C_{X-}(0)(-\lambda k T)+C_{Y-}(0)(-\lambda k T) + C_{\cA-}(0)(-\lambda' T) = \lambda k r_\pm T;
\]
observe that for $\lambda\neq 0$, this gives
\begin{equation}
\label{EqMS1sPsi1Limits}
  \Psi_{-,1}^0\to(\sgn\lambda)\infty,\quad r\to r_\pm,
\end{equation}
i.e.\ the limit is either $+\infty$ at both horizons, or $-\infty$ at both horizons. Now, the equation for for $\Psi_{-,1}^0$ reads (after factoring out a non-vanishing prefactor, and using that $\wt V_-\equiv 0$ for $k^2=2$)
\[
  \pa_r e^{-2\varphi_+}\mu\pa_r \Psi_{-,1}^0 = 0,
\]
thus
\[
  \Psi_{-,1}^0 = c_1\int\frac{e^{2\varphi_+}}{\mu}\,dr + c_2
\]
for $c_1,c_2\in\R$. But then for $c_1\neq 0$, we have $\Psi_{-,1}^0\to\pm(\sgn c_1)\infty$ as $r\to r_\pm$, which contradicts \eqref{EqMS1sPsi1Limits}; therefore, we must have $c_1=0$, correspondingly $\lambda=0$, and therefore $\Psi_{-,1}^0=c_2$ is \emph{constant}, and $\Psi_\pm^0=c_2 H_+^{-1}=c_2 r/6\bhm$. Furthermore, we obtain $Z=Z_0\equiv 0$, and $X=X_0$, etc. We also point out that this shows that the gauge modification function $L$ in \eqref{EqMS1sStationaryGaugeMod} is in fact stationary.

Recall now that in the present, $l=1$, setting, $\Psi_{-,1}^0$ is not gauge-invariant; the next step is thus to show that $\Psi_{+,1}^0\equiv 0$ and $\Psi_{-,1}^0=c_2 r/6\bhm$ is a pure gauge solution. For this purpose, by linearity it suffices to consider the case $c_2=-6\bhm$, so $\Psi_\pm^0=-r$, which by virtue of \eqref{EqMS2Sc0PsipmToXYA} after a brief calculation corresponds to
\[
  X = 1,\quad Y = 1, \quad \cA = -\frac{Q}{2},
\]
i.e.\ to a perturbation with $\wt F\equiv 0$, $J=1/2$, and $N=0$, which according to \eqref{EqMS1sPerturbation}, \eqref{EqMS1sUpdPertVars} and the choice of gauge above is the perturbation given by
\[
  \gdot = r^2\Sph\slg, \quad \Adot=0;
\]
this however is equal to a pure gauge solution as in \eqref{EqMS2ScPureGauge} for $T=0$, $P=0$ and $L=2 r/k$. Therefore, upon adding to our original mode perturbation $(\gdot,\Adot)$ a pure gauge mode solution \emph{which is smooth on $\hX$} (not merely in $(r_-,r_+)$), the resulting smooth mode perturbation in the gauge \eqref{EqMS1sUpdPertVars} has $\wt F=0$, $J=0$ and $N=0$ in $(r_-,r_+)$; but then the unique continuation argument involving equation~\eqref{EqMS2ScBoxOfPerturbation} applies, and hence $\wt F=0$ etc.\ on all of $\hX$. In summary, we have shown that a stationary $l=1$ scalar perturbation is equal to a pure gauge solution, with the quantities $T$, $P$ and $L$ stationary as well.

This finishes the proof of Theorem~\ref{ThmMS} for scalar $l=1$ perturbations.

\subsection{Vector \texorpdfstring{$l=1$}{l=1} modes}
\label{SubsecMS1v}

We next consider a perturbation of the form \eqref{EqMSVector1}; thus, in the splittings \eqref{EqMSCalcCtgt} and \eqref{EqMSCalcS2}, we have
\[
  \gdot = \begin{pmatrix} 0 \\ r f\otimes\vect \\ 0 \end{pmatrix}, \quad \Adot=\begin{pmatrix} 0 \\ r K\vect \end{pmatrix}.
\] We assume that $(\gdot,\Adot)$ is a non-decaying generalized mode solution of $\sL(\gdot,\Adot)=0$, i.e.\ with frequency $\sigma$, $\Im\sigma\geq 0$. There is no analogue of Corollary~\ref{CorMS2Log} in the case that there are mode solutions of $\sL(\gdot,\Adot)=0$ which are not pure gauge; thus, in such cases, one has to deal with generalized modes directly.

The pure gauge terms which are of vector $l=1$ type are given in \eqref{EqMS2VecGauge}: they are determined by a 1-form $u=r L\vect$, and for $l=1$ vectors, thus $\sldelta^*\vect=0$, we have
\[
  \cL_{u^\sharp}g = 2\delta_g^*u = 2 r^2\hd r^{-1}L \otimes_s \vect,\quad \cL_{u^\sharp}A=0.
\]
Thus, adding $\cL_{u^\sharp}(g,A)$ to $(\gdot,\Adot)$ amounts to changing $f$ to $f+r\hd(r^{-1}L)$ and leaving $K$ unchanged. Therefore, the gauge-invariant quantities associated with the given perturbation $(\gdot,\Adot)$ are
\begin{equation}
\label{EqMS1vGaugeInv}
  \hd(r^{-1}f),\quad K.
\end{equation}
Indeed, if both vanish, then we can choose $L$ with $\hd(r^{-1}L)=r^{-1}f$ as the cohomology of $\hX$ is trivial: one can obtain $L$ by simply integrating $r^{-1}f$, starting from a fixed point in $M^\circ$. (In general, the expansion of $L$ in powers of $t_*$ will thus have one higher power of $t_*$ than $(\gdot,\Adot)$, thus $f$.) For the 1-form $u=r L\vect$, this gives
\[
  (\gdot,\Adot) = (2 r f\otimes_s\vect,0) = (\cL_{u^\sharp} g, \cL_{u^\sharp} A),
\]
as desired.

The equation $\sL(\gdot,\Adot)=0$, expressed in terms of the gauge-invariant quantities \eqref{EqMS1vGaugeInv}, is equivalent to the system
\begin{equation}
\label{EqMS1vSys}
\begin{gathered}
  r^{-2}\hdelta r^4\hd(r^{-1}f) - 4 Q r^{-2}\hstar\hd(r K) = 0, \\
  \hdelta\hd(r K)-2 r^{-1}K - Q\hstar\hd(r^{-1}f) = 0.
\end{gathered}
\end{equation}
Before analyzing this system, we point out that a linearized KNdS solution in which we change the angular momentum, rotating around the same axis as $\vect$, so
\[
  (\pa_a g_b|_{b_0},\pa_a\breve A_b|_{b_0}) = (2(1-\mu)\sin^2\theta\,dt\,d\phi, Q r^{-1}\sin^2\theta\,d\phi)
\]
in static coordinates, is of the form \eqref{EqMSVector1} with $f=(\frac{2\bhm}{r^2}+\frac{\Lambda r}{3}-\frac{Q^2}{r^3})dt$, $K=Q r^{-2}$ and $\vect=\sin^2\theta\,d\phi$; the gauge-invariant quantities associated with this particular perturbation are thus\footnote{A simple calculation verifies that these quantities indeed solve \eqref{EqMS1vSys}.}
\begin{equation}
\label{EqMS1vKNdS}
  \hstar\hd(r^{-1}f)=4 Q^2 r^{-5}-6\bhm r^{-4}, \qquad K = Q r^{-2}.
\end{equation}

Returning to the analysis of \eqref{EqMS1vSys} in general, we note that the first equation and the 2-dimensional nature of $\hX$ (thus $\hd(r^{-1}f)$ is a top degree form) imply
\[
  r^4\hstar\hd(r^{-1}f) - 4 Q r K = c
\]
for some constant $c\in\R$. Note that this does not use the fact that $(\gdot,\Adot)$ is a generalized mode solution. (For the perturbation \eqref{EqMS1vKNdS}, one has $c=-6\bhm$.) Plugging this into the second equation gives
\begin{equation}
\label{EqMS1vASWave}
  (\hBox - V)(r K) = Q r^{-4} c, \quad V=2r^{-2} + 4 Q^2 r^{-4}.
\end{equation}
Since $V>0$, the scalar operator $\hBox-V$ on $\hX$ has no resonances in the closed upper half plane; since $r K$ is a non-decaying generalized mode, we conclude that $r K$ must in fact be stationary. More precisely, since $K=Q r^{-2}$ solves this equation with $c=-6\bhm$ by the discussion of the linearized KNdS solution, we have $K = -\frac{c Q}{6\bhm r^2}$.

Thus, subtracting the linearized KNdS solution $-\frac{c}{6\bhm}(\pa_a g_b|_{b_0},\pa_a A_b|_{b_0})$ from $(\gdot,\Adot)$ --- which preserves the generalized mode nature of $(\gdot,\Adot)$ --- we can assume that $c=0$, hence $K=0$ (therefore $\Adot=0$), and thus $\hd(r^{-1}f)=0$, so all gauge-invariant quantities vanish. Our original $(\gdot,\Adot)$ is therefore the sum of a linearized KNdS solution and a pure gauge solution.

\begin{rmk}
\label{RmkMS1vDynamical}
  For purely gravitational perturbations, i.e.\ $\Adot=0$, the gauge-invariant quantity $K$ vanishes, and thus $\hd(r^{-1}f)=0$ and $(\gdot,0)$ is a pure gauge solution as above \emph{for general $\gdot$}, that is, without using the assumption that $\gdot$ is a non-decaying generalized mode; this is the case considered in \cite[\S7]{HintzVasyKdSStability}. Similarly, purely electromagnetic perturbations, i.e.\ $\gdot=0$, have $\hd(r^{-1}f)=0$, hence $K=-\frac{c}{4 Q r}$, which plugged into \eqref{EqMS1vASWave} yields $c=0$, and again $(0,\Adot)$ is a pure gauge solution for general $\Adot$. For general coupled gravitational and electromagnetic perturbations however, and for $c=0$, the wave equation \eqref{EqMS1vASWave} for $K$ is expected to have non-trivial exponentially decaying mode solutions; we do not study this here however. Thus, it is likely that there are dynamical degrees of freedom for vector $l=1$ perturbations, unlike in the purely gravitational context considered in \cite{HintzVasyKdSStability}; this was already pointed out by Moncrief \cite[\S{VII}]{MoncriefRNGaugeInv}.
\end{rmk}

\subsection{Spherically symmetric perturbations (\texorpdfstring{$l=0$}{l=0} modes)}
\label{SubsecMS0}

The simple proof of Birkhoff's theorem given by Schleich and Witt \cite{SchleichWittBirkhoff} can easily be extended to the spherically symmetric Einstein--Maxwell system; we proceed to describe the linearized version of such an extension. (An alternative approach uses the formulation of the Einstein--Maxwell system as a characteristic initial value problem in a double null gauge, see e.g.\ \cite[equations~(2.3)--(2.6)]{DafermosRendallSCC}.) A general spherically symmetric perturbation takes the form
\begin{equation}
\label{EqMS0Pert1}
  \gdot = \mudot\,dt_*^2 + 2\gdot_{01}\,dt_*\,dr + \gdot_{11}\,dr^2 + \gdot_S\,r^2\slg, \quad \Adot = \Adot_0\,dt_* + \Adot_1\,dr,
\end{equation}
where the coefficients are functions of $(t_*,r)$. Assuming that $\sL(\Adot,\gdot)=0$, we aim to show that this must be equal to a linearized RNdS solution $(g'(b'),A'(b'))$ for some $b'=(\dot\bhm,\bfzero,\Qdot)\in\R^5$, up to a pure gauge term. To do so, we first work locally near the horizons $r=r_\pm$ using the forms \eqref{EqKNdS0MetricNull} and \eqref{EqKNdS0Potential} of the metric and the 4-potential, so we write
\begin{equation}
\label{EqMS0Metric}
  g = \mu\,dt_0^2\pm 2\,dt_0\,dr - r^2\slg, \quad A = q\,dt_0,
\end{equation}
near $r=r_\pm$, where we set
\begin{equation}
\label{EqMS0MuQ}
  \mu = 1 - \frac{2\bhm}{r} - \frac{\Lambda r^2}{3} + \frac{Q^2}{r^2}, \quad q=-Q r^{-1}
\end{equation}
The function $t_0$, defined in \eqref{EqKNdS0NullCoord} near $r=r_\pm$, does not match up in the overlap region $(r_-,r_+)$, so should properly be denoted $t_{0,\pm}$, and correspondingly $A_\pm=-Q r^{-1}\,dt_{0,\pm}$, but we drop the subscript for brevity. (The $A$ here differs from \eqref{EqKNdS0Potential} by an exact 1-form; note here that adding an exact 1-form to $A$ not only does not change the form, but in fact not any of the \emph{coefficients} of the linearization of \eqref{EqBasicDerEM} around $(g,A)$, since \eqref{EqBasicDerEM} only involves $A$ through $d A$.) We note that in the form \eqref{EqMS0Metric}, the linearization of $(g,A)$ in $(\bhm,Q)$ in the direction $(\dot\bhm,\dot Q)$ takes the form
\begin{equation}
\label{EqMS0LinMetPot}
  g' = \Bigl(-\frac{2\dot\bhm}{r}+\frac{2 Q\dot Q}{r^2}\Bigr)\,dt_0^2,\quad A'=-\dot Q r^{-1}\,dt_0,
\end{equation}
i.e.\ this is the form of a linearized RNdS metric, which on its domain of definition differs from $(g'(b'),A'(b'))$ for $b'=(\dot\bhm,\bfzero,\dot Q)$ --- see Definition~\ref{DefKNdSaLin} and recall that we are dropping the subscript `$b_0$' --- by a pure gauge term.

We are free to add pure gauge terms to $(\gdot,\Adot)$, defined in \eqref{EqMS0Pert1}, corresponding to aspherical vector fields $V\in\CI(\hX;T\hX)$ and functions $a\in\CI(\hX)$; thus, in $(t_0,r)$ coordinates, $V=V_0(t_0,r)\pa_{t_0}+V_1(t_0,r)\pa_r$ and $a=a(t_0,r)$. We compute:
\begin{lemma}
\label{LemmaMS0Gauge}
  Let us consider the metric and 4-potential defined in \eqref{EqMS0Metric}. As a map between sections of
  \[
    \la\pa_{t_0}\ra\oplus\la\pa_r\ra \quad\tn{and}\quad \la dt_0^2\ra \oplus \la 2\,dt_0\,dr\ra \oplus \la dr^2\ra \oplus \la\slg\ra,
  \]
  we then have
  \[
    \tcL_g
      = \begin{pmatrix}
          2\mu\pa_{t_0}        & \pm 2\pa_{t_0}+\mu' \\
          \mu\pa_r\pm\pa_{t_0} & \pm\pa_r \\
          \pm 2\pa_r           & 0 \\
          0                    & -2 r
        \end{pmatrix},
  \]
  while as a map between sections of $\la\pa_{t_0}\ra\oplus\la\pa_r\ra$ and $\la dt_0\ra\oplus\la dr\ra$, we have
  \[
    \tcL_A
      = \begin{pmatrix}
          q\pa_{t_0} & q' \\
          q\pa_r     & 0
        \end{pmatrix}
  \]
  Mapping from sections of the trivial bundle $\ulR$ into $\la dt_0\ra\oplus\la dr\ra$, we have $d=(\pa_{t_0}, \pa_r)^t$.
  
  Lastly, for any smooth 1-form $\omega=u\,dt_0+v\,dr$, there exists a smooth function $a$ with $(\omega+d a)(\pa_r)=0$, i.e.\ $\omega+d a$ has no $dr$ component.
\end{lemma}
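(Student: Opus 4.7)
The plan is to verify each assertion by direct coordinate computation, working entirely in the $(t_0,r,\omega)$ coordinates in which $g$ and $A$ take the simple forms \eqref{EqMS0Metric}. Recall that by Definition~\ref{DefBasicLinLie}, $\tcL_g V=\cL_V g$ and $\tcL_A V=\cL_V A$; the stated matrices simply collect the coefficients of $V_0$ and $V_1$ (and their derivatives) in $\cL_V g$ and $\cL_V A$ with respect to the specified trivializations.

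For $\tcL_g$, I would split $g$ into its aspherical part $\hg=\mu\,dt_0^2\pm 2\,dt_0\,dr$ and its spherical part $-r^2\slg$, and use the coordinate formula $(\cL_V h)_{ab}=V^c\pa_c h_{ab}+h_{cb}\pa_a V^c+h_{ac}\pa_b V^c$ with $V=V_0\pa_{t_0}+V_1\pa_r$. With $h_{t_0 t_0}=\mu$, $h_{t_0 r}=\pm 1$ and $h_{r r}=0$, one directly reads off
\begin{align*}
  (\cL_V g)_{t_0 t_0} &= 2\mu\,\pa_{t_0}V_0 + (\mu'\pm 2\pa_{t_0})V_1, \\
  (\cL_V g)_{t_0 r} &= (\mu\pa_r\pm\pa_{t_0})V_0 \pm\pa_r V_1, \\
  (\cL_V g)_{r r} &= \pm 2\,\pa_r V_0,
\end{align*}
while $\cL_V(-r^2\slg) = -2 r V_1\slg$ contributes the last row $(0,-2r)$. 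The calculation for $\tcL_A$ is of the same type and even shorter: with $A_{t_0}=q(r)$ and $A_r=0$, one gets $(\cL_V A)_{t_0}=q\,\pa_{t_0}V_0+q' V_1$ and $(\cL_V A)_r=q\,\pa_r V_0$. The formula $d=(\pa_{t_0},\pa_r)^t$ on scalars is immediate.

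For the final assertion, given $\omega=u\,dt_0+v\,dr$, one merely needs a smooth $a$ with $\pa_r a=-v$; fixing a reference point $r_*$ in the domain of definition, the choice $a(t_0,r):=-\int_{r_*}^r v(t_0,s)\,ds$ does the job and is smooth in $(t_0,r)$ by smoothness of $v$ and differentiation under the integral.

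There is no real obstacle here: everything reduces to bookkeeping of coordinate derivatives, and the only point requiring a moment's care is the sign conventions associated with the `$\pm$' in \eqref{EqMS0Metric}, which must be carried consistently through the off-diagonal entries of $\tcL_g$. I would therefore simply present the three matrix computations side by side, noting that they may be read off from the formulas above, and then record the one-line construction of $a$.
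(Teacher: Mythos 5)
Your proposal is correct and matches the paper's proof, which simply declares the first three claims to be "simple calculations" and constructs $a$ by solving $\pa_r a=-v$ exactly as you do; your explicit coordinate computation of the Lie derivatives fills in precisely the bookkeeping the paper omits, and the entries you obtain agree with the stated matrices.
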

\begin{proof}
  The first three claims are simple calculations; the last claim follows by taking $a$ solving $\pa_r a=-v$.
\end{proof}

Thus, rewriting the perturbation \eqref{EqMS0Pert1} in the form
\[
  \gdot = \mudot\,dt_0^2 \pm 2\Xdot\,dt_0\,dr + \Zdot\,dr^2 - 2 r\Ydot\,\slg, \quad \Adot=\dot q\,dt_0 + \Adot_1\,dr
\]
(with a new function $\Adot_1$), we may add a pure gauge solution $(\cL_V g,\cL_V A)$ with $V=v\pa_{t_0}$, where $v$ solves $\pm 2\pa_r v=-\Zdot$, in order to eliminate the $dr^2$ component of $\gdot$. Further, by adding another such pure gauge solution, now with $V=-\Ydot\pa_r$, we can eliminate the $\slg$ component of $\gdot$. Afterwards, we can replace $\Adot$ by $\Adot+d a$ for suitable $a$ so as to eliminate the $dr$ component of $\Adot$. Up to pure gauge solutions, we can thus write the perturbation as
\begin{equation}
\label{EqMS0Pert2}
  \gdot = \mudot\,dt_0^2 \pm 2\Xdot\,dt_0\,dr,\quad \Adot=\dot q\,dt_0.
\end{equation}
To proceed, we use the linearized Einstein--Maxwell equations. For $(g,A)$ of the general form
\[
  g = \mu\,dt_0^2 \pm 2 X\,dt_0\,dr - r^2\slg,\quad A=q\,dt_0,
\]
where $\mu,X,q$ may be any functions of $(t_0,r)$, the vanishing of the $dr^2$ component of
\[
  \sE:=\Ric(g)+\Lambda g-2 T(g,dA)
\]
is equivalent to $\pa_r X=0$. Thus, the linearized Einstein--Maxwell equations for \eqref{EqMS0Pert2} imply $\pa_r\Xdot=0$, so $\Xdot$ is a function of $t_0$ only; writing $\Xdot=\pa_{t_0}f$ with $f=f(t_0)$, we may subtract $\tcL_g(2f\pa_{t_0})$ from $\gdot$ to eliminate the $dt_0\,dr$ component of $\gdot$. We may therefore assume
\[
  \Xdot \equiv 0, \quad\tn{so}\quad \gdot=\mudot\,dt_0^2,\quad \Adot=\qdot\,dt_0.
\]
with new functions $\mudot$ and $\qdot$. Considering $(g,A)$ of the form \eqref{EqMS0Metric}, but with $\mu$ and $q$ arbitrary functions of $(t_0,r)$, Maxwell's equation becomes
\begin{equation}
\label{EqMS0Max}
  \delta_g d A = \bigl(-\mu(2r^{-1}\pa_r q+\pa_{rr}q)\pm\pa_{t_0}\pa_r q\bigr)\,dt_0 \mp (2r^{-1}\pa_r q+\pa_{rr}q)\,dr = 0.
\end{equation}
When linearized around $\mu,q$ given in \eqref{EqMS0MuQ}, the $dr$ component implies $\pa_r r^2\pa_r\qdot=0$, so $\qdot=\qdot_0-\Qdot r^{-1}$ with $\qdot_0$ and $\Qdot$ functions of $t_0$ only. Since $\qdot_0(t_0)\,dt_0$ is exact, we may assume $\qdot_0\equiv 0$ by adding an exact 1-form to $\Adot$; thus $\qdot=-\Qdot r^{-1}$. The $dt_0$ component of \eqref{EqMS0Max} implies $\pa_{t_0}\Qdot=0$, therefore $\Qdot$ is constant.

Next, the vanishing of the spherical part of $\sE$ for arbitrary $(g,A)$ of the form \eqref{EqMS0Metric} is equivalent to
\[
  -1 + \Lambda r^2 + \pa_r(r\mu) + r^2(\pa_r q)^2 = 0.
\]
Linearized around \eqref{EqMS0MuQ} and using the already determined form of $\qdot$, this gives
\[
  \pa_r(r\mudot) + 2r^{-2}Q\Qdot = 0,
\]
hence
\[
  \mudot = -\frac{2\dot\bhm}{r} + \frac{2 Q\Qdot}{r^2},
\]
where $\dot\bhm$ is a constant of integration in $r$, thus it is a priori a function of $t_0$; this is almost the desired result \eqref{EqMS0LinMetPot}. But the $dt_0^2$ component of $\sE=0$ reads
\[
  \mu\bigl(2\pa_r\mu + r(2\Lambda-2(\pa_r q)^2+\pa_{rr}\mu)\bigr) \pm 2\pa_{t_0}\mu = 0.
\]
For $\mu,q$ given by \eqref{EqMS0MuQ}, the expression enclosed in large parentheses vanishes for all values of $\bhm$ and $Q$, and since it does not involve $t_0$ derivatives, it vanishes in fact for $\mu,q$ of the form \eqref{EqMS0MuQ} for which $\bhm$ depends on $t_0$. Thus, linearizing this equation around the $\mu,q$ given in \eqref{EqMS0MuQ} yields $\pa_{t_0}\mudot=0$, i.e.\ $\dot\bhm=0$. This proves $(\gdot,\Adot)=(g',A')$, given in \eqref{EqMS0LinMetPot}, up to pure gauge terms.

Since this argument was local near the horizons, we need to patch the pure gauge solutions together to get the desired global (in $r$) result. For a perturbation $(\gdot,\Adot)$ of the general form \eqref{EqMS0Pert1} solving the linearized Einstein--Maxwell system, we showed that
\[
  (\gdot,\Adot) = \bigl(g'(b'_\pm)+\cL_{V_\pm}g,A'(b'_\pm)+\cL_{V_\pm}A + d a_\pm \bigr)
\]
for $r<r_+$ (the `$-$' case), resp.\ $r>r_-$ (the `$+$' case). Now $(g'(b'_+),A'(b'_+))$ can be brought into the form used near $r_-$, i.e.\ \eqref{EqMS0Pert2} with the bottom sign; since this leaves $\mudot$ unaffected, we conclude by comparing coefficients that we must have $b'_+=b'_-\equiv b'$. Therefore $\cL_{V_+-V_-}g=0$, so $V_+-V_-$ is spherically symmetric and Killing, hence a constant multiple of the globally defined vector field $\pa_{t_*}$; thus $V_+=V_-+c_V\pa_{t_*}\equiv V$ for a suitable $c_V\in\R$, with $\cL_{V_\pm}g=\cL_V g$. We then find $d(a_+-a_-)=0$, hence $a_+=a_-+c_a\equiv a$ for a suitable $c_a\in\R$, and we conclude that
\[
  (\gdot,\Adot) = (g'(b')+\cL_V g,A'(b')+\cL_V A+d a),
\]
with $V$ and $a$ defined globally on $M^\circ$. This finishes the proof of Theorem~\ref{ThmMS} in spherical symmetry.

We remark that if $(\gdot,\Adot)$ was a generalized mode with highest power $t_*^k$, $k\in\N_0$, in the expansion, then $(V,a)$ constructed above is a generalized mode as well (with the same frequency $\sigma$), with highest power in $t_*$ at most $t_*^{k+1}$.

This completes the proof of Theorem~\ref{ThmMS}.

\section{Constraint damping}
\label{SecCD}

We return to the analysis of the \emph{gauge-fixed} Einstein--Maxwell system linearized around $(g,A)\equiv (g_{b_0},A_{b_0})$. Concretely, following the discussions in \S\ref{SubsecIntroStr} and \S\ref{SubsecBasicNL}, and preparing the non-linear analysis in \S\ref{SubsecNLProof}, we work with the formulation
\begin{gather}
\label{EqCDLinE}
  D_g(\Ric+\Lambda)(\gdot) - \tdel^* D_g\Ups^E(\gdot) = 2 D_{g,d A}T(\gdot,d\Adot), \\
\label{EqCDLinM}
  D_{g,A}(\delta_{(\cdot)} d(\cdot))(\gdot,\Adot) - \td\Ups^M(g,\Adot) = 0,
\end{gather}
with gauge terms defined by \eqref{EqKNdSIniGaugeE}--\eqref{EqKNdSIniGaugeM}, where $\tdel^*$ and $\td$ are modifications of $\delta_g^*$ and $d$ as discussed around \eqref{EqBasicNLtdel} and \eqref{EqBasicNLtd}; we will make concrete choices momentarily. (We remark that the gauge term in the second equation arises by linearizing $\Ups^M(g,A-A_{b_0})$ around $(g,A)=(g_{b_0},A_{b_0})$, see also \eqref{EqKNdSIniLinGauge}; this is the type of gauge term studied in \S\ref{SubsecKNdSIni}.) The constraint propagation equations for $\Ups^M$ and $\Ups^E$ are then
\begin{gather}
\label{EqCDPropM}
  \delta_g\td(\Ups^M(g,\Adot)) = 0, \\
\label{EqCDPropE}
  \delta_g G_g\tdel^*(D_g\Ups^E(\gdot)) = 0,
\end{gather}
cf.\ \eqref{EqBasicLinGaugePropMod}. We recall here that the first equation follows directly from \eqref{EqCDLinM}, hence it is \emph{decoupled} from \eqref{EqCDLinE}, while the second equation only follows from \eqref{EqCDLinE} once $\td\Ups^M(g,\Adot)=0$ is established.

As motivated in the introduction, we wish to show that non-decaying (generalized) mode solutions of the system \eqref{EqCDLinE}--\eqref{EqCDLinM} are in fact mode solutions of the \emph{ungauged} system \eqref{EqBasicLinEinsteinMaxwell}, that is, with the $\Ups^E$ and $\Ups^M$ terms absent; we argue now that this follows once we establish the constraint damping property, i.e.\ the absence of resonances in the closed upper half plane, for the operators $\delta_g\td$ and $\delta_g G_g\tdel^*$ \emph{separately}. Indeed, if $(\gdot,\Adot)$ is a generalized mode solution of the above gauge-fixed system, then mode stability for the operator $\delta_g\td$, applied to equation \eqref{EqCDPropM}, will imply $\Ups^M(g,\Adot)\equiv 0$, since this is a non-decaying solution to an equation whose solutions are exponentially decaying. But then $D_{g,A}(\delta_{(\cdot)} d(\cdot))(\gdot,\Adot)\equiv 0$ as well, and thus \eqref{EqCDPropE} holds; by mode stability for the operator $\delta_g G_g\tdel^*$, we find $D_g\Ups^E(\gdot)\equiv 0$, and thus $(\gdot,\Adot)$ solves the linearized \emph{ungauged} Einstein--Maxwell system, as claimed.

Thus, we merely need to establish mode stability separately for the operators appearing in \eqref{EqCDPropM} and \eqref{EqCDPropE}. In \cite[\S8]{HintzVasyKdSStability}, Vasy and the author proved:

\begin{thm}
\label{ThmCDE}
  See \cite[Theorem~8.1]{HintzVasyKdSStability}. Let $t_*$ denote the timelike function on RNdS constructed in Lemma~\ref{LemmaKNdS0Ext}, and define
  \[
    \tdel^*u = \delta_g^*u + \gamma_1\,dt_*\cdot u - \gamma_2 u(\nabla t_*)g,\quad \gamma_1,\gamma_2\in\R.
  \]
  Then there exist parameters $\gamma_1,\gamma_2>0$ and a constant $\alpha>0$ such that all resonances $\sigma$ of the constraint propagation operator $\delta_g G_g\tdel^*$ satisfy $\Im\sigma<-\alpha$.
\end{thm}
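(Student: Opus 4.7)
The plan is to verify the hypotheses of Theorem~\ref{ThmLinAnaMero} for the operator $L := \delta_g G_g \tdel^*$ acting on sections of $T^*M$, and then to exclude all poles of $\wh{L}(\sigma)^{-1}$ in a half plane $\Im\sigma \geq -\alpha$ for a suitable choice of $\gamma_1,\gamma_2 > 0$. Crucially, $L$ depends only on the metric $g = g_{b_0}$, with no coupling to the electromagnetic field $A_{b_0}$; since the RNdS metric shares the non-degenerate horizon structure (Definition~\ref{DefKNdS0NonDeg}) and the $r$-normally hyperbolic photon sphere (Lemma~\ref{LemmaKNdSGeoNormHyp}) with the Schwarzschild--de~Sitter and slowly rotating Kerr--de~Sitter cases treated in \cite[\S8]{HintzVasyKdSStability}, the entire microlocal framework transfers, and one can reduce the problem to the analogous mode stability statement in the SdS setting (the $Q_{e,0} = 0$ case) followed by a perturbative step.

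First I would compute $L$ explicitly. The operator $\delta_g G_g \delta_g^*$ on 1-forms equals $\tfrac{1}{2}\Box_g + \mathrm{(curvature)}$ by a standard Weitzenbock-type identity, so $L$ is principally scalar with symbol $G_b \otimes \Id$. The added terms in $\tdel^*$ produce first-order and zeroth-order modifications: $\delta_g G_g (\gamma_1 dt_* \cdot u - \gamma_2 u(\nabla t_*) g)$ contributes $\gamma_1 \nabla_{\nabla t_*} u$ at top order (after cancellations coming from $G_g$), plus algebraic curvature and $\mu'$-terms at lower order. Because $dt_*$ is everywhere timelike with $|dt_*|^2 = c^2 > 0$ (Lemma~\ref{LemmaKNdS0Ext}), the first-order modification is a multiple of a timelike vector field — this is the source of the damping and controls the skew-adjoint principal part at the radial sets.

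Next I would verify the two microlocal conditions of Theorem~\ref{ThmLinAnaMero}. \emph{At the radial points} $\cR_{b,\pm}^\pm$, one computes the endomorphism $\wh\beta_\pm \in \End(\pi^* T^*M)$ defined in \eqref{EqLinAnaWhBetaPM}; the $\gamma_1$ term contributes a diagonal shift proportional to $\gamma_1 \la dt_*, \nabla t_*\ra / \beta_{b,\bullet,0}$ with a definite sign, and for $\gamma_1 > 0$ large enough one arranges $\wh\beta \geq -1$ (or better), which is the threshold needed downstream. \emph{At the trapped set} $\Gamma$, using the explicit form \eqref{EqKNdSGeoTrapHam} of $\ham_G$ and the normally hyperbolic structure, one checks via \cite{HintzPsdoInner} that the subprincipal operator $S_\sub(L)$ satisfies the pinching estimate \eqref{EqLinAnaTrap}: the tensorial nature of $L$ contributes an a priori non-small endomorphism to $\tfrac{1}{2i}(S_\sub - S_\sub^*)$, but by choosing the fiber inner product $h$ adapted to the splitting along $dt_*$ and $\nabla t_*$ (exploiting spherical symmetry), one can diagonalize this contribution and then use $\gamma_2$ to cancel it. This is the place where both $\gamma_1$ and $\gamma_2$ must be carefully tuned. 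The outcome is that $\wh L(\sigma)$ is a meromorphic family of Fredholm operators in $\Im\sigma > -\alpha$, with high-energy invertibility.

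\textbf{The main obstacle is mode stability in the critical strip} $-\alpha \leq \Im\sigma \leq C$. Energy estimates rule out resonances in $\Im\sigma \gg 0$, leaving a compact region where resonances could a priori accumulate. I would argue by homotopy in $(\gamma_1,\gamma_2)$: at $\gamma_1 = \gamma_2 = 0$, the operator $\delta_g G_g \delta_g^*$ has a finite-dimensional space of resonances at $\sigma = 0$, corresponding to 1-forms $\omega$ with $\delta_g G_g \delta_g^* \omega = 0$, which by a direct integration by parts in the exterior region (exploiting the spherical symmetry of $g_{b_0}$ and the spacelike nature of the lateral boundaries of $\Omega$) are essentially Killing-type 1-forms. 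Turning on $\gamma_1,\gamma_2 > 0$, one computes $\pa_{\gamma_i} \sigma$ at each such resonance via a standard first-order perturbation formula (pairing $L' \omega$ against a dual resonant state); the timelike character of $dt_*$ and the definite sign of $\gamma_1, \gamma_2$ force $\Im \pa_{\gamma_i}\sigma < 0$, pushing the resonances into the lower half plane. Combined with the global high-energy control and a compactness argument (no resonance can escape to infinity within the strip), this yields the existence of $\gamma_1, \gamma_2 > 0$ and $\alpha > 0$ with $\Res(L) \subset \{\Im\sigma < -\alpha\}$, as claimed.
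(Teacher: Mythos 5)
Your microlocal setup (principal scalarity, radial-point and trapped-set conditions) is consistent with the general framework of \S\ref{SecLinAna}, but the heart of the theorem --- the absence of resonances in a strip containing the closed upper half plane --- is where your argument has a genuine gap, and it is also where you diverge from the paper. The paper's proof is a one-line reduction: the RNdS metric in the form given by Lemma~\ref{LemmaKNdS0Ext} is structurally identical to the Schwarzschild--de~Sitter metric used in \cite[\S8]{HintzVasyKdSStability} (only the explicit formula for $\mu$ differs, and the computations there use only its non-degeneracy, never the formula), so \cite[Theorem~8.1]{HintzVasyKdSStability} applies verbatim. That proof is \emph{semiclassical}: one takes $\gamma_1,\gamma_2\gg 0$, views the modification as a complex absorbing term which is semiclassically elliptic on the characteristic set of $\Box_g$ away from the zero section and fiber infinity, and combines this with radial point, trapping, low-frequency, and energy estimates to exclude resonances in a half plane $\Im\sigma>-\alpha$ outright. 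No knowledge of the resonances of the unmodified operator $\delta_g G_g\delta_g^*$ is required.

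Your route instead perturbs from $\gamma_1=\gamma_2=0$. For that to close you must (i) determine \emph{all} resonances of $\delta_g G_g\delta_g^*$ on 1-forms with $\Im\sigma\geq 0$, including their multiplicities and (dual) resonant states, and (ii) compute the first-order motion of each one in $\gamma_1,\gamma_2$ and show its imaginary part is strictly negative. You assert both without proof. Point (i) is not a soft fact: $\delta_g G_g\delta_g^*$ is a wave-type operator with tensorial curvature terms on a Lorentzian domain with horizons, so "a direct integration by parts in the exterior region" provides no positivity and does not show that the non-decaying spectrum sits only at $\sigma=0$, nor that the zero resonance is semisimple with an identifiable Killing-type kernel. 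Point (ii) requires pairing against explicit dual resonant states, which you do not produce, and the claimed sign is exactly the content to be proved. This is precisely why the perturbative (small-$\gamma$) proof in the paper is carried out only for the \emph{scalar} operator $\delta_g\td$ in Theorem~\ref{ThmCDM}, where the zero resonance of $\Box_g$ is known to be simple of rank one with resonant state $1$ and dual state $1_{[r_-,r_+]}$; for the 1-form constraint propagation operator that input is unavailable, and the large-$\gamma$ semiclassical argument is used instead. As written, your proposal does not establish the theorem.
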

\begin{proof}
  The proof in the reference deals with the uncharged Schwarzschild--de~Sitter case, but works without any modifications in the RNdS setting as well: indeed, the RNdS metric, as given in Lemma~\ref{LemmaKNdS0Ext}, has the same form as the Schwarzschild--de~Sitter metric in the form \cite[equations~(8.5)--(8.6)]{HintzVasyKdSStability}.
\end{proof}

\emph{We continue to use the form of the metric given in Lemma~\ref{LemmaKNdS0Ext}.} We now establish mode stability for $\delta_g\td$ for a suitable modification of $\td$.

\begin{thm}
\label{ThmCDM}
  With $t_*$ as in Lemma~\ref{LemmaKNdS0Ext}, define
  \[
    \td u = d u + \gamma_3\,u\,dt_*,\quad \gamma_3\in\R
  \]
  for $u\in\CI(M^\circ)$. Then there exists a parameter $\gamma_3>0$ and a constant $\alpha>0$ such that all resonances $\sigma$ of the constraint propagation operator $\delta_g\td$ satisfy $\Im\sigma<-\alpha$.
\end{thm}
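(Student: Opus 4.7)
The plan is to treat $\delta_g\td$ as a stationary, first order, bounded-coefficient perturbation of $\Box_g$, and to track its resonances in $\gamma_3$ via the standard Kato-type perturbation theory, in a manner closely analogous to (but substantially simpler than) the treatment of the Einstein constraint propagation operator in \cite[\S8]{HintzVasyKdSStability}. Since $\delta_g d=\Box_g$ on scalars, one writes
\[
  \delta_g\td = \Box_g + \gamma_3 L_1, \qquad L_1 u := \delta_g(u\,dt_*) = -\nabla_T u + (\delta_g dt_*)\,u,
\]
where $T=(dt_*)^\sharp$ is the smooth, future timelike vector field dual to $dt_*$, with $|T|_g^2=c^2>0$ by Lemma~\ref{LemmaKNdS0Ext}. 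Because $\gamma_3 L_1$ is a first order operator with stationary bounded coefficients, it does not affect the principal symbol of $\delta_g\td$, nor its subprincipal behavior at the b-conormal bundles of the horizons or at the trapped set of Lemma~\ref{LemmaKNdSGeoNormHyp}. Consequently the entire package of high energy, radial point, and normally hyperbolic trapping estimates underlying Theorem~\ref{ThmLinAnaMero} applies uniformly in small $\gamma_3$, yielding a meromorphic family $\wh{\delta_g\td}(\sigma)^{-1}$ in any fixed strip $\{\Im\sigma\geq-\alpha_0/2\}$ with a locally constant number of resonances.

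The base input is mode stability for the scalar wave operator $\Box_g$ on the non-degenerate RNdS spacetime: $\wh{\Box_g}(\sigma)^{-1}$ is meromorphic in $\{\Im\sigma>-\alpha_0\}$ for some $\alpha_0>0$; its only resonance in $\{\Im\sigma\geq 0\}$ is a \emph{simple} resonance at $\sigma=0$; and the corresponding resonant state is $u_0\equiv 1$. This is the exact RNdS analogue of the KdS/SdS scalar mode stability statement used in \cite[\S5]{HintzVasyKdSStability}, and the proof is entirely standard: the absence of non-zero resonances in the closed upper half plane comes from a boundary pairing argument combined with the radial/trapping propagation estimates applied to $\Box_g$ and $\Box_g^*$, while simplicity of the resonance at $0$ is the statement that the pairing $\langle\partial_\sigma\wh{\Box_g}(0)u_0,u_0^*\rangle$ with a dual resonant state $u_0^*$ (a distribution supported in $\{r_-\leq r\leq r_+\}$) is non-zero.

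The heart of the proof is then a one line perturbation calculation. Let $\sigma(\gamma_3)$ denote the resonance of $\delta_g\td$ bifurcating from $\sigma(0)=0$; by standard holomorphic perturbation theory,
\[
  \sigma'(0) = -\frac{\langle L_1 u_0,\,u_0^*\rangle}{\langle\partial_\sigma\wh{\Box_g}(0)u_0,\,u_0^*\rangle}.
\]
Substituting $u_0\equiv 1$ yields $L_1 u_0=\delta_g dt_*=\Box_g t_*$, while
\[
  \partial_\sigma\wh{\Box_g}(0)u_0 = i[t_*,\Box_g](1) = -i\,\Box_g t_*,
\]
because $\Box_g(1)=0$ and $\Box_g(t_*\cdot 1)=\Box_g t_*$. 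Numerator and denominator are therefore proportional, and since the denominator is non-zero by simplicity, one obtains $\sigma'(0)=-i$ without needing to evaluate the pairing explicitly. Hence for all sufficiently small $\gamma_3>0$ the perturbed resonance lies at $\sigma=-i\gamma_3+\cO(\gamma_3^2)$, and combining with the uniform strip estimates of the first paragraph, every resonance of $\delta_g\td$ in $\{\Im\sigma\geq-\alpha_0/2\}$ lies in a $\cO(\gamma_3^2)$-neighborhood of $-i\gamma_3$; any $\alpha\in(0,\gamma_3/2)$ then satisfies the conclusion of the theorem. The only real obstacle I foresee is establishing the simplicity of the zero resonance of $\Box_g$ on RNdS, but this is routine and can be imported wholesale from the KdS/SdS analysis of \cite[\S5]{HintzVasyKdSStability} with only cosmetic changes accounting for the electromagnetic deformation of the metric.
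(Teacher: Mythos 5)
Your proof is correct and is essentially the paper's own (second) proof of Theorem~\ref{ThmCDM}: the same perturbation off $\Box_g$, the same base input (the scalar wave operator on a non-degenerate RNdS spacetime has a single, simple rank-one resonance at $0$ in a strip $\Im\sigma>-2\wt\alpha$, with resonant state $1$), and the same computation yielding $\sigma'(0)=-i$; your shortcut of observing that numerator and denominator of the perturbation formula are both multiples of $\la\Box_g t_*,u_0^*\ra$ neatly replaces the paper's explicit evaluation $\int(\Box_g t_*)\,r^2\,dr\,|d\slg|=-4\pi(r_+^2+r_-^2)\neq 0$ against the dual state $1_{[r_-,r_+]}$. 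The only imprecision is the claim that the first-order term leaves the subprincipal behavior at the radial sets and at the trapping unaffected --- it shifts it by $\cO(\gamma_3)$ --- but since the relevant threshold and trapping conditions are open this is harmless for small $\gamma_3$, exactly as in the paper's appeal to its perturbation-stability results (the paper also records a separate semiclassical proof valid for $\gamma_3\gg 0$).
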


We give two proofs: the first is semiclassical --- thus, we show that $\gamma_3\gg 0$ works --- following the arguments in \cite[\S8]{HintzVasyKdSStability} in the present, much simplified setting; the second is perturbative, similar to the one used in \cite[Lemma~3.5]{HintzVasySemilinear}, in which we compute the behavior of the simple resonance at $0$ for $\delta_g d$ upon increasing $\gamma_3$ slightly above zero.

\begin{proof}[First proof]
  Write $\gamma_3=h^{-1}$, then
  \[
    \cP_h u := h^2\delta_g\td u = (h^2\Box_g - i L_h)u,\quad L_h u = G(-i h d u,dt_*) + i h(\Box_g t_*)u.
  \]
  The semiclassical principal symbol of $L_h$ is $\sigmabh(L_h)(\zeta)=G(dt_*,\zeta)$, which is positive, resp.\ negative, if $\zeta$ is future, resp.\ past, causal, and in particular elliptic in the causal double cone $\{\zeta\in\Tb^*M\setminus o\colon G(\zeta)\geq 0\}$. Since the semiclassical characteristic set of $h^2\Box_g\in\Diffbh^2(M)$ is $G^{-1}(\zeta)\subset\rcTbdual M$, this shows that $\cP_h$ is semiclassically elliptic at $\Tb^*M\setminus o$, i.e.\ away from the zero section and away from fiber infinity. The high frequency analysis in \cite[\S8.2]{HintzVasyKdSStability} applies directly to the operator $\cP_h$, giving semiclassical microlocal propagation estimates at real principal type points (with complex absorption $L_h$), radial points and at the trapped set in semiclassical weighted b-Sobolev spaces $\Hbh^{s,\rho}(M)$ for any fixed regularity $s$ and weight $\rho$ for sufficiently small $h>0$.

  The low frequency analysis is vastly simplified compared to \cite[\S8.3]{HintzVasyKdSStability} since $\cP_h$ is a scalar operator; it thus suffices to prove the analogues of \cite[Lemmas~8.17 and 8.18]{HintzVasyKdSStability}, since the arguments following these in \cite[\S8.3]{HintzVasyKdSStability} then apply directly to $\cP_h$. To do so, we write
  \[
    L_h = M_{t_*}\,h D_{t_*} + M_r\,h D_r + i h S
  \]
  similarly to \cite[Lemma~8.13]{HintzVasyKdSStability}, where now
  \[
    M_{t_*} = c^2, \quad M_r = -\nu, \quad S=\Box_g t_*=\nu'+2r^{-1}\nu,
  \]
  as follows from Lemma~\ref{LemmaKNdS0Ext} and \cite[Lemma~8.4]{HintzVasyKdSStability}. Thus indeed $M_{t_*}>0$, and $\pm M_r=\mp\nu>0$ is positive for $\pm(r-r_c)>0$, proving the analogue of \cite[Lemma~8.17]{HintzVasyKdSStability} in the present setting. Moreover, fixing the volume form $dt_*\,dr\,d\slg$ and defining
  \[
    \ell' = \frac{1}{2ih}(L_h-L_h^*) = \frac{1}{2}(\pa_r M_r + S + S^*) = \frac{1}{2}(3\nu'+8 r^{-1}\nu)
  \]
  analogously to \cite[equation~(8.18)]{HintzVasyKdSStability}, we have $\ell'<0$ near the critical point $r=r_c$ of $\nabla t_*$ (where $\nu=0$), establishing \cite[Lemma~8.18]{HintzVasyKdSStability} in the present context.

  Energy estimates near hypersurfaces, discussed in \cite[\S8.4]{HintzVasyKdSStability}, are standard in the present, scalar setting. Finally, the global estimates obtained in \cite[\S8.5]{HintzVasyKdSStability} hold for our scalar operator $\cP_h$ as well, and the absence of resonances in a half space including the closed upper half plane follows as in the reference.
\end{proof}

\begin{proof}[Second proof]
  Let us write $\td_\gamma := d + \gamma\,dt_*$ and $P_\gamma:=\delta_g\td_\gamma$. Then $P_0=\Box_g$ is the scalar wave operator on a non-degenerate RNdS spacetime; the only resonance of $P_0$ in a half space $\Im\sigma>-2\wt\alpha$, with $\wt\alpha>0$ small, is a simple rank $1$ resonance at $\sigma(0)=0$, see \cite[Lemma~2.14]{HintzVasyCauchyHorizon} and the references given there. By general perturbation results, see \cite[Proposition~5.11]{HintzVasyKdSStability}, the operator $P_\gamma$ for small $\gamma$ therefore has only one simple rank $1$ resonance in $\Im\sigma>-\wt\alpha$; denoting its location by $\sigma(\gamma)$, the function $\sigma(\cdot)$ is analytic near $0$, and one can moreover choose a resonant state $\phi(\gamma)\in\CI(Y)$, with $\phi(0)=1$, depending analytically on $\gamma$. With $\psi=1_{[r_-,r_+]}$ denoting a dual resonant state of $\Box_g$ at $0$, so $\wh{P_0}(0)^*\psi=0$, we then have
  \begin{equation}
  \label{EqCDPerturb}
  \begin{split}
    0 &= \Big\la\frac{d}{d\gamma}\wh{P_\gamma}(\sigma(\gamma))(\phi(\gamma))\big|_{\gamma=0}, \psi\Big\ra \\
      &= \sigma'(0)\Big\la\frac{d}{d\sigma}\wh{P_0}(\sigma)\big|_{\sigma=0}(1),\psi\Big\ra + \Big\la \frac{d}{d\gamma}\wh{P_\gamma}\big|_{\gamma=0}(0)(1),\psi\Big\ra,
  \end{split}
  \end{equation}
  where we integrate over $t_*=0$ with respect to the volume density $r^2\,dr\,|d\slg|$. Now $\wh{P_\gamma}(\sigma)=\wh{\Box_g}(\sigma)+\gamma(\Box t_*)-\gamma\la d(\cdot),dt_*\ra$ and
  \[
    \frac{d}{d\sigma}\wh{\Box_g}(\sigma)\big|_{\sigma=0}1=\frac{d}{d\sigma}(e^{i\sigma t_*}\Box_g e^{-i\sigma t_*})\big|_{\sigma=0} = -i\Box_g t_*.
  \]
  Since $\Box_g t_*=r^{-2}(r^2\nu)'$ and $\nu(r_\pm)=\mp 1$ in the notation of Lemma~\ref{LemmaKNdS0Ext} (see \cite[Lemma~8.4]{HintzVasyKdSStability}), so
  \[
    \int_{[r_-,r_+]\times\Sph^2}(\Box_g t_*)\,r^2\,dr\,|d\slg|=-4\pi(r_+^2+r_-^2) \neq 0,
  \]
  we conclude that \eqref{EqCDPerturb} implies $0=-i\sigma'(0)+1$, hence $\sigma'(0)=-i$. Therefore, for small $\gamma>0$, we have $\Im\sigma(\gamma)<0$, and Theorem~\ref{ThmCDM} follows.
\end{proof}

We summarize the above discussion as follows:
\begin{thm}
\label{ThmCD}
  Fix $\tdel^*$ and $\td$ as in Theorems~\ref{ThmCDE} and \ref{ThmCDM}. Then any non-decaying generalized mode solution $(\gdot,\Adot)$ (or in fact any generalized mode solution with temporal frequency $\sigma\in\C$ with $\Im\sigma\geq-\alpha$) of the linearized gauge-fixed Einstein--Maxwell system \eqref{EqCDLinE}--\eqref{EqCDLinM} on $\Omega$ satisfies $D_g\Ups^E(\gdot)=0$ and $\Ups^M(g,\Adot)=0$, and hence is a solution of the ungauged system \eqref{EqBasicLinEinsteinMaxwell}.
\end{thm}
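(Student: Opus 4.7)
The plan is to decouple the argument into two independent constraint propagation steps, handling the Maxwell gauge first (since its propagation equation is automatic from \eqref{EqCDLinM} alone) and then, using that the Maxwell part of the ungauged system now holds, treating the Einstein gauge via the linearized second Bianchi identity. In each step, mode stability for the corresponding constraint propagation operator, provided by Theorems~\ref{ThmCDM} and \ref{ThmCDE}, forces the gauge violation to vanish. Throughout, the key observation is that since $\Ups^M$, $D_g\Ups^E$, $\delta_g\td$ and $\delta_g G_g\tdel^*$ are all stationary linear differential operators, applying them to a generalized mode of frequency $\sigma$ with $\Im\sigma\geq-\alpha$ produces a generalized mode of the same frequency (with at most the same polynomial growth in $t_*$), so that the theorems above directly apply.

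First, I would apply $\delta_g$ to equation \eqref{EqCDLinM}. Linearizing the identity $\delta_g\delta_g d A\equiv 0$ around $(g,A)$ and using $\delta_g d A=0$ for the background RNdS solution, one obtains
\[
  \delta_g\bigl(D_{g,A}(\delta_{(\cdot)}d(\cdot))(\gdot,\Adot)\bigr) = 0,
\]
and thus \eqref{EqCDLinM} yields the constraint propagation equation \eqref{EqCDPropM}:
\[
  \delta_g\td\bigl(\Ups^M(g,\Adot)\bigr) = 0.
\]
Since $\Ups^M(g,\Adot)$ is a generalized mode with $\Im\sigma\geq-\alpha$, Theorem~\ref{ThmCDM} (whose resonances all satisfy $\Im\sigma<-\alpha$) forces $\Ups^M(g,\Adot)\equiv 0$. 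Equation \eqref{EqCDLinM} then reduces to $D_{g,A}(\delta_{(\cdot)}d(\cdot))(\gdot,\Adot)=0$, which is precisely the Maxwell part $\sL_2(\gdot,\Adot)=0$ of the ungauged linearized system.

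For the Einstein part, I would apply $\delta_g G_g$ to equation \eqref{EqCDLinE}. The linearized second Bianchi identity \eqref{EqBasicLin2ndBianchi} states that
\[
  \delta_g G_g\bigl(D_g(\Ric+\Lambda)(\gdot) - 2 D_{g,dA}T(\gdot,d\Adot)\bigr) = 0
\]
whenever the linearized Maxwell equations \eqref{EqBasicLinFdotSol} hold; these were just established in the previous step. Applying $\delta_g G_g$ to \eqref{EqCDLinE} thus yields
\[
  \delta_g G_g\tdel^*\bigl(D_g\Ups^E(\gdot)\bigr) = 0,
\]
which is \eqref{EqCDPropE}. Since $D_g\Ups^E(\gdot)$ is again a generalized mode with $\Im\sigma\geq-\alpha$, Theorem~\ref{ThmCDE} forces $D_g\Ups^E(\gdot)\equiv 0$. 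Substituting this back into \eqref{EqCDLinE} gives the full linearized ungauged Einstein equation $\sL_1(\gdot,\Adot)=0$, so $(\gdot,\Adot)$ solves \eqref{EqBasicLinEinsteinMaxwell}.

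The only genuinely delicate point — and the one I would verify carefully — is the use of the linearized second Bianchi identity: one must confirm that all three terms $d\Fdot$, $D_{g,F}(\delta_{(\cdot)}(\cdot))(\gdot,\Fdot)$, and $\delta_g G_g D_g(\Ric+\Lambda)(\gdot)$ appearing in the derivation of \eqref{EqBasicLin2ndBianchi} vanish in the present 4-potential setup. Here $\Fdot=d\Adot$ ensures $d\Fdot=0$ automatically, while $D_{g,A}(\delta_{(\cdot)}d(\cdot))(\gdot,\Adot)=0$ supplies the remaining piece. With this in hand, the rest of the argument is purely algebraic manipulation of linear operators, and the entire conclusion reduces to the two scalar/1-form mode stability statements already established.
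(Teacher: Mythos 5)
Your proposal is correct and follows essentially the same route as the paper: the Maxwell constraint propagation equation \eqref{EqCDPropM} is decoupled and handled first via Theorem~\ref{ThmCDM}, after which the linearized Maxwell equation holds and the linearized second Bianchi identity yields \eqref{EqCDPropE}, to which Theorem~\ref{ThmCDE} applies. The order of the two steps, and the caveat that the Einstein constraint propagation only becomes available once $\Ups^M(g,\Adot)=0$ is established, match the paper's argument exactly.
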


\section{High frequency analysis of the linearized equation}
\label{SecESG}

We now analyze the high frequency properties of the linearized gauged Einstein--Maxwell system \eqref{EqCDLinE}--\eqref{EqCDLinM}, which is the linearization around
\[
  (g,A)=(g_{b_0},A_{b_0})
\]
of the full non-linear equation we will use in our proof of non-linear stability. Thus, let us define the linear, stationary second order differential operator
\begin{align*}
  L(\gdot,\Adot) &= \Bigl( 2\bigl(D_g(\Ric+\Lambda)(\gdot) - \tdel^*D_g\Ups^E(\gdot) - 2 D_{g,d A}T(\gdot,d\Adot)\bigr), \\
    &\qquad\qquad D_{g,A}(\delta_{(\cdot)}d(\cdot))(\gdot,\Adot) - \td \Ups^M(g,\Adot) \Bigr),
\end{align*}
acting on sections $(\gdot,\Adot)$ of the bundle $S^2\,\Tb^*_\Omega M\oplus\Tb^*_{\Omega}M$; here, as before, $\Ups^E$ and $\Ups^M$ are given by the expressions \eqref{EqKNdSIniGaugeE}--\eqref{EqKNdSIniGaugeM}, and $T$ is defined in \eqref{EqBasicDerEMTensor}. Furthermore, $\tdel^*$ and $\td$ are given by Theorems~\ref{ThmCDE} and \ref{ThmCDM}, so
\begin{equation}
\label{EqESGTildeOps}
  (\tdel^*-\delta_g^*)u = \gamma_1\,dt_*\cdot u - \gamma_2 u(\nabla t_*)g, \qquad (\td-d)v=\gamma_3\,v\,dt_*,
\end{equation}
for suitable $\gamma_1,\gamma_2,\gamma_3>0$, where $u$ a 1-form and $v$ a scalar function. We prove:

\begin{thm}
\label{ThmESG}
  (Cf.\ \cite[Theorem~4.4]{HintzVasyKdSStability}.) The operator $L$ satisfies the condition \eqref{EqLinAnaTrap} on the subprincipal operator at the trapped set. Furthermore, the quantity $\wh\beta$ in \eqref{EqLinAnaWhBeta}, related to the threshold regularity at the radial sets, satisfies $\wh\beta\geq 0$. Thus, there exists $\alpha>0$ such that for $s\geq 1$, the operator $\wh L(\sigma)$ satisfies the high energy estimates \eqref{EqLinAnaMeroEst} in $\Im\sigma\geq-\alpha$, and moreover $L$ has no resonances $\sigma$ with $-\alpha\leq\Im\sigma<0$.
\end{thm}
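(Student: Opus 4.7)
The plan is to establish the two microlocal conditions --- $\wh\beta\geq 0$ at $\cR_{b_0,\pm}$ and the skew-adjoint bound \eqref{EqLinAnaTrap} for $S_\sub(L)$ at $\Gamma$ --- invoke Theorem~\ref{ThmLinAnaMero}, and finally combine constraint damping (Theorem~\ref{ThmCD}) with mode stability (Theorem~\ref{ThmMS}) to exclude resonances just below the real axis. A preliminary observation simplifies the setup: since the Einstein block of $L$ has been multiplied by $2$ and the gauge modifications \eqref{EqESGTildeOps} only affect the subprincipal symbol, we have $\sigma_{\bl,2}(L)=G_{b_0}\otimes\Id$ on $E:=S^2\,\Tb^*M\oplus\Tb^*M$, so $L$ is principally scalar and the null-geodesic flow, the radial sets $\cR_{b_0,\pm}$, and the trapped set $\Gamma$ are exactly as in \S\ref{SubsecKNdSGeo}.

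For the radial point bound, I would work in a stationary frame for $E$ near each horizon adapted to the null coordinate $\tau_0$ of \eqref{EqKNdS0Tau0}, and compute $\wh\beta_\pm$ from \eqref{EqLinAnaWhBetaPM} using the expressions in \S\ref{SubsecMSCalc} for $\delta_g^*$, $\sR_g$, $D_{g,dA}T$, and the first-order piece $2(\tdel^*-\delta_g^*)\delta_g G_g$ obtained by expanding $D_g(\Ric+\Lambda)(\gdot)-\tdel^*D_g\Ups^E(\gdot)$ via \eqref{EqBasicNLLinRic}--\eqref{EqBasicNLLinUpsE}. The modifications $\gamma_1\,dt_*\cdot u$, $-\gamma_2 u(\nabla t_*)g$ and $\gamma_3 v\,dt_*$ contribute with a favorable sign because $-d\tau/\tau=dt_*$ is future timelike on $M$; after possibly rescaling the Einstein component of the fiber inner product by a large constant (harmless since $\wh\beta$ is computed eigenvalue-wise) and choosing $\gamma_i$ as in Theorems~\ref{ThmCDE}--\ref{ThmCDM}, one obtains $\wh\beta\geq 0$. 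This is the direct analogue of \cite[\S9.1]{HintzVasyKdSStability} in the coupled setting, combined with the scalar Maxwell computation of \S\ref{SubsubsecIntroStrM}.

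The main obstacle is the trapped set bound \eqref{EqLinAnaTrap}. By \eqref{EqKNdSGeoTrapHam} the Hamilton vector field $\ham_{G_{b_0}}|_\Gamma$ is tangent to $\Gamma$ with vanishing $\pa_r$-component, and Lemma~\ref{LemmaKNdSGeoNormHyp} gives uniform $r$-normal hyperbolicity. I would compute $\sigma_\sub(L)|_\Gamma$ as a pointwise bundle endomorphism of $\pi^*E$ using the formulas in \S\ref{SubsecMSCalc}, and then apply the method of \cite{HintzPsdoInner} as used in \cite[\S4.3]{HintzVasyKdSStability}: conjugating the fiber inner product on $\pi^*E$ by an endomorphism of the form $e^{N\log f(r)}$ with $f(r_P)=1$ and $f'(r_P)$ a free parameter adds $-i N\ham_{G_{b_0}}\log f$ to $S_\sub(L)$, a term that vanishes at $\Gamma$ (since $\ham_{G_{b_0}}r|_\Gamma=0$) but whose derivative transverse to $\Gamma$ can cancel any prescribed contribution to the skew-adjoint part of $\sigma_\sub(L)|_\Gamma$ up to an $\alpha_\Gamma$-small remainder. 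The delicate point is that for $Q_{e,0}\neq 0$ the gravitational and electromagnetic blocks of $\sigma_\sub(L)|_\Gamma$ are coupled through $D_{g,dA}T$ in \eqref{EqCDLinE} and through Lemma~\ref{LemmaMSLinDelta} in \eqref{EqCDLinM}, so the endomorphism is not block-diagonal; however the coupling entries depend smoothly on $b_0$ and vanish at $Q_{e,0}=0$. Since the threshold condition is an open condition in the $\cC^0$ topology on symbols (cf.\ \cite[Remark~5.1]{HintzVasyKdSStability}), a perturbative argument from the Schwarzschild--de~Sitter case treated in \cite[\S9]{HintzVasyKdSStability} --- together with the scalar Maxwell computation of \S\ref{SubsubsecIntroStrM} for the Lorenz block --- yields \eqref{EqLinAnaTrap} for all non-degenerate RNdS parameters.

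Given these two conditions, Theorem~\ref{ThmLinAnaMero} provides the high energy estimate \eqref{EqLinAnaMeroEst} and the meromorphy of $\wh L(\sigma)^{-1}$ in some half plane $\{\Im\sigma\geq-\alpha_0\}$, $\alpha_0>0$. For the final claim, let $\alpha_{\mathrm{CD}}>0$ be the damping rate from Theorem~\ref{ThmCD}, and consider any $\sigma\in\Res(L)$ with $\Im\sigma\geq-\min(\alpha_0,\alpha_{\mathrm{CD}})$ and any $(\gdot,\Adot)\in\Res(L,\sigma)$. Theorem~\ref{ThmCD} forces $D_g\Ups^E(\gdot)=0$ and $\Ups^M(g,\Adot)=0$, so $(\gdot,\Adot)$ solves the ungauged system \eqref{EqBasicLinEinsteinMaxwell}; Theorem~\ref{ThmMS} then implies that $\sigma=0$, since any generalized mode solution of the ungauged system with $\sigma\neq 0$ would be pure gauge and hence not a non-trivial resonant state of $L$ at $\sigma\neq 0$. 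Discreteness of $\Res(L)$ in $\{\Im\sigma>-\alpha_0\}$ finally allows us to pick $\alpha\in(0,\min(\alpha_0,\alpha_{\mathrm{CD}}))$ so small that $\{-\alpha\leq\Im\sigma<0\}\cap\Res(L)=\emptyset$, completing the proof.
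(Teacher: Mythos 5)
Your overall architecture is right --- verify the two microlocal conditions, feed them into Theorem~\ref{ThmLinAnaMero}, and then shrink $\alpha$ using the finiteness of $\Res(L)\cap\{\Im\sigma\geq-\alpha\}$ --- and that last discreteness step is exactly how the paper concludes. But there is a genuine gap at the trapped set. You establish \eqref{EqLinAnaTrap} by openness/perturbation from the decoupled case $Q_{e,0}=0$, and then claim this "yields \eqref{EqLinAnaTrap} for all non-degenerate RNdS parameters." Openness only gives a neighborhood of $Q_{e,0}=0$ in parameter space; it does not propagate along the full non-degenerate range \eqref{EqRNdSNondeg}, and the paper explicitly makes no smallness assumption on the charge. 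This is precisely why \S\ref{SubsecESGTrap} carries out the explicit computation of the $14\times14$ endomorphism $S_L$ and its eigenvalues (Lemma~\ref{LemmaESGTrap}): the charge enters only through the eigenvalues $\pm 2iq'\sqrt 2$, which are purely imaginary and hence harmless for \emph{arbitrary} charge --- a fact that no soft perturbative argument delivers. Relatedly, your conjugation by $e^{N\log f(r)}$ with $f(r_P)=1$ contributes $-iN\ham_{G}\log f$, which \emph{vanishes at} $\Gamma$; since \eqref{EqLinAnaTrap} is a pointwise condition on $\Gamma$, this conjugation cannot cancel anything there. The actual freedom one exploits is the choice of the fiber inner product $h$ on $\pi^*E$ over $\Gamma$ itself, which (given that all eigenvalues of $S_L$ have non-negative real part) can be chosen, Schur-triangularization plus diagonal rescaling, to make the skew-adjoint part $<\alpha_\Gamma$.

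Your final paragraph also contains two errors, though they turn out not to be fatal. Theorem~\ref{ThmMS} is stated only for $\Im\sigma\geq 0$, so it cannot be applied to a putative resonance in the strip $-\alpha\leq\Im\sigma<0$; and the inference "pure gauge, hence not a non-trivial resonant state of $L$" is false --- pure gauge generalized modes satisfying the linearized gauge conditions are perfectly good non-zero elements of $\Res(L,\sigma_j)$ for various $\sigma_j\neq 0$ with $\Im\sigma_j\geq 0$ (this is the entire raison d'\^etre of the gauge-modification space $\Xi$ in \S\ref{SecLin}). The theorem does not assert that $0$ is the only resonance in the closed upper half plane, only that the open strip below the real axis is resonance-free, and that follows from the finiteness argument you append at the end; the constraint-damping/mode-stability detour should simply be deleted. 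Finally, at the radial sets your sketch asserts the favorable signs without verifying that the charge coupling $L_{12},L_{21}$ does not spoil them; in the paper this is checked by exhibiting $S_L^\pm$ as lower triangular in a suitable ordering (Lemma~\ref{LemmaESGRad}), so that the eigenvalues $2\kappa_\pm,\ldots,2\gamma_1,\gamma_3$ are independent of $q$.
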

\begin{proof}
  The assertions at the trapped set and at the horizons will be verified in the remainder of this section. We recall that then $L$ has only finitely many resonances in $\Im\sigma\geq-\alpha$ for $\alpha$ as in Theorem~\ref{ThmLinAnaMero}; thus, reducing $\alpha$ if necessary guarantees that all resonances in $\Im\sigma\geq-\alpha$ in fact satisfy $\Im\sigma\geq 0$. The assumption $s\geq 1$ ensures that the above-threshold regularity condition on $s$ in Theorem~\ref{ThmLinAnaMero} is satisfied (again reducing $\alpha>0$ if necessary).
\end{proof}

The point of this theorem is that it implies that solutions of initial value problems for $L$ with data in the space $D^{s,\alpha}(\Omega;S^2\,\Tb^*_\Omega M\oplus\Tb^*_\Omega M)$ have an asymptotic expansion into finitely many (generalized) mode solutions up to an exponentially decaying remainder term in $\Hbext^{s,\alpha}(\Omega;S^2\,\Tb^*_\Omega M\oplus\Tb^*_\Omega M)$; see the discussion around \eqref{EqLinAnaAsympExp}.

The key calculation, which we will describe in \S\ref{SubsecESGTrap}, is that of the subprincipal operator $S_\sub(L)$ of $L$ at the trapped set; this is purely symbolic, and correspondingly the calculation of the main quantity, the spectrum of a certain endomorphism of the bundle $S^2\Tb^*_\Omega M\oplus\Tb^*_\Omega M$ (pulled back to $\Tb^*_\Omega M$ via the projection $\pi\colon\Tb^*_\Omega M\to\Omega$) associated with $S_\sub(L)$, is straightforward, even if algebraically somewhat lengthy as it involves high-dimensional linear algebra. The regularity assumption in Theorem~\ref{ThmESG} comes from a calculation of the threshold regularity at the radial sets at the event and cosmological horizons, see \S\ref{SubsecESGRad}.

We compute the form of $L$ in a bit more detail. For the first component, we use \eqref{EqBasicNLLinRic} as well as \eqref{EqBasicNLLinUpsE} together with $\sE_g\equiv 0$ (recalling that the background metric, called $t$ there, is equal to $g=g_{b_0}$, so the covariant derivatives defining $C_{\mu\nu}^\kappa$ there all vanish). In addition, writing $F=d A$ and $\Fdot=d\Adot$, we note that \eqref{EqMSDT} shows that $T(g,F)$ depends only on $g$ but no derivatives, hence $D_g T(\gdot,F)$ is of order $0$ in $\gdot$; on the other hand,
\[
  D_F T(g,\Fdot) = -2\tr_g^{24}(F\otimes_s\Fdot) + G(F,\Fdot)\,g.
\]
For the second component of $L$, we recall $D_A(\delta_g d(\cdot))(\Adot)=\delta_g d\Adot$; moreover, we write the last term in \eqref{EqMSLinDelta} as
\[
  \gdot_\nu{}^{\ell;\mu}-\gdot_\nu{}^{\mu;\ell} = (d^\nabla\gdot)_\nu{}^{\mu\ell}
\]
using the covariant exterior derivative of $\gdot$ viewed as a $T^*M^\circ$-valued 1-form, so
\[
  S^2 T^*M^\circ \hra T^*M^\circ \otimes T^*M^\circ \xra{d^\nabla} T^*M^\circ \otimes \Lambda^2 T^*M^\circ.
\]
Lastly, we simply have $\Ups^M(g,\Adot)=-\delta_g\Adot$ for our fixed metric $g=g_{b_0}$. Thus, defining the operators
\begin{gather*}
  L_{12}(\Adot) = 4\tr_g^{24}(F\otimes_s d\Adot) - 2 G(F,d\Adot)\,g, \\
  L_{21}(\gdot) = - \tr_g^{12}(\delta_g G_g\gdot\otimes F) + \frac{1}{2}\tr_g^{24}\tr_g^{35}(d^\nabla\gdot\otimes F),
\end{gather*}
we have
\begin{equation}
\label{EqESGL}
\begin{split}
  L(\gdot,\Adot) &\equiv \Bigl( \Box_g\gdot + 2(\tdel^*-\delta_g^*)\delta_g G_g\gdot + L_{12}(\Adot), \\
    &\qquad \Box_g\Adot + (\td-d)\delta_g\Adot + L_{21}(\gdot) \Bigr),
\end{split}
\end{equation}
modulo terms of order $0$, which are sub-subprincipal and thus irrelevant for the present high energy discussion, which only involves principal and subprincipal terms at the normally hyperbolic trapping and the radial set.

\subsection{Basic computations on the RNdS spacetime}
\label{SubsecESGComp}

We compute the explicit form of (parts of) operators appearing in the expression \eqref{EqESGL}.

\subsubsection{Warped products}

We first consider a warped product metric
\begin{equation}
\label{EqESGCompMet}
  g = \alpha^2\,dt^2 - h
\end{equation}
on $\cM=\R_t\times\cX_x$, with $\alpha\in\CI(\cX)$ positive, and $h=h(x,dx)$ a smooth (Riemannian) metric on the manifold $\cX$. We let
\[
  e_0:=\alpha^{-1}\pa_t,\quad e^0:=\alpha\,dt,\quad \Omega=\log\alpha,
\]
and denote the Levi-Civita connection of $h$ by $\nabla^h$. From \cite[\S6.1]{HintzVasyKdSStability}, we then recall for $v\in\CI(\cM,T\cX)$ and $w\in\CI(\cM,T^*\cX)$ the covariant derivatives
\begin{equation}
\label{EqESGCompConn1}
\begin{gathered}
  \nabla_{e_0}e^0=-d\Omega,\quad \nabla_{e_0}w=e_0 w-w(\nabla^h\Omega)e^0, \\
  \nabla_v e^0=0,\quad \nabla_v w=\nabla^h_v w.
\end{gathered}
\end{equation}
Furthermore, we define bundle splittings
\begin{equation}
\label{EqESGCompBundle1}
\begin{gathered}
  T^*\cM=W_N\oplus W_T,\quad \Lambda^2 T^*\cM=W_{NT}\oplus W_{TT}, \\
  S^2T^*\cM=V_{NN}\oplus V_{NT}\oplus V_T, \\
\end{gathered}
\end{equation}
where $W_N=\la e^0\ra$ is canonically trivial, so $W_N\cong\ulR$ is the trivial real rank $1$ bundle over $\cM$, and $W_T=T^*\cX$; the summands of the 2-form bundle are $W_{NT}=e^0\wedge W_T\cong W_T$ and $W_{TT}=\Lambda^2 T^*\cX$; and lastly
\[
  V_{NN}=\la e^0e^0\ra\cong\ulR,\quad V_{NT}=2e^0 W_T\cong W_T,\quad V_T=S^2T^*\cX.
\]
Here we write $uv=u\cdot v=u\otimes_s v=\frac{1}{2}(u\otimes v+v\otimes u)$ for the symmetrized product. For the second term in $L_{21}$, we also introduce the induced splitting
\begin{equation}
\label{EqESGCompBundle1T}
  T^*\cM\otimes\Lambda^2 T^*\cM = (W_N\otimes W_{NT}) \oplus (W_N\otimes W_{TT}) \oplus (W_T\otimes W_{NT}) \oplus (W_T\otimes W_{TT}),
\end{equation}
identifying $W_N\otimes W_\bullet\cong W_\bullet$ for $\bullet=NT,TT$, and $W_T\otimes W_{NT}\cong T^*\cX\otimes T^*\cX$. We decompose the covector $\zeta\in T^*\cM$, at which all symbols below are implicitly evaluated, according to \eqref{EqESGCompBundle1} as
\begin{equation}
\label{EqESGCompCovec}
  T^*\cM \ni \zeta = -\wt\sigma e^0 + \omega = \begin{pmatrix} -\wt\sigma \\ \omega \end{pmatrix}, \quad \wt\sigma\in\R,\ \omega\in T^*\cX;
\end{equation}
in particular, $\sigma_1(e_0)=-i\wt\sigma$, so the exterior derivative and the codifferential (divergence) acting on 1-forms have symbols
\begin{equation}
\label{EqESGCompDDel}
  \sigma_1(d) = i\zeta\wedge = i\begin{pmatrix} -\omega & -\wt\sigma \\ 0 & \omega\wedge \end{pmatrix},
  \quad
  \sigma_1(\delta_g) = -i \iota_\zeta = i\begin{pmatrix} \wt\sigma & \iota_\omega \end{pmatrix},
\end{equation}
where $\iota_\zeta:=\iota_{G(\zeta,\cdot)}$, while $\iota_\omega:=\iota_{H(\omega,\cdot)}$, i.e.\ for 1-forms on $\cX$, we use the musical isomorphism induced by $h$, so $\iota_\omega\omega':=H(\omega,\omega')$ for a 1-form $\omega'$ on $\cX$. For the symmetric gradient on 1-forms, $\sigma_1(\delta_g^*)=i\zeta\otimes_s(\cdot)$, and the divergence acting on symmetric 2-tensors, $\sigma_1(\delta_g)=-i\iota_\zeta$, we have
\[
  \sigma_1(\delta_g^*) = i\begin{pmatrix} -\wt\sigma & 0 \\ \frac{1}{2}\omega & -\frac{1}{2}\wt\sigma \\ 0 & \omega\otimes_s(\cdot) \end{pmatrix},
  \quad
  \sigma_1(\delta_g) = i\begin{pmatrix} \wt\sigma & \iota_\omega & 0 \\ 0 & \wt\sigma & \iota_\omega \end{pmatrix}.
\]
The metric and the trace reversal operator take the form
\[
  g = \begin{pmatrix} 1 \\ 0 \\ -h \end{pmatrix},
  \quad
  G_g = \begin{pmatrix}
          \frac{1}{2}  & 0 & \frac{1}{2}\tr_h \\
          0            & 1 & 0 \\
          \frac{1}{2}h & 0 & 1-\frac{1}{2}h\tr_h
        \end{pmatrix},
\]
thus
\begin{equation}
\label{EqESGCompDelgGg}
  2\sigma_1(\delta_g G_g)=
   i\begin{pmatrix}
      \wt\sigma & 2\iota_\omega & \wt\sigma\tr_h \\
      \omega & 2\wt\sigma       & 2\iota_\omega-\omega\tr_h
    \end{pmatrix}.
\end{equation}
Next, for 1-forms $a,b,c,d$, we have
\begin{align*}
  \tr_g^{24}((a\wedge b)\otimes_s(c\wedge d)) &= G(a,c)b\otimes_s d + G(b,d)a\otimes_s c \\
   &\qquad - G(a,d)b\otimes_s c - G(b,c)a\otimes_s d;
\end{align*}
therefore, for the 2-form
\begin{equation}
\label{EqESGCompF1}
  F=e^0\wedge\rho = \begin{pmatrix} \rho \\ 0 \end{pmatrix}, \quad \rho\in\CI(\cM,T^*\cX),
\end{equation}
on $\cM$, where the last expression is in terms of the decomposition \eqref{EqESGCompBundle1}, we have
\begin{equation}
\label{EqESGCompTrg24}
  \tr_g^{24}(F\otimes_s(\cdot))
    =\begin{pmatrix} 
       -\iota_\rho          & 0 \\
       0                    & \frac{1}{2}\iota_\rho \\
       \rho\otimes_s(\cdot) & 0
     \end{pmatrix},
  \quad
  G(F,\cdot) = \begin{pmatrix} -\iota_\rho & 0 \end{pmatrix}.
\end{equation}
on 2-forms. Acting on 1-forms, one computes
\begin{equation}
\label{EqESGCompTrg12}
  \tr_g^{12}((\cdot)\otimes F) = \begin{pmatrix} 0 & \iota_\rho \\ \rho & 0 \end{pmatrix}.
\end{equation}
Further, one computes the symbol of $d^\nabla\colon\CI(\cM;S^2T^*\cM)\to\CI(\cM;T^*\cM\otimes\Lambda^2 T^*\cM)$, so $\sigma_1(d^\nabla)(\zeta)(\psi\otimes\psi)=i\psi\otimes(\zeta\wedge\psi)$, in the decompositions \eqref{EqESGCompBundle1} and \eqref{EqESGCompBundle1T} to be
\[
  \sigma_1(d^\nabla)
   =i\begin{pmatrix}
       -\omega & -\wt\sigma            & 0 \\
       0       & \omega\wedge          & 0 \\
       0       & -(\cdot)\otimes\omega & -\wt\sigma \\
       0       & 0                     & i^{-1}\sigma_1(d^\nabla_\cX),
     \end{pmatrix},
\]
while
\[
  \tr_g^{24}\tr_g^{35}((\cdot)\otimes F)
    =\begin{pmatrix}
       -2\iota_\rho & 0 & 0                      & 0 \\
       0            & 0 & -2\Id\otimes\iota_\rho & 0
     \end{pmatrix}
\]
as a bundle map $T^*\cM\otimes\Lambda^2 T^*\cM\to T^*\cM$, therefore the composition is
\begin{equation}
\label{EqESGCompTrg24Trg35}
  \sigma_1(\tr_g^{24}\tr_g^{35}(d^\nabla(\cdot)\otimes F))
  = 2i
    \begin{pmatrix}
      \iota_\rho\omega & \wt\sigma\iota_\rho & 0 \\
      0                & \iota_\rho\omega    & \wt\sigma\iota_\rho
    \end{pmatrix}.
\end{equation}
Finally, in our application, we will have $t_*=t-T$ with $T\in\CI(\cX)$, see \eqref{EqKNdS0CoordDef}, so that
\[
  dt_* = \begin{pmatrix} \alpha^{-1} \\ -d_\cX T \end{pmatrix},
    \quad
  \nabla t_*=\begin{pmatrix} \alpha^{-1} & \iota_{d T} \end{pmatrix},
\]
so in view of \eqref{EqESGTildeOps}
\begin{equation}
\label{EqESGCompDiff1}
  \tdel^*-\delta_g^*
  =\gamma_1
     \begin{pmatrix}
       \alpha^{-1}     & 0                      \\
       -\frac{1}{2}d T & \frac{1}{2}\alpha^{-1} \\
       0               & -d T\otimes_s(\cdot)
     \end{pmatrix}
   -\gamma_2
     \begin{pmatrix}
       \alpha^{-1}   & \iota_{d T} \\
       0             & 0 \\
       -\alpha^{-1}h & -h\iota_{d T}
     \end{pmatrix}.
\end{equation}

\subsubsection{Specialization of the spatial metric}

Next, we assume that the spatial metric $h$ in \eqref{EqESGCompMet} takes the form
\[
  h = \alpha^{-2}\,dr^2 + r^2\slg,\quad \alpha=\alpha(r),
\]
with $\slg$ a metric (independent of $r$) on an $\sln$-dimensional manifold $S$, $\sln\in\N_0$. All slashed operators are those induced by (the Levi-Civita connection of) $\slg$. We let
\[
  e_1:=\alpha\pa_r,\quad e^1=\alpha^{-1}\,dr.
\]
From \cite[\S6.2]{HintzVasyKdSStability}, we recall that for $v\in\CI(\cX,T S)$, $w\in\CI(X,T^*S)$,
\begin{equation}
\label{EqESGCompConn2}
\begin{gathered}
  \nabla^h_{e_1}e^1=0, \quad \nabla^h_{e_1}w=e_1 w-\alpha r^{-1}w, \\
  \nabla^h_v e^1=r\alpha\iota_v\slg, \quad \nabla^h_v w=\slnabla_v w - \alpha r^{-1}w(v)e^1,
\end{gathered}
\end{equation}
extending $\slg$ to an element of $S^2 T^*\cX$ by means of the orthogonal projection $T\cX\to\la e_1\ra^\perp\cong T S$. We also split the natural bundles on $\cX$ by writing
\begin{equation}
\label{EqESGCompBundle2}
\begin{gathered}
  T^*\cX = W^T_N\oplus W^T_T, \quad \Lambda^2 T^*\cX=W^T_{NT}\oplus W^T_{TT}, \\
  S^2 T^*\cX = V^T_{NN}\oplus V^T_{NT}\oplus V^T_T,
\end{gathered}
\end{equation}
where $W^T_N=\la e^1\ra\cong\ulR$ and $W^T_T=T^*S$, while $W^T_{NT}=e^1\wedge W^T_T\cong W^T_T$ and $W^T_{TT}=\Lambda^2 T^*S$, and
\[
  V^T_{NN}=\la e^1e^1\ra\cong\ulR, \quad V^T_{NT}=2 e^1 W^T_T \cong W^T_T, \quad V^T_T=S^2 T^*S.
\]
Thus,
\[
  d\Omega = \begin{pmatrix} \alpha' \\ 0 \end{pmatrix},
    \quad
  \nabla^h\Omega=\begin{pmatrix} \alpha' & 0 \end{pmatrix};
    \quad
  h = \begin{pmatrix} 1 \\ 0 \\ r^2\slg \end{pmatrix},
    \quad
  \tr_h = \begin{pmatrix} 1 & 0 & r^{-2}\sltr \end{pmatrix}.
\]
We write the covector $\omega\in T^*\cX$ in \eqref{EqESGCompCovec} as
\begin{equation}
\label{EqESGCompCovec2}
  \omega = \wt\xi e^1 + \eta = \begin{pmatrix} \wt\xi \\ \eta \end{pmatrix}, \quad \wt\xi\in\R,\ \eta\in T^*S
\end{equation}
in the splitting \eqref{EqESGCompBundle2}. Consequently, acting on the bundle $T^*\cX$, we have
\begin{equation}
\label{EqESGCompOmega}
  \omega\wedge(\cdot)
   =\begin{pmatrix}
      -\eta & \wt\xi \\
      0     & \eta\wedge
    \end{pmatrix},
  \quad
  \iota_\omega
   =\begin{pmatrix}
      \wt\xi & r^{-2}\iota_\eta
    \end{pmatrix},
  \quad
  \omega\otimes_s(\cdot)
   =\begin{pmatrix}
      \wt\xi          & 0 \\
      \frac{1}{2}\eta & \frac{1}{2}\wt\xi \\
      0               & \eta\otimes_s(\cdot)
    \end{pmatrix},
\end{equation}
where $\iota_\eta\eta'=\slG(\eta,\eta')$ on 1-forms on $S$. On symmetric 2-tensors, we have
\[
  \iota_\omega=
    \begin{pmatrix} 
      \wt\xi & r^{-2}\iota_\eta & 0 \\
      0      & \wt\xi           & r^{-2}\iota_\eta.
    \end{pmatrix}
\]
We also specialize the expression \eqref{EqESGCompF1} for the 2-form $F=e^0\wedge\rho$ by demanding
\[
  \rho = q\,e^1 = \begin{pmatrix} q \\ 0 \end{pmatrix},\quad q\in\CI(\cX);
\]
hence, on 1-forms,
\begin{equation}
\label{EqESGCompIotaRho1}
  \iota_\rho = \begin{pmatrix} q & 0 \end{pmatrix}, \quad \rho\otimes_s(\cdot)=\begin{pmatrix} q & 0 \\ 0 & \frac{1}{2}q \\ 0 & 0 \end{pmatrix}
\end{equation}
while on 2-forms, resp.\ symmetric 2-tensors,
\begin{equation}
\label{EqESGCompIotaRho2}
  \iota_\rho = \begin{pmatrix} 0 & 0 \\ q & 0 \end{pmatrix},\quad \tn{resp.}\quad \iota_\rho = \begin{pmatrix} q & 0 & 0 \\ 0 & q & 0 \end{pmatrix},
\end{equation}
in the respective splittings given by \eqref{EqESGCompBundle2}. In our application on the RNdS spacetime, we have $q=-Q_e\alpha^{-1}r^{-2}$, see \eqref{EqKNdS0Potential}.

Finally, if the function $T$ in $t_*=t-T$ is a function of $r$ only, then
\begin{equation}
\label{EqESGCompDiff2}
  d T=\begin{pmatrix} \alpha T' \\ 0 \end{pmatrix}, \quad \iota_{d T}=\begin{pmatrix} \alpha T' & 0 \end{pmatrix}.
\end{equation}

\subsection{Subprincipal operator at the trapped set}
\label{SubsecESGTrap}

We continue dropping the subscript `$b_0$' from the notation; all computations use the objects (metric, 4-potential, electromagnetic field) defined for the RNdS spacetime with mass $\bhm$ and electric charge $Q_e$.

We recall the location of the trapped set $\Gamma$ from \eqref{EqKNdSGeoTrapSet}; the dual coordinates $\sigma$ and $\xi$ there are related to $\wt\sigma$ in \eqref{EqESGCompCovec} and $\wt\xi$ in \eqref{EqESGCompCovec2} by
\begin{equation}
\label{EqESGTrapCovecRel}
  \sigma=\alpha\wt\sigma, \ \xi=\alpha^{-1}\wt\xi,
\end{equation}
so
\begin{equation}
\label{EqESGTrapGamma}
  r=r_P,\quad \wt\sigma^2 = r^{-2}|\eta|^2,\quad \xi=0,
\end{equation}
at $\Gamma$, with $r_P$ the radius of the photon sphere; we have $\sigma\neq 0$ and $\eta\neq 0$ since $\Gamma$ by definition does not intersect $o\subset T^*\cM$. Let $\pi\colon T^*\cM\to\cM$ denote the projection. At $\Gamma$ then, using \eqref{EqKNdSGeoTrapHam}, we can compute the subprincipal operator $S_\sub(\Box_g)=-i\nabla^{\pi^*T^*\cM}_{\ham_G}$ of the wave operator on 1-forms at $\Gamma$, where $\alpha'=r^{-1}\alpha$, using \eqref{EqESGCompConn1} and \eqref{EqESGCompConn2} to be
\begin{equation}
\label{EqESGTrapSubpr}
\begin{split}
  S_\sub(\Box_g) &= -2\alpha^{-1}\wt\sigma D_t + i r^{-2}\nabla^{\pi^*T^*\cM}_{\ham_{|\eta|^2}} + S, \\
  &\hspace{5em} S = i\alpha r^{-1} \begin{pmatrix} 0 & -2\wt\sigma & 0 \\ -2\wt\sigma & 0 & -2r^{-2}\iota_\eta \\ 0 & 2\eta & 0 \end{pmatrix},
\end{split}
\end{equation}
acting on sections of $\pi^*T^*\cM\to T^*\cM$, in the combination of the splittings \eqref{EqESGCompBundle1} and \eqref{EqESGCompBundle2}, i.e.\ $T^*\cM=W_N\oplus(W^T_N\oplus W^T_T)$, pulled back to a vector bundle over $T^*\cM$ via $\pi$. (See also \cite[Proposition~4.7]{HintzPsdoInner}.) We note that we have
\[
  \nabla^{\pi^*T^*\cM}_{\ham_{|\eta|^2}} = \begin{pmatrix}\ham_{|\eta|^2} & 0 & 0 \\ 0 & \ham_{|\eta|^2} & 0 \\ 0 & 0 & \nabla^{\pi_{\Sph^2}^*T^*\Sph^2}_{\ham_{|\eta|^2}} \end{pmatrix}.
\]
Moreover, $D_t$ and $\nabla^{\pi^*T^*\cM}_{\ham_{|\eta|^2}}$ act in a canonical way on sections of $\pi^*T^*\cM$ due to the stationary and spherically symmetric nature of $\cM$; thus, the 0-th order part $S$ of $S_\sub(\Box_g)$ is well-defined.

As in \cite[\S9.1]{HintzVasyKdSStability}, it is convenient to perform a further `microlocal' decomposition of $\pi^*T^*\cM\to T^*\cM$ over the trapped set $\Gamma$. Thus, for $\eta\in T^*\Sph^2\setminus o$, we define $\eta^\perp:=\slstar\eta$, further
\[
  \wh\eta := \wt\sigma^{-1}\eta,\quad \wh\eta^\perp := \wt\sigma^{-1}\eta^\perp,
\]
and write
\begin{equation}
\label{EqESGTrapMlSplitSph}
\begin{gathered}
  (\pi_{\Sph^2}^*T^*\Sph^2)_\eta = \la \wh\eta \ra \oplus \la\wh\eta^\perp\ra, \quad (\pi_{\Sph^2}^*\Lambda^2 T^*\Sph^2)_\eta = \la\wh\eta\wedge\wh\eta^\perp\ra, \\
  (\pi_{\Sph^2}^*S^2T^*\Sph^2)_\eta = \la \wh\eta\wh\eta \ra \oplus \la 2\wh\eta\wh\eta^\perp\ra \oplus \la\wh\eta^\perp\wh\eta^\perp\ra,
\end{gathered}
\end{equation}
inducing the decompositions
\begin{equation}
\label{EqESGTrapMlSplit}
\begin{split}
  (\pi^* T^*\cM)_\zeta &= \la e^0\ra \oplus \Bigl(\la e^1\ra \oplus \bigl(\la \wh\eta \ra \oplus \la\wh\eta^\perp\ra\bigr)\Bigr), \\
  (\pi^*\Lambda^2 T^*\cM)_\zeta &= \Bigl(\la e^0\wedge e^1\ra \oplus\bigl(\la e^0\wedge\wh\eta \ra \oplus\la e^0\wedge\wh\eta^\perp\ra\bigr)\Bigr) \\
    &\hspace{6em} \oplus\bigl(\la e^1\wedge\wh\eta\ra \oplus \la e^1\wedge\wh\eta^\perp\ra \bigr) \oplus \la\wh\eta\wedge\wh\eta^\perp\ra, \\
  (\pi^*S^2 T^*\cM)_\zeta &= \Bigl(\la e^0e^0\ra \oplus \bigl(\la 2e^0e^1\ra \oplus \bigl( \la 2e^0\wh\eta\ra \oplus \la 2e^0\wh\eta^\perp\ra \bigr)\bigr)\Bigr) \\
    &\hspace{6em} \oplus \Bigl(\la e^1e^1\ra \oplus \bigl(\la 2e^1\wh\eta\ra \oplus \la 2e^1\wh\eta^\perp\ra \bigr)\Bigr) \\
    &\hspace{6em} \oplus \bigl(\la\wh\eta\wh\eta\ra \oplus \la 2\wh\eta\wh\eta^\perp\ra \oplus \la\wh\eta^\perp\wh\eta^\perp\ra \bigr),
\end{split}
\end{equation}
into trivial rank $1$ bundles over $T^*\cM$ near $\Gamma$, where the placing of the parentheses reflects the successive refinements \eqref{EqESGCompBundle1}, \eqref{EqESGCompBundle2} and \eqref{EqESGTrapMlSplitSph} of the various bundles. In terms of \eqref{EqESGTrapMlSplitSph}, writing $\slg=|\eta|^{-2}\eta\eta + |\eta^\perp|^{-2}\eta^\perp\eta^\perp$, and using \eqref{EqESGTrapGamma}, we have
\[
  r^2\slg=\begin{pmatrix}1\\0\\1\end{pmatrix},\quad r^{-2}\sltr=\begin{pmatrix}1&0&1\end{pmatrix},\quad
  \eta=\begin{pmatrix}\wt\sigma\\0\end{pmatrix},\quad r^{-2}\iota_\eta=\begin{pmatrix}\wt\sigma & 0 \end{pmatrix};
\]
further, acting on 1-forms,
\[
  \eta\otimes_s(\cdot)=\begin{pmatrix} \wt\sigma & 0 \\ 0 & \frac{1}{2}\wt\sigma \\ 0 & 0 \end{pmatrix},
  \quad
  \eta\wedge(\cdot)=\begin{pmatrix} 0 & \wt\sigma \end{pmatrix},
\]
while acting on symmetric 2-tensors,
\[
  r^{-2}\iota_\eta = \begin{pmatrix} \wt\sigma & 0 & 0 \\ 0 & \wt\sigma & 0 \end{pmatrix}.
\]
At $\Gamma$, we have $\zeta=-\wt\sigma e^0+\omega=(-\wt\sigma,0,\wt\sigma,0)^t$.

We note that $S$ in \eqref{EqESGTrapSubpr} in the splitting \eqref{EqESGTrapMlSplit} is equal to
\[
  S=i r^{-1}\sigma\begin{pmatrix}0&-2&0&0\\-2&0&-2&0\\0&2&0&0\\0&0&0&0\end{pmatrix}.
\]
Putting together the computations from \S\ref{SubsecESGComp}, we now compute the 0-th order part $S_L$ of $S_\sub(L)$ at $\Gamma$, with $L$ given in \eqref{EqESGL}. Acting on the bundle $S^2T^*\cM\oplus T^*\cM$, we can write $S_L$ as a $2\times 2$ block matrix,
\[
  (i r^{-1}\sigma)^{-1} S_L = \begin{pmatrix} S_L^{11} & S_L^{12} \\ S_L^{21} & S_L^{22} \end{pmatrix}.
\]
The $(1,1)$ block, corresponding to the first two terms in the first component of $L$, was already computed in \cite[\S9.1]{HintzVasyKdSStability};\footnote{There is an inconsequential typo in the matrix multiplying $\gamma_2''$ in \cite{HintzVasyKdSStability}; the matrix given here is the correct one.} it equals
\begin{align*}
  S_L^{11}&
  =\openbigpmatrix{2pt}
     0  & -4 & 0  & 0  & 0  & 0  & 0  & 0  & 0  & 0 \\
     -2 & 0  & -2 & 0  & -2 & 0  & 0  & 0  & 0  & 0 \\
     0  & 2  & 0  & 0  & 0  & -2 & 0  & 0  & 0  & 0 \\
     0  & 0  & 0  & 0  & 0  & 0  & -2 & 0  & 0  & 0 \\
     0  & -4 & 0  & 0  & 0  & -4 & 0  & 0  & 0  & 0 \\
     0  & 0  & -2 & 0  & 2  & 0  & 0  & -2 & 0  & 0 \\
     0  & 0  & 0  & -2 & 0  & 0  & 0  & 0  & -2 & 0 \\
     0  & 0  & 0  & 0  & 0  & 4  & 0  & 0  & 0  & 0 \\
     0  & 0  & 0  & 0  & 0  & 0  & 2  & 0  & 0  & 0 \\
     0  & 0  & 0  & 0  & 0  & 0  & 0  & 0  & 0  & 0
   \closebigpmatrix \\
  &+\gamma_1'
   \openbigpmatrix{2.3pt}
     2 & 0 & 4 & 0 & 2  & 0 & 0 & 2 & 0 & 2  \\
     0 & 2 & 0 & 0 & 0  & 2 & 0 & 0 & 0 & 0  \\
     1 & 0 & 2 & 0 & -1 & 0 & 0 & 1 & 0 & -1 \\
     0 & 0 & 0 & 2 & 0  & 0 & 0 & 0 & 2 & 0  \\
     0 & 0 & 0 & 0 & 0  & 0 & 0 & 0 & 0 & 0  \\
     0 & 0 & 0 & 0 & 0  & 0 & 0 & 0 & 0 & 0  \\
     0 & 0 & 0 & 0 & 0  & 0 & 0 & 0 & 0 & 0  \\
     0 & 0 & 0 & 0 & 0  & 0 & 0 & 0 & 0 & 0  \\
     0 & 0 & 0 & 0 & 0  & 0 & 0 & 0 & 0 & 0  \\
     0 & 0 & 0 & 0 & 0  & 0 & 0 & 0 & 0 & 0
   \closebigpmatrix
  -\gamma_1''
   \openbigpmatrix{2.3pt}
     0 & 0 & 0 & 0 & 0  & 0 & 0 & 0 & 0 & 0  \\
     1 & 0 & 2 & 0 & 1  & 0 & 0 & 1 & 0 & 1  \\
     0 & 0 & 0 & 0 & 0  & 0 & 0 & 0 & 0 & 0  \\
     0 & 0 & 0 & 0 & 0  & 0 & 0 & 0 & 0 & 0  \\
     0 & 4 & 0 & 0 & 0  & 4 & 0 & 0 & 0 & 0  \\
     1 & 0 & 2 & 0 & -1 & 0 & 0 & 1 & 0 & -1 \\
     0 & 0 & 0 & 2 & 0  & 0 & 0 & 0 & 2 & 0  \\
     0 & 0 & 0 & 0 & 0  & 0 & 0 & 0 & 0 & 0  \\
     0 & 0 & 0 & 0 & 0  & 0 & 0 & 0 & 0 & 0  \\
     0 & 0 & 0 & 0 & 0  & 0 & 0 & 0 & 0 & 0
   \closebigpmatrix \\
 &+\gamma_2'
   \openbigpmatrix{3pt}
     -1 & 0 & -2 & 0 & -1 & 0 & 0 & -1 & 0 & -1 \\
     0  & 0 & 0  & 0 & 0  & 0 & 0 & 0  & 0 & 0  \\
     0  & 0 & 0  & 0 & 0  & 0 & 0 & 0  & 0 & 0  \\
     0  & 0 & 0  & 0 & 0  & 0 & 0 & 0  & 0 & 0  \\
     1  & 0 & 2  & 0 & 1  & 0 & 0 & 1  & 0 & 1  \\
     0  & 0 & 0  & 0 & 0  & 0 & 0 & 0  & 0 & 0  \\
     0  & 0 & 0  & 0 & 0  & 0 & 0 & 0  & 0 & 0  \\
     1  & 0 & 2  & 0 & 1  & 0 & 0 & 1  & 0 & 1  \\
     0  & 0 & 0  & 0 & 0  & 0 & 0 & 0  & 0 & 0  \\
     1  & 0 & 2  & 0 & 1  & 0 & 0 & 1  & 0 & 1 
   \closebigpmatrix
 +\gamma_2''
   \openbigpmatrix{2.1pt}
     0 & -2 & 0 & 0 & 0 & -2 & 0 & 0 & 0 & 0 \\
     0 & 0  & 0 & 0 & 0 & 0  & 0 & 0 & 0 & 0 \\
     0 & 0  & 0 & 0 & 0 & 0  & 0 & 0 & 0 & 0 \\
     0 & 0  & 0 & 0 & 0 & 0  & 0 & 0 & 0 & 0 \\
     0 & 2  & 0 & 0 & 0 & 2  & 0 & 0 & 0 & 0 \\
     0 & 0  & 0 & 0 & 0 & 0  & 0 & 0 & 0 & 0 \\
     0 & 0  & 0 & 0 & 0 & 0  & 0 & 0 & 0 & 0 \\
     0 & 2  & 0 & 0 & 0 & 2  & 0 & 0 & 0 & 0 \\
     0 & 0  & 0 & 0 & 0 & 0  & 0 & 0 & 0 & 0 \\
     0 & 2  & 0 & 0 & 0 & 2  & 0 & 0 & 0 & 0
   \closebigpmatrix,
\end{align*}
where $\gamma_1'=\frac{1}{2}\alpha^{-2}r\gamma_1$, $\gamma_2'=\alpha^{-2}r\gamma_2$, and $\gamma_1''=\frac{1}{2}r T'\gamma_1$, $\gamma_2''=r T'\gamma_2$. The latter four terms here use \eqref{EqESGCompDelgGg}; in terms of \eqref{EqESGTrapMlSplit}, we have
\begin{equation}
\label{EqESGTrapDelgGgMl}
  \sigma_1(2\delta_g G_g)
    =i\wt\sigma
     \begin{pmatrix}
       1 & 0 & 2 & 0 & 1  & 0 & 0 & 1 & 0 & 1 \\
       0 & 2 & 0 & 0 & 0  & 2 & 0 & 0 & 0 & 0 \\
       1 & 0 & 2 & 0 & -1 & 0 & 0 & 1 & 0 & -1 \\
       0 & 0 & 0 & 2 & 0  & 0 & 0 & 0 & 2 & 0
     \end{pmatrix},
\end{equation}
and the above form of $S_L^{11}$ then follows easily from \eqref{EqESGCompDiff1} and \eqref{EqESGCompDiff2}.

Next, in the splitting \eqref{EqESGTrapMlSplit} of $\pi^*T^*\cM$, we find $\delta_g=i(\wt\sigma,0,\wt\sigma,0)$ and $(\td-d)=\gamma_3(\alpha^{-1},\alpha T',0,0)^t$, so we compute
\[
  S_L^{22} = \begin{pmatrix} 0 & -2 & 0 & 0 \\ -2 & 0 & -2 & 0 \\ 0 & 2 & 0 & 0 \\ 0 & 0 & 0 & 0 \end{pmatrix}
    + \gamma_3'
      \begin{pmatrix}
        1 & 0 & 1 & 0 \\
        0 & 0 & 0 & 0 \\
        0 & 0 & 0 & 0 \\
        0 & 0 & 0 & 0
      \end{pmatrix}
    + \gamma_3''
      \begin{pmatrix}
        0 & 0 & 0 & 0 \\
        1 & 0 & 1 & 0 \\
        0 & 0 & 0 & 0 \\
        0 & 0 & 0 & 0
      \end{pmatrix}
\]
for $\gamma_3'=\alpha^{-2}r\gamma_3$, $\gamma_3''=r T'\gamma_3$. For $S_L^{21}$, which is the principal symbol of $L_{21}$ at $\Gamma$ rescaled by $(i r^{-1}\sigma)^{-1}$, we find, using \eqref{EqESGCompTrg12}, \eqref{EqESGCompTrg24Trg35}, \eqref{EqESGCompIotaRho1}, \eqref{EqESGCompIotaRho2}, \eqref{EqESGTrapDelgGgMl}, and $\iota_\rho\omega=0$ at $\Gamma$:
\[
  S_L^{21}
  = q'
    \begin{pmatrix}
      0  & 0 & 0  & 0 & 0 & -2 & 0 & 0  & 0 & 0  \\
      -1 & 0 & -2 & 0 & 1 & 0  & 0 & -1 & 0 & -1 \\
      0  & 0 & 0  & 0 & 0 & 2  & 0 & 0  & 0 & 0  \\
      0  & 0 & 0  & 0 & 0 & 0  & 2 & 0  & 0 & 0
    \end{pmatrix},
\]
where $q'=\frac{1}{2}\alpha^{-1}r q$. Finally, $S_L^{12}$, the principal symbol of $L_{12}$ divided by $i r^{-1}\sigma$, can be computed at $\Gamma$ by using \eqref{EqESGCompDDel}, \eqref{EqESGCompTrg24}, \eqref{EqESGCompOmega}, \eqref{EqESGCompIotaRho1}, and \eqref{EqESGCompIotaRho2}; one finds
\[
  S_L^{12}
  = 4q'
    \begin{pmatrix}
      0  & 1  & 0  & 0 \\
      0  & 0  & 0  & 0 \\
      0  & -1 & 0  & 0 \\
      0  & 0  & 0  & 0 \\
      0  & -1 & 0  & 0 \\
      -1 & 0  & -1 & 0 \\
      0  & 0  & 0  & -1 \\
      0  & 1  & 0  & 0 \\
      0  & 0  & 0  & 0 \\
      0  & 1  & 0  & 0
    \end{pmatrix}.
\]
For $Q_{e,0}=0$, hence $q=0$, we have $S_L^{12}=0$, $S_L^{21}=0$, so $S_L$ is block-diagonal; furthermore, one easily checks that the eigenvalues of $S_L^{22}$ are $0$ and $\gamma_3'$. In \cite[\S9.1]{HintzVasyKdSStability}, it was checked that the eigenvalues of $S_L^{11}$ are either $0$ or positive multiples of $\gamma_1'$ and $\gamma_2'$. Thus, for $\gamma_j\geq 0$, the condition \eqref{EqLinAnaTrap} on the skew-adjoint part of the subprincipal operator at the trapped set is met. Since the condition is open, it immediately follows that it holds for small charges $Q_{e,0}$ as well. In fact, the smallness assumption on $Q_{e,0}$ is not necessary, as we proceed to show:

\begin{lemma}
\label{LemmaESGTrap}
  The eigenvalues of the $14\times 14$ matrix $(ir^{-1}\sigma)^{-1}S_L$ are
  \[
    0,\ 2\gamma_1',\ 4\gamma_1',\ 2\gamma_2',\ \gamma_3',\ -2 i q'\sqrt{2},\ 2 i q'\sqrt{2};
  \]
  in particular, for $\gamma_j\geq 0$, $j=1,2,3$, they all have real part $\geq 0$.
\end{lemma}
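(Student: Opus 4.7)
The plan is to block-diagonalize the $14\times 14$ matrix $M:=(ir^{-1}\sigma)^{-1}S_L$ by exploiting the $\Z_2$ symmetry generated by the reflection $\wh\eta^\perp\mapsto -\wh\eta^\perp$. Each basis vector in the splittings \eqref{EqESGTrapMlSplit} carries a definite parity (the number of $\wh\eta^\perp$ factors modulo $2$), and a direct inspection of the entries of the four blocks $S_L^{11}, S_L^{12}, S_L^{21}, S_L^{22}$ displayed above shows that every non-zero entry preserves this parity. This decomposes $M$ into a $10$-dimensional ``scalar'' (even) sector and a $4$-dimensional ``vector'' (odd) sector; the latter is spanned by the four basis vectors containing an odd power of $\wh\eta^\perp$, namely $2e^0\wh\eta^\perp$, $2e^1\wh\eta^\perp$, $2\wh\eta\wh\eta^\perp$, and $\wh\eta^\perp$.

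For the vector sector I would write the restriction of $M$ explicitly from the block entries collected in \S\ref{SubsecESGComp} and \S\ref{SubsecESGTrap}. Abbreviating the four basis vectors as $F_4, E_4, E_7, E_9$ respectively, the combination $E_4-E_9$ lies in $\ker M$, and in the basis $\{F_4, E_4+E_9, E_7, E_4-E_9\}$ the matrix takes block upper-triangular form with a $3\times 3$ upper-left block. A direct expansion of the determinant yields the characteristic polynomial
\[
  -\lambda(\lambda-2\gamma_1')(\lambda^2+8q'^2),
\]
accounting for the eigenvalues $0, 2\gamma_1', \pm 2iq'\sqrt{2}$.

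For the scalar sector, the first step is to observe that $e^0-\wh\eta$ is annihilated by $M$: the $S_L^{22}$-images of $e^0$ and $\wh\eta$ coincide, both equalling $\gamma_3' e^0+(-2+\gamma_3'')e^1$, while their $S_L^{12}$-images both equal $-4q'(2e^1\wh\eta)$. This reduces the problem to a $9\times 9$ matrix. I would then note that under the simultaneous sign change $q'\mapsto -q'$ combined with a sign flip of the $T^*\cM$-components, $M$ is preserved, so its scalar-sector characteristic polynomial depends on $q'$ only through $q'^2$. The crucial claim is that all $q'^2$-contributions actually cancel, so that the characteristic polynomial coincides with the one at $q'=0$. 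To see this cancellation I would exploit the fact that the image of $S_L^{12}$ restricted to the even sector is contained in $\operatorname{span}\{E_1, E_3, E_5, E_6, E_8, E_{10}\}$ (missing $E_2$), while $E_2$ lies in $\ker S_L^{21}|_{\rm even}$, so the $q'$-coupling threads through a subspace with restricted overlap with itself under $S_L^{11}$, forcing pairwise cancellation of the $q'^2$-minors. Once this is established, setting $q'=0$ block-diagonalizes the scalar sector into the $7$-dimensional even restriction of $S_L^{11}$, whose eigenvalues $\{0, 2\gamma_1', 4\gamma_1', 2\gamma_2'\}$ were identified in \cite[\S9.1]{HintzVasyKdSStability}, and the $2$-dimensional restriction of $S_L^{22}$ to $\operatorname{span}\{e^0+\wh\eta, e^1\}$, whose eigenvalues $\{0, \gamma_3'\}$ are immediate from the explicit form of $S_L^{22}$.

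The main obstacle is verifying the cancellation of the $q'^2$-terms in the $9\times 9$ scalar-sector determinant. Conceptually, this cancellation is forced by the fact that the only $q'$-dependent eigenvalues of the full matrix, namely $\pm 2iq'\sqrt 2$, arise from the vector sector; but producing a clean, self-contained proof requires either a careful identification of further invariant (or nearly invariant) subspaces in the scalar sector, or, as a fallback, a direct symbolic verification of the vanishing of the $q'^2$-coefficient of the characteristic polynomial. I expect the invariant-subspace route to succeed by exploiting the $\gamma_2'$-eigenspace structure already isolated in \cite[\S9.1]{HintzVasyKdSStability} together with the specific null structure of the $S_L^{12}$--$S_L^{21}$ coupling noted above.
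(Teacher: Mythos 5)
Your odd-parity reduction agrees with the paper's: $S_L$ preserves $V_1=\la 2e^0\wh\eta^\perp,\,2e^1\wh\eta^\perp,\,2\wh\eta\wh\eta^\perp,\,\wh\eta^\perp\ra$, the kernel vector there is $2e^0\wh\eta^\perp-2\wh\eta\wh\eta^\perp$ (note that $2e^1\wh\eta^\perp-\wh\eta^\perp$ is \emph{not} annihilated, so check your labels), and the induced $3\times 3$ map on the quotient by the kernel has characteristic polynomial $(2\gamma_1'-\lambda)(\lambda^2+8 q'^2)$. That half is fine.

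The even-parity sector is where the argument breaks. The central claim --- that the $q'^2$-contributions to the even-sector characteristic polynomial cancel, so one may set $q'=0$ and decouple $S_L^{11}|_{\mathrm{even}}$ from $S_L^{22}|_{\mathrm{even}}$ --- is false, and the heuristic offered for it (``the only $q'$-dependent eigenvalues come from the vector sector'') is circular and, as it turns out, wrong: the eigenvalues $\pm 2 i q'\sqrt 2$ occur in \emph{both} sectors, each with multiplicity one. Concretely, writing $f_1,\dots,f_{10},f_{11},\dots,f_{14}$ for the canonical basis of \eqref{EqESGTrapMlSplit}, the even sector contains the invariant subspace $V_2=\la f_1-f_8,\ f_2-f_6,\ f_1-f_3+f_8,\ f_{11}-f_{13}\ra$ on which $M:=(i r^{-1}\sigma)^{-1}S_L$ is nilpotent (your $e^0-\wh\eta$ is one of its generators), and on the $6$-dimensional quotient, in the basis $[f_1],[f_{11}],[f_2],[f_5],[f_{10}],[f_{12}]$, the induced map is block lower triangular with lower-right $3\times 3$ block
\[
  \begin{pmatrix} \gamma_2' & \gamma_2' & -4 q' \\ \gamma_2' & \gamma_2' & 4 q' \\ q' & -q' & 0 \end{pmatrix},
\]
whose determinant $16 q'^2\gamma_2'$ already shows that the $q'$-dependence does not cancel; its eigenvalues are $2\gamma_2'$ and $\pm 2 i q'\sqrt 2$. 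Your fallback --- symbolically checking that the $q'^2$-coefficient of the even-sector characteristic polynomial vanishes --- would therefore fail, and the structural facts you cite ($\ran S_L^{12}|_{\mathrm{even}}$ missing $f_2$, and $f_2\in\ker S_L^{21}$) are true but cannot force a cancellation that does not occur. You land on the correct list of \emph{distinct} eigenvalues only because $\pm 2 i q'\sqrt 2$ happen to appear in the vector sector as well; to actually prove the lemma for the even sector you must carry the $q'$-coupling through, e.g.\ via the quotient computation just sketched.
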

\begin{proof}
  We work in the fiber $V=(\pi^*(S^2T^*\cM\oplus T^*\cM))_\zeta$ over a single point $\zeta\in\Gamma$. The strategy is to find invariant subspaces of $V$ by which we quotient out to reduce the task to a simpler, less-dimensional one. Denote the canonical sections of $\pi^*S^2T^*\cM$ in the rank $1$ decomposition \eqref{EqESGTrapMlSplit} by $f_1,\ldots,f_{10}$, and the canonical sections of $\pi^*T^*\cM$ by $f_{11},\ldots,f_{14}$. Then $S_L$ preserves the subspace $V_1=\la f_4,f_7,f_9,f_{14}\ra$, with $K_1=\la f_4-f_9\ra\subset\ker(S_L|_{V_1})$; in the basis $[f_4],[f_7],[f_{14}]$ of $V_1/K_1$, the map induced by $S_L$ on $V_1/K_1$ is given by the matrix
  \[
    \begin{pmatrix}
      2\gamma_1'     & 0   & 0 \\
      -2-2\gamma_1'' & 0   & -4q' \\
      0              & 2q' & 0
    \end{pmatrix}
  \]
  with eigenvalues $2\gamma_1'$, $\pm 2 i q'\sqrt{2}$; thus the eigenvalues of $S_L|_{V_1}$ are these, together with $0$. We next note that $S_L$ preserves the subspace $V_2=\la f_1-f_8,f_2-f_6,f_1-f_3+f_8,f_{11}-f_{13}\ra$, and indeed $S_L|_{V_2}$ is lower triangular and nilpotent, hence its only eigenvalue is $0$. On the 6-dimensional quotient $V/(V_1\oplus V_2)$, for which we take the ordered basis $[f_1],[f_{11}],[f_2],[f_5],[f_{10}],[f_{12}]$, we have $[f_3]=2[f_1]$, $[f_6]=[f_2]$, $[f_8]=[f_1]$ and $[f_{13}]=[f_{11}]$, so the map induced by $L_V$ has the matrix
  \[
    \begin{pmatrix}
      4\gamma_1'     & 0             & 0                         & 0         & 0         & 0 \\
      0              & \gamma_3'     & 0                         & 0         & 0         & 0 \\
      -2+2\gamma_1'' & -4q'          & 2\gamma_1'                & 0         & 0         & 0 \\
      \gamma_2'      & 0             & -4+4\gamma_1''+\gamma_2'' & \gamma_2' & \gamma_2' & -4q'+\gamma_2' \\
      \gamma_2'      & 0             & \gamma_2''                & \gamma_2' & \gamma_2' & 4q'+\gamma_2' \\
      -q             & -2+\gamma_3'' & 0                         & q         & -q        & 0
    \end{pmatrix}
  \]
  Its eigenvalues are $4\gamma_1'$, $\gamma_3'$ and $2\gamma_1'$, together with the eigenvalues of the bottom right $3\times 3$ block, which are $2\gamma_2'$, $\pm 2 i q'\sqrt{2}$, finishing the proof.
\end{proof}

This establishes condition~\eqref{EqLinAnaTrap}, and thus gives the desired high energy estimates.

\subsection{Threshold regularity}
\label{SubsecESGRad}

As explained in \S\ref{SecLinAna}, the amount of regularity above which solutions of $L u=0$ have a resonance expansion up to an exponentially decaying remainder depends on the skew-adjoint part of $L$ at the radial set $\cR$ with respect to a positive definite inner product; the particular choice is informed by the structure of the subprincipal operator $S_\sub(L)$ at $\cR$.

Near the radial set $\cR_\pm=\Nb^*\{r=r_\pm\}\setminus o$ at the event ($-$) or cosmological ($+$) horizon, we use the coordinates \eqref{EqKNdS0NullCoord} and the resulting form \eqref{EqKNdS0MetricNull} of the metric; writing covectors as
\[
  -\sigma\,dt_0 + \xi\,dr + \eta,\quad \eta\in T^*\Sph^2,
\]
the dual metric function is then given by $G=\mp 2\sigma\xi-\mu\xi^2-r^{-2}\slG$, and at $\cR_\pm$, one finds
\begin{equation}
\label{EqESGRadHam}
  \ham_G=\pm 2\xi\pa_{t_0} \mp 2\kappa_\pm\xi^2\pa_\xi,
\end{equation}
with $\kappa_\pm$ the surface gravities, defined in equation~\eqref{EqMS2ScSurfGrav}.

The partial trivializations \eqref{EqESGCompBundle1} and \eqref{EqESGCompBundle2} are not defined at $r=r_\pm$; hence, following \cite[\S6.3]{HintzVasyKdSStability}, we compute the transition function to a smooth trivialization of $T^*M^\circ$ and $S^2 T^*M^\circ$ as follows: writing a 1-form in the static region $\cM$ as
\[
  u = u_N\,e^0 + u^T_N\,e^1 + u^T_T = \wt u_N\,dt_0 + \wt u^T_N\,dr+\wt u^T_T,
\]
with $u^T_T$ and $\wt u^T_T$ 1-forms on $\Sph^2$, we have
\begin{equation}
\label{EqESGRadTrans1}
  \begin{pmatrix} u_N \\ u^T_N \\ u^T_T \end{pmatrix} = \sC^{[1]}_\pm \begin{pmatrix} \wt u_N \\ \wt u^T_N \\ \wt u^T_T \end{pmatrix},
  \quad
  \sC^{[1]}_\pm = \begin{pmatrix} \alpha^{-1} & 0 & 0 \\ \mp\alpha^{-1} & \alpha & 0 \\ 0 & 0 & 1 \end{pmatrix}.
\end{equation}
The bundle splitting
\begin{equation}
\label{EqESGRadBundle1}
  T^*M^\circ = \la dt_0 \ra \oplus \la dr \ra \oplus T^*\Sph^2
\end{equation}
\emph{is} smooth at $r=r_\pm$. Consider the induced bundle splitting
\begin{equation}
\label{EqESGRadBundle2}
  S^2T^*M^\circ = \la dt_0^2 \ra \oplus \la 2dt_0\,dr\ra \oplus (2 dt_0\cdot T^*\Sph^2) \oplus \la dr^2 \ra \oplus (2 dr\cdot T^*\Sph^2) \oplus S^2T^*\Sph^2.
\end{equation}
Writing a section $u$ of $S^2T^*M^\circ$ as
\begin{align*}
  u &= u_{NN}\,e^0e^0 + 2 u_{NTN}\,e^0e^1 + 2e^0\cdot u_{NTT} + u^T_{NN}\,e^1e^1 + 2e^1\cdot u^T_{NT} + u^T_{TT} \\
    &= \wt u_{NN}\,dt_0^2 + 2 \wt u_{NTN}\,dt_0\,dr + 2dt_0\cdot \wt u_{NTT} + \wt u^T_{NN}\,dr^2 + 2dr\cdot \wt u^T_{NT} + \wt u^T_{TT},
\end{align*}
the transition matrix is defined by
\begin{equation}
\label{EqESGRadTrans2}
  \begin{pmatrix} u_{NN} \\ u_{NTN} \\ u_{NTT} \\ u^T_{NN} \\ u^T_{NT} \\ u^T_{TT} \end{pmatrix} = \sC_\pm^{(2)} \begin{pmatrix} \wt u_{NN} \\ \wt u_{NTN} \\ \wt u_{NTT} \\ \wt u^T_{NN} \\ \wt u^T_{NT} \\ \wt u^T_{TT} \end{pmatrix},
  \quad
  \sC_\pm^{(2)} = \openbigpmatrix{2pt} \alpha^{-2} & 0 & 0 & 0 & 0 & 0 \\ \mp\alpha^{-2} & 1 & 0 & 0 & 0 & 0 \\ 0 & 0 & \alpha^{-1} & 0 & 0 & 0 \\ \alpha^{-2} & \mp 2 & 0 & \alpha^2 & 0 & 0 \\ 0 & 0 & \mp\alpha^{-1} & 0 & \alpha & 0 \\ 0 & 0 & 0 & 0 & 0 & 1 \closebigpmatrix.
\end{equation}
We further recall from \cite[equations~(6.15) and (6.18)]{HintzVasyKdSStability} the forms of $\nabla^{\pi^*T^*M^\circ}_{\ham_G}$ and $\nabla^{\pi^*S^2T^*M^\circ}_{\ham_G}$ at $\cR_\pm$; thus
\begin{equation}
\label{EqESGRadPullback}
  -i\nabla^{\pi^*(S^2T^*M^\circ\oplus T^*M^\circ)}_{\ham_G} = \pm 2\xi D_{t_0} \mp 2\kappa_\pm\xi D_\xi \xi \mp 2 i\kappa_\pm \xi \diag(0,2,1,4,3,2,0,2,1),
\end{equation}
valid at $\cR_\pm$, with $\diag(a_1,\ldots,a_n)$ denoting the $n\times n$ diagonal matrix with entries $a_1,\ldots,a_n$. (The result \eqref{EqESGRadPullback} follows from \eqref{EqESGCompConn1}, \eqref{EqESGCompConn2}, \eqref{EqESGRadHam}, \eqref{EqESGRadTrans1} and \eqref{EqESGRadTrans2}.) The first two terms here are formally self-adjoint with respect to the symplectic volume form on $T^*M^\circ$.

In order to compute $S_\sub(L)$ at $\cR_\pm$, we in addition need to compute the principal symbol of the sum of the first order terms in \eqref{EqESGL} at $\cR_\pm$. This computation is easily accomplished by first calculating the symbols at $N^*\{r=r_0\}$, for $r_0\in(r_-,r_+)$ close to $r_\pm$, in the static coordinate system and bundle splittings \eqref{EqESGCompBundle1} and \eqref{EqESGCompBundle2}, conjugating by $\sC_\pm^{(2)}\oplus\sC_\pm^{[1]}$ and restricting to $\mu=0$, thereby computing the limit of the principal symbol as $r_0\to r_\pm$, which is no longer a singular limit in the smooth bundle splittings \eqref{EqESGRadBundle1} and \eqref{EqESGRadBundle2}, since $L$ has smooth coefficients across the horizons. (This is merely an invariant way of phrasing the strategy in \cite[\S9.2]{HintzVasyKdSStability}.)

We perform a decomposition
\begin{equation}
\label{EqESGRadBundle3}
  S^2 T^*\Sph^2 = \la r^2\slg \ra \oplus \slg^\perp,
\end{equation}
so
\[
  r^2\slg = \begin{pmatrix} 1 \\ 0 \end{pmatrix},\quad r^{-2}\sltr = \begin{pmatrix} 1 & 0 \end{pmatrix}.
\]
At $N^*\{r=r_0\}$, one then finds using \eqref{EqESGCompDelgGg} and $\wt\sigma=0$, so $\omega=\wt\xi\,e^1=\alpha^{-1}\xi\,dr$, see \eqref{EqESGTrapCovecRel}, that
\begin{equation}
\label{EqESGRadDelgGg}
  \sigma_1(2\delta_g G_g)
    = i\alpha\xi
      \begin{pmatrix}
        0 & 2 & 0 & 0 & 0 & 0  & 0 \\
        1 & 0 & 0 & 1 & 0 & -1 & 0 \\
        0 & 0 & 0 & 0 & 2 & 0  & 0
      \end{pmatrix}
\end{equation}
in the decomposition \eqref{EqESGCompBundle1} and \eqref{EqESGCompBundle2} of $T^*\cM$ and $S^2 T^*\cM$, with the summand $S^2 T^*\Sph^2$ decomposed further as in \eqref{EqESGRadBundle3}. Using this together with \eqref{EqESGCompDiff1} and \eqref{EqESGCompDiff2}, conjugating by $(\sC_\pm^{(2)})^{-1}$, writing $\mu T'=\pm(1+\mu c_\pm)$ as in \eqref{EqKNdS0CoordT} and evaluating at $\mu=0$ recovers the result of \cite[\S9.2]{HintzVasyKdSStability},
\begin{align}
\label{EqESGRadL11}
  \sigma_1\bigl(2&(\tdel^*-\delta_g^*)\delta_g G_g\bigr) \\
    & = 
  \mp i\gamma_1\xi
    \openbigpmatrix{2pt}
      2         & 0 & 0         & 0 & 0 & 0              & 0 \\
      \mp c_\pm & 0 & 0         & 0 & 0 & \pm\frac{1}{2} & 0 \\
      0         & 0 & 1         & 0 & 0 & 0              & 0 \\
      0         & 0 & 0         & 0 & 0 & -c_\pm         & 0 \\
      0         & 0 & \mp c_\pm & 0 & 0 & 0              & 0 \\
      0         & 0 & 0         & 0 & 0 & 0              & 0 \\
      0         & 0 & 0         & 0 & 0 & 0              & 0
    \closebigpmatrix
  \mp i\gamma_2\xi
    \openbigpmatrix{2pt}
      0           & 0 & 0 & 0 & 0 & 0     & 0 \\
      \pm 2 c_\pm & 0 & 0 & 0 & 0 & \mp 1 & 0 \\
      0           & 0 & 0 & 0 & 0 & 0     & 0 \\
      0           & 0 & 0 & 0 & 0 & 0     & 0 \\
      0           & 0 & 0 & 0 & 0 & 0     & 0 \\
      -2 c_\pm    & 0 & 0 & 0 & 0 & 1     & 0 \\
      0           & 0 & 0 & 0 & 0 & 0     & 0
    \closebigpmatrix \nonumber
\end{align}
at $\cR_\pm$. For the $(2,2)$ block of $L$, thus on 1-forms, we compute at $N^*\{r=r_0\}$
\[
  \sigma_1\bigl((\td-d)\delta_g\bigr) = i\gamma_3\xi\begin{pmatrix} 0 & 1 & 0 \\ 0 & -\mu T' & 0 \\ 0 & 0 & 0 \end{pmatrix}
\]
in the static bundle splitting; to translate this to the splitting \eqref{EqESGRadBundle1}, we conjugate by $(\sC_\pm^{[1]})^{-1}$ and evaluate at $\mu=0$, obtaining
\begin{equation}
\label{EqESGRadL22}
  \sigma_1\bigl((\td-d)\delta_g\bigr) = \mp i\gamma_3\xi \begin{pmatrix} 1 & 0 & 0 \\ \mp c_\pm & 0 & 0 \\ 0 & 0 & 0 \end{pmatrix}.
\end{equation}
For the $(2,1)$ block, we have at $N^*\{r=r_0\}$
\[
  \sigma_1(L_{21}) = \frac{1}{2}i\alpha q\xi
      \begin{pmatrix}
        1 & 0 & 0 & -1 & 0 & 1 & 0 \\
        0 & 0 & 0 & 0  & 0 & 0 & 0 \\
        0 & 0 & 2 & 0  & 0 & 0 & 0
      \end{pmatrix}
\]
in the static splitting, where we use \eqref{EqESGCompTrg12}, \eqref{EqESGCompTrg24Trg35} and \eqref{EqESGRadDelgGg}. Multiplying this from the left by $(\sC_\pm^{[1]})^{-1}$ and from the right by $\sC_\pm^{(2)}$, and evaluating at $\mu=0$ gives the expression at $\cR_\pm$,
\begin{equation}
\label{EqESGRadL21}
  \sigma_1(L_{21})
    =\mp i q \xi
     \begin{pmatrix}
       0 & 0     & 0     & 0 & 0 & 0            & 0 \\
       0 & \mp 1 & 0     & 0 & 0 & -\frac{1}{2} & 0 \\
       0 & 0     & \mp 1 & 0 & 0 & 0            & 0
     \end{pmatrix}.
\end{equation}
In the static splitting, the $(1,2)$ block finally has
\[
  \sigma_1(L_{12})
   = 2i\alpha q\xi
     \begin{pmatrix}
       1  & 0 & 0 \\
       0  & 0 & 0 \\
       0  & 0 & 1 \\
       -1 & 0 & 0 \\
       0  & 0 & 0 \\
       1  & 0 & 0 \\
       0  & 0 & 0
     \end{pmatrix},
\]
so in the smooth splitting (multiplying from the left by $(\sC_\pm^{(2)})^{-1}$ and from the right by $\sC_\pm^{[1]}$ and evaluating at $\mu=0$),
\begin{equation}
\label{EqESGRadL12}
  \sigma_1(L_{12})
    = \mp 2 i q \xi
      \begin{pmatrix}
        0     & 0 & 0 \\
        -1    & 0 & 0 \\
        0     & 0 & 0 \\
        0     & 0 & 0 \\
        0     & 0 & -1 \\
        \mp 1 & 0 & 0 \\
        0     & 0 & 0
      \end{pmatrix}
\end{equation}
at $\cR_\pm$.

We thus find that at $\cR_\pm$,
\[
  S_\sub(L) = \pm 2\xi D_{t_0} \mp 2\kappa_\pm \xi D_\xi \xi \mp i \xi S_L^\pm
\]
where in the decompositions \eqref{EqESGRadBundle1}, \eqref{EqESGRadBundle2} and \eqref{EqESGRadBundle3}, i.e.\ acting on the pullback of
\begin{align*}
  &\la dt_0^2 \ra \oplus \la 2 dt_0\,dr \ra \oplus (2 dt_0\cdot T^*\Sph^2) \oplus \la dr^2 \ra \oplus (2 dr\cdot T^*\Sph^2) \oplus \la r^2\slg \ra \oplus \slg^\perp \\
  & \qquad \oplus \la dt_0\ra \oplus \la dr \ra \oplus T^*\Sph^2
\end{align*}
under $\pi\colon T^*M^\circ\to M^\circ$, the bundle endomorphism $S_L^\pm$ is a block $10\times 10$ matrix, namely the sum of $2\kappa_\pm\diag(0,2,1,4,3,2,2,0,2,1)$, coming from \eqref{EqESGRadPullback} (with the additional splitting \eqref{EqESGRadBundle3}), and the $(7+3)\times(7+3)$ block matrix with $(1,1)$, $(2,2)$, $(2,1)$ and $(1,2)$ entries \eqref{EqESGRadL11}, \eqref{EqESGRadL22}, \eqref{EqESGRadL21}, \eqref{EqESGRadL12}, respectively.

\begin{lemma}
\label{LemmaESGRad}
  The distinct eigenvalues of $S_L^\pm$ are
  \[
    2\kappa_\pm,\ 4\kappa_\pm,\ 6\kappa_\pm,\ 8\kappa_\pm,\ 2\kappa_\pm+\gamma_1,\ 4\kappa_\pm+\gamma_2,\ 2\gamma_1,\ \gamma_3;
  \]
  in particular, for $\gamma_j\geq 0$, $j=1,2,3$, they all have real part $\geq 0$.
\end{lemma}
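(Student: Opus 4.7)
The plan is to mimic the strategy used in the proof of Lemma~\ref{LemmaESGTrap}: find a filtration of the $10$-dimensional fiber by $S_L^\pm$-invariant subspaces whose graded pieces are either $1$-dimensional or low-dimensional and (block-)triangular, and then read off the eigenvalues from their diagonals. Since the only off-diagonal couplings come from four explicit matrices (\eqref{EqESGRadL11}, \eqref{EqESGRadL22}, \eqref{EqESGRadL21}, \eqref{EqESGRadL12}) on top of the connection diagonal $2\kappa_\pm\diag(0,2,1,4,3,2,2,0,2,1)$, the filtration can be produced by direct inspection.

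Working in the basis given by the splittings \eqref{EqESGRadBundle1}, \eqref{EqESGRadBundle2}, and \eqref{EqESGRadBundle3}---call the basis vectors $e_1,\ldots,e_{10}$ in the ordering $dt_0^2$, $2\,dt_0\,dr$, $2\,dt_0\cdot T^*\Sph^2$, $dr^2$, $2\,dr\cdot T^*\Sph^2$, $r^2\slg$, $\slg^\perp$, $dt_0$, $dr$, $T^*\Sph^2$---I would first pick out the columns whose only nonzero entry lies on the diagonal. Columns $4$, $5$, $7$, $9$ satisfy this, so $\la e_4\ra$, $\la e_5\ra$, $\la e_7\ra$, $\la e_9\ra$ are $1$-dimensional invariant subspaces contributing the eigenvalues $8\kappa_\pm$, $6\kappa_\pm$, $4\kappa_\pm$, $4\kappa_\pm$. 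Column $10$ has its sole off-diagonal entry (from $L_{12}$) in row $5$, so $\la e_5, e_{10}\ra$ is invariant and the extra eigenvalue $2\kappa_\pm$ is read off its triangular diagonal.

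I would then pass to the $5$-dimensional quotient spanned by $[e_1], [e_2], [e_3], [e_6], [e_8]$. The rows $4,5,7,9,10$ that have been killed absorb all nontrivial entries of $L_{21}$ together with almost all entries of $L_{12}$; inspecting what survives in the quotient shows that $[e_2]$ and $[e_3]$ become eigenvectors with eigenvalues $4\kappa_\pm$ and $2\kappa_\pm+\gamma_1$, the latter picking up the $\gamma_1$-contribution from $M_1$ in \eqref{EqESGRadL11}. After modding these out, the residual $3\times 3$ map on $\{[e_1], [e_6], [e_8]\}$ is lower triangular in this order, its diagonal $(2\gamma_1,\,4\kappa_\pm+\gamma_2,\,\gamma_3)$ read off directly from the $(1,1)$-entry of $\gamma_1 M_1$, the $(6,6)$-entry of $\gamma_2 M_2$ combined with the connection, and the $(8,8)$-entry of \eqref{EqESGRadL22}.

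Collecting the diagonals produced at each stage yields the claimed list of eight distinct eigenvalues, and the sign statement then follows at once from $\kappa_\pm>0$ and $\gamma_j\geq 0$. No genuine mathematical difficulty is present; the main work is the careful bookkeeping required to verify at each step that the proposed subspaces are indeed preserved by the sum of the connection diagonal and the four perturbation matrices, which is a purely mechanical check against the explicit entries displayed in \eqref{EqESGRadL11}--\eqref{EqESGRadL12}.
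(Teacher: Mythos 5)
Your strategy is the same as the paper's: exhibit invariant subspaces on which $S_L^\pm$ is triangular and read the eigenvalues off the diagonal. The paper organizes this as a direct sum of three invariant subspaces (the line $\slg^\perp$; the $3$-dimensional space $(2 dt_0\cdot T^*\Sph^2)\oplus T^*\Sph^2\oplus(2 dr\cdot T^*\Sph^2)$ in that order; and the remaining $6$-dimensional space ordered as $dt_0^2,\,dt_0,\,r^2\slg,\,2 dt_0\,dr,\,dr,\,dr^2$), whereas you use a filtration with successive quotients; these are interchangeable bookkeeping devices, and your invariance claims all check out against \eqref{EqESGRadL11}--\eqref{EqESGRadL12}.

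One small correction to your last step: the residual $3\times 3$ map on $\{[e_1],[e_6],[e_8]\}$ is \emph{not} lower triangular in that order. The $L_{12}$ entry $\mp 2 q$ coupling $dt_0\to r^2\slg$ (row $6$, column $8$ of the full matrix) survives both of your quotients and sits above the diagonal when $[e_6]$ precedes $[e_8]$; it is not one of the entries absorbed by the killed rows. Reordering to $([e_1],[e_8],[e_6])$ does yield a lower triangular matrix, and in any case the first and third rows of your $3\times 3$ matrix are already purely diagonal, so its eigenvalues are the diagonal entries $2\gamma_1$, $\gamma_3$, $4\kappa_\pm+\gamma_2$ as you claim. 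The conclusion of the lemma is therefore unaffected.
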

\begin{proof}
  $S_L^\pm$ preserves $\slg^\perp$, and equals scalar multiplication by $4\kappa_\pm$. Next, $S_L^\pm$ preserves $(2dt_0\cdot T^*\Sph^2)\oplus T^*\Sph^2\oplus(2dr\cdot T^*\Sph^2)$ and is in fact lower triangular (notice the different order of the bundles) with diagonal entries, hence eigenvalues, $2\kappa_\pm+\gamma_1$, $2\kappa_\pm$ and $6\kappa_\pm$. Lastly, $S_L^\pm$ preserves
  \[
    \la dt_0^2\ra \oplus \la dt_0\ra \oplus \la r^2\slg\ra \oplus \la 2dt_0\,dr\ra \oplus \la dr\ra \oplus \la dr^2\ra,
  \]
  and acting on this space (with bundles in this order), $S_L^\pm$ is lower triangular as well, with diagonal entries $2\gamma_1$, $\gamma_3$, $4\kappa_\pm+\gamma_2$, $4\kappa_\pm$, $4\kappa_\pm$, $8\kappa_\pm$.
\end{proof}

This plugs into \eqref{EqLinAnaWhBetaPM} and \eqref{EqLinAnaWhBeta}, giving $\wh\beta_\pm\geq 0$ there, and hence the threshold condition in Theorem~\ref{ThmLinAnaMero} is implied by $s>1/2+\alpha\sup(\beta)$. The proof of Theorem~\ref{ThmESG} is complete.

\section{Linear stability of KNdS black holes}
\label{SecLin}

With $\tdel^*$ and $\td$ as in Theorems~\ref{ThmCD}, and for KNdS parameters $b$ close to $b_0$, we define the linearized gauge-fixed Einstein--Maxwell operator
\begin{align*}
  L_b(\gdot,\Adot) &= \Bigl( 2\bigl(D_{g_b}(\Ric+\Lambda)(\gdot) - \tdel^*D_{g_b}\Ups^E(\gdot) - 2 D_{g_b,d A_b}T(\gdot,d\Adot)\bigr), \\
    &\quad\qquad D_{g_b,A_b}(\delta_{(\cdot)}d(\cdot))(\gdot,\Adot) - \td\Ups^M(g_b,\Adot)\Bigr),
\end{align*}
with $\Ups^E$, $\Ups^M$ and $T$ given in equations~\eqref{EqKNdSIniGaugeE}, \eqref{EqKNdSIniGaugeM}, and \eqref{EqBasicDerEMTensor}, respectively. We point out that $L_b$ is a principally scalar wave operator, and in view of Theorem~\ref{ThmESG}, $L:=L_{b_0}$ satisfies the assumptions of Theorem~\ref{ThmLinAnaSolvFixed}.

Using the mode stability result, Theorem~\ref{ThmMS}, which describes the structure of all non-decaying generalized resonant states of $L_b$, the high energy analysis, Theorem~\ref{ThmESG}, immediately implies the linear stability of non-degenerate RNdS spacetimes by means of the argument used in the proof of \cite[Theorem~10.2]{HintzVasyKdSStability}; see also Remark~\ref{RmkIntroStrNoCD}.

A robust proof, which prepares the non-linear stability argument, makes use of the fact that we have arranged for constraint damping (Theorem~\ref{ThmCD}); we proceed to explain this, following the arguments in \cite[\S\S10 and 11]{HintzVasyKdSStability} closely. First, we point out that while linearized KNdS solutions $(g'_b(b'),A'_b(b'))$ (see Definition~\ref{DefKNdSaLin}) solve the linearized Einstein--Maxwell equations $\sL(g'_b(b'),A'_b(b'))=0$ in the notation of Theorem~\ref{ThmMS}, they generally do not respect the linearized gauge conditions; as explained in \S\ref{SubsecIntroStr} and around equations~\eqref{EqBasicLinGaugeG} and \eqref{EqBasicLinGaugeA}, this can be remedied by solving
\begin{equation}
\label{EqLinGaugeG}
  D_{g_b}\Ups^E\bigl(g'_b(b')+\tcL_{g_b}V'_b(b')\bigr) = 0
\end{equation}
for the vector field $V'_b(b')\in\CI(\Omega^\circ;T\Omega^\circ)$, and then solving
\begin{equation}
\label{EqLinGaugeA}
  \Ups^M\bigl(g_b,\Adot+\tcL_{A_b}V'_b(b')+d a'_b(b')\bigr) = 0
\end{equation}
for the function $a'_b(b')\in\CI(\Omega^\circ)$; note that this differs slightly from equation~\eqref{EqBasicLinGaugeA} due to the different choices of linearized gauges (the present one coming from our choice \eqref{EqNLBabyNonlinearOp} of the non-linear gauge-fixed operator below). To ensure linear dependence of $V'_b(b')$ and $a'_b(b')$ on $b'$, we demand that their Cauchy data vanish:
\[
  \gamma_0(V'_b(b')) = 0,  \quad \gamma_0(a'_b(b')) = 0.
\]
Thus, the sections
\begin{equation}
\label{EqLinGaugedLinKNdS}
  \bigl(g_b^{\prime\Ups}(b'),A_b^{\prime\Ups}(b')\bigr) := \bigl(g'_b(b') + \tcL_{g_b}V'_b(b'), A'_b(b')+\tcL_{A_b}V'_b(b')+d a'_b(b')\bigr)
\end{equation}
solve the linearized gauge-fixed Einstein--Maxwell equation $L_b(g_b^{\prime\Ups}(b'),A_b^{\prime\Ups}(b'))=0$.

\begin{rmk}
\label{RmkLinGaugeVF}
  For $b=b_0$, equation~\eqref{EqLinGaugeG} reads
  \[
    (\Box_{g_{b_0}}+\Lambda)V'_{b_0}(b') = -2 D_{g_{b_0}}\Ups^E(g'_{b_0}(b')),
  \]
  hence $V'_{b_0}(b')$ has an asymptotic expansion with an $\cO(e^{-\alpha t_*})$ remainder term by the main result of \cite{HintzPsdoInner}.
\end{rmk}

Fixing a cutoff
\begin{equation}
\label{EqLinCutoff}
  \chi\in\CI(\R_{t_*}), \quad \chi(t_*)\equiv 0,\ t_*\leq 1,\ \ \chi(t_*)\equiv 1,\ t_*\geq 2,
\end{equation}
we could then use $L_b(\chi g^{\prime\Ups}_b(b'),\chi A^{\prime\Ups}_b(b'))\in\CIc(\Omega^\circ;S^2 T^*\Omega^\circ\oplus T^*\Omega^\circ)$ as a modification of the range of $L_b$ along the lines indicated in \S\ref{SubsecIntroStr} and Theorem~\ref{ThmLinAnaSolvFixed}. For future purposes, it is however better to use instead
\begin{equation}
\label{EqLinGeneratesLinKNdS}
\begin{split}
  K_b(b') &:= L_b\bigl(\chi g'_b(b')+\tcL_{g_b}(\chi V'_b(b')),\chi A'_b(b')+\tcL_{A_b}(\chi V'_b(b')) + d(\chi a'_b(b'))\bigr) \\
    &= L_b(\chi g'_b(b'),\chi A'_b(b')) + (2\tdel^*\theta_b(b'), \td\kappa_b(b'))
\end{split}
\end{equation}
with
\begin{align*}
  \theta_b(b') &= -\tdel^* D_{g_b}\Ups^E(\tcL_{g_b}(\chi V'_b(b'))), \\
  \kappa_b(b') &= -\td\Ups^M\bigl(g_b,\tcL_{A_b}(\chi A'_b(b')) + d(\chi a'_b(b'))\bigr).
\end{align*}
This shows directly how the forcing term $K_b(b')$ generates the linearized KNdS solution in the particular form $(g'_b(b'),A'_b(b'))$ after a suitable modification of the gauge.

\begin{thm}
\label{ThmLinKNdS}
  Fix $s\geq 2$, and let $\alpha>0$ be as in Theorem~\ref{ThmESG}. There exists a finite-dimensional space of gauge modifications
  \begin{equation}
  \label{EqLinKNdSGaugeMod}
    \Xi \subset \CIc(\Omega^\circ;T^*\Omega^\circ\oplus\ul\R),
  \end{equation}
  with $\ul\R=\Omega^\circ\times\R$ the trivial line bundle, such that the following holds: let $b\in B$ be close to the parameters $b_0$ of a non-degenerate RNdS black hole, and let
  \begin{align*}
    (\hdot,\kdot,\bfEdot,\bfBdot)&\in H^{s+1}(\Sigma_0;S^2 T^*\Sigma_0)\oplus H^s(\Sigma_0;S^2 T^*\Sigma_0) \\
      &\qquad\qquad \oplus H^s(\Sigma_0;T^*\Sigma_0) \oplus H^{s+1/2}(\Sigma_0;T^*\Sigma_0)
  \end{align*}
  be initial data for the linearized Einstein--Maxwell system, linearized around the KNdS solution $(g_b,A_b)$, which satisfy the linearized constraint equations, linearized around the KNdS initial data $(h_b,k_b,\bfE_b,\bfB_b)$. Assume that the linearized magnetic charge vanishes, see the discussion around \eqref{EqKNdSIniLinearizedFromPotential}.

  Then there exist linearized KNdS parameters $b'\in\R^5$, an exponentially decaying tail $\wt r\in\Hbext^{s,\alpha}(\Omega;S^2\Tb^*_\Omega M\oplus \Tb^*_\Omega M)$ and a gauge modification $(\theta,\kappa)\in\Xi$ such that the solution of the initial value problem
  \begin{equation}
  \label{EqLinKNdSIVP}
    \begin{cases}
      L_b\wt r = -K'_b(b')-(2\tdel^*\theta,\td\kappa) & \tn{in }\Omega^\circ, \\
      \gamma_0(\wt r) = D_{(h_b,k_b,\bfE_b,\bfB_b)}i_b(\hdot,\kdot,\bfEdot,\bfBdot) & \tn{on }\Sigma_0.
    \end{cases}
  \end{equation}
  is exponentially decaying, $\wt r\in\Hbext^{s,\alpha}(\Omega;S^2\,\Tb^*_\Omega M\oplus\Tb^*_\Omega M)$. In other words, with $\wt r=(\wt g,\wt A)$, the linearized metric and 4-potential
  \begin{equation}
  \label{EqLinKNdSIVPSol1}
  \begin{split}
    \gdot &:= \chi g'_b(b') + \wt g, \\
    \Adot &:= \chi A'_b(b') + \wt A
  \end{split}
  \end{equation}
  solve the linearized Einstein--Maxwell equation $\sL(\gdot,\Adot)=0$, attaining the given initial data at $\Sigma_0$, in the gauge
  \begin{equation}
  \label{EqLinKNdSIVPSol2}
  \begin{split}
    D_{g_b}\Ups^E(\gdot) - \theta'_b(b') - \theta = 0, \\
    \Ups^M(g_b,\Adot) - \kappa'_b(b') - \kappa = 0.
  \end{split}
  \end{equation}
\end{thm}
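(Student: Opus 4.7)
The plan is to reduce Theorem~\ref{ThmLinKNdS} to the abstract solvability-under-stationary-perturbations result, Theorem~\ref{ThmLinAnaSolvPertStat}, applied to the family $\{L_b\}_{b \in B}$. I would parametrize the modification space by $\bfc = (b',\theta,\kappa) \in \R^5 \oplus \Xi$ and take
\[
  z(b,\bfc) := -K_b(b') - (2\tdel^*\theta, \td\kappa),
\]
with $K_b(b')$ as in \eqref{EqLinGeneratesLinKNdS}. The two tasks are then to construct the finite-dimensional gauge-modification space $\Xi$ in \eqref{EqLinKNdSGaugeMod}, and to verify that the map $\lambda_{b_0} \colon \bfc \mapsto \lambda_\IVP(z(b_0,\bfc))$ is surjective onto $\cL(R^*,\ol\C)$, where $R^*$ is the space of non-decaying dual resonant states of $L_{b_0}$ furnished by Theorem~\ref{ThmESG}.

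For both tasks I would combine mode stability (Theorem~\ref{ThmMS}) with constraint damping (Theorem~\ref{ThmCD}). Any non-decaying resonant state $(\gdot,\Adot)$ of $L_{b_0}$ satisfies $D_{g_{b_0}}\Ups^E(\gdot)=0$ and $\Ups^M(g_{b_0},\Adot)=0$ by Theorem~\ref{ThmCD}, and hence solves the \emph{ungauged} linearized Einstein--Maxwell system; by Theorem~\ref{ThmMS} it therefore decomposes as
\[
  (\gdot,\Adot) = (g'_{b_0}(b'), A'_{b_0}(b')) + (\tcL_{g_{b_0}} V,\ \tcL_{A_{b_0}} V + da),
\]
with $(V,a)$ a (generalized) mode of the same frequency. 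Let $\{(V_j,a_j)\}_{j=1}^N$ be a finite family of generators for all pure-gauge directions appearing in $R$, whose finiteness is guaranteed by Remark~\ref{RmkMSModeByMode}. I would define $\Xi$ as the span of
\[
  \theta_j := -D_{g_{b_0}}\Ups^E(\tcL_{g_{b_0}}(\chi V_j)), \quad \kappa_j := -\Ups^M\bigl(g_{b_0}, \tcL_{A_{b_0}}(\chi V_j) + d(\chi a_j)\bigr),
\]
which lie in $\CIc$ by constraint damping applied to the fact that $(\tcL_{g_{b_0}}V_j,\tcL_{A_{b_0}}V_j+da_j)$ respects the linearized gauge for $t_* \gg 1$. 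Gauge covariance of the ungauged operator then yields $L_{b_0}(\tcL_{g_{b_0}}(\chi V_j),\tcL_{A_{b_0}}(\chi V_j)+d(\chi a_j)) = (2\tdel^*\theta_j,\td\kappa_j)$, so forcing by $-(2\tdel^*\theta_j,\td\kappa_j)$ introduces exactly the pure gauge resonant state in the resonance expansion, while $-K_{b_0}(b')$ introduces the linearized KNdS resonant state $(g'_{b_0}(b'),A'_{b_0}(b'))$; together these exhaust $R$, and surjectivity of $\lambda_{b_0}$ follows from the non-degenerate pairing between $R$ and $R^*$. Theorem~\ref{ThmLinAnaSolvPertStat} then yields, for $b$ near $b_0$, the desired continuous linear solution map into $\Hbext^{s,\alpha}$.

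Given the solution $\wt r = (\wt g,\wt A)$ to \eqref{EqLinKNdSIVP}, I would form $(\gdot,\Adot)$ via \eqref{EqLinKNdSIVPSol1} and study
\[
  (y_E,y_M) := \bigl(D_{g_b}\Ups^E(\gdot) - \theta'_b(b') - \theta,\ \Ups^M(g_b,\Adot) - \kappa'_b(b') - \kappa\bigr),
\]
with $\theta'_b(b')$ and $\kappa'_b(b')$ the stationary gauge sources produced by $(\chi g'_b(b'),\chi A'_b(b'))$. A direct computation using the definition of $K_b(b')$ and the linearized second Bianchi identity shows that $(y_E,y_M)$ satisfies the homogeneous constraint propagation system \eqref{EqCDPropM}--\eqref{EqCDPropE}, while Proposition~\ref{PropKNdSIni} together with the linearized constraints imposed on $(\hdot,\kdot,\bfEdot,\bfBdot)$ ensures $\gamma_0(y_E,y_M) = 0$. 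Constraint damping then forces $(y_E,y_M) \equiv 0$, which is the gauge condition \eqref{EqLinKNdSIVPSol2}, and feeding this back into $L_b(\gdot,\Adot) = -(2\tdel^*\theta,\td\kappa)$ gives $\sL(\gdot,\Adot) = 0$, as required. The main obstacle I anticipate is the careful treatment of the $\sigma = 0$ resonance: the parameter family $(g_b,A_b)$ contributes a Jordan block via the correction $V'_b(b')$ of Remark~\ref{RmkLinGaugeVF}, which may grow linearly in $t_*$, and the mode stability statement permits generalized modes with an extra power of $t_*$ relative to the generating state; the dimension count underlying the surjectivity of $\lambda_{b_0}$ must therefore absorb both the $l = 0,1$ special cases of Theorem~\ref{ThmMS} and the possible rank degeneracies of $L_{b_0}$ at zero frequency, which is the most delicate point of the argument.
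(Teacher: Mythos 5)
Your proposal follows essentially the same route as the paper's proof: constraint damping (Theorem~\ref{ThmCD}) plus mode stability (Theorem~\ref{ThmMS}) classify the non-decaying resonant states of $L_{b_0}$, the pure gauge generators produce the compactly supported space $\Xi$ by exactly the formulas you write, the forcing $K_{b_0}(b')$ supplies the linearized KNdS directions, and Theorem~\ref{ThmLinAnaSolvPertStat} transfers the surjectivity of $\lambda_{b_0}$ to $b$ near $b_0$; the concluding constraint-propagation argument you sketch is likewise the intended one. The ``delicate point'' you flag at $\sigma=0$ is handled in the paper without any rank analysis: one first forms the subspace $K\subset\Res(L_{b_0},0)$ spanned by the zero-frequency parts of the asymptotic expansions of the gauge-corrected family $(g^{\prime\Ups}_{b_0}(b'),A^{\prime\Ups}_{b_0}(b'))$, and then --- since Theorem~\ref{ThmMS} guarantees every zero resonant state is a linearized KNdS solution plus pure gauge --- simply chooses a vector space complement of $K$ in $\Res(L_{b_0},0)$ consisting entirely of pure gauge states, so that only finite-dimensionality and spanning matter, not the precise dimension count.
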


We shall not state the extension of this to the case of magnetically charged black holes; see Remark~\ref{RmkNLMagnetic} for the non-linear case.

\begin{proof}[Proof of Theorem~\ref{ThmLinKNdS}]
  With $\alpha$ as in Theorem~\ref{ThmESG}, denote the finite number $N$ of resonances of $L_{b_0}$, the linearization around the \emph{RNdS} spacetime, in $\Im\sigma\geq-\alpha$ by $\sigma_1=0$, $\sigma_2,\ldots,\sigma_N\neq 0$. We recall that Theorem~\ref{ThmCD} ensures that all resonant states $(\gdot,\Adot)$ of $L_{b_0}$ at frequency $\sigma_j$, $j=1,\ldots,N$, in fact satisfy the gauge conditions
  \begin{equation}
  \label{EqLinKNdSResStateGauged}
    D_{g_{b_0}}\Ups^E(\gdot) = 0, \quad \Ups^M(g_{b_0},\Adot) = 0,
  \end{equation}
  and the linearized ungauged Einstein--Maxwell system $\sL(\gdot,\Adot)=0$; thus, by Theorem~\ref{ThmMS}, $(\gdot,\Adot)$ is a linearized KNdS solution (only appearing at the resonance $\sigma_1=0$) plus a pure gauge solution.
  
  For $j\neq 1$ then, pick a basis $\{(\gdot_{j 1},\Adot_{j 1}),\ldots,(\gdot_{j N_j},\Adot_{j N_j})\}$, $N_j\in\N$, of the space $\Res(L_{b_0},\sigma_j)$ of resonant states; these are all pure gauge solutions. Hence, we can write
  \begin{equation}
  \label{EqLinKNdSPureGauge1}
    \gdot_{j k} = \cL_{V_{j k}}g_{b_0}, \quad \Adot_{j k} = \cL_{V_{j k}}A_{b_0} + d a_{j k}
  \end{equation}
  for suitable $V_{j k}\in\CI(\Omega^\circ;\TC\Omega^\circ)$ and $a_{j k}\in\CI(\Omega^\circ;\C)$. With $\chi$ a cutoff as in \eqref{EqLinCutoff}, the pure gauge modification giving rise to the asymptotic behavior $(\gdot_{j k},\Adot_{j k})$ is then
  \begin{equation}
  \label{EqLinKNdSPureGauge2}
    L_{b_0}\bigl(\tcL_{g_{b_0}}(\chi V_{j k}), \tcL_{A_{b_0}}(\chi V_{j k}) + d(\chi a_{j k})\bigr) = (2\tdel^*\theta_{j k},\td\kappa_{j k})
  \end{equation}
  with
  \begin{equation}
  \label{EqLinKNdSPureGauge3}
  \begin{gathered}
    \theta_{j k} = -D_{g_{b_0}}\Ups^E\bigl(\tcL_{g_{b_0}}(\chi V_{j k})\bigr) \in \CIc(\Omega^\circ;\TC^*\Omega^\circ), \\
    \kappa_{j k} = -\Ups^M\bigl(g_{b_0},\tcL_{A_{b_0}}(\chi V_{j k}) + d(\chi a_{j k})\bigr) \in \CIc(\Omega^\circ;\C),
  \end{gathered}
  \end{equation}
  the compact support in $\Omega^\circ$ being a consequence of \eqref{EqLinKNdSResStateGauged}.

  Next, for $j=1$, i.e.\ for the zero resonance, we note that the correctly gauged linearized KNdS solutions $(g^{\prime\Ups}_{b_0}(b'),A^{\prime\Ups}_{b_0}(b'))$ in \eqref{EqLinGaugedLinKNdS}, being smooth and lying in the kernel of $L_{b_0}$, have an asymptotic expansion up to an exponentially decaying remainder term by Theorem~\ref{ThmESG}. In fact, $V'_{b_0}(b')$ and $a'_{b_0}(b')$ themselves enjoy such expansions: for $V'_{b_0}(b')$, this was discussed in Remark~\ref{RmkLinGaugeVF}; for $a'_{b_0}(b')$, this then follows from the scalar wave equation \eqref{EqLinGaugeA} for $a'_{b_0}(b')$ and the fact that $V'_{b_0}(b')$ enjoys an asymptotic expansion. (We remark that the same is true if we replace $b_0$ by $b$ close to $b_0$.) Denote by $(g^{\prime\Ups}_{b_0}(b')_0,A^{\prime\Ups}_{b_0}(b')_0)$ the part of the asymptotic expansion of $(g^{\prime\Ups}_{b_0}(b'),A^{\prime\Ups}_{b_0}(b'))$ at frequency $0$; this is a generalized mode, and therefore
  \[
    K := \bigl\{ (g^{\prime\Ups}_{b_0}(b')_0, A^{\prime\Ups}_{b_0}(b')_0) \colon b'\in\R^5 \} \subset \Res(L_{b_0},0).
  \]
  By Theorem~\ref{ThmMS} (which applies at $0$ frequency as well due to Theorem~\ref{ThmCD}), \emph{all} generalized modes with frequency $0$ are equal to a linearized KNdS solution plus a pure gauge solution; therefore, we can pick a vector space complement
  \[
    \mathspan \bigl\{(\gdot_{1 1},\Adot_{1 1}),\ldots,(\gdot_{1 N_1},\Adot_{1 N_1}) \bigr\}
  \]
  of $K$ in the space $\Res(L_{b_0},0)\cap\CI(\Omega^\circ;T^*\Omega^\circ\oplus\ul\R)$ of real-valued zero resonant states consisting entirely of pure gauge solutions. (The space $\Res(L_{b_0},0)$ is spanned over $\C$ by its real-valued elements, since $L_{b_0}$ has real coefficients.) Thus, we have \eqref{EqLinKNdSPureGauge1}--\eqref{EqLinKNdSPureGauge3} for these zero resonant states as well.
  
  We can now define
  \begin{align*}
    \Xi^\C &:= \mathspan\{ (\theta_{j k},\kappa_{j k}) \colon j=1,\ldots,N,\ k=1,\ldots,N_j \} \\
      &\qquad\qquad \subset \CIc(\Omega^\circ;\TC^*\Omega^\circ\oplus\ul\C),
  \end{align*}
  and then the space of gauge modifications
  \[
    \Xi := \Xi^\C \cap \CIc(\Omega^\circ;T^*\Omega^\circ\oplus\ul\R)
  \]
  is the space of real-valued elements of $\Xi$. Since the operator $L_{b_0}$ has real coefficients, complex conjugation induces isomorphisms $\Res(L_{b_0},\sigma)\cong\Res(L_{b_0},-\bar\sigma)$ for all $\sigma\in\C$; therefore the vector space $\Res(L_{b_0},\sigma) + \Res(L_{b_0},-\bar\sigma)$ is spanned by its real-valued elements, indeed by the real parts of the elements of $\Res(L_{b_0},\sigma)$, and $\Xi^\C$ is correspondingly spanned by the real parts of the $(\theta_{j k},\kappa_{j k})$, so $\Xi^\C = \mathspan_\C \Xi$. Let $N_\Xi:=\dim_\R\Xi<\infty$, and fix a linear isomorphism
  \begin{equation}
  \label{EqLinKNdSXiIso}
    \R^{N_\Xi} \ni \bfc' \mapsto (\theta(\bfc'), \kappa(\bfc')) \in \Xi.
  \end{equation}

  Next, let $B\subset\R^5$ be a small neighborhood of $b_0$, let $N_\Xi:=\dim_\R\Xi$, and define the continuous map
  \begin{align*}
    z \colon B \times \R^5 \times \R^{N_\Xi} &\to \CIc(\Omega^\circ;S^2 T^*\Omega^\circ\oplus T^*\Omega^\circ), \\
    (b,b',\bfc') &\mapsto K_b(b') + (2\tdel^*\theta(\bfc'),\td\kappa(\bfc')),
  \end{align*}
  linear in $(b',\bfc')$, with $K_b(b')$ defined in \eqref{EqLinGeneratesLinKNdS}. Denote by $z^\C$ the map obtained from $z$ by $\C$-linear extension in $(b',\bfc')$, then $z^\C$ satisfies the assumptions of Theorem~\ref{ThmLinAnaSolvPertStat} (with $W=B$, $N_\cZ=5+N_\Xi$, and $w_0=b_0$) by construction. Therefore, we can indeed solve \eqref{EqLinKNdSIVP}, however a priori with $b'\in\C^5$, $(\theta,\kappa)\in\Xi^\C$, and $\wt r$ taking values in the complexified bundles. Since the initial data as well as the coefficients of $L_b$ are real, the real parts of $b'$ etc.\ yield a solution of \eqref{EqLinKNdSIVP} as well; but the solution is unique, hence we conclude that $b'\in\R^5$ and $(\theta,\kappa)\in\Xi$, with $\wt r$ real as well. Recalling the computation \eqref{EqLinGeneratesLinKNdS}, this shows that the solution of \eqref{EqLinKNdSIVP} indeed satisfies \eqref{EqLinKNdSIVPSol1}--\eqref{EqLinKNdSIVPSol2}.
\end{proof}

As a corollary (which by itself is much weaker than the above full linear stability result), we can prove the (generalized) mode stability of slowly rotating KNdS black holes: our robust setup, using constraint damping (Theorem~\ref{ThmCD}) and the high frequency analysis (Theorem~\ref{ThmESG}), allows us to infer this from the mode stability of \emph{non-rotating} black holes, which we proved using separation of variables in \S\ref{SecMS}.

\begin{thm}
\label{ThmLinMode}
  (Cf.\ \cite[Theorem~10.8]{HintzVasyKdSStability}.) Suppose $b$ is close to the parameters $b_0$ of a non-degenerate RNdS black hole. Let $\sigma\in\C$, $\Im\sigma\geq 0$, and $k\in\N_0$, and suppose
  \begin{equation}
  \label{EqLinMode}
    (\gdot,\Adot) = \sum_{j=0}^k e^{-i\sigma t_*}t_*^j(\gdot_j,\Adot_j),\quad \gdot_j\in\CI(Y;S^2 T^*_Y\Omega^\circ),\ \Adot_j\in\CI(Y;T^*_Y\Omega^\circ)
  \end{equation}
  is a generalized mode solution of the linearized Einstein--Maxwell system $\sL(\gdot,\Adot)=0$, linearized around the KNdS solution $(g_b,A_b)$, see \eqref{EqBasicLinEinsteinMaxwellExpl}. Then there exist generalized modes $V\in\CI(\Omega^\circ;\TC\Omega^\circ)$ and $a\in\CI(\Omega^\circ;\C)$ as well as parameters $b'\in\R^5$ such that
  \[
    (\gdot,\Adot) = (g'_b(b'),A'_b(b')) + (\cL_V g,\cL_V A + d a).
  \]
  If $\sigma\neq 0$, then $b'=0$, so $(\gdot,\Adot)$ is a pure gauge solution, while for $\sigma=0$, $(\gdot,\Adot)$ is equal to a linearized KNdS solution (linearized around $(g_b,A_b)$) plus a pure gauge solution.
\end{thm}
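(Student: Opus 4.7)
My plan is to derive Theorem~\ref{ThmLinMode} from the linear stability result, Theorem~\ref{ThmLinKNdS}, via an asymptotic matching of the pure gauge freedom. In the 4-potential formulation, $\Fdot = d\Adot$ is automatically exact, so the vanishing linearized magnetic charge hypothesis of Theorem~\ref{ThmLinKNdS} is automatic; moreover, the initial data $(\hdot, \kdot, \bfEdot, \bfBdot)$ induced by $(\gdot, \Adot)$ on $\Sigma_0$ satisfy the linearized constraint equations by linearizing the derivation in \S\ref{SubsecBasicLin}.

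First, I apply Theorem~\ref{ThmLinKNdS} to these initial data: this produces $b' \in \R^5$, a gauge modification $(\theta, \kappa) \in \Xi$, and exponentially decaying $(\wt g, \wt A)$ such that
\[
  (\gdot^\#, \Adot^\#) := \bigl(\chi g'_b(b') + \wt g,\ \chi A'_b(b') + \wt A\bigr)
\]
solves $\sL(\gdot^\#, \Adot^\#) = 0$ with the same Cauchy data as $(\gdot, \Adot)$ and satisfies the gauge conditions \eqref{EqLinKNdSIVPSol2}. Next, I argue that $D := (\gdot - \gdot^\#, \Adot - \Adot^\#)$ is a pure gauge perturbation: successively solving forward wave equations for a vector field $V$ and a function $a$ with vanishing Cauchy data,
\[
  (D_{g_b}\Ups^E \circ \tcL_{g_b}) V = -D_{g_b}\Ups^E(D_1), \qquad \Ups^M(g_b, d a) = -\Ups^M\bigl(g_b, D_2 + \cL_V A_b\bigr),
\]
each of which is principally $\Box_{g_b}$ on vector fields, resp.\ functions, arranges that $D + (\cL_V g_b, \cL_V A_b + d a)$ satisfies both linearized gauge conditions globally. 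Since this object also solves $\sL = 0$, it then solves $L_b = 0$; as its Cauchy data vanish, well-posedness for $L_b$ (Theorem~\ref{ThmESG}) forces it to vanish identically, giving $D = -(\cL_V g_b, \cL_V A_b + d a)$.

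The final step is an asymptotic matching. Rearranging yields
\[
  (\gdot, \Adot) = \bigl(g'_b(b'), A'_b(b')\bigr) + (\cL_V g_b, \cL_V A_b + d a) + R,
\]
where $R$ collects the compactly supported cutoff term $(\chi - 1)(g'_b(b'), A'_b(b'))$ and the exponentially decaying tail $(\wt g, \wt A)$. The forward solutions $V, a$ are driven by terms whose asymptotic expansions consist of a generalized mode at frequency $\sigma$ (from the mode content of $\gdot, \Adot$) and a zero-frequency stationary piece (from $\chi g'_b(b')$), up to exponentially decaying remainders; by the resonance expansion of Theorem~\ref{ThmLinAnaMero} applied to the tensor-valued wave operators appearing in the gauge equations (cf.\ \cite{HintzPsdoInner}), one decomposes $V = V_{\mathrm{mod}} + V_{\mathrm{rem}}$ and $a = a_{\mathrm{mod}} + a_{\mathrm{rem}}$ with $V_{\mathrm{mod}}, a_{\mathrm{mod}}$ generalized modes at frequencies in $\{\sigma, 0\}$, carrying at most one additional power of $t_*$ compared to $(\gdot, \Adot)$, and exponentially decaying remainders. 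Moving all exponentially decaying contributions to one side gives
\[
  (\gdot, \Adot) - \bigl(g'_b(b'), A'_b(b')\bigr) - \bigl(\cL_{V_{\mathrm{mod}}} g_b, \cL_{V_{\mathrm{mod}}} A_b + d a_{\mathrm{mod}}\bigr) = \tn{exp.\ decaying},
\]
whose left side is a generalized mode with frequencies in $\{\sigma, 0\}$ and therefore must vanish identically.

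For $\sigma \neq 0$, I then equate the frequency-$0$ components of this final identity: the left side carries none, so the linearized KNdS perturbation $(g'_b(b'), A'_b(b'))$ must equal minus the zero-frequency piece of the pure gauge contribution, hence be pure gauge. Because the physical parameters encoded by $b'$ (mass, angular momentum, electric charge) can be extracted from gauge-invariant quantities such as the linearized electric charge and Komar-type integrals over $2$-spheres in $\Sigma_0$, this forces $b' = 0$. For $\sigma = 0$ no such restriction arises, and the claimed representation of $(\gdot, \Adot)$ as a linearized KNdS solution plus a pure gauge solution follows. The principal obstacle is the asymptotic matching step: ensuring that $V$ and $a$ inherit a clean generalized mode decomposition with the claimed control on the additional $t_*$-power requires invoking the full stationary resolvent theory for the tensor-valued wave operators governing the gauge transformations, and carefully accounting for the interaction between the forced $\sigma$-mode response and the zero-frequency contribution generated by the cutoff of the linearized KNdS family.
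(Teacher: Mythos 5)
Your proposal inverts the order of operations relative to the paper: the paper first puts $(\gdot,\Adot)$ itself into the linearized gauge by a pure gauge generalized mode (extracted from the resonance expansion of the gauge wave equations solved with \emph{vanishing} Cauchy data), so that the gauged perturbation solves $L_b=0$ with its own Cauchy data, and then invokes Theorem~\ref{ThmLinKNdS} together with uniqueness for the IVP with \emph{identical} Cauchy data; you instead build the comparison solution $(\gdot^\#,\Adot^\#)$ from the geometric initial data and try to show the difference $D$ is pure gauge. That route can be made to work, but as written it has a genuine gap at its central step. Theorem~\ref{ThmLinKNdS} prescribes $\gamma_0(\wt r)=D_{(h_b,k_b,\bfE_b,\bfB_b)}i_b(\hdot,\kdot,\bfEdot,\bfBdot)$, i.e.\ the \emph{gauged} Cauchy data produced by Proposition~\ref{PropKNdSIni}; these induce the same geometric data $(\hdot,\kdot,\bfEdot,\bfBdot)$ as $(\gdot,\Adot)$ but satisfy the gauge conditions \eqref{EqKNdSIniGaugeEM} at $\Sigma_0$, which the given $(\gdot,\Adot)$ need not. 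Hence $\gamma_0(\gdot^\#,\Adot^\#)\neq\gamma_0(\gdot,\Adot)$ in general, so your $D$ has \emph{trivial induced geometric data} but \emph{non-vanishing Cauchy data}. Consequently, after you add the pure gauge term $(\cL_V g_b,\cL_V A_b+d a)$ with $V,a$ having vanishing Cauchy data, the resulting solution of $L_b=0$ does not have vanishing Cauchy data, and the well-posedness argument does not force it to vanish. To repair this you must first prove the standard (but not free) lemma that a solution of the linearized Einstein--Maxwell system inducing trivial geometric data on $\Sigma_0$ is pure gauge, by choosing the Cauchy data of $(V,a)$ on $\Sigma_0$ — not zero — so as to annihilate $\gamma_0(D)$ before running the gauge propagation; only then does uniqueness apply. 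This is precisely the complication the paper's ordering is designed to avoid.

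Two smaller points. First, in the asymptotic matching you should not claim the non-decaying part of the final identity lives only at frequencies $\{\sigma,0\}$: the forward solutions $V,a$ generically also pick up contributions at all resonances of the relevant wave operators in $\Im\sigma>-\alpha$; the correct conclusion is that a finite sum of generalized modes (at any frequencies with $\Im>-\alpha$) which is $\cO(e^{-\alpha t_*})$ vanishes, after which you equate the frequency-$\sigma$ components of the exact identity. Second, your detour through gauge-invariant charges and Komar integrals to get $b'=0$ for $\sigma\neq 0$ is unnecessary: since $g'_b(b')$ is stationary, it contributes only to the frequency-$0$ part of the expansion, so equating frequency-$\sigma$ parts for $\sigma\neq 0$ directly exhibits $(\gdot,\Adot)$ as pure gauge with $b'=0$ in the asserted decomposition.
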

\begin{proof}
  We put $(\gdot,\Adot)$ into the gauge $D_{g_b}\Ups^E(\cdot)=0$, $\Ups^M(g_b,\cdot)=0$, as follows: solve the wave equations
  \[
    D_{g_b}\Ups^E(\gdot+\tcL_{g_b}V_0) = 0,\quad \Ups^M(g_b,\Adot+\tcL_{A_b}V_0 + d a_0) = 0,
  \]
  with vanishing initial data, for the vector field $V_0$ and the function $a_0$, both of which have an asymptotic expansion up to exponentially decaying remainders, as discussed in the proof of Theorem~\ref{ThmLinKNdS}. If $V_1$ and $a_1$ denote the parts of their respective asymptotic expansions which are generalized modes with frequency $\sigma$, we may replace $(\gdot,\Adot)$ by $(\gdot+\cL_{V_1}g_b,\Adot+\cL_{V_1}A_b+d a_1)$, which satisfies \eqref{EqLinMode} still, with a possibly larger value of $k$. Thus, we may assume $L_b(\gdot,\Adot)=0$; since $\sL(\gdot,\Adot)=0$, the initial data $(\gdot,\Adot)$ satisfy the linearized constraint equations at $\Sigma_0$, and satisfy the linearized gauge (encoded in the operator $L_b$) by construction. Thus, Theorem~\ref{ThmLinKNdS} implies that for suitable $(\theta,\kappa)\in\Xi$ and $b'\in\R^5$, the initial value problem
  \[
    L_b\wt r = -K'_b(b') - (2\tdel^*\theta,\td\kappa), \quad \gamma_0(\wt r) = \gamma_0(\gdot,\Adot)
  \]
  has an exponentially decaying solution, which by definition of $K'_b(b')$ and $\Xi$ means that, writing $\wt r=(\wt g,\wt A)$,
  \[
    L_b\bigl(\wt g + \chi g'_b(b') + \tcL_{g_b}(\chi V), \wt A + \chi A'_b(b') + \tcL_{A_b}(\chi V) + d(\chi a)\bigr) = 0
  \]
  for suitable sums of generalized modes $V$ and $a$. By uniqueness of solutions of the initial value problem, we conclude
  \[
    \gdot = \wt g + \chi g'_b(b') + \tcL_{g_b}(\chi V), \quad \Adot = \wt A + \chi A'_b(b') + \tcL_{A_b}(\chi V) + d(\chi a),
  \]
  which implies the equality of those parts of the asymptotic expansions of both sides of these two equations which are generalized modes with frequency $\sigma$. This proves the claim.
\end{proof}

\section{Non-linear stability of KNdS black holes}
\label{SecNL}

Following the strategy explained in \S\ref{SubsecIntroStr}, we now put together the mode stability, constraint damping, high energy estimates, and parts of the linear stability argument to prove Theorem~\ref{ThmIntroNL}.

\subsection{Proof of non-linear stability}
\label{SubsecNLProof}

In an iterative argument for proving non-linear stability, solving a linearized gauged Einstein--Maxwell equation globally at each step, we would like to use the change $b'$ of the final black hole parameters suggested by the solution \eqref{EqLinKNdSIVPSol1} of the linearized gauged equation to update the current guess at the final parameters from $b$ to $b+b'$, and we would like to update the gauge modification $(\theta,\kappa)$ in a similar fashion.

Now, the linearized KNdS solution $(g'_{b_0}(b'),A'_{b_0}(b'))$ does not solve the linearized gauged equation by itself; rather, as explained around equation~\eqref{EqLinGaugeG}, one has to add a pure gauge solution $(\cL_{V_{b_0}(b')}g_{b_0},\cL_{V_{b_0}(b')}A_{b_0}+d a_{b_0}(b'))$ in order to correct the failure of $(g'_{b_0}(b'),A'_{b_0}(b'))$ to satisfy the linearized gauge conditions $D_{g_{b_0}}\Ups^E(\cdot)=0$ and $\Ups^M(g_{b_0},\cdot)=0$; since the linearized ungauged Einstein--Maxwell operator annihilates this pure gauge solution, this is equivalent to $(g'_{b_0}(b'),A'_{b_0}(b'))$ solving the linearized \emph{gauged} Einstein--Maxwell system after a change of the gauge source functions. On the other hand, the solution $(g,A)=(g_b,A_b)$ of the ungauged Einstein--Maxwell system certainly satisfies the gauge conditions
\[
  \Ups^E(g)-\Ups^E(g_b)=0, \quad \Ups^M(g_{b_0},A-A_b)=0;
\]
the linearization of the gauge source functions $\Ups^E(g_b)$ and $\Ups^M(g_{b_0},A_{b_0}-A_b)$ in $b$ (around $b=b_0$) thus also yields a change of the gauge source functions after which $(g'_{b_0}(b'),A'_{b_0}(b'))$ solves the linearized gauged Einstein--Maxwell system. Indeed, the equality of the gauge source functions in these two arguments is equivalent to the equations \eqref{EqLinGaugeG}--\eqref{EqLinGaugeA} (for $b=b_0$) defining $V_{b_0}(b')$ and $a'_{b_0}(b')$.

Solving the non-linear Einstein--Maxwell system in the schematic form
\begin{gather*}
  (\Ric+\Lambda)(g_b+\wt g) - \tdel^*(\Ups^E(g_b+\wt g)-\Ups^E(g_b)) - 2 T(g_b+\wt g,d(A_b+\wt A)) = 0, \\
  \delta_{g_b+\wt g}d (A_b+\wt A) - \td\Ups^M(g_b+\wt g,\wt A) = 0,
\end{gather*}
valid for large $t_*$ (and thus ignoring finite-dimensional, compactly supported gauge modifications), one thus expects to be able to subsume \emph{both} the linearized KNdS solution \emph{and} the gauge change (given by $V_{b_0}(b')$ and $a_{b_0}(a')$), coming from the solution of the linearized gauged equation, into a change $b'$ of the asymptotic parameter $b$. (At a transition region, located at finite interval in $t_*$, one will also need to patch together gauge conditions to avoid the need to change the gauge of the Cauchy data at each step of the non-linear iteration).

We now make this precise; part of this computation was already performed in \cite[\S11.2]{HintzVasyKdSStability}. To keep the notation manageable, we introduce
\[
  \Ups^M_g(A) := \Ups^M(g,A)
\]
and recall $\tcL_T V := \cL_V T$ for tensors $T$ and vector fields $V$, so $[\tcL_T,\chi]V=\cL_{\chi V}T-\chi\cL_V T$; further
\begin{equation}
\label{EqNLProofNotation}
\begin{gathered}
  V := V_{b_0}(b'),\ a := a_{b_0}(b'), \\
  g' := g'_{b_0}(b'),\ A' := A'_{b_0}(b'), \\
  g^{\prime\Ups} := g'+\cL_V g_{b_0}, \ A^{\prime\Ups} := A'+\cL_V A_{b_0} + d a,
\end{gathered}
\end{equation}
hence dropping the dependence of these quantities on $b_0$ and $b'$ from the notation. Define the non-linear operator
\begin{equation}
\label{EqNLBabyNonlinearOp}
\begin{split}
  P_0(b,\wt g,\wt A) &= \Bigl( 2\bigl((\Ric+\Lambda)(g) - \tdel^*\bigl(\Ups^E(g)-\Ups^E(g_{b_0,b})\bigr) - 2 T(g, d A)\bigr), \\
  &\qquad\qquad \delta_g d A - \td \Ups^M(g,\wt A) \Bigr),
\end{split}
\end{equation}
where
\begin{equation}
\label{EqNLMetricInterpolate}
  g_{b_0,b} = (1-\chi)g_{b_0} + \chi g_b
\end{equation}
interpolates between the fixed metric $g_{b_0}$ near $\Sigma_0$ and the KNdS metric $g_b$ for late times, and where we have set $g:=g_{b_0,b}+\wt g$ and $A:=A_{b_0,b}+\wt A$. Then the linearization at the RNdS solution $b=b_0$, $\wt g=0$, $\wt A=0$,
\[
  L_{b_0} := D_{0,0}P_0(b_0,\cdot,\cdot),
\]
takes the form
\begin{equation}
\label{EqNLProofLb0}
\begin{split}
  L_{b_0}(\gdot,\Adot) &= \Bigl( 2\bigl(D_g(\Ric+\Lambda)(\gdot) - \tdel^* D_g\Ups^E(\gdot) - 2 D_{g,d A}T(\gdot,d\Adot)\bigr), \\
   & \qquad\qquad D_{g,A}(\delta_{(\cdot)}d(\cdot))(\gdot,\Adot) - \td\Ups^M(g,\Adot)\Bigr).
\end{split}
\end{equation}
where we put
\begin{equation}
\label{EqNLProofNotation2}
  g=g_{b_0},\ A=A_{b_0}.
\end{equation}

We then have:

\begin{lemma}
\label{LemmaNLProofParamChange}
  We use the notation \eqref{EqNLProofNotation}--\eqref{EqNLProofNotation2} and recall the definitions \eqref{EqLinGaugeG}--\eqref{EqLinGaugeA}, as well as \eqref{EqLinGeneratesLinKNdS}. Then
  \begin{equation}
  \label{EqNLProofParamChange}
  \begin{split}
    D_{b_0}P_0(\cdot,0,0)(b') &= L_{b_0}\bigl(\chi g' + \cL_{\chi V}g, \chi A' + \cL_{\chi V}A + d(\chi a)\bigr) \\
     &\qquad\qquad + \bigl(2\tdel^*\vartheta_\chi(b'), \td\varkappa_\chi(b')\bigr) \\
     & = K_{b_0}(b') + \bigl(2\tdel^*\vartheta_\chi(b'), \td\varkappa_\chi(b')\bigr),
  \end{split}
  \end{equation}
  where
  \begin{align*}
    \vartheta_\chi(b') &:= [D_g\Ups^E\circ\tcL_g,\chi]V + [D_g\Ups^E,\chi]g', \\
    \varkappa_\chi(b') &:= [\Ups^M_g\circ\tcL_{A},\chi]V + [\Ups^M_g\circ d,\chi]a + [\Ups^M_g,\chi]A';
  \end{align*}
  these are compactly supported and smooth in $\Omega^\circ$, with $\vartheta_\chi(b')$ a 1-form and $\varkappa_\chi(b')$ a scalar function.
\end{lemma}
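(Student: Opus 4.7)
The plan is to compute $D_{b_0} P_0(\cdot,0,0)(b')$ directly from the definition \eqref{EqNLBabyNonlinearOp}, and separately to expand $L_{b_0}$ applied to the grafted linearized KNdS solution, then match the two modulo terms of the form $\tdel^*(\cdot)$ and $\td(\cdot)$. For the first computation, observe that at $b=b_0$, $\wt g=\wt A=0$, we have $g=g_{b_0}$, $A=A_{b_0}$, which solves the ungauged Einstein--Maxwell system, and that $\tfrac{d}{ds}|_{s=0}g_{b_0,b_0+sb'}=\chi g'$, $\tfrac{d}{ds}|_{s=0}A_{b_0,b_0+sb'}=\chi A'$. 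Two cancellations occur: the gauge term $\tdel^*(\Ups^E(g)-\Ups^E(g_{b_0,b}))$ contributes nothing upon $b$-differentiation, since $g$ and $g_{b_0,b}$ both vary through the same function $\chi g'$ at $\wt g=0$; and $\td\Ups^M(g,\wt A)$ vanishes identically at $\wt A=0$ because $\Ups^M(g,0)\equiv 0$. Hence
\[
  D_{b_0}P_0(\cdot,0,0)(b') = \bigl(2\sL_1(\chi g',\chi A'),\ \sL_2(\chi g',\chi A')\bigr),
\]
with $\sL=(\sL_1,\sL_2)$ the ungauged linearized Einstein--Maxwell operator \eqref{EqBasicLinEinsteinMaxwellExpl}.

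Next, recall from \eqref{EqNLProofLb0} that $L_{b_0}(\gdot,\Adot)=(2\sL_1(\gdot,\Adot),\sL_2(\gdot,\Adot))-(2\tdel^*D_g\Ups^E(\gdot),\,\td\Ups^M(g,\Adot))$. Applied to any pure gauge solution $(\tcL_g W,\tcL_A W+d\alpha)$, the $\sL$ part vanishes by gauge invariance of the ungauged system at the RNdS solution $(g,A)$. Using bilinearity, this yields
\begin{align*}
  L_{b_0}\bigl(\chi g'+\tcL_g(\chi V),\ \chi A'+\tcL_A(\chi V)+d(\chi a)\bigr)
    &= (2\sL_1(\chi g',\chi A'),\ \sL_2(\chi g',\chi A')) \\
    &\quad -\bigl(2\tdel^*\bigl[D_g\Ups^E(\chi g')+D_g\Ups^E(\tcL_g(\chi V))\bigr], \\
    &\qquad\ \td\bigl[\Ups^M(g,\chi A')+\Ups^M(g,\tcL_A(\chi V)+d(\chi a))\bigr]\bigr).
\end{align*}
The identification of $K_{b_0}(b')$ with the left-hand side is immediate from the definition \eqref{EqLinGeneratesLinKNdS}, giving the second equality in \eqref{EqNLProofParamChange} for free.

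The key step is to recognize the two bracketed quantities as $\vartheta_\chi(b')$ and $\varkappa_\chi(b')$. The defining equations \eqref{EqLinGaugeG}--\eqref{EqLinGaugeA} give $D_g\Ups^E(g')=-D_g\Ups^E(\tcL_g V)$ and $\Ups^M(g,A')=-\Ups^M(g,\tcL_A V)-\Ups^M(g,d a)$. Decomposing
\[
  D_g\Ups^E(\chi g') = \chi D_g\Ups^E(g')+[D_g\Ups^E,\chi]g', \quad
  D_g\Ups^E(\tcL_g(\chi V)) = \chi D_g\Ups^E(\tcL_g V)+[D_g\Ups^E\circ\tcL_g,\chi]V,
\]
and summing, the $\chi$-times-bulk terms cancel and what remains is precisely $\vartheta_\chi(b')$. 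The analogous manipulation in the Maxwell slot, using also $\Ups^M(g,d(\chi a))=\chi\Ups^M(g,d a)+[\Ups^M_g\circ d,\chi]a$, produces $\varkappa_\chi(b')$. Rearranging then gives the first equality in \eqref{EqNLProofParamChange}. Compact support of $\vartheta_\chi(b')$ and $\varkappa_\chi(b')$ follows because each commutator $[D,\chi]$ of a differential operator $D$ with the scalar cutoff $\chi$ is a differential operator whose coefficients are supported in $\supp d\chi\subset\{1\leq t_*\leq 2\}$; smoothness is inherited from that of $V$, $a$, $g'$, $A'$. The only real obstacle is the bookkeeping of commutators required to verify that the cancellation assembles into exactly the two expressions $\vartheta_\chi(b')$ and $\varkappa_\chi(b')$ as stated; no analytic input beyond \eqref{EqLinGaugeG}--\eqref{EqLinGaugeA} is needed.
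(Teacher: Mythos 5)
Your proof is correct and follows essentially the same route as the paper's: identify $D_{b_0}P_0(\cdot,0,0)(b')$ with the ungauged linearized operator applied to $(\chi g',\chi A')$, expand $L_{b_0}$ on the grafted pure-gauge-corrected solution using gauge invariance of $\sL$, and assemble the residual gauge-fixing terms into $\vartheta_\chi(b')$ and $\varkappa_\chi(b')$ via the commutator decompositions and the defining equations \eqref{EqLinGaugeG}--\eqref{EqLinGaugeA}. The only cosmetic difference is that the paper routes the algebra through $L_{b_0}(\chi g',\chi A')$ as an intermediate quantity, which changes nothing of substance.
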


Before proving the lemma, we explain how this achieves what we discussed schematically above: the forcing term effecting a (linearized) change in the final black hole parameter (i.e.\ the left hand side in \eqref{EqNLProofParamChange}) is the same as the forcing term generating the linearized gauged KNdS solution $(g^{\prime\Ups},A^{\prime\Ups})$ for large $t_*$ (the first term on the right hand side in \eqref{EqNLProofParamChange}), up to an additional compactly supported gauge modification $(\vartheta_\chi(b'),\varkappa_\chi(b'))$ used to patch up the gauge in the transition region $\supp d\chi$.

We point out that the left hand side in \eqref{EqNLProofParamChange}, which is equal to
\begin{equation}
\label{EqNLProofParamChangeFormula}
  \Bigl( 2\bigl(D_g(\Ric+\Lambda)(\chi g') - 2 D_{g,d A}T(\chi g',\chi A')\bigr), D_{g,A}(\delta_{(\cdot)}d(\cdot))(\chi g',\chi A') \Bigr),
\end{equation}
has compact support in $M^\circ$ not intersecting $\Sigma_0$, as does the first term on the right hand side; the latter property was already used in \S\ref{SecLin}.

\begin{proof}[Proof of Lemma~\ref{LemmaNLProofParamChange}]
  Comparing \eqref{EqNLProofParamChangeFormula} with \eqref{EqNLProofLb0}, we find
  \begin{equation}
  \label{EqNLProofParamChangeFormula2}
    D_{b_0}P_0(\cdot,0,0)(b') = L_{b_0}(\chi g',\chi A') + \bigl(2\tdel^*D_g\Ups^E(\chi g'),\td\Ups^M(g,\chi A')\bigr).
  \end{equation}
  On the other hand, using that $(\cL_{\chi V}g,\cL_{\chi V}A+d(\chi a))$ is a pure gauge term, one computes
  \begin{align*}
    &L_{b_0}\bigl(\chi g' + \cL_{\chi V}g, \chi A' + \cL_{\chi V}A + d(\chi a)\bigr) \\
    &\quad = L_{b_0}(\chi g',\chi A') + \bigl(-2\tdel^*D_g\Ups^E(\tcL_g(\chi V)), -\td\Ups^M_g(\tcL_{A}(\chi V)+d(\chi a))\bigr).
  \end{align*}
  We then manipulate the first component of the last term by using \eqref{EqLinGaugeG} and writing
  \begin{align*}
    -D_g\Ups^E(\tcL_g(\chi V)) &= -[D_g\Ups^E\circ\tcL_g,\chi]V + \chi D_g\Ups^E(g') \\
      & = -\vartheta_\chi(b') + D_g\Ups^E(\chi g'),
  \end{align*}
  and similarly the second component,
  \begin{align*}
    -\Ups^M_g(\cL_A(\chi V)+d(\chi a)) &= -[\Ups^M_g\circ\cL_A,\chi]V - [\Ups^M_g\circ d,\chi]a + \chi\Ups^M_g(A') \\
      & = -\varkappa_\chi(b') + \Ups^M(g,\chi A').
  \end{align*}
  Putting these expressions together, we obtain \eqref{EqNLProofParamChange}.
\end{proof}

More generally, we need to relate changes of the asymptotic black hole parameters to modifications of the gauge at \emph{all} steps in our non-linear Nash--Moser iteration scheme. To accomplish this, denote the linearization of $P_0$ around possibly non-zero $\wt g$ and $\wt A$ by
\begin{align*}
  L_{b,\wt g,\wt A}(\gdot,\Adot) &= D_{\wt g,\wt A}P_0(b,\cdot,\cdot)(\gdot,\Adot) \\
    &= \Bigl( 2\bigl(D_g(\Ric+\Lambda)(\gdot) - \tdel^*D_g\Ups^E(\gdot) - 2 D_{g,d A}T(\gdot,d\Adot)\bigr), \\
    &\qquad\qquad\qquad D_{g,A}(\delta_{(\cdot)}d(\cdot))(\gdot,\Adot) - \wt d D_{g,\wt A}\Ups^M(\gdot,\Adot)\Bigr),
\end{align*}
where we write $g=g_{b_0,b}+\wt g$ and $A=A_{b_0,b}+\wt A$. Then \eqref{EqNLProofParamChangeFormula2} generalizes to
\begin{align*}
  D_b P_0(\cdot,\wt g,\wt A)(b') &= L_{b,\wt g,\wt A}(\chi g'_b(b'),\chi A'_b(b')) \\
    &\quad + \bigl(\tdel^* 2 D_{g_{b_0,b}}\Ups^E(\chi g'_b(b')), \td \Ups^M(g,\chi A'_b(b'))\bigr).
\end{align*}

Combining this with the gauge modifications coming from the linear analysis in \S\ref{SecLin}, we can now describe the finite-dimensional modifications which we will use for the proof of non-linear stability below. As usual, let $B\subset\R^5$ be a small neighborhood of the RNdS parameters $b_0$; let further
\[
  \wt W^s := \bigl\{ (\wt g,\wt A) \in \Hbext^{s,\alpha}(\Omega;S^2\,\Tb^*_\Omega M) \oplus \Hbext^{s,\alpha}(\Omega;\Tb^*_\Omega M) \colon \|\wt g\|_{\Hbext^{14,\alpha}} + \|\wt A\|_{\Hbext^{14,\alpha}} < \eps \bigr\}
\]
for sufficiently small $\eps>0$. Let $\Xi$ denote the (fixed) space of gauge modifications from the linear theory, see \eqref{EqLinKNdSGaugeMod} and the proof of Theorem~\ref{ThmLinKNdS}, and recall the isomorphism \eqref{EqLinKNdSXiIso}. We then define the map
\begin{equation}
\label{EqNLProofModificationMap}
\begin{split}
  z \colon B\times\wt W^{s+2} \times \R^5 \times \R^{N_\Xi} &\to \Hbext^{s,\alpha}(\Omega;S^2\,\Tb^*_\Omega M)\oplus\Hbext^{s,\alpha}(\Omega;\Tb^*_\Omega M) \\
  &\qquad \hra D^{s,\alpha}(\Omega;S^2\,\Tb^*_\Omega M\oplus\Tb^*_\Omega M)
\end{split}
\end{equation}
by
\begin{equation}
\label{EqNLProofModificationMapDef}
\begin{split}
  z(b,(\wt g,\wt A),b',\bfc') &= D_b P_0(\cdot,\wt g,\wt A)(b') - (2\tdel^*\vartheta_\chi(b'),\td\varkappa_\chi(b')) \\
    &\qquad + (2\tdel^*\theta(\bfc'), \td\kappa(\bfc')).
\end{split}
\end{equation}
(The reason for putting two extra derivatives in the space $\wt W^{s+2}$ is due to the fact that $D_b P_0(\cdot,\wt g,\wt A)$ has coefficients with regularity $H^s$ if $(\wt g,\wt A)$ has regularity $H^{s+2}$.) This puts together
\begin{enumerate}
\item the asymptotic parameter change (first term), which for $b=b_0$, $\wt g=0$ and $\wt A=0$ is equal to the compactly supported modification of the range (giving rise to the solution $(g^{\prime\Ups}_b(b'),A^{\prime\Ups}_b(b'))$ for large $t_*$ of the linearized gauge-fixed Einstein--Maxwell equation), given by the first term in \eqref{EqNLProofParamChange}, upon subtracting $(2\tdel^*\vartheta_\chi(b'),\td\varkappa_\chi(b'))$, given by the second term in \eqref{EqNLProofParamChange},
\item the gauge modifications necessitated by the pure gauge resonances in the closed upper half plane, captured by the parameter $\bfc'$ and the map \eqref{EqLinKNdSXiIso}.
\end{enumerate}

We can now state and prove our main theorem, which we stated somewhat informally as Theorems~\ref{ThmIntroNL} and \ref{ThmIntroNLFull} in the introduction:

\begin{thm}
\label{ThmNLKNdS}
  Recall from \eqref{EqKNdSIniSpace} the space $Z^s$ of initial data for the Einstein--Maxwell system with vanishing magnetic charge. Let $(h,k,\bfB,\bfE)\in Z^\infty$ be a smooth initial data set, thus $h$ is a smooth Riemannian metric on $\Sigma_0$, $k$ a symmetric 2-tensor, $\bfE$ and $\bfB$ 1-forms, with $[\star_h\bfB]=0\in H^2(\Sigma_0,\R)$. Suppose $(h,k,\bfB,\bfE)$ is close to the initial data $(h_{b_0},k_{b_0},\bfB_{b_0},\bfE_{b_0})$ (induced by $(g_{b_0},A_{b_0})$ on $\Sigma_0$) of any fixed non-degenerate Reissner--Nordstr\"om--de~Sitter black hole in the topology of $H^{21}(\Sigma_0;S^2 T^*\Sigma_0\oplus S^2 T^*\Sigma_0\oplus T^*\Sigma_0\oplus T^*\Sigma_0)$.
  
  Then there exist KNdS black hole parameters $b\in B$ close to $b_0$ and exponentially decaying tails $(\wt g,\wt A)\in\wt W^\infty$ such that the smooth metric and 4-potential
  \[
    g = g_{b_0,b} + \wt g, \quad A = A_{b_0,b} + \wt A
  \]
  solve the coupled Einstein--Maxwell system
  \[
    \Ric(g) + \Lambda g = 2 T(g,d A), \quad \delta_g d A = 0,
  \]
  and attain the given initial data $(h,k,\bfB,\bfE)$ at $\Sigma_0$. More precisely, using the map $i_b$ from Proposition~\ref{PropKNdSIni}, we have $\gamma_0(g)=i_{b_0}(h,k,\bfB,\bfE)$; and, using the notation of \eqref{EqNLProofModificationMapDef}, there exists $\bfc\in\R^{N_\Xi}$ such that $g$ and $A$ satisfy the gauge conditions
  \begin{gather*}
    \Ups^E(g) - \Ups^E(g_{b_0,b}) - \theta(\bfc) + \vartheta_\chi(b-b_0) = 0, \\
    \Ups^M(g,A) - \Ups^M(g,A_{b_0,b}) - \kappa(\bfc) + \varkappa_\chi(b-b_0) = 0.
  \end{gather*}
\end{thm}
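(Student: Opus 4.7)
The plan is to reformulate Theorem \ref{ThmNLKNdS} as the global solvability of the modified quasilinear system
\[
  \Phi(b,\wt g,\wt A,\bfc) := P_0(b,\wt g,\wt A) - z(b,(\wt g,\wt A),b-b_0,\bfc) = 0,
\]
with $P_0$ from \eqref{EqNLBabyNonlinearOp} and $z$ from \eqref{EqNLProofModificationMapDef}, and close it via a Nash--Moser iteration in which each linearized problem is solved globally using Theorem \ref{ThmLinAnaSolvPert}. The Cauchy data for $(\wt g,\wt A)$ are prescribed via the map $i_{b_0}$ from Proposition \ref{PropKNdSIni}, namely $\gamma_0(g_{b_0,b}+\wt g,A_{b_0,b}+\wt A) = i_{b_0}(h,k,\bfE,\bfB)$; since $g_{b_0,b}=g_{b_0}$ and $A_{b_0,b}=A_{b_0}$ near $\Sigma_0$ by the choice of cutoff in \eqref{EqNLMetricInterpolate}, these data are in fact independent of $b$, which is essential for using the same Cauchy data at every iteration step.

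The first task is to verify that the hypotheses of Theorem \ref{ThmLinAnaSolvPert} hold for the family $L_{b,\wt g,\wt A}$, viewed as a stationary perturbation $L_b$ of $L_{b_0}$ modified by the exponentially decaying, finite regularity perturbation induced by $(\wt g,\wt A)\in\wt W^s$. The high frequency estimates and the threshold condition are supplied by Theorem \ref{ThmESG}; the bound $\wh\beta\geq 0$ established in Lemma \ref{LemmaESGRad} satisfies the requirement $\wh\beta\geq -1$. The crucial surjectivity of $\bfc'\mapsto \lambda_\IVP(z(b_0,0,0,b',\bfc'))$ onto $\cL(R^*,\ol\C)$ at the base point is achieved as follows: Theorem \ref{ThmCD} (constraint damping) forces every non-decaying resonant state of $L_{b_0}$ to solve the ungauged linearized Einstein--Maxwell system, so by Theorem \ref{ThmMS} such states are sums of pure gauge solutions and linearized KNdS solutions; the pure gauge resonances are surjected onto by $\theta(\bfc),\kappa(\bfc)\in\Xi$ just as in the proof of Theorem \ref{ThmLinKNdS}, and the zero-frequency KNdS direction is produced by the parameter change $b'$ via Lemma \ref{LemmaNLProofParamChange}, which identifies $D_bP_0(\cdot,0,0)(b')$ with the forcing generating the linearized gauged KNdS solution up to the compactly supported adjustment $(\vartheta_\chi(b'),\varkappa_\chi(b'))$ that is subtracted off inside $z$.

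Given this linear theory, the Nash--Moser iteration proceeds in the standard way: starting from $(b_0,0,0,0)$ one inverts $D\Phi$ at each step via Theorem \ref{ThmLinAnaSolvPert}, with trivial Cauchy data for the correction, and applies a smoothing operator before updating. The tame estimates \eqref{EqLinAnaSolvPertEst} for the linearized solution operator yield the derivative loss budget; the assumption $H^{21}$ on the initial data is well in excess of what is needed to close the Nash--Moser scheme in $\Hbext^{s,\alpha}$ for some fixed $s$, and smoothness of the limit then follows by iterating the linear regularity estimates. The output is $(b,\wt g,\wt A,\bfc)$ with $(\wt g,\wt A)\in\wt W^\infty$ and $b\in B$.

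Finally, I would close the argument by propagating the gauge conditions. For a solution of $\Phi=0$, define
\begin{align*}
  \ups^E &:= \Ups^E(g) - \Ups^E(g_{b_0,b}) - \theta(\bfc) + \vartheta_\chi(b-b_0), \\
  \ups^M &:= \Ups^M(g,A) - \Ups^M(g,A_{b_0,b}) - \kappa(\bfc) + \varkappa_\chi(b-b_0).
\end{align*}
The second component of $\Phi=0$ reads $\delta_g d A = \td\ups^M$; taking $\delta_g$ and using $\delta_g\delta_g d=0$ yields the wave equation $\delta_g\td\ups^M=0$. Given $\ups^M\equiv 0$, the first component combined with the second Bianchi identity and \eqref{EqBasicDerEMDelT} yields the wave equation $\delta_g G_g\tdel^*\ups^E=0$. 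The Cauchy data of both $\ups^E$ and $\ups^M$ vanish at $\Sigma_0$: the gauge adjustments $\theta(\bfc),\kappa(\bfc),\vartheta_\chi,\varkappa_\chi$ are supported away from $\Sigma_0$, the Cauchy data coming from $i_{b_0}$ are by construction in the linearized gauge, and the linearized Hamiltonian constraint \eqref{EqBasicNLConstraints22} converts the original constraints on $(h,k,\bfE,\bfB)$ into the vanishing of $\partial_{t_*}\ups^M$ at $\Sigma_0$. Uniqueness for the (scalar, resp.\ scalar) wave equations then forces $\ups^E\equiv 0$ and $\ups^M\equiv 0$, so $(g,A)$ solves the ungauged Einstein--Maxwell system \eqref{EqBasicDerEM} in the stated modified gauge. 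The main obstacle will be verifying that the family $\wt L_{b,\wt g,\wt A}:=L_{b,\wt g,\wt A}-L_{b_0}$ satisfies the Lipschitz continuity hypotheses of Theorem \ref{ThmLinAnaSolvPert} uniformly along the iteration, which amounts to careful but routine estimates on the non-linearities in $\Ric$, $T$, and the gauge terms.
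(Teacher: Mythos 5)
Your overall strategy is the paper's: set up a quasilinear gauge-fixed system with finite-dimensional modifications parametrized by $(b,\bfc)$, solve it by Nash--Moser with Theorem~\ref{ThmLinAnaSolvPert} at each step (surjectivity at the base point coming from constraint damping, mode stability, and Lemma~\ref{LemmaNLProofParamChange}, exactly as in the proof of Theorem~\ref{ThmLinKNdS}), and recover the ungauged Einstein--Maxwell system at the end by propagating $\ups^M$ and then $\ups^E$ through their homogeneous wave equations with vanishing Cauchy data. The gauge-propagation step and the identification of the relevant hypotheses ($\wh\beta\geq 0\geq -1$, the tame estimates, the $b$-independence of the Cauchy data near $\Sigma_0$) are all correct.

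However, your definition $\Phi:=P_0(b,\wt g,\wt A)-z(b,(\wt g,\wt A),b-b_0,\bfc)$ is wrong, and in a way that breaks both halves of the argument. By \eqref{EqNLProofModificationMapDef}, $z(b,(\wt g,\wt A),b-b_0,\bfc)$ contains the term $D_b P_0(\cdot,\wt g,\wt A)(b-b_0)$. First, differentiating your $\Phi$ in $b$ then cancels the contribution $D_b P_0(\cdot,\wt g,\wt A)(b')$ coming from the $b$-dependence of $P_0$ through $g_{b_0,b}$, so the $b'$-direction of $D\Phi$ no longer produces the forcing $K_b(b')$ that generates the linearized KNdS solution; the map $(b',\bfc')\mapsto\lambda_\IVP(\cdot)$ is then \emph{not} surjective onto $\cL(R^*,\ol\C)$, since the non-pure-gauge zero resonant states (spanned by $(g^{\prime\Ups}_{b_0}(b')_0,A^{\prime\Ups}_{b_0}(b')_0)$) are never hit, and Theorem~\ref{ThmLinAnaSolvPert} does not apply. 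Second, $D_b P_0(\cdot,\wt g,\wt A)(b-b_0)$ is of the form \eqref{EqNLProofParamChangeFormula} (a linearized-Ricci/energy-momentum/Maxwell expression), \emph{not} of the form $(2\tdel^*\vartheta,\td\varkappa)$; subtracting it from $P_0$ destroys the structure needed for the constraint-propagation argument, since applying $\delta_g G_g$ (resp.\ $\delta_g$) to $\Phi=0$ no longer yields homogeneous wave equations for $\ups^E$ (resp.\ $\ups^M$), so the stated gauge conditions would not follow. The correct definition is $\Phi=P_0(b,\wt g,\wt A)+\bigl(2\tdel^*(\theta(\bfc)+\vartheta_\chi(b-b_0)),\td(\kappa(\bfc)+\varkappa_\chi(b-b_0))\bigr)$, augmented by the Cauchy-data component $\gamma_0(g_{b_0}+\wt g,A_{b_0}+\wt A)-i_{b_0}(h,k,\bfE,\bfB)$: only the compactly supported pure-gauge modifications are added by hand, and $z$ then arises automatically as the $(b',\bfc')$-part of $D\Phi$, which is precisely the content of Lemma~\ref{LemmaNLProofParamChange}. (A minor further point: at each Nash--Moser step the linear theory a priori produces complex $(b',\bfc')$; one must observe that reality is preserved since $L_{b,\wt g,\wt A}$ has real coefficients and the smoothing operators can be chosen to preserve real sections.)
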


\begin{rmk}
\label{RmkNLMagnetic}
  The cohomological condition on $\bfB$, which is equivalent to the vanishing of the magnetic charge of the perturbed black hole, can be dropped in view of Lemmas~\ref{LemmaBasicDerEMRotation} and \ref{LemmaBasicDerEMRotationCharges}. The assumption on the initial data then is that they are close to the initial data of any fixed non-degenerate RNdS black hole with parameters $b_0=(\bhm,\bfzero,Q_e,Q_m)\in B_m$, see the definition~\eqref{EqKNdSMagParam}; the conclusion is that we have a solution of the initial value problem \eqref{EqBasicDerEMCurv1}--\eqref{EqBasicDerEMCurv2} with asymptotic behavior $g=g_{b_0,b}+\wt g$, $F=F_{b_0,b}+\wt F$, where now $b\in B_m$ is close to $b_0$, and $F_{b_0,b}=(1-\chi)F_{b_0}+\chi F_b$, with $g_b$ and $F_b$ defined in equation~\eqref{EqKNdSMagSols}.
\end{rmk}

\begin{proof}[Proof of Theorem~\ref{ThmNLKNdS}]
  We will use the Nash--Moser iteration scheme as presented by Saint-Raymond \cite{SaintRaymondNashMoser}, using the notation employed in \cite[Theorem~11.1]{HintzVasyKdSStability}. Thus, we define the real Banach spaces
  \begin{align*}
    B^s &= \R^5 \oplus \Hbext^{s,\alpha}(\Omega;S^2\,\Tb^*_\Omega M\oplus\Tb^*_\Omega M) \oplus \R^{N_\Xi}, \\
    \bfB^s &= D^{s,\alpha}(\Omega;S^2\,\Tb^*_\Omega M\oplus\Tb^*_\Omega M).
  \end{align*}
  Next, recalling the map $P_0$ from \eqref{EqNLBabyNonlinearOp}, we define the map $\Phi\colon B^\infty \to \bfB^\infty$,
  \begin{align*}
    \Phi&(b,(\wt g,\wt A),\bfc) = \Bigl(P_0(b,\wt g,\wt A) + \bigl(2\tdel^*(\theta(\bfc)+\vartheta_\chi(b-b_0)), \td(\kappa(\bfc)+\varkappa_\chi(b-b_0))\bigr), \\
      &\qquad\qquad\qquad\qquad \gamma_0(g_{b_0}+\wt g,A_{b_0}+\wt A)-i_{b_0}(h,k,\bfE,\bfB)\Bigr) \\
      & \equiv \Bigl( 2\bigl(\Ric(g)+\Lambda g-2 T(g,d A)-\tdel^*\bigl(\Ups^E(g)-\Ups^E(g_{b_0,b})-\theta(\bfc)-\vartheta_\chi(b-b_0)\bigr)\bigr), \\
      &\qquad\qquad \delta_g d A - \td\bigl(\Ups^M(g,A)-\Ups^M(g,A_{b_0,b})-\kappa(\bfc)-\varkappa_\chi(b-b_0)\bigr), \\
      &\qquad\qquad \gamma_0(\wt g,\wt A)-(i_{b_0}(h,k,\bfE,\bfB)-\gamma_0(g_{b_0},A_{b_0})) \Bigr),
  \end{align*}
  where $g=g_{b_0,b}+\wt g$, $A=A_{b_0,b}+\wt A$, defined in a neighborhood of $(b_0,(0,0),\bfzero)$ in the topology of $B^5$ (which implies that $\wt g$ and $\wt A$ are small in $\cC^2$). Note that our assumptions on the initial data imply that $i_{b_0}(h,k,\bfE,\bfB)-\gamma_0(g_{b_0},A_{b_0})$ is small in $H^{21}$. Now, in the Nash--Moser iteration scheme, we need to solve linearized equations of the form
  \[
    D_{(b,(\wt g,\wt A),\bfc)}\Phi(b',(\wt g',\wt A'),\bfc') = d \equiv (f,r_0,r_1) \in \bfB^\infty,
  \]
  which is equivalent to the initial value problem
  \begin{equation}
  \label{EqNLKNdSLin}
    \begin{cases}
      L_{b,\wt g,\wt A}(\wt g',\wt A') + z(b,(\wt g,\wt A),b',\bfc') = f, \\
      \gamma_0(\wt g',\wt A') = (r_0,r_1).
    \end{cases}
  \end{equation}
  Now for $b=b_0$ and $(\wt g,\wt A)=(0,0)$, and using Lemma~\ref{LemmaNLProofParamChange}, the first equation becomes
  \[
    L_{b_0}(\wt g',\wt A') + K'_{b_0}(b') + \bigl(2\tdel^*\theta(\bfc'), \td\kappa(\bfc')\bigr) = f,
  \]
  which by the construction of the map $z$ can always be solved (with the given initial data) for $(\wt g',\wt A')\in\Hbext^{\infty,\alpha}$, as follows from the proof of linear stability, Theorem~\ref{ThmLinKNdS}. By Theorem~\ref{ThmLinAnaSolvPert} then, the perturbed problem~\eqref{EqNLKNdSLin} can be solved as well if $b$ is close to $b_0$ and $(\wt g',\wt A')$ is small in $\wt W^{14}$. We remark that a priori one needs to allow $b'$, $\bfc'$ and $(\wt g',\wt A')$ to be complex-valued; however, arguing inductively (and noting that the smoothing operators in the Nash--Moser iteration can be chosen so as to preserve real sections), $b$, $\wt g$ and $\wt A$ in \eqref{EqNLKNdSLin} will always be real-valued, so by the same argument as at the end of the linear stability proof in \S\ref{SecLin}, $b'$ etc.\ will in fact be real-valued.
  
  In view of the estimate \eqref{EqLinAnaSolvPertEst}, the solution of \eqref{EqNLKNdSLin} satisfies the tame estimate
  \begin{align*}
    |b'|&+|\bfc'| + \|(\wt g',\wt A')\|_{\Hbext^{s,\alpha}} \\
       &\leq C\bigl( \|d\|_{D^{s+3,\alpha}} + (1+\|(\wt g,\wt A)\|_{\Hbext^{s+6,\alpha}})\|d\|_{D^{13,\alpha}}\bigr)
  \end{align*}
  for $s\geq 10$. (The additional two derivatives on $(\wt g,\wt A)$ are due to the same reason as in the definition of the map \eqref{EqNLProofModificationMap}.) We can therefore appeal to the Nash--Moser theorem \cite[Theorem~11.1]{HintzVasyKdSStability} to conclude the proof.
\end{proof}

\begin{rmk}
\label{RmkNLChargeConverges}
  By Stokes' theorem, the electric charge of the final black hole is equal to the electric charge $Q_e=\frac{1}{4\pi}\int_{\Sph^2}\star_h\bfE$ computed directly from the initial data. However, we point out that the value $Q_e^{(k)}$ of the electric charge at the $k$-th step of the Nash--Moser iteration may well be different from $Q_e$; however, $Q_e$ is of course (necessarily) recovered as the limit of $Q_e^{(k)}$ as $k\to\infty$.
\end{rmk}

\subsection{Initial data}
\label{SubsecNLIni}

We proceed to show how the conformal method can be used to construct initial data sets for the Einstein--Maxwell system in the context of the present paper. The formulation of the conformal method in this context goes back to Isenberg--Murchadha--York \cite{IsenbergMurchadhaYorkConstraintsFields}; see also \cite[\S7]{IsenbergCMCConstraints} for a study of the space of solutions in the constant mean curvature case, and the paper by Isenberg--Maxwell--Pollack \cite{IsenbergMaxwellPollackGluing} on gluing theorems for fairly general matter fields, including electromagnetic fields. We refer the reader to the survey by Bartnik and Isenberg \cite{BartnikIsenbergConstraints} for further references.

Let us denote the initial data induced on a spacelike hypersurface $\Sigma_i$ by a slowly rotating KNdS black hole solution $(g_b,A_b)$, so with $b$ close to $b_0$, by $(h_0,k_0,\bfE_0,\bfB_0)$. Below, we will prove a general theorem allowing us to construct initial data close to the KNdS data $(h_0,k_0,\bfE_0,\bfB_0)$, provided the mean curvature $H_0=\tr_{h_0}k_0$ is small. Observe now that $H_0$ is small indeed if we choose $\Sigma_i$ to be a small spacelike neighborhood of $\{t=0\}$ (in Boyer--Lindquist coordinates in the exterior region) within the maximal analytic extension of a slowly rotating KNdS solution: this follows directly for $\Sigma_i=\{t=0\}$ in the RNdS spacetime, since $\Sigma_i$ is totally geodesic in this case, being the set of fixed points of the isometry $t\mapsto -t$, and by continuity for slowly rotating KNdS black holes. See Figure~\ref{FigNLIniSurface}.

\begin{figure}[!ht]
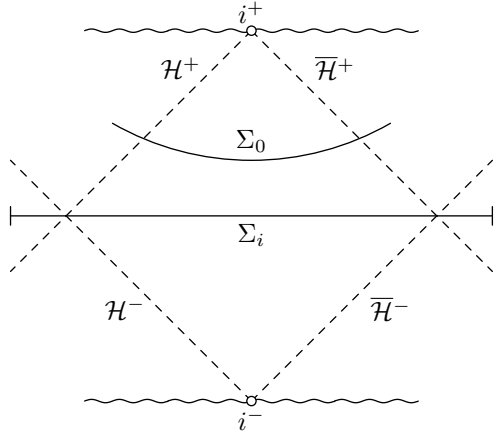

\centering
\inclfig{NLIniSurface}
\caption{The totally geodesic hypersurface $\Sigma_i$ (with boundary) within the maximal analytic extension of a RNdS spacetime. A neighborhood of the Cauchy surface $\Sigma_0$ lies in the closure of the domain of dependence of $\Sigma_i$.}
\label{FigNLIniSurface}
\end{figure}

Denote by
\[
  \cD_{h_0} = 2\delta_{h_0}^* + \frac{2}{3}h_0\delta_{h_0}
\]
the conformal Killing operator.

\begin{thm}
\label{ThmNLIni}
  Let $\Sigma_i$ be a smooth compact connected 3-dimensional manifold with boundary, and suppose $h_0,k_0\in\CI(\Sigma_i;S^2 T^*\Sigma_i)$, with $h_0$ a Riemannian metric, and $\bfE_0,\bfB_0\in\CI(\Sigma_i;T^*\Sigma_i)$ solve the Einstein--Maxwell constraint equations \eqref{EqBasicNLConstraints1}--\eqref{EqBasicNLConstraints2}. Write $k_0=H_0 h_0+Q_0$, where $H_0\in\CI(\Sigma_i)$, and $Q_0\in\CI(\Sigma_i;S^2 T^*\Sigma_i)$ is trace-free with respect to $h_0$.

  Let $s\geq s_0>3/2$. Then there exists an $\eps>0$ such that the following holds under the assumption $\|H_0\|_{H^{s_0}}<\eps$: let $\wt H\in H^s(\Sigma_i)$ and $\wt Q_1\in H^s(\Sigma_i;S^2 T^*\Sigma_i)$, with $\tr_{h_0}\wt Q_1=0$ and $\delta_{h_0}\wt Q_1=0$; let further $\wt\bfE,\wt\bfB\in H^s(\Sigma_i;T^*\Sigma_i)$, and assume $\delta_{h_0}\wt\bfE=0$ and $\delta_{h_0}\wt\bfB=0$. Let
  \[
    \Psi=(\wt H,\wt Q_1,\wt\bfE,\wt\bfB).
  \]
  Then if $\|\Psi\|_{H^{s_0}}<\eps$, there exist $\varphi=1+\psi$, $\psi\in H^{s+2}(\Sigma_i)$, and $V_2\in H^{s+1}(\Sigma_i;T\Sigma_i)$ such that
  \begin{equation}
  \label{EqNLIniData}
  \begin{gathered}
    h = \varphi^4 h_0,\quad k=(H_0+\wt H)h + \varphi^{-2}(Q_0+\wt Q_1+\cD_{h_0}V_2), \\
    \bfE = \varphi^{-2}(\bfE_0+\wt\bfE), \quad \bfB = \varphi^{-2}(\bfB_0+\wt\bfB)
  \end{gathered}
  \end{equation}
  solves the Einstein--Maxwell constraint equations. Furthermore, there exists a constant $C$ such that the estimate
  \begin{equation}
  \label{EqNLIniEst}
    \|\psi\|_{H^{s+2}} + \|V_2\|_{H^{s+1}} \leq C\|\Psi\|_{H^s}
  \end{equation}
  holds. In particular, we have (for another constant $C$)
  \begin{equation}
  \label{EqNLIniEst2}
    \|(h,k,\bfE,\bfB)-(h_0,k_0,\bfE_0,\bfB_0)\|_{H^s} \leq C\|\Psi\|_{H^s}.
  \end{equation}
\end{thm}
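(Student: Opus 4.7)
I plan to use the conformal method: the ansatz \eqref{EqNLIniData} is designed so that the Einstein--Maxwell constraints reduce to a coupled semilinear elliptic system for $(\psi,V_2)$, which I would solve by an implicit function theorem argument around the exact reference solution $(\psi,V_2)=(0,0)$ at $\Psi=0$. First I would verify that the source-free Maxwell constraints $\delta_h\bfE=\delta_h\bfB=0$ and the cohomological condition characterizing $Z^s$ are automatic for the ansatz. This uses the standard $3$-dimensional identity $\delta_{\varphi^4 h_0}(\varphi^{-2}\alpha)=\varphi^{-6}\delta_{h_0}\alpha$ for 1-forms $\alpha$ (immediate from $\star_h\alpha=\varphi^2\star_{h_0}\alpha$); together with the hypotheses $\delta_{h_0}\bfE_0=\delta_{h_0}\wt\bfE=0$ and their $\bfB$ analogues, this implies \eqref{EqBasicNLConstraints2} and preserves the de~Rham class of $\star_h\bfB$.

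Next I would substitute the ansatz into the Hamiltonian and momentum constraints \eqref{EqBasicNLConstraints1}. Using the Yamabe-type transformation $R_h\varphi^5=R_{h_0}\varphi-8\Delta_{h_0}\varphi$, the weight $|\sigma|_h^2=\varphi^{-12}|\sigma_0|_{h_0}^2$ for the trace-free piece $\sigma_0:=Q_0+\wt Q_1+\cD_{h_0}V_2$, the analogous $|\bfE|_h^2=\varphi^{-8}|\bfE_0+\wt\bfE|_{h_0}^2$ and $\star_h(\bfB\wedge\bfE)=\varphi^{-6}\star_{h_0}(\ldots)$, together with $\delta_h(\varphi^{-2}\sigma_0)=\varphi^{-6}\delta_{h_0}\sigma_0$ for trace-free symmetric $2$-tensors, the Einstein constraints become
\begin{gather*}
  \cF_1(\psi,V_2;\Psi) := -8\Delta_{h_0}\varphi + R_{h_0}\varphi - \varphi^{-7}|\sigma_0|_{h_0}^2 + 6H^2\varphi^5 - 2\Lambda\varphi^5 - 2\varphi^{-3}(|\bfE_0+\wt\bfE|^2+|\bfB_0+\wt\bfB|^2) = 0, \\
  \cF_2(\psi,V_2;\Psi) := \tfrac{1}{2}\delta_{h_0}\cD_{h_0}V_2 + \varphi^6\,d(H_0+\wt H) - dH_0 - \star_{h_0}\bigl((\bfB_0+\wt\bfB)\wedge(\bfE_0+\wt\bfE) - \bfB_0\wedge\bfE_0\bigr) = 0,
\end{gather*}
where $H:=H_0+\wt H$, $\varphi=1+\psi$, and I have used the reference constraints to enforce $\cF_1(0,0;0)=\cF_2(0,0;0)=0$. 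This is a Douglis--Nirenberg elliptic semilinear system: $\cF_1$ is second order in $\psi$ alone, $\cF_2$ is second order in $V_2$ alone, and $V_2$ enters $\cF_1$ only through the algebraic term $|\cD_{h_0}V_2|_{h_0}^2$. I would solve this with Dirichlet boundary conditions on $\pa\Sigma_i$, so that the perturbed data agree with the reference KNdS data on $\pa\Sigma_i$ (consistent with the application of Theorem~\ref{ThmNLKNdS}, which only uses the interior of $\Sigma_i$).

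The core analytic step is to invert the linearization of $(\cF_1,\cF_2)$ at $(\psi,V_2)=(0,0)$. This decouples to leading order into
\[
  (-8\Delta_{h_0} + W_0)\dot\psi = f, \qquad \tfrac{1}{2}\delta_{h_0}\cD_{h_0}\dot V_2 = g,
\]
where $W_0 = -8\Lambda + 8(|\bfE_0|^2+|\bfB_0|^2) + 8|Q_0|^2 + 24H_0^2$ is obtained by using the reference Hamiltonian constraint to substitute for $R_{h_0}$ in the computation. The Dirichlet realization of $\delta_{h_0}\cD_{h_0}$ is an isomorphism from $H^{s+1}(\Sigma_i;T\Sigma_i)$ with vanishing boundary trace to $H^s(\Sigma_i;T^*\Sigma_i)$: its kernel consists of conformal Killing fields of $h_0$ vanishing on $\pa\Sigma_i$, which vanish identically by unique continuation. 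Provided the Dirichlet operator $-8\Delta_{h_0}+W_0$ is likewise invertible on the corresponding scalar spaces, a tame implicit function theorem delivers $(\psi,V_2)$ satisfying \eqref{EqNLIniEst}; the estimate \eqref{EqNLIniEst2} on the data themselves then follows from Sobolev multiplication since $s_0>3/2$.

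The principal obstacle is verifying invertibility of $-8\Delta_{h_0}+W_0$ with Dirichlet boundary conditions. Because $\Lambda>0$, the potential $W_0$ need not be pointwise non-negative---on the static RNdS slice $H_0=0$, $Q_0=0$, $\bfB_0=0$ so that $W_0=-8\Lambda+8 Q_{e,0}^2/r^4$, which changes sign near $r\sim(Q_{e,0}^2/\Lambda)^{1/4}$---so the maximum principle is not directly applicable and invertibility becomes a genuine spectral condition on the reference data. For the $t=0$ slice of a non-degenerate RNdS spacetime this can be handled either by a direct Dirichlet eigenvalue computation or, following the standard conformal method literature, by constructing global sub- and super-solutions for the full nonlinear Lichnerowicz equation $\cF_1=0$ exploiting the $\Lambda>0$ barrier at large $\varphi$ and the positivity of $R_{h_0}=2\Lambda+2(|\bfE_0|^2+|\bfB_0|^2)$ at small $\varphi$. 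In either case, the smallness hypothesis $\|H_0\|_{H^{s_0}}<\eps$ then ensures by openness that the Dirichlet isomorphism property persists on the slowly rotating KNdS reference slices considered here, completing the argument.
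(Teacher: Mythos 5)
Your reduction of the constraints via the conformal ansatz is the same as the paper's (and your powers of $\varphi$ in the Lichnerowicz equation are the correct ones), but there is a genuine gap at the solvability step. You solve the system on $\Sigma_i$ with Dirichlet conditions, which forces you to invert the Dirichlet realization of the linearized Lichnerowicz operator $-8\Delta_{h_0}+W_0$. As you yourself observe, $W_0$ contains $-8\Lambda<0$ and is not signed, so this is a nontrivial spectral condition --- and under the hypotheses of the theorem (arbitrary $\Sigma_i$ and arbitrary reference data solving the constraints with $\|H_0\|_{H^{s_0}}$ small) it can simply fail: enlarging $\Sigma_i$ inside a fixed constraint-satisfying data set drives Dirichlet eigenvalues through zero. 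Neither of your proposed remedies closes this. A ``direct eigenvalue computation'' would at best treat the specific RNdS slice, not the general statement; and the sub/supersolution route is obstructed because the zeroth-order nonlinearity is not monotone in $\varphi$ when $\Lambda>0$ (the term $-2\Lambda\varphi^5$ drives $\cF_1\to-\infty$ at large $\varphi$ while $-\varphi^{-7}|\sigma_0|^2$ has the wrong monotonicity at small $\varphi$), which is exactly the regime where the classical monotone iteration for the Lichnerowicz equation breaks down. The appeal to ``openness'' from $H_0=0$ presupposes the unproved base case and in any event would not cover general reference data.

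The paper avoids needing any such invertibility: it embeds $\Sigma_i$ into a closed manifold $\Sigma$, extends the reference data arbitrarily (so the constraint errors $p_1,p_2$ are supported in $\Sigma_e=\ol{\Sigma\setminus\Sigma_i}$), and solves the extended system up to finite-dimensional error terms $z_1\in\cZ_1$, $z_2\in\cZ_2$ \emph{supported in $\Sigma_e$}; by unique continuation (for conformal Killing fields and for elements of $\ker\wt L_1$), complements of the ranges of $\delta_{h_0}\cD_{h_0}$ and of the linearized Lichnerowicz operator can indeed be chosen supported there, and restricting back to $\Sigma_i$ kills $p_j+z_j$ and yields an honest solution of the constraints. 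This is the same ``modify the range by a finite-dimensional space living where it does no harm'' device used throughout the paper. Note also that the role of $\|H_0\|_{H^{s_0}}<\eps$ is not to perturb an invertibility statement: it is used in the contraction estimate to make the \emph{nonlocal coupling} term $\cD_{h_0}S_2\bigl(-2((1+\psi)^6-1)\,d H_0\bigr)$, which feeds the momentum equation back into the Lichnerowicz equation, small. If you want to keep a boundary-value formulation on $\Sigma_i$ itself, you would have to prove the Dirichlet spectral nondegeneracy, which is not available under the stated hypotheses.
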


A neighborhood of the Cauchy surface $\Sigma_0$ used in Theorem~\ref{ThmNLKNdS} lies in the closure of the domain of dependence of $\Sigma_i$ (choosing $\Sigma_i$ large enough), hence one can evolve the initial data provided by Theorem~\ref{ThmNLIni} until $\Sigma_0$. Thus, Theorem~\ref{ThmNLIni} can be used to construct a sizeable set (or using the generalization discussed momentarily in fact a full neighborhood) of initial data close to slowly rotating KNdS data which, for $s=21$ and sufficiently small $\|\Psi\|_{H^{21}}$, can be used as initial data for Theorem~\ref{ThmNLKNdS}.

One can also replace $h_0$ by another metric $h_0'$ of class $H^s$, which is a perturbation of the given $h_0$ in the topology of $H^{s_0}(\Sigma_i;S^2 T^*\Sigma_i)$, and similarly perturb $Q_0$ to a trace-free symmetric 2-tensor $Q_0'$ with respect to $h_0'$, and perturb $\bfE_0$ and $\bfB_0$ to divergence-free (with respect to $h_0'$) 1-forms $\bfE_0'$ and $\bfB_0'$, and still obtain a solution of the constraint equations as in \eqref{EqNLIniData}, with $h_0$ etc.\ replaced by $h_0'$ etc. It is easy to see that \emph{every} initial data set $(h,k,\bfE,\bfB)$ satisfying \eqref{EqNLIniEst2} for $s=s_0$ and with $D_{s_0}$ sufficiently small arises via this generalization of the construction \eqref{EqNLIniData} (in fact with $\varphi\equiv 1$).

\begin{proof}[Proof of Theorem~\ref{ThmNLIni}]
  The divergence on 1-forms obeys the transformation rule $\delta_{\varphi^4 h_0}=\varphi^{-6}\delta_{h_0}\varphi^2$; the same formula is valid on symmetric trace-free 2-tensors. Furthermore, on 2-forms, the  Hodge star operator transforms as  $\star_{\varphi^4 h_0}=\varphi^{-2}\star_{h_0}$. Using the transformation rule for the scalar curvature
  \[
    R_{\varphi^4 h_0} = \varphi^{-4}(R_{h_0}+8\varphi^{-1}\Delta_{h_0}\varphi),
  \]
  with $\Delta_{h_0}\geq 0$, the constraint equations \eqref{EqBasicNLConstraints1} for the data \eqref{EqNLIniData} are equivalent to the system
  \begin{align}
    \label{EqNLIniEq1}
    \begin{split}
     &P_1(\varphi,V_2;\Psi):=\Delta_{h_0}\varphi + \frac{R_{h_0}}{8}\varphi - \frac{1}{8}|Q_0+\wt Q_1+\cD_{h_0}V_2|_{h_0}^2\varphi^{-7} \\
      &\qquad\qquad + \frac{1}{4}\bigl(3|H_0+\wt H|^2-\Lambda-|\bfE_0+\wt\bfE|_{h_0}^2 - |\bfB_0+\wt\bfB|_{h_0}^2\bigr)\varphi^{-5} = 0,
    \end{split} \\
    \label{EqNLIniEq2}
    \begin{split}
      &\delta_{h_0}\cD_{h_0}V_2 = P_2(\varphi;\Psi) := -\delta_{h_0}Q_0 - 2\varphi^6 d(H_0+\wt H) \\
      &\hspace{12em}+2\star_{h_0}\bigl((\bfB_0+\wt\bfB)\wedge(\bfE_0+\wt\bfE)\bigr);
    \end{split}
  \end{align}
  the constraints \eqref{EqBasicNLConstraints2} for $\bfE$ and $\bfB$ are automatically satisfied. \emph{We will henceforth drop the subscript `$h_0$' from the notation.} By assumption on the data set $(h_0,k_0,\bfE_0,\bfB_0)$, if the perturbation data are trivial, then $\varphi\equiv 1$ and $V_2\equiv 0$ solve this system, so $P_1(1,0;0)=0$ and $P_2(1;0)=0$.
  
  Similarly to the proof of \cite[Proposition~11.4]{HintzVasyKdSStability}, we embed $\Sigma_i$ into a closed, connected 3-manifold $\Sigma$, and denote by $\Sigma_e=\ol{\Sigma\setminus\Sigma_i}$ the closure of the region appended to $\Sigma_i$. Extending $h_0$ to a smooth Riemannian metric on $\Sigma$, likewise extending the symmetric 2-tensor $k_0$ and the 1-forms $\bfE_0$ and $\bfB_0$ \emph{in any smooth fashion}, and denoting the thus extended quantities by $h_0$ etc.\ still, we have
  \[
    P_1(1,0;0)=:p_1\in\CIdot(\Sigma_e),\quad P_2(1;0)=:p_2\in\CIdot(\Sigma_e;T^*\Sigma_e).
  \]
  Extend $\wt H$ to a function in $H^s(\Sigma_e)$, denoting the extension by $\wt H$ still; one can arrange for the extension to have an $H^{s_0}$ and $H^s$ norm which is bounded by a constant times the corresponding norms on $\Sigma_i$. Similarly, extend the other components of $\Psi$.

  We aim to find finite-dimensional subspaces $\cZ_1\subset\CIdot(\Sigma_e)$, $\cZ_2\subset\CIdot(\Sigma_e;T\Sigma_e)$, for which we can solve
  \begin{equation}
  \label{EqNLIniEqExt}
    P_1(\varphi,V_2;\Psi) = p_1 + z_1,\quad \delta \cD V_2=(P_2(\varphi;\Psi)-p_2)+z_2,
  \end{equation}
  for $z_j\in\cZ_j$, $j=1,2$, and $\varphi=1+\psi$, $\psi\in H^{s_0+2}(\Sigma_e)$, $V_2\in H^{s_0+1}(\Sigma_e;T\Sigma_e)$, with control of higher Sobolev norms in terms of higher norms of the data as in \eqref{EqNLIniEst}, provided $\wt H$ etc.\ are small in $H^{s_0}$. Since $p_j$ and $z_j$ vanish identically in $\Sigma_i$, the restriction of $\varphi$ and $V_2$ to $\Sigma_i$ yields a solution of \eqref{EqNLIniEq1}--\eqref{EqNLIniEq2} satisfying the estimate \eqref{EqNLIniEst} on $\Sigma_i$. The estimate \eqref{EqNLIniEst2} is then an immediate consequence of the estimate \eqref{EqNLIniEst} and the definition \eqref{EqNLIniData}.

  We proceed to solve the system \eqref{EqNLIniEqExt} for $\varphi=1+\psi$. We first discuss the second equation: the operator $\delta \cD$ is elliptic and self-adjoint, with $(\ran\delta \cD)^\perp=\ker\delta \cD$ given by conformal Killing vector fields. The arguments in \cite[\S2]{BeigChruscielSchoenKIDs} imply that any non-trivial conformal Killing field has support in all of $\Sigma$, in particular its support intersects $\Sigma_e$ in an open set. Thus, there exists a subspace $\cZ_2$ as above, of dimension equal to $\dim\ker\delta \cD$, with $H^s(\Sigma_e;T\Sigma_e)=\ran_{H^{s+2}}\delta \cD \oplus \cZ_2$ for all $s\in\R$; this gives a bounded linear solution operator
  \[
    (S_2,S_\cZ) \colon H^s(\Sigma_e;T\Sigma_e) \to H^{s+2}(\Sigma_e;T\Sigma_e)\oplus\cZ_2, \quad f \mapsto (S_2 f,S_\cZ f)=(V_2,z_2),
  \]
  for the equation $\delta \cD V_2=f+z_2$, with $s\in\R$ arbitrary. It therefore remains to solve the first equation in \eqref{EqNLIniEqExt} in the form
  \begin{equation}
  \label{EqNLIniEqRedux}
    \wt P_1(\psi;\Psi) := P_1\bigl(1+\psi,S_2(P_2(1+\psi;\Psi)-p_2);\Psi\bigr) - p_1 = z_1,
  \end{equation}
  with $z_1$ in the space $\cZ_1$ which we will specify momentarily. Let us write
  \[
    \wt P_1(\psi;0) = \wt L_1\psi + \wt P_1'(\psi),
  \]
  with
  \[
    \wt L_1=\Delta+\frac{R}{8}+\frac{7}{8}|Q_0|^2-\frac{5}{4}(3|H_0|^2-\Lambda-|\bfE_0|^2-|\bfB_0|^2)
  \]
  self-adjoint and differential, while
  \begin{align*}
    \wt P_1'(\psi) &= \frac{1}{8}\bigl(|Q_0|^2-\bigl|Q_0+\cD S_2(-2((1+\psi)^6-1)d H_0)\bigr|^2\bigr)(1+\psi)^{-7} \\
      &\qquad + \frac{1}{8}|Q_0|^2(1-7\psi-(1+\psi)^{-7}) \\
      &\qquad + \frac{1}{4}(3|H_0|^2-\Lambda-|\bfE_0|^2-|\bfB_0|^2)((1+\psi)^{-5}-1+5\psi)
  \end{align*}
  captures the non-local (pseudodifferential) part of $\wt P_1$ coming from $S_2$ --- which is \emph{small} due to the smallness of $H_0$ --- and the remaining non-linear terms in $\wt P_1$, which vanish quadratically at $\psi=0$; concretely, we have the estimate
  \begin{equation}
  \label{EqNLIniEqOperatorEst1}
    \|\wt P_1'(\psi)\|_{H^{s_0}} \leq C\bigl(\|H_0\|_{H^{s_0}}\|\psi\|_{H^{s_0}} + \|\psi\|_{H^{s_0}}^2\bigr)
  \end{equation}
  since $H^{s_0}$ is an algebra for $s_0>3/2$. Now, since $\wt P_1(0;0)=0$, equation \eqref{EqNLIniEqRedux} is equivalent to
  \begin{equation}
  \label{EqNLIniEqRedux2}
    \wt L_1\psi = -\wt P_1'(\psi) - (\wt P_1(\psi;\Psi)-\wt P_1(\psi;0)) + z_1,
  \end{equation}
  with the second term satisfying the estimate
  \begin{equation}
  \label{EqNLIniEqOperatorEst2}
    \|\wt P_1(\psi;\Psi)-\wt P_1(\psi;0)\|_{H^{s_0}} \leq C\|\psi\|_{H^{s_0}}\|\Psi\|_{H^{s_0}}.
  \end{equation}
  Now, since the unique continuation principle applies to elements of $(\ran\wt L_1)^\perp$, we can find $\cZ_1\subset\CIdot(\Sigma_e)$ (with dimension equal to $\dim\ker\wt L_1$) and a solution operator
  \[
    S_1 \colon H^s(\Sigma_e) \to H^{s+2}(\Sigma_e) \oplus \cZ_1,\quad f\mapsto(\psi,z_1),
  \]
  for the equation $\wt L_1\psi=f+z_1$. Identify $\cZ_1\cong\C^N$ for $N=\dim\cZ_1$. We can then solve the equation~\eqref{EqNLIniEqRedux2} for $\psi\in H^{s_0+2}(\Sigma)$ and $z_1\in\cZ_1$ using the Banach fixed point theorem applied to the map
  \begin{equation}
  \label{EqNLIniFixedPoint}
    (\psi,z_1) \mapsto S_1\bigl(-\wt P_1'(\psi)-(\wt P_1(\psi;\Psi)-\wt P_1(\psi;0))+z_1\bigr),
  \end{equation}
  which for sufficiently small $\eps>0$ and under the assumption $\|H_0\|_{H^{s_0}}+\|\Psi\|_{H^{s_0}}<\eps$ maps the ball $\{(\psi,z_1)\colon\|\psi\|_{H^{s_0+2}}+|z_1|\leq\eps\}$ into itself; this follows from the estimates \eqref{EqNLIniEqOperatorEst1} and \eqref{EqNLIniEqOperatorEst2}. Using similar estimates, one can show that the map \eqref{EqNLIniFixedPoint} is a contraction (reducing $\eps>0$ if necessary). This proves the existence of a solution of the system \eqref{EqNLIniEq1}--\eqref{EqNLIniEq2}. The higher regularity estimate~\eqref{EqNLIniEst} follows by elliptic regularity.
\end{proof}

\appendix
\section{Review of b-geometry and b-analysis}
\label{SecB}

We only give a very brief account of the aspects of b-geometry and b-analysis which are used in the present paper; for a  more detailed overview, we refer the reader to \cite[Appendix~A]{HintzVasyKdSStability} as well as to Melrose's book \cite{MelroseAPS} on the subject.

Fix a smooth connected $(n+1)$-dimensional manifold $M$ with non-empty boundary $\pa M$. We denote by $\Vb(M)\subset\cV(M)$ the space of \emph{b-vector fields}, smooth vector fields on $M$ which are tangent to $\pa M$. Away from $\pa M$, these are simply ordinary smooth vector fields. Near the boundary, with $(\tau,x^1,\ldots,x^n)$ denoting \emph{adapted} local coordinates near a point in $\pa M$, namely with $\pa M$ given by the vanishing of $\tau$, a b-vector field $V$ takes the form
\[
  V = a \tau\pa_\tau + \sum_{j=1}^n b_j \pa_{x^j},\quad a,b_j\in\CI(M).
\]
Correspondingly, b-vector fields are the space of sections of a natural vector bundle $\Tb M\to M$, called \emph{b-tangent bundle}, which over the interior $M^\circ$ is naturally isomorphic to the standard tangent bundle, and which near the boundary in the above coordinates has the basis $\{\tau\pa_\tau,\pa_{x^1},\ldots,\pa_{x^n}\}$; in particular, $\tau\pa_\tau$ is non-vanishing at $\tau=0$ as a b-vector field. One can check that $\tau\pa_\tau$ is in fact well-defined, i.e.\ independent of the choice of adapted local coordinates. The space $\Diffb^*(M)$ of \emph{b-differential operators} is the universal enveloping algebra of $\Vb(M)$, thus elements of $\Diffb^m(M)$ are finite linear combinations (with $\CI(M)$ coefficients) of products of up to $m$ b-vector fields. If $E,F\to M$ are two smooth vector bundles, one can more generally define $m$-th order b-differential operators $\Diffb^m(M;E,F)$ mapping $\CI(M;E)$ into $\CI(M;F)$, e.g.\ using local trivializations of $E$ and $F$.

The dual bundle $\Tb^*M$ of $\Tb M$, called the \emph{b-cotangent bundle}, is correspondingly spanned by $\frac{d\tau}{\tau},dx^1,\ldots,dx^n$; here $\frac{d\tau}{\tau}$ is smooth (and non-degenerate) as a b-1-form up to $\tau=0$. A smooth \emph{b-metric} $g$ on $M$ is then a smooth section of the second symmetric tensor power $S^2\,\Tb^*M$; in local coordinates as above, this means that
\[
  g = g_{00}\,\frac{d\tau^2}{\tau^2} + 2 g_{0j}\,\frac{d\tau}{\tau}\otimes_s dx^j + g_{ij}\,dx^i\otimes_s dx^j,\quad g_{\mu\nu}\in\CI(M).
\]
If $M$ arises as the compactification of a manifold $M^\circ$ without boundary as in equation~\eqref{EqKNdS0MfComp}, then a smooth b-metric on $M$ is asymptotically stationary on $M^\circ$ in the following sense: letting $t:=-\log\tau$, we have $\frac{d\tau}{\tau}=-dt$, and a smooth function $a\in\CI(M)$, having a Taylor expansion in powers of $\tau$, has a Taylor expansion on $M^\circ$ in powers of $e^{-t}$; thus, $g=g_0+\wt g$ with
\[
  g_0=g_{00}(0,x)\,dt^2 - 2 g_{0j}(0,x)\,dt\otimes_s dx^j + g_{ij}(0,x)\,dy^i\otimes_s dy^j,\quad \wt g=\cO(e^{-t}),
\]
approaches the stationary metric $g_0$ exponentially fast as $t\to\infty$. Conversely, if $M^\circ$ is equipped with a metric $g$ approaching a stationary metric exponentially fast at some rate $\alpha>0$, then $g$ extends to be a smooth b-metric on $M$ plus an error term (in general non-smooth) of size $\tau^\alpha$. In the case of interest in the present paper, this remainder term will be \emph{conormal}, or more generally lie in a weighted b-Sobolev space which we discuss further below.

We further have the \emph{b-differential} $\bdiff$, acting between sections of the exterior powers $\Lambda^k\,\Tb^*M$; they are defined by extension of the usual exterior differential $d$ from $M^\circ$; thus, acting on functions, one has
\[
  \bdiff a = (\tau\pa_\tau a)\frac{d\tau}{\tau} + (\pa_{x^j}a)dx^j,
\]
and in general $\bdiff\in\Diffb^1(M;\Lambda^k\,\Tb^*M,\Lambda^{k+1}\,\Tb^*M)$.

On $M$, we naturally have the b-density bundle $\Omegab^1(M)$, with local trivialization induced by $|\frac{d\tau}{\tau}dx^1\ldots dx^n|$; fixing a nowhere vanishing b-density $\nu$ on $M$, this allows us to define the $L^2$ space $L^2_\bl(M;\nu)\equiv L^2(M;\nu)$. We drop the density $\nu$ from the notation from now on. (For compact $M$, different choices of $\nu$ lead to equivalent norms.) For integer $k\geq 0$ and real $\alpha\in\R$, we then define the \emph{weighted b-Sobolev space}
\begin{align*}
  \Hb^{k,\alpha}(M) &= \{ u\in \tau^\alpha L^2_\bl(M) \colon V_1\ldots V_j u\in\tau^\alpha L^2_\bl(M), \\
  &\qquad\qquad 0\leq j\leq k,\ V_\ell\in\Vb(M),\ 1\leq\ell\leq j\}.
\end{align*}
For compact $M$, $\Hb^{k,\alpha}(M)$ can be endowed with a Hilbert space structure by means of a finite collection of b-vector fields which span $\Tb_p M$ over every $p\in M$; the norms for any two such collections are equivalent. For non-integer $s\in\R$, the space $\Hb^{s,\alpha}(M)$ is defined using duality, that is $\Hb^{s,\alpha}(M)^*=\Hb^{-s,-\alpha}(M)$, and interpolation. We point out that the definition of the space $\Hb^{s,\alpha}(M)$ as a Hilbert space for $M$ compact does \emph{not} require the choice of a metric. Elements of the space $\Hb^{\infty,\alpha}(M)=\bigcap_{s\in\R}\Hb^{s,\alpha}(M)$ are called \emph{conormal} (with respect to $L^2_\bl$); for $M$ compact, this space carries a natural Fr\'echet space structure. Near a point on $\pa M$, using coordinates $(\tau,x^1,\ldots,x^n)$ as above, and letting $t=-\log\tau$, the space $\Hb^{s,\alpha}(M)$ is locally the same (as a Hilbert space, up to equivalence of norms) as the space $e^{-\alpha t}H^s(M^\circ)$, where the Sobolev space on $M^\circ$ is defined by testing with products of the vector fields $\pa_t,\pa_{x^1},\ldots,\pa_{x^n}$.

Suppose next that $\Omega\subset M$ is a non-empty open subset of $M$. One can then define the space of \emph{supported distributions} $\dot\sD(\Omega)$ as the space of distributions $u\in\sD(M)=\CIdot(M;\Omega^1 M)^*$ with $\supp u\subset\Omega$. (The same definition applies for $M$ without boundary.) We then define $\Hbsupp^{s,\alpha}(\Omega)=\Hb^{s,\alpha}(M)\cap\dot\sD(\Omega)$; this thus consists of elements of $\Hb^{s,\alpha}(M)$ which are supported in $\bar\Omega$. The space of \emph{extendible distributions}, $\bar\sD(\Omega)$, is equal to the space of restrictions $u|_\Omega$ for $u\in\sD(M)$; we likewise define $\Hbext^{s,\alpha}(\Omega)=\Hb^{s,\alpha}(M)|_{\Omega}$, with the natural (quotient) norm. Thus, elements of $\Hbext^{s,\alpha}(\Omega)$ automatically have extensions to $\Hb^{s,\alpha}(M)$ (with the same norm).

If $E\to M$ is a smooth vector bundle, weighted b-Sobolev spaces $\Hb^{s,\alpha}(M;E)$ are defined using local trivializations of $E$; for $\Omega\subset M$ as above, one can likewise define spaces $\Hbsupp^{s,\alpha}(M;E)$ and $\Hbext^{s,\alpha}(M;E)$ of supported and extendible sections of $E$ over $\Omega$.

\section{Explicit expressions for the mode stability analysis}
\label{SecFormulas}

In this appendix, we list the explicit formulas for a number of functions arising in \S\ref{SecMS}; we recall that the quantities $x$, $y$, $z$, $m$ were defined in \eqref{EqMS2Scmxyz}, $H$ in \eqref{EqMS2ScalphaH}, $\tilde c$ in \eqref{EqMS2ScCpmWtc}, $a_+$ in \eqref{EqMS2ScApmBpm}, and $\hat c$ in \eqref{EqMS2Sc0HatC}.

The expressions for the functions used in equation~\eqref{EqMS2ScMaster1} are then:

\begin{align}
\label{EqFormulasVPhiFPhi}
  V_\Phi &= \frac{\mu}{r^2 H^2}\bigl(9 x^3 - 9(2 y+6 z-m)x^2 + (72 z^2-8(4 m-3)z + 3 m^2)x \\
      &\qquad\qquad + 8(9 x z-12 z^2-m z)y - 32 z^3 + 24 m z(z+1) + m^2(m+2)\bigr), \nonumber\\
  F_\Phi &= -\frac{8 Q\mu}{r^3 H^2}\bigl(2(3 x-8 z)y+2 x z-3 x^2+6 x+m(m+4)\bigr). \nonumber
\end{align}
The functions appearing in equation~\eqref{EqMS2ScRecoverXYZ} are given as follows:
\begin{align}
\label{EqFormulasPX0thruPZ}
  P_{X 0} &= \bigl(6(4 z+m)x-64 z^2-16 m z\bigr)y + 27 x^3 - 24(5 z-m)x^2 \\
    &\qquad + \bigl(152 z^2-2(35 m-12)z+3 m(3 m+2)\bigr)x - 64 z^3 + 48 m z^2 \nonumber\\
    &\qquad - 8 m(m-2) z + 2 m^2 (m+2), \nonumber\\
  P_{X 1} &= 2(4 z+m)y + 9 x^2 - (16 z - 5 m + 6)x + 8 z^2 - 6 m z - 4 m, \nonumber\\
  P_{X\cA} &= -4(4 z+m)y - 18 x^2 + 4(8 z-m+6)x \nonumber\\
    &\qquad - 16 z^2 + 4(m-4)z + 2 m(m+6), \nonumber\\
  P_{Y 0} &= 2\bigl(18 x^2-3(28 z-m)x+96 z^2-8 m z\bigr)y + 9 x^3 - 6(10 z-m)x^2 \nonumber\\
    &\qquad + \bigl(120 z^2-2(11 m-12)z + 3 m(m+2)\bigr)x \nonumber\\
    &\qquad - 64 z^3 + 16(m-4)z^2 - 8 m(m+2)z, \nonumber\\
  P_{Y 1} &= 2(6 x-12 z+m)y+3 x^2-(12 z+m+6)x + 8 z^2 + 2(m+8)z, \nonumber\\
  P_{Y\cA} &= -4(6 x-12 z+m)y - 6 x^2 + 4(6 z-m)x \nonumber\\
    &\qquad - 16 z^2 + 4(m-4)z - 2 m(m+2), \nonumber\\
  P_Z &= (-6 x+16 z)y+3 x^2+(-2 z+3 m)x-(4 m+8)z-2 m. \nonumber
\end{align}
The functions used in equation~\eqref{EqMS2Sc0Coeffs} are:
\begin{align}
\label{EqFormulasPXthruPA}
  P_X &= \bigl(9 x-36 z+3(12 y-6-m)\bigr)x - 6(12 z-m)y + 6(4 z+m+8)z, \\
  P_Y &= -3(9 x-16 z+5 m-6)x-6(4 z+m)y - 6(4 z-3 m)z + 12 m, \nonumber\\
  P_\cA &= -8(\tilde c+m r)z. \nonumber
\end{align}
The functions appearing in equation~\eqref{EqMS2Sc0PsipmToXYA} take the following form:
\begin{align}
\label{EqFormulasPXpthruPym}
  P_{X+} &= -8 z H \mu - 3\bigl(9 x-8(5 z-m)\bigr)x^2 - 3 m(3 m+2+2 y)x \\
   &\qquad + 2(35 m-12 y-12)x z+64 z^3 - 8 z^2(19 x-8 y+6 m) \nonumber\\
   &\qquad + 8 m(2 y+m-2)z - 2 m^2(m+2) \nonumber\\
   &\quad - \Bigl(4 z-\frac{\tilde c}{r}\Bigr)\bigl((9 x-16 z+2 m-12)x+2(m+4 z)y \nonumber\\
   &\qquad\qquad\qquad\qquad + 2(4 z-m+4)z - m(m+6)\bigr), \nonumber\\
  P_{X-} &= 81(4 z-x)x^4 - 18 x^4(4 m+3\hat c) \nonumber\\
    &\qquad - 3 x^3\bigl(144 z^2-2 z(36\hat c+23 m)+m(16\hat c+3(3 m+2)+6 y)\bigr) \nonumber\\
    &\qquad - 2 x^2\bigl(-96 z^3+8 z^2(18\hat c+m)-2 m z(35\hat c+5 m+24) \nonumber\\
    &\qquad\qquad\qquad\qquad + 3 m(-8 y z+(m+2)(m+3\hat c)+2\hat c(y-2))\bigr) \nonumber\\
    &\qquad -4 m^2(m+2)\hat c\,x-64 x z^3(m-2\hat c)-16 m(6\hat c+4 y-m+4)x z^2 \nonumber\\
    &\qquad + 8 m x z\bigl(m(3\hat c+m+6)+(4\hat c-2 m)y\bigr), \nonumber\\
  Q_+ &= 12 x^2+\bigl(3(y-9 z-3)+5 m+2\hat c\bigr)x + 16 z^2 \nonumber\\
    &\qquad - 2 z(3 m+\hat c-4) + 2(m+\hat c)y-2(2 m+\hat c), \nonumber\\
  Q_- &= -\frac{16 r a_+ \mu Q}{r^2}+(9 x-16 z+5 m-6)x+8 z^2+2(m+4 z)y-6 m z-4 m, \nonumber\\
  P_{Y+} &= -9 x^3+6 x^2(3 y+z+m-\hat c+3) + 16 m z^2 + 4 x z(-12 y-6 m+\hat c) \nonumber\\
    &\qquad +x\bigl(12(2 m+\hat c)y+m(7 m+12)+12\hat c\bigr) - 16 z\bigl((3 m+2\hat c)y-m-1\bigr) \nonumber\\
    &\qquad + 4 m^2 y+2(m+2)^2(-2 z+m+\hat c-2) + 16 m z + 8(m-\hat c+2), \nonumber\\
  P_{Y-} &= 81 x^4+54 x^3(6 y+4 z+m+\hat c+1) + 9 x^2\bigl(16 z^2-2 z(24 x-m+8\hat c) \nonumber\\
    &\qquad -6 x+24(\hat c-4 z)y+m(6 y+3 m+4\hat c+6)\bigr) + 6 x\bigl(16 z^2(6 y-3 m+\hat c) \nonumber\\
    &\qquad +2 z(24(m-2\hat c)y+m(m-3\hat c-12)) + 3 m(2 y+m+2)\hat c\bigr) \nonumber\\
    &\qquad + 8z\bigl(24 m z^2-6 z(4(3 m-2\hat c)y+m(m-4)) \nonumber\\
    &\qquad\qquad\qquad\qquad +3 m^2(2 y+m-\hat c+2) - 12 m \hat c\bigr). \nonumber
\end{align}

\bibliographystyle{alpha}
\newcommand{\etalchar}[1]{$^{#1}$}

\end{document}